\newcommand{\C}{{\mathbb C}}
\newcommand{\Z}{{\mathbb Z}}
\newcommand\Ker{\operatorname{Ker}}
\newcommand\rk{\operatorname{rk}}
\newcommand\Simpl{\operatorname{Spl}}
\newcommand\Iso{\operatorname{Iso}}
\newcommand\Sub{\operatorname{Sub}}
\newcommand\Ind{\operatorname{Ind}}
\newcommand\rank{\operatorname{rank}}
\newcommand\Tr{\operatorname{Tr}}
\newcommand\Ob{\operatorname{Ob}}
\newcommand\Pic{\operatorname{Pic}}
\newcommand\lev{\operatorname{lev}}
\newcommand\Inv{\operatorname{Inv}}
\newcommand\length{\operatorname{length}_{\mathcal R}}
\newcommand\plength{\operatorname{p-length}}
\newcommand\End{\operatorname{End}}
\newcommand\Aut{\operatorname{Aut}}
\newcommand\Hom{\operatorname{Hom}}
\newcommand\Id{\operatorname{Id}}
\newcommand\Ext{\operatorname{Ext}}
\newcommand\rat{\mathrm{rat}}
\newcommand\tffr{\mathrm{tffr}}
\newcommand\Ra{\mathcal{R}}
\newcommand\RC{\mathcal{RC}}
\newcommand{\sld}{{\mathfrak{sl}(2)}}
\newcommand{\Spl}{\operatorname{Spl}}
\newcommand{\PSpl}{\operatorname{PSpl}}
\renewcommand{\sl}{\mathfrak{sl}}
\newcommand\Mod{\text{-}\mathrm{Mod}}
\newtheorem{thm}{Theorem}[section]
\newtheorem*{theo}{Theorem}
\newtheorem{prop}[thm]{Proposition}
\newtheorem{cor}[thm]{Corollary}
\newtheorem{lem}[thm]{Lemma}
\theoremstyle{defn}
\newtheorem{defn}[thm]{Definition}
\theoremstyle{rem}
\newtheorem{rem}[thm]{Remark}
\theoremstyle{exam}
\newtheorem{exam}[thm]{Example}
\numberwithin{equation}{section}
\begin{document}

\title[The Grothendieck and Picard groups of torsion free $\sld$-modules\qquad  ]{The Grothendieck and Picard groups of finite rank torsion free $\sl(2)$-modules}


\author[F. J. Plaza Mart\'{\i}n]{F. J. Plaza Mart\'{\i}n}
\author[C. Tejero Prieto]{C. Tejero Prieto}
\address{Departamento de Matem\'aticas and IUFFyM, Universidad de
Salamanca,  Plaza de la Merced 1-4
        \\
        37008 Salamanca. Spain.
        \\
         Tel: +34 923294460. Fax: +34 923294583}
\curraddr{}
\email{fplaza@usal.es}\email{carlost@usal.es}
\thanks{}

%
\thanks{This work is supported by the research contracts 
PGC2018-099599-B-I00 of Ministerio de Ciencia e Innovaci\'on, Spain (first author) and MTM2017-86042-P (second author) of Ministerio de Econom\'ia, Industria y Competividad, Spain. The second author has received support also  from GIR STAMGAD (JCyL)}

\subjclass[2010]{Primary 17B10. Secondary 18F30}
\keywords{Torsion free $\sl(2)$-modules, rational $\sld$-modules, Grothendieck group, Picard group}
\date{}

\dedicatory{}

\begin{abstract} The classification problem for simple $\sld$-modules leads in a natural way to the study of the category of finite rank torsion free $\sld$-modules and its subcategory of rational $\sld$-modules. We prove that the rationalization functor induces an identification between the isomorphism classes of simple modules of these categories. This raises the question of what is the precise relationship between other invariants associated with them. We give a complete solution to this problem for the Grothendieck and Picard groups, obtaining along theway several new results regarding these categories that are interesting in their own right. 
\end{abstract}

\maketitle


\section{Introduction}

Simple $\sld$-modules are quotients of the universal enveloping algebra of $\sld$ and hence  have countable $\C$-dimension. Thus the generalized version of the Schur Lemma, due to Dixmier, applies to them, showing that every simple $\sld$-module is a Casimir module,  (see Theorem  \ref{thm:Schur-Dixmier}). This is the first reduction in the classification problem of simple $\sld$-modules.

The next important step in this program is the following key dichotomy: a simple $\sl(2)$-module is either a weight module or a torsion free module,; we refer the reader to \cite[Thm. 6.3]{Mazor} for further details. 

There is a well known (\cite[Theorem 3.32]{Mazor}) explicit classification of simple weight modules into four families consisting of finite dimensional, Verma, anti-Verma and dense modules. This classification has allowed us in \cite{Plaza-Tejero-Ext} to establish the extension properties  of weight $\sld$-modules to Witt and Virasoro algebras.

On the other hand, Block proved in \cite{Block} that simple torsion free modules can also be classified, although admittedly in a less explicit way, since they are parametrized by the similarity classes of irreducible elements of a certain non commutative euclidean algebra. See Theorem \ref{thm:isom-classes}  and Bavula's paper \cite{Bavula2} for the precise statement. 

In a previous paper \cite{Plaza-Tejero-Con} we proved that every simple torsion free module has finite rank, (Theorem \ref{thm:simple-tf-is-of-fr}). Therefore, in the context of the classification program it is sensible to study in detail the category $\sld\Mod_\tffr$ of torsion free finite rank $\sld$-modules. The finite rank result, together with Bavula's approach, lead us in a natural way to introduce the full subcategory $\Ra$ of $\sld\Mod_\tffr$ formed by rational $\sld$-modules. One advantage of considering rational representations is that they are finite dimensional $\C(z)$-vector spaces, and torsion free finite rank $\sld$-modules are exactly their $\sld$-submodules. However, as we shall prove, their major convenience lies in the existence of a bijection between the isomorphism classes of simple torsion free modules $\widehat\Spl(\sld\Mod_\tffr)$ and the isomorphism classes of rational representations that are $\Ra$- irreducible, (see Theorem \ref{thm:isom-classes-generalized}). This allows us to reduce the determination of  $\widehat\Spl(\sld\Mod_\tffr)$ to a problem on finite dimensional $\C(z)$-vector spaces instead of a problem on $\C[z]$-modules. The situation is reminiscent of that encountered in integral representation theory, see for instance  \cite{Reiner}. A key role for understanding this relationship is played by  the rationalization functor $F_\rat\colon \sld\Mod_\tffr\to\Ra$ that sends a torsion free finite rank $\sld$-module to the rational module obtained by localization, (see Definition \ref{defn:rationalization-functor}). 

One of the central motivations of this paper is precisely to study to what extent the equivalence between the isomorphism classes of simple objects in the categories $\sld\Mod_\tffr$, $\Ra$ can be extended to other types of invariants associated to them. In particular, we will endeavor to analyze in detail the relationship between their Grothendieck groups.  To achieve this goal, we start  with a careful study of $\Ra$.

A key result of this paper, on which many others  depend, is the existence of minimal polynomials over $\C$ for the Casimir operators of rational representations:

\begin{theo}[\textbf{\ref{t:caracCasC}}]
If $(W,\rho)$ is a rational $\sl(2)$-representation, then the minimal polynomial  $M_{C_\rho}(t)$ of its Casimir operator  $C_\rho$, considered as a $\C(z)$-linear endomorphism of $W$, has its coefficients in $\C$; that is, $M_{C_\rho}(t)\in \C[t]$. Therefore, the Casimir operator $C_\rho$ understood as a $\C$-linear endomorphism of $W$ has minimal polynomial  $M_{C_\rho}(t)$.
\end{theo}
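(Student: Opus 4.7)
The plan is to reduce the statement to the case of an $\Ra$-irreducible module via a composition series in $\Ra$, and then to use the fact that on an $\Ra$-irreducible module the Casimir acts as multiplication by a scalar in $\C$. The first step is to observe that, as a finite-dimensional $\C(z)$-vector space, $W$ has finite length in $\Ra$, so one can select a composition series
\[
0=W_0\subset W_1\subset\cdots\subset W_k=W
\]
whose successive quotients $W_i/W_{i-1}$ are $\Ra$-irreducible rational representations.

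Next, I would invoke Theorem \ref{thm:isom-classes-generalized} to produce, for each index $i$, a simple torsion free $\sl(2)$-module $M_i$ such that $W_i/W_{i-1}\cong F_\rat(M_i)$ in $\Ra$. Since $M_i$ is a simple $\sl(2)$-module, the generalized Schur lemma (Theorem \ref{thm:Schur-Dixmier}) forces the Casimir to act on $M_i$ as multiplication by some scalar $\lambda_i\in\C$. Because $F_\rat$ is given by localization and carries $\sl(2)$-module endomorphisms to endomorphisms of the rationalized module, $C_\rho$ acts on $W_i/W_{i-1}$ as multiplication by the same $\lambda_i$.

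From here the argument is formal. The previous step gives $(C_\rho-\lambda_i)W_i\subset W_{i-1}$ for every $i$, and since the operators $C_\rho-\lambda_i$ commute, the composite $\prod_{i=1}^{k}(C_\rho-\lambda_i)$ annihilates $W$. Hence the polynomial $P(t)=\prod_{i=1}^{k}(t-\lambda_i)\in\C[t]$ annihilates $C_\rho$, the $\C(z)$-minimal polynomial $M_{C_\rho}(t)$ divides $P(t)$, and consequently $M_{C_\rho}(t)\in\C[t]$, which is the main claim. The final assertion is then immediate: any polynomial in $\C[t]$ that annihilates $C_\rho$ as a $\C$-linear endomorphism has constant coefficients, so it also annihilates $C_\rho$ as a $\C(z)$-linear endomorphism and is therefore a multiple of $M_{C_\rho}(t)$; conversely, $M_{C_\rho}(t)\in\C[t]$ plainly annihilates $C_\rho$ as a $\C$-linear map. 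The only genuinely non-formal ingredient is the identification, in the second paragraph, of the composition factors with localizations of simple torsion free $\sl(2)$-modules; once Theorem \ref{thm:isom-classes-generalized} is invoked to do this, everything else is routine.
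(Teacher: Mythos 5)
Your strategy is natural and the formal bookkeeping in the last paragraph is fine, but the step you single out as the ``only genuinely non-formal ingredient'' is precisely where a circularity enters. Theorem \ref{thm:isom-classes-generalized} is a bijection between $\widehat{\Simpl}({\mathcal C}_{\mathrm{tffr}})$ and $\widehat{\Simpl}_{\mathcal{RC}}({\mathcal{RC}})$: both sides concern \emph{Casimir} modules only, so to apply it to a composition factor $W_i/W_{i-1}$ you must already know that $W_i/W_{i-1}$ is a rational Casimir module, equivalently that $C_\rho$ acts on it as a scalar. What you actually know at this point is only that $W_i/W_{i-1}$ is $\Ra$-irreducible. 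The assertion that every $\Ra$-irreducible rational module is Casimir is Proposition \ref{prop:R-irred-rational-is-Casimir}, and its proof in the paper passes through Proposition \ref{prop:indecomposable-generalized-Casimir} and Theorem \ref{thm:finite+decomp}, the latter being a direct corollary of Theorem \ref{t:caracCasC} itself. (Appealing instead to the stronger identification $\widehat{\Simpl}_\Ra(\Ra)=\widehat{\Simpl}_{\RC}(\RC)$ of Theorem \ref{thm:identification-isomorphism-classes} has the same problem, since that theorem is also downstream of Theorem \ref{t:caracCasC}.) The subtlety you are skipping is real: for a general polynomial $P\in\C(z)[t]\setminus\C[t]$ the operator $P(C_\rho)$ is \emph{not} an $\sl(2)$-endomorphism, because the coefficients of $P$ do not commute with $\rho(L_{\pm1})$; so one cannot decompose an $\Ra$-irreducible module along the $\C(z)$-spectrum of $C_\rho$ without first knowing the minimal polynomial has constant coefficients.

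The paper's own proof breaks the circle with an elementary computation that avoids the structure theory entirely: using the semilinearity relations $\rho(L_{\pm1})\circ P(C_\rho)=(\nabla^{\pm1}P)(C_\rho)\circ\rho(L_{\pm1})$ together with the two expressions \eqref{align:casimir} for $C$, one shows that both $\bigl(t-z(z+1)\bigr)\nabla(M_{C_\rho})(t)$ and $\bigl(t-z(z+1)\bigr)M_{C_\rho}(t)$ lie in $\mathrm{Ann}(C_\rho)\subset\C(z)[t]$; comparing them forces $\nabla(M_{C_\rho})=M_{C_\rho}$, and a rational function fixed by $z\mapsto z+1$ is constant. If you want to salvage your reduction to composition factors, you would first need an independent proof that an $\Ra$-irreducible rational module is Casimir with level in $\C$ (for instance by producing a simple $\sl(2)$-submodule, applying Dixmier's Schur Lemma there, and extending $C_\rho=\mu\cdot\Id$ to all of $W$ by $\C(z)$-linearity since the submodule generates $W$ over $\C(z)$); but that auxiliary step is not supplied in the proposal and is essentially as hard as the statement you are proving.
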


From this we obtain that every $\sld$-endomorphism of a rational representation also has  a minimal polynomial over $\C$, (Proposition \ref{p:finite+extension}). Another key result that we obtain from Theorem {\ref{t:caracCasC} is that every rational representation decomposes into a finite direct sum of generalized Casimir rational $\sld$-modules, (Theorem \ref{thm:finite+decomp}). This gives a decomposition of  the abelian category $\Ra$ into the $\Hom$-orthogonal direct sum of the abelian subcategories of generalized rational Casimir modules:
		\[
		{\mathcal R}\,=\, \bigoplus_{\mu\in \C } {\mathcal{RC}}^\bullet_{\mu}\, . 
		\]

We also obtain important consequences regarding the problem of classification of simple torsion free $\sld$-modules. In Theorem \ref{thm:identification-isomorphism-classes} we prove that one has the following identifications $$\widehat{\Simpl}({\sl(2)\mathrm{\Mod}_\mathrm{tf}})=\widehat{\Simpl}({\sl(2)\mathrm{\Mod}_\mathrm{tffr}})\xrightarrow[\sim]{\widehat F_\mathrm{rat}}\widehat{\Simpl}_{\mathcal{RC}}({\mathcal{RC}})=\widehat{\Simpl}_{\mathcal R}({\mathcal R}).$$  Therefore, determining the isomorphism classes of simple torsion free $\sl(2)$-modules is equivalent to the determination of the isomorphism classes of $\Ra$-irreducible rational $\sl(2)$-modules. The latter is an easier  problemsince we are dealing with $\C(z)$-vector spaces instead of $\C[z]$-modules. Taking into account the above mentioned dichotomy, this gives a complete description of the isomorphism classes of simple $\sl(2)$-modules \begin{align*}\widehat{\Simpl}({\sl(2)\mathrm{\Mod}})&= \widehat{\Simpl}({\sl(2)\mathrm{\Mod}}_\mathrm{weight})\coprod \widehat{\Simpl}({\sl(2)\mathrm{-Mod}_\mathrm{tf}})\simeq \\ &\simeq \widehat{\Simpl}({\sl(2)\mathrm{\Mod}}_\mathrm{weight})\coprod \widehat{\Simpl}_{\mathcal{R}}({\mathcal{R}}).\end{align*}

The categorical properties of rational representations are also very interesting. We prove in Theorems \ref{thm:Jordan-Holder-Krull-Schmidt} and \ref{thm:rational-essentially-small}  that $\Ra$ is an essentially small abelian category of finite length, and therefore it is a Jordan-H\"older category and a Krull-Schmidt category. Moreover, we will see in Proposition  \ref{prop:end=C} that the $\sl(2)$-endomorphisms of an $\Ra$-irreducible rational module are just the $\C$ multiples of the identity. This result can not be obtained either by means of Dixmier's generalization of Schur's Lemma, because rational modules have $\C$-dimension equal to the continuum, or by Quillen's Lemma, because rational modules are never $\sld$-simple. From this we also get that $\Ra$ is a $\Hom_\C$-finite category, (Theorem \ref{t:hom-finite}). 

Most of these properties of rational modules have their reflection on the category $\sld\Mod_\tffr$. This is the case for the existence of minimal polynomials for Casimir operators, (Theorem \ref{thm:minimalCasC}), and $\sld$-endomorphisms, (Proposition \ref{p:finite+extension-frtf}). We also obtain in Theorem \ref{thm:finite+decomp-tffr}   a decomposition of the exact category $\sl(2)\Mod_\mathrm{tffr}$  into the $\Hom$-orthogonal direct sum of the exact subcategories of generalized Casimir modules:
		\[
		\sl(2)\Mod_\mathrm{tffr}\,=\, \bigoplus_{\mu\in \C} {\mathcal{C}}^\bullet_{\mu,\mathrm{tffr}}\, . \]
As far as categorical properties are concerned, the behavior of torsion free finite rank modules  is nearly as good as in the  rational case. We prove in Proposition \ref{prop:exact-tffr} and Theorem \ref{thm:tffr-is-essentially-small}  that $\sl(2)\Mod_\mathrm{tffr}$ is an essentially small exact category. Furthermore, in Theorem \ref{t:hom-finite-frtf} we will see that it is $\Hom_\C$-finite and therefore it is a Krull-Schmidt category. We also have a very precise relationship between $\Ra$ and the ambient category $\sl(2)\Mod_\mathrm{tffr}$. Indeed we will prove  that $\Ra$ is a thick subcategory, (Proposition \ref{prop:exact-tffr}), and the rationalization functor $F_\rat$ is a retraction of the natural embedding $i_\tffr\colon\Ra\hookrightarrow \sl(2)\Mod_\mathrm{tffr}$, (Proposition \ref{prop:rationalization-is-a-retraction}). Moreover, in Theorem \ref{thm:rational-is-reflective-localization-of-tffr}  we show that $\Ra$ is a faithful reflective localization of $\sl(2)\Mod_\mathrm{tffr}$  with localization functor $F_\rat$.

The most notable property missing from the rational case is the finite length condition, (see Remark \ref{rem:finite-length-missed}). However,  we will prove  in Proposition \ref{prop:finite-pure-length-category} that $\sl(2)\Mod_\mathrm{tffr}$  satisfies both the ascending and descending chain conditions on pure submodules and thus is a finite pure length category. By Proposition \ref{prop:p-lenght-is-R-length-for-rational-modules}, pure length agrees on rational modules  with $\Ra$-length.  Moreover, introducing the concept of purely irreducible $\sld$-module,  we show in Proposition \ref{prop:existence-of-filtrations}  that every torsion free finite rank $\sld$-module admits pure composition series and we prove in Theorem  \ref{thm:JH-pure-composition-series} the key result that the Jordan-H\"older theorem holds for them.

Armed with the knowledge we have gained about the structural properties of these categories, we proceed to study their Grothendieck groups. The main tools we use in their computation are devissage subcategories and a generalization of Heller's devissage theorem that identifies special instances of them, (Theorem \ref{thm:devissage-generalizado}).  

The additive Grothendieck group $K^\oplus_0(\Ra)$ of the category of rational representations is completely determined by the Krull-Schmidt property of $\Ra$, which implies that indecomposable generalized Casimir rational modules form a devissage subcategory, as shown in Theorem \ref{thm:additive-Grothendieck-group}, and also yields the decomposition ${\mathcal R}\,=\, \bigoplus_{\mu\in \C } {\mathcal{RC}}^\bullet_{\mu}\,$ provided by the minimal polynomial of the Casimir operator, (see Theorem \ref{thm:descomposición-Grothendieck-aditivo} and Corollary \ref{cor:compatibility-aditivo}). 
In turn, the structure of the general Grothendieck group $K_0(\Ra)$ is completely determined by the above decomposition and the  devissage subcategory formed by the $\Ra$-irreducible Casimir modules, since $\Ra$ is a Jordan-H\"older category, (Theorems  \ref{t:Grothendieck-rational} and \ref{thm:decomposition-Grothendieck}). We also analyze the compatibility of the devissage procedure with respect to the canonical filtration of a generalized Casimir rational module $W\in\RC^\bullet$ described in Proposition \ref{prop:filtration}, proving that rational Casimir modules form a devissage subcategory for $\RC^\bullet$, (Theorem \ref{thm:first-devissage} and Corollary \ref{cor:compatibility}). Therefore, the algorithm for disassembling (devissage) the class of a rational module $W$ in the Grothendieck group $[W]\in K_0(\Ra)$  into a sum of classes of irreducible Casimir modules in $K_0(\Spl(\RC))$ proceeds in stages. First, we decompose $W$ into a sum of generalized Casimir representations according to the minimal polynomial of its Casimir operator. Then these classes of generalized Casimir modules are decomposed into a sum of Casimir rational representations by means of the canonical filtration. Finally, each one of these Casimir classes is decomposed  by means of a composition series into a sum of classes of irreducible Casimir modules.

We obtain analogous results for the Grothendieck groups of $\sld\Mod_\tffr$, (Theorems \ref{t:Grothendieck-rational-tffr} and \ref{thm:decomposition-Grothendieck-tffr},) where now the key points are to prove that the category of purely irreducible modules $\PSpl(\sld)$ is a devissage subcategory, and use the decomposition $\sld\Mod_\tffr=\bigoplus_{\mu\in\C}\mathcal C^\bullet_{\mu,\tffr}$ obtained from the minimal polynomial of the Casimir operator. We also have compatibility with the canonical filtration,  (Theorem \ref{thm:first-devissage-tffr}), although the proof is more involved since now we cannot use Theorem \ref{thm:devissage-generalizado}.

The relationship between the general Grothendieck groups of $\Ra$ and $\sld\Mod_\tffr$ can be established by an analogue of Heller's localization theorem for the quotient of an abelian category under a Serre subcategory. There is a  well known  one to one correspondence between Serre subcategories and torsion theories. The localization theorem says that the map induced on Grothendieck groups by the quotient functor is a surjection and has in its kernel the Grothendieck group of  the corresponding torsion theory, \cite[Theorem 5.13, pag.  115]{Swan}. Although the situation here is different because $\sld\Mod_\tffr$ is only an exact category,  we are able to establish our localization result due to the fact that the rationalization functor $F_\rat\colon \sld\Mod_\tffr\to \Ra$, as explained above, is a reflective localization. Indeed, we prove in Theorem \ref{thm:localization-theorem}
that there exists a naturally split short exact sequence
	  	\[
	  0 \to    \mathcal{VT}(\sl(2)\Mod_\mathrm{tffr}) \to   K_0(\sl(2)\Mod_\mathrm{tffr})\xrightarrow{F_{\rat*}}  K_0(\Ra) \to  0
	  \]
where $\mathcal{VT}(\sl(2)\Mod_\mathrm{tffr})$ are the virtual torsion modules of finite rank. This is the best analogue one could hope for, since the kernel of $F_\rat$ is just the zero module.

The categories $\sld\Mod_\tffr$, $\Ra$ are stratified according to rank and $\C(z)$-\-di\-me\-nsion, respectively. The rationalization functor is compatible with these stratifications and therefore induces an identification between the isomorphism classes of simple objects of the corresponding strata $$\widehat{\Simpl}({\sl(2)\mathrm{\Mod} ^m_\mathrm{tffr}})\xrightarrow[\sim]{\widehat F_\mathrm{rat}}\widehat{\Simpl}_{\mathcal R}({\mathcal R^m}).$$ We study in detail the one dimensional stratum since it  plays a key role for several results ranging from the rationalization and extension problems to the Picard group. In the first place, by  Corollary \ref{cor:rational-onedim-are-irred-Casimir} one has the identifications $\Ra^1=\Spl_\Ra(\Ra^1)=\RC^1$ and by Theorem \ref{thm:identification-isomorphism-classes} we get $ \widehat{\Spl}(\sld\Mod^1_\tffr) \simeq \widehat{\Ra^1}$. We describe $\widehat{\Ra^1}$ explicitly as a quotient of the space $\C(z)^\times$ of invertible rational functions, (Theorem \ref{t:equivalentrational}). This makes it possible to answer the question as to whether a one dimensional rational representation arises as the rationalization of a rank one polynomial representation, (Theorem \ref{t:ratio-poly}). We also analyze the extension problem  for rational representations.  It is a classical result of Bavula \cite[Theorem 3]{Bavula2}, (see also \cite[[Theorem 6.40 pag. 210]{Mazor}) that the extension groups of finite length $\sld$-modules have finite $\C$-dimension. We have proved in Theorem \ref{t:hom-finite} that the space of $\sld$-homomorphisms between rational $\sld$-modules  is also finite $\C$-dimensional. Therefore it is natural to analyze whether this is also true for the higher extension groups between rational modules. However, we show that this is not the case in Corollary \ref{cor:non-finite-Exts-rational-modules} where, thanks to the explicit characterization of $\mathcal R^1$, we describe explicit rational modules whose first extension group has infinite dimension over $\C$.

Since  the category of rational representations $\Ra$ is a $\C$-linear abelian subcategory of the category of finite dimensional $\C(z)$-vector spaces, it is natural to study which of its properties remain valid for $\Ra$. It is remarkable that the tensor product $\otimes$ of $\C(z)$-vector spaces restricts to $\Ra$, because this is not the usual construction for $\sld$-modules, where the tensor product  is taken over $\C$. Moreover, we show that this rational tensor product has very nice properties that can be summarized  by saying that $(\Ra,\otimes)$ is a closed symmetric monoidal category, (see Theorem \ref{t:Rmonoidal} where we also describe the internal $\Hom$). This makes it possible, following May \cite{May}, to define the Picard group $\Pic(\Ra)$ of the category of rational representations. In Theorem  \ref{thm:invertible-elements}  we show that it is completely determined by the one dimensional stratum and that $\Pic(\Ra)=\widehat{\Ra^1}$. Moreover, we prove  that the level map gives rise to a split short exact sequence $$0\to \Pic^0(\Ra)\to \Pic(\Ra)\xrightarrow{\lev}\C\to 0$$ and that we have an identification $\Pic^0(\Ra)=\C(z)^\times_0$ with the multiplicative subgroup of $\C(z)^\times$ formed by the invertible rational functions without zeros or poles at the origin, (Theorem \ref{thm:Picard-group}). The symmetric monoidal structure also makes  it possible to introduce ring structures on the Grothendieck groups, (see Section \ref{subsect:Grothendieck-rings}).

The identification $\widehat F_\rat\colon  \widehat{\Spl}(\sld\Mod^1_\tffr) \xrightarrow{\sim} \widehat{\Ra^1}=\Pic(\Ra)$ given in Theorem \ref{thm:identification-isomorphism-classes} allows one to formally define  the Picard group of the category $\sld\Mod_\tffr$ as the group $\Pic(\sld\Mod_\tffr)=\widehat{\Spl}(\sld\Mod^1_\tffr)$.  The existence of this group structure on $\widehat{\Spl}(\sld\Mod^1_\tffr)$ might point to the existence of an appropriate symmetric monoidal structure on the category $\sld\Mod_\tffr$ such that its Picard group is the one introduced above. We plan to investigate this in future work.


We summarize the notation used in the paper. 
\begin{itemize}
	\item  $\sl(2)\text{-}\mathrm{Mod}$ is the category of $\sl(2)$-modules;
	\item $\mathcal C$ is the full subcategory of $\sl(2)\text{-}\mathrm{Mod}$ formed by all Casimir $\sl(2)$-modules;
	\item $\mathcal R$ (resp. $\mathcal RC$) is the full subcategory of $\sl(2)\text{-}\mathrm{Mod}$ formed by all rational $\sl(2)$-modules (resp. rational Casimir  $\sl(2)$-modules).
	\end{itemize}
For any subcategory $\mathcal A$ of $\sl(2)\text{-}\mathrm{Mod}$, we have the following full subcategories of $\mathcal A$:
\begin{itemize}
	\item  $\mathcal A_{\mathrm{tf}}$, of torsion free $\sl(2)$-modules;
	\item  $\mathcal A_{\mathrm{tffr}}$, of torsion free, finite rank $\sl(2)$-modules;
	\item$\mathcal A_{\mathrm{tfft}}$, of torsion free, finite type  $\sl(2)$-modules;
	\item $\Ind(\mathcal A)$,  of indecomposable $\sl(2)$-modules of $\mathcal A$;
	\item $\Ind_{\mathcal A}(\mathcal A)$, of $\mathcal A$-indecomposable objects of $\mathcal A$;
	\item $\Simpl(\mathcal A)$,  of simple $\sl(2)$-modules of $\mathcal A$;
	\item $\Simpl_{\mathcal A}(\mathcal A)$, of $\mathcal A$-simple or $\mathcal A$-irreducible objects of $\mathcal A$.
\end{itemize}
and we denote:	
	\begin{itemize}
	\item $\widehat{\Ind}(\mathcal A)$,  the  isomorphism classes of indecomposable $\sl(2)$-modules of $\mathcal A$;
	\item $\widehat\Ind_{\mathcal A}( \mathcal A)$, the isomorphism classes of $\mathcal A$-indecomposable objects of $\mathcal A$;
	\item $\widehat{\Simpl}(\mathcal A)$, the isomorphism classes of simple $\sl(2)$-modules of $\mathcal A$;
	\item  $\widehat{\Simpl}_{\mathcal A}(\mathcal A)$, the isomorphism classes of $\mathcal A$-simple objects of $\mathcal A$;
	\end{itemize}
	
Finally, given two subcategories ${\mathcal A}_1, {\mathcal A}_2$ of ${\mathcal A}$, we write:
\begin{itemize}
	\item $\perp\hskip-.13cm(\mathcal A_1,\mathcal A_2)$ if $\Hom_{\mathcal A}(M,N)=0$ for every $M\in\Ob(\mathcal A_1)$, $N\in\Ob(\mathcal A_2)$;
	\item $\mathcal A_1\perp\mathcal A_2$ if one has $\perp\hskip-.13cm(\mathcal A_1,\mathcal A_2)$ and also $\perp\hskip-.13cm(\mathcal A_2,\mathcal A_1)$. In this case we say that ${\mathcal A}_1, {\mathcal A}_2$ are $\Hom$-orthogonal.
\end{itemize}

\section{$\sl(2)$-modules}

Let $\sl(2)$ be the Lie algebra of the Lie group $\operatorname{SL}(2,\C)$ and recall that  $\sl(2)$ is a simple Lie algebra. We have set $\C$ as the base field but everything admits a straightforward generalization to an arbitrary algebraically closed field of characteristic $0$. Let $\{e,f,h\}$  be a Chevalley basis of $\sl(2)$ satisfying the commutation relations:
\begin{equation}\label{eq:basis-sl2}
[e,f]=h\; , \qquad
[h,e]=2e \; , \qquad
[h,f]=-2f\; .
\end{equation}
For the purposes of this paper, it will be more convenient to consider the basis \begin{equation}
\{L_{-1}:=f,\quad L_0:=-\frac12h ,\quad  L_1:=-e\}.
\end{equation}

Every $\sl(2)$-module $V$ has a natural $\C[z]$-module structure, where $z$ acts on $V$ by $L_0$. That is, $z\cdot v:= L_0(v)$, for every $v\in V$. Let us now consider the ring automorphism $\nabla\colon \C[z]\to \C[z]$ such that $\nabla(z)=z+1$. For any $k\in\Z$, we denote by $\End^k_{\C[z]}(V)$ the $\C[z]$-module of $\nabla^k$-semilinear endomorphisms of $V$; i.e., 
\[
\End^k_{\C[z]}(V)\,:=\,
\big\{ \varphi \in \End_{\C}(V) \text{ s.t. }
\varphi (z\cdot v)=\nabla^k(z)\cdot \varphi(v)=(z+k)\cdot \varphi(v)\big\} \, . 
\] 

In particular, one has $\End^0_{\C[z]}(V)=\End_{\C[z]}(V)$. The $\C[z]$-module of $\nabla^k$-semilinear automorphisms of $V$, $\Aut^k_{\C[z]}(V)$, is defined similarly. 

Notice that an $\sl(2)$-module is just a $\C[z]$-module $V$  endowed with two $\C[z]$-semilinear endomorphisms  \begin{equation}\label{eq:semilinear}\rho(L_{-1})\in \End^{-1}_{\C[z]}(V),\quad \rho(L_1)\in \End^1_{\C[z]}(V),\end{equation} that satisfy  \begin{equation}\label{eq:commutation-relation}
 [\rho(L_{-1}),\rho(L_1)]=-2z.
\end{equation} In what follows we think of every $\sl(2)$-module in this way.  Given such a $\rho$ defined on a $\C[z]$-module $V$, we denote by $V_\rho$ the corresponding $\sl(2)$-module.

\section{Casimir modules}\label{section:Casimir-modules}

The Casimir element $C$ of the universal enveloping algebra of $\sl(2)$ can be expressed in the following equivalent ways:
    \begin{align}\label{align:casimir} 
	\begin{split}
		    C &=\frac14[(h+1)^2-1]+fe=L_0(L_0-1)-L_{-1}L_1 \,, \\
    C &=\frac14[(h-1)^2-1]+ef=L_0(L_0+1)-L_1 L_{-1}\, .
    \end{split}
    \end{align}

\begin{defn} The Casimir operator of an $\sl(2)$-module $(V,\rho)$,  is the endomorphism $C_\rho:=\rho(C)\in\End_{\sl(2)}(V).$
We say that  $(V,\rho)$ is a Casimir module of level $\mu\in\C$ if $C_\rho=\mu\Id_V$. We denote by $\mathcal C$ (resp. $\mathcal C_\mu$) the full preadditive (resp. additive) subcategory of $\sl(2)\text{-}\mathrm{Mod}$ formed by all Casimir modules (resp. of level $\mu$).
\end{defn}

Having in mind \eqref{align:casimir} and defining the polynomial:
	\[ 
	\pi_\mu(z)=z(z-1)-\mu\in\C[z]
	\, , \]
one easily checks that Casimir modules of level $\mu$ are exactly the $\C[z]$-modules endowed with $\C[z]$-semilinear endomorphisms $\rho(L_{-1})$, $\rho(L_{1})$ as in (\ref{eq:semilinear}) that verify: 
\begin{equation}\label{eq:casimir-module}
\rho(L_{-1})\circ\rho(L_1) =\pi_\mu(z), \quad \rho(L_1)\circ\rho(L_{-1}) =\pi_\mu(z+1).
\end{equation}

The commutation relation (\ref{eq:commutation-relation}) follows automatically from (\ref{eq:casimir-module})

Casimir (simple) $\sl(2)$-modules of level $\mu$ are exactly the (simple) modules over the generalized Weyl algebra:
	{\small \[
	\mathbb A(\mu)\, :=\,  U/\langle C-\mu\rangle \, =\,  U/ \langle L_{-1}L_1 -\pi_{\mu}(z) \rangle
	\, =\,  U/ \langle L_1 L_{-1} -\pi_{\mu}(z+1) \rangle
	\, , 
	\]}where $U:=U(\sl(2))$ is the universal enveloping algebra of $\sl(2)$ and $\langle x \rangle$ denotes the left ideal generated by $x$, see for instance \cite[Chapter 6]{Mazor}. Moreover, $\mathbb A(\mu)$ is a $\Z$-graded algebra:
	\[\mathbb A(\mu)=\bigoplus_{i\in\Z} \mathbb A(\mu)_i,
	\] 
with $\mathbb A(\mu)_0=\C[z]$ and $\mathbb A(\mu)_{-i}=\mathbb A(\mu)_0\cdot (L_{-1})^i$, $\mathbb A(\mu)_{i}=\mathbb A(\mu)_0\cdot (L_{1})^i$, for $i>0$.

The key role played by Casimir modules follows from the generalized version of Schur Lemma  introduced by Dixmier, see \cite[Lemma 4.1.4]{Dixmier}, since simple $\sld$-modules have countable $\C$-dimension. We get the next result,  \cite[Theorem 4.7]{Mazor}.

\begin{thm}\label{thm:Schur-Dixmier} Every simple $\sl(2)$-module is a Casimir module. Hence, one has

 $$\Simpl(\sl(2)\text{-}{\mathrm{Mod}_\mathrm{}})=\Simpl(\mathcal{C}).$$
\end{thm}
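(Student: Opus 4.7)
The plan is to invoke Dixmier's generalization of Schur's Lemma (cited as \cite[Lemma 4.1.4]{Dixmier} in the paper), which applies to simple modules of countable $\C$-dimension over the uncountable algebraically closed field $\C$. The only nontrivial input I need is that every simple $\sl(2)$-module has countable $\C$-dimension; once this is in place, centrality of the Casimir element in $U(\sl(2))$ does the rest.

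First I would check the countable-dimension hypothesis. By the Poincar\'e--Birkhoff--Witt theorem, the monomials $L_{-1}^a L_0^b L_1^c$ with $a,b,c\ge 0$ form a $\C$-basis of $U:=U(\sl(2))$, so $\dim_\C U = \aleph_0$. If $(V,\rho)$ is a simple $\sl(2)$-module and $0\ne v\in V$, then the nonzero submodule $U\cdot v$ must equal $V$, so $V$ is cyclic over $U$ and hence $\dim_\C V \le \dim_\C U = \aleph_0$.

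Second I would apply Dixmier's Schur Lemma: since $V$ is simple of at most countable $\C$-dimension while $\C$ is uncountable and algebraically closed, one has
\[
\End_{\sl(2)}(V)\,=\,\C\cdot \Id_V.
\]
Because the Casimir element $C$ is central in $U$, its image $C_\rho=\rho(C)$ commutes with the whole action of $\sl(2)$, so $C_\rho\in \End_{\sl(2)}(V)$. Consequently there exists $\mu\in\C$ with $C_\rho=\mu\,\Id_V$, which is exactly the statement that $(V,\rho)$ is a Casimir module of level $\mu$, i.e.\ $V\in \Ob(\mathcal C)$.

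Finally, for the equality $\Simpl(\sl(2)\text{-}\mathrm{Mod})=\Simpl(\mathcal C)$ I would just unwind definitions. By the preceding paragraph every simple $\sl(2)$-module belongs to the full subcategory $\mathcal C$, and is \emph{a fortiori} simple when regarded as an object of $\mathcal C$; conversely, any simple object of $\mathcal C\subset \sl(2)\text{-}\mathrm{Mod}$ is in particular a simple $\sl(2)$-module. The main (and essentially only) obstacle is verifying the hypotheses of Dixmier's lemma correctly; once PBW gives countable dimension, everything else is formal from the centrality of $C$.
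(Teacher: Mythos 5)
Your proposal is correct and follows exactly the approach the paper indicates: the paper itself does not write out a proof but cites Dixmier's generalization of Schur's Lemma together with the observation (via PBW, as you spell out) that simple $\sl(2)$-modules have countable $\C$-dimension, and then invokes centrality of the Casimir element. Your write-up simply makes the cited argument explicit, so it matches the paper's intended reasoning.
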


\begin{prop}\label{prop:homs-casimir} Let $(V_1,\rho_1)$,  $(V_2,\rho_2)$ be two Casimir $\sl(2)$-modules of levels   $\mu_1$ and $\mu_2$, respectively. Then one has:
\begin{enumerate}
\item If $\mu_1\neq\mu_2$, then $\Hom_{\sl(2)}(V_1,V_2)=0$. Therefore, the categories $\mathcal C_{\mu_1}$, $\mathcal C_{\mu_2}$ are $\Hom$-orthogonal and there is a natural identification $\mathcal C=\coprod_{\mu\in\C}\mathcal C_\mu$.
\item If $\mu_1=\mu_2$ and $V_2$ is torsion free, then $$\Hom_{\sl(2)}(V_1,V_2)=\{\phi\in \Hom_{\C[z]}(V_1,V_2)\colon \rho_2(L_1)\circ \phi =\phi \circ \rho_1(L_1)\}.$$
\end{enumerate}
\end{prop}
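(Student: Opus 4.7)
For part (1), the plan is the standard central-element argument. Any $\sl(2)$-homomorphism $\phi\colon V_1\to V_2$ must commute with the action of the Casimir element $C\in U(\sl(2))$, so $C_{\rho_2}\circ\phi=\phi\circ C_{\rho_1}$. Since $C_{\rho_i}=\mu_i\Id$, this reads $(\mu_2-\mu_1)\phi=0$, and since $\mu_1\neq\mu_2$ we conclude $\phi=0$. The identification $\mathcal C=\coprod_{\mu\in\C}\mathcal C_\mu$ then follows because every Casimir module decomposes trivially according to its level, and there are no nonzero morphisms between different components.

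For part (2), one inclusion is immediate: any $\sl(2)$-morphism $\phi$ commutes in particular with $L_0$ and $L_1$, and commuting with $L_0$ is exactly $\C[z]$-linearity. The real content is the reverse inclusion. Given $\phi\in\Hom_{\C[z]}(V_1,V_2)$ with $\rho_2(L_1)\circ\phi=\phi\circ\rho_1(L_1)$, my plan is to introduce the ``defect'' map
\[
\psi \,:=\, \rho_2(L_{-1})\circ \phi - \phi\circ \rho_1(L_{-1})\,,
\]
and show $\psi=0$ by feeding it into the defining Casimir relations \eqref{eq:casimir-module}. A quick check using the semilinearity types shows that $\psi\in\Hom^{-1}_{\C[z]}(V_1,V_2)$.

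The key step is to multiply $\psi$ on the left by $\rho_2(L_1)$. Using $\rho_2(L_1)\rho_2(L_{-1})=\pi_\mu(z+1)$, the hypothesis $\rho_2(L_1)\phi=\phi\rho_1(L_1)$, and $\rho_1(L_1)\rho_1(L_{-1})=\pi_\mu(z+1)$, one computes
\[
\rho_2(L_1)\circ\psi \,=\, \pi_\mu(z+1)\phi - \phi\,\pi_\mu(z+1)\,=\,0\,,
\]
the last equality because $\phi$ is $\C[z]$-linear. Now I apply $\rho_2(L_{-1})$ on the left and use $\rho_2(L_{-1})\rho_2(L_1)=\pi_\mu(z)$ from \eqref{eq:casimir-module} to obtain $\pi_\mu(z)\cdot\psi(v)=0$ in $V_2$ for every $v\in V_1$. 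Since $\pi_\mu(z)=z(z-1)-\mu$ is a nonzero element of $\C[z]$ and $V_2$ is $\C[z]$-torsion free by hypothesis, we conclude $\psi\equiv 0$, i.e.\ $\phi$ commutes with $\rho(L_{-1})$ as well, so $\phi$ is an $\sl(2)$-homomorphism.

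The only delicate point is the torsion-free hypothesis on $V_2$: without it, $\pi_\mu(z)\psi=0$ does not force $\psi=0$, and indeed the statement would fail in general. Everything else is bookkeeping with the semilinearity conventions set up in Section~2 and the two forms of the Casimir element in \eqref{align:casimir}.
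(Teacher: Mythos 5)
Your proof is correct and follows essentially the same route as the paper: part (1) via the intertwining of Casimir operators, and part (2) by defining the defect $\psi=\rho_2(L_{-1})\circ\phi-\phi\circ\rho_1(L_{-1})$, computing $\pi_\mu(z)\cdot\psi=0$ via the relations \eqref{eq:casimir-module} together with $\C[z]$-linearity and the $L_1$-intertwining hypothesis, and then invoking torsion-freeness of $V_2$. Your two-step organization (first $\rho_2(L_1)\circ\psi=0$, then apply $\rho_2(L_{-1})$) is just a slightly different bookkeeping of the paper's single chain of equalities.
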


\begin{proof} (1) The first claim follows  since $\phi$ intertwines both Casimir operators. 
The second  is a consequence of the first and the definition of the coproduct of preadditive categories. (2) One implication is obvious. On the other hand, given $\phi\in \Hom_{\C[z]}(V_1,V_2)$ such that  $\rho_2(L_1)\circ \phi=\phi\circ \rho_1(L_1)$ we have \begin{align*}
&\pi_\mu(z)[\rho_2(L_{-1})\circ \phi -\phi \circ \rho_1(L_{-1})]=\\&=\rho_2(L_{-1})\circ\rho_2(L_{1})\circ[\rho_2(L_{-1})\circ \phi -\phi \circ \rho_1(L_{-1})]=\\&=\rho_2(L_{-1})\circ[\rho_2(L_{1})\circ \rho_2(L_{-1})\circ \phi -\rho_2(L_{1})\circ \phi \circ \rho_1(L_{-1})]=\\&=\rho_2(L_{-1})\circ[\rho_2(L_{1})\circ \rho_2(L_{-1})\circ \phi -\phi \circ\rho_1(L_{1})\circ  \rho_1(L_{-1})]=\\&=\rho_2(L_{-1})\circ[\pi_{\mu_2}(z) \phi -\phi \circ( \pi_{\mu_1}(z)) ]=\\&=\rho_2(L_{-1})\circ[(\pi_{\mu_2}(z)-\pi_{\mu_1}(z))\phi ].\end{align*} Since $\mu_1=\mu_2$ we get $\pi_\mu(z)[\rho_2(L_{-1})\circ \phi -\phi \circ \rho_1(L_{-1})]=0$ and bearing in mind that $V_2$ is torsion free we have $\rho_2(L_{-1})\circ \phi -\phi \circ \rho_1(L_{-1})=0$.
\end{proof}

\section{Torsion free modules}

\subsection{Polynomial modules}

\begin{defn} 
	Let $V$ be an $\sl(2)$-module. We say that $V$ is a polynomial $\sl(2)$-module or that we have a polynomial representation of $\sl(2)$ if $V$ is a finite type free  $\C[z]$-module.\end{defn}

The study of polynomial Casimir representations was one of the main achievements of \cite{Plaza-Tejero-Con}. Its relevance follows from their close relationship with simple modules as the following result shows.

\begin{thm}[{\cite[Thm. 2.10]{Plaza-Tejero-Con}}]
Every simple, torsion free, finite type  $\sl(2)$-module is a polynomial  Casimir representation of $\sl(2)$. 
Furthermore, a torsion free, finite type  $\sl(2)$-module is simple in  $\sl(2)\text{-}{\mathrm{Mod}_\mathrm{tfft}}$ if and only if it is simple in the subcategory consisting of polynomial Casimir representations.
\end{thm}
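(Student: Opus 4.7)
The plan is to handle the two assertions separately. For the first claim, I would simply invoke Theorem \ref{thm:Schur-Dixmier} to deduce that any simple $\sl(2)$-module, in particular a simple torsion free finite type one, is automatically a Casimir module of some level $\mu\in\C$. Since $\C[z]$ is a PID and $V$ is finitely generated and torsion free over it (with $z$ acting by $L_0$), the structure theorem for modules over a PID forces $V$ to be free of finite rank over $\C[z]$. Combining both facts, $V$ is a polynomial Casimir representation.

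The second assertion rests on the preliminary observation that any $\sl(2)$-submodule $W$ of a torsion free finite type module $V$ is itself torsion free and finite type: torsion-freeness is inherited from $V$, and $W$ is a $\C[z]$-submodule of a finitely generated module over the Noetherian ring $\C[z]$, hence finitely generated; by the PID argument it is then automatically free of finite rank, and so polynomial. In particular, simplicity in $\sl(2)\text{-}{\mathrm{Mod}_\mathrm{tfft}}$ is the same as $\sl(2)$-simplicity for a tf-ft module. For the equivalence, one direction is immediate: if $V$ is $\sl(2)$-simple, then by the first part it lies in the subcategory of polynomial Casimir representations, and it has no nonzero proper $\sl(2)$-submodules at all, in particular no proper polynomial Casimir ones. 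Conversely, suppose $V$ is simple in the polynomial Casimir subcategory and let $0\neq W\subseteq V$ be an $\sl(2)$-submodule; by the observation $W$ is polynomial, and since $C_\rho=\mu\,\Id_V$ for some $\mu\in\C$ we also have $C_{\rho|W}=\mu\,\Id_W$, so $W$ is polynomial Casimir. Simplicity in that subcategory then forces $W=V$, so $V$ is $\sl(2)$-simple.

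I do not expect any serious obstacle: the whole argument reduces to the combination of Dixmier's version of Schur's Lemma with the standard Noetherian and PID behaviour of $\C[z]$-modules. The only delicate point is keeping track of what \emph{simple in a subcategory} means and checking that $\sl(2)$-submodules of a tf-ft module automatically lie in the relevant subcategories, which is settled once and for all by the preliminary observation above.
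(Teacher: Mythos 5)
Your argument is correct and uses exactly the tools the authors have at hand: Dixmier's generalization of Schur's Lemma (Theorem \ref{thm:Schur-Dixmier}) to see that a simple module is Casimir, the structure theorem for finitely generated torsion free modules over the PID $\C[z]$ to conclude freeness, and the Noetherian property of $\C[z]$ to see that every $\sl(2)$-submodule of a tf-ft module is again tf-ft (hence polynomial, and Casimir of the same level once $V$ is Casimir). This reduces both notions of simplicity to $\sl(2)$-simplicity, from which the equivalence follows; this is precisely the route one expects for the cited result, and it holds up.
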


\subsection{Rational modules} Now we introduce rational $\sl(2)$-modules and describe the role that Casimir modules play in this case. 

The description given by Bavula \cite{Bavula1, Bavula2} of all simple $\sl(2)$-modules is based on the euclidean algebra $\mathbb B$ of skew Laurent polynomials over the field of rational fractions $\C(z)$ defined by the extension of the automorphism $\nabla$. Following the standard notation, we write   $\mathbb B=\C(z)[X,X^{-1};\nabla]$ whose product is determined  by the condition:
	$$X^i\cdot \xi(z)=\nabla^i(\xi(z))\cdot X^i=\xi(z+i)\cdot X^i,$$
for every $\xi(z)\in\C(z)$ and every $i\in\Z$. Notice that a $\mathbb B$-module is just a pair $(W,\sigma)$ formed by a $\C(z)$-vector space $W$ endowed with a $\nabla$-semilinear automorphism $\sigma$. 

We denote by $\mathbb B\text{-}\mathrm{Mod}$ the category of $\mathbb B$-modules and $\mathbb B\text{-}\mathrm{Mod}_{\mathrm{fd}}$ is the subcategory formed by those $\mathbb B$-modules that are finite dimensional $\C(z)$-vector spaces. For any subcategory $\mathcal B$ of $\mathbb B\text{-}\mathrm{Mod}$, we denote by $\Simpl(\mathcal B)$ the simple $\mathbb B$-modules of $\mathcal B$ and $\widehat{\Simpl}(\mathcal B)$ is the set of isomorphism classes of simple $\mathbb B$-modules.

If one considers the multiplicative subset $S=\C[z]\setminus \{0\}$, then the generalized Weyl algebra $ \mathbb A(\mu)$ embeds naturally in the localization $S^{-1}\mathbb A(\mu)$ and there is a natural identification $\mathbb B\simeq S^{-1}\mathbb A(\mu)$ providing a natural embedding of $\C$-algebras $\Phi_\mu\colon \mathbb A(\mu)\hookrightarrow\mathbb B$ that at the same time is a morphism of left $\C[z]$-modules such that $\Phi_\mu(L_{1})=X$, $\Phi_\mu(L_{-1})={\pi_\mu(z)}X^{-1}$. From now on, we identify  $\mathbb A(\mu)$ with its image under $\Phi_\mu$ inside $\mathbb B$. The natural functor of extension of scalars 
	\begin{equation}\label{eq:functorextension}
	F:=\mathbb B\otimes_{\mathbb A(\mu)}(-)\colon \mathcal C_{\mu}\longrightarrow\mathbb B\text{-}\mathrm{Mod}
	\end{equation}
sends an $\sl(2)$-module $V$ to its localization $S^{-1}V = \mathbb B\otimes_{\mathbb A(\mu)} V$ as a $\mathbb B$-module. Restricting the action of $\mathbb B$ to $\mathbb A(\mu)$ via the embedding  $\Phi_\mu\colon \mathbb A(\mu)\hookrightarrow\mathbb B$, one obtains a functor $i_\mu\colon \mathbb B\text{-}\mathrm{Mod}\to \mathcal C_\mu$ which is the right adjoint functor of $F$ and there is a functorial isomorphism $F\circ i_\mu\cong \Id_{\mathbb B\text{-}\mathrm{Mod}}$. 
\begin{rem}\label{rem:functor-extension-scalars} For any Casimir $\sl(2)$-module $V$ defined by a representation $\rho$, the $\mathbb B$-module structure of $S^{-1}V$ is given by the $\nabla$-semilinear automorphism
	\[
	S^{-1}(\rho(L_1))\colon S^{-1}V\to S^{-1}V
	\]
that is, one has $$X\cdot \left(\frac{v}{p(z)}\right):= S^{-1}(\rho(L_1))\left(\frac{v}{p(z)}\right)=\frac{\rho(L_1)(v)}{\nabla(p(z))}=\frac{\rho(L_1)(v)}{p(z+1)}.$$ 
Therefore, the functor of extension of scalars is  just $F(V,\rho)=(S^{-1}V,S^{-1}(\rho(L_1)))$, showing that $F$ is an exact functor. \end{rem}

A key property of the euclidean algebra $\mathbb B$ is that it is both a left and a right principal ideal domain, see \cite[Corollary 6.11]{Mazor}. This implies the following pleasant consequence, see \cite[Propositions 6.14, 6.15]{Mazor}.

\begin{prop}
The isomorphism classes  of simple $\mathbb B$-modules form a set $$ \widehat{\Simpl}({\mathbb B\Mod})=\left(\mathrm{Irr}(\mathbb B)/\sim\right):=\widehat{\mathrm{Irr}}(\mathbb B)$$ that gets naturally identified with  the similarity classes $\widehat{\mathrm{Irr}}(\mathbb B)$ of irreducible elements of $\mathbb B$.
\end{prop}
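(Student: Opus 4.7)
The plan is to combine the fact that $\mathbb B$ is a left principal ideal domain (recorded in the paper just before the proposition) with the classical dictionary between simple cyclic modules over a domain and irreducible elements, and then invoke the definition of the similarity relation to package the bijection.

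First I would show that every simple left $\mathbb B$-module $W$ is cyclic. For any $0\neq w\in W$ the submodule $\mathbb B\cdot w\subseteq W$ is nonzero, hence equals $W$ by simplicity. Therefore $W\simeq \mathbb B/\mathrm{Ann}_{\mathbb B}(w)$, and $\mathrm{Ann}_{\mathbb B}(w)$ is a maximal left ideal. Since $\mathbb B$ is a left PID one can write $\mathrm{Ann}_{\mathbb B}(w)=\mathbb B b$ for some $b\in\mathbb B$, and, conversely, $\mathbb B/\mathbb B b$ is a simple $\mathbb B$-module whenever $\mathbb B b$ is a maximal proper left ideal.

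Next I would match maximality of $\mathbb B b$ with irreducibility of $b$. An inclusion of principal left ideals $\mathbb B b\subseteq \mathbb B d$ is equivalent to the existence of $c\in\mathbb B$ with $b=cd$, and, using that $\mathbb B$ is a domain, one checks $\mathbb B b=\mathbb B d$ iff $c$ is a unit of $\mathbb B$. Consequently $\mathbb B b$ is a proper maximal left ideal precisely when $b$ is a non-unit admitting no factorization $b=cd$ with both $c$ and $d$ non-units, i.e.\ iff $b\in\mathrm{Irr}(\mathbb B)$.

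Finally, recalling that the similarity relation on $\mathrm{Irr}(\mathbb B)$ is defined by $b\sim b'$ iff $\mathbb B/\mathbb B b\simeq \mathbb B/\mathbb B b'$ as left $\mathbb B$-modules, the assignment $[W]\mapsto [b]$ obtained from any presentation $W\simeq \mathbb B/\mathbb B b$ is well defined on isomorphism classes and furnishes the sought-after bijection $\widehat{\Simpl}(\mathbb B\Mod)\xrightarrow{\sim}\widehat{\mathrm{Irr}}(\mathbb B)$. The substantive inputs are the left PID hypothesis on $\mathbb B$ and the bookkeeping of inclusions of principal left ideals; I expect this latter step to be the main point to verify carefully because of the noncommutative setting, though it is standard once one consistently works with left divisibility and keeps track of sides.
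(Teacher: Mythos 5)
The paper offers no proof of its own here; it simply cites \cite[Propositions 6.14, 6.15]{Mazor}, and your argument is exactly the standard one that reference contains. Your route is correct: simplicity gives cyclicity, the left PID property presents $W\simeq\mathbb B/\mathbb B b$, and the dictionary between maximal principal left ideals and irreducible elements together with the Ore notion of similarity (two elements are similar precisely when the associated cyclic quotients are isomorphic) yields the bijection, which also makes clear that $\widehat{\Simpl}(\mathbb B\Mod)$ is a set since it is a quotient of $\mathrm{Irr}(\mathbb B)\subseteq\mathbb B$.

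The one step worth tightening is your claim that ``using that $\mathbb B$ is a domain, one checks $\mathbb B b=\mathbb B d$ iff $c$ is a unit.'' The domain property alone gives you $ec=1$ for some $e$, i.e.\ $c$ is left invertible, but in a general noncommutative domain a left inverse need not be a two-sided inverse. Here this is harmless because $\mathbb B=\C(z)[X,X^{-1};\nabla]$ carries the $\Z$-grading by $X$-degree: if $ec=1$ then comparing lowest and highest $X$-degrees forces both $e$ and $c$ to be nonzero monomials $a(z)X^k$, which are genuine units, so left-invertible does imply unit in $\mathbb B$. Recording this (or, alternatively, appealing to the Euclidean degree function) closes the only gap; everything else in your argument is sound.
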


One has the following crucial result, see \cite{Bavula1, Bavula2} and  \cite[Thm. 6.3.6, Cor. 6.3.9]{Mazor}. 
\begin{thm}\label{thm:isom-classes} The functor of extension of scalars 
	$F\colon \mathcal C_{\mu}\longrightarrow\mathbb B\text{-}\mathrm{Mod}$
induces a bijection $$\widehat F\colon \widehat{\Simpl}({\mathcal C}_{\mu,\mathrm{tf}})\xrightarrow{\sim} \widehat{\Simpl}({\mathbb B\Mod})$$ between the  isomorphism classes  $\widehat{\Simpl}({\mathcal C}_{\mu,\mathrm{tf}})$ of  simple, torsion free, Casimir $\sl(2)$-modules of level $\mu$ and the set $\widehat{\Simpl}({\mathbb B\Mod})$ of isomorphism classes  of simple $\mathbb B$-modules. The inverse bijection maps a simple $\mathbb B$-module  to  its $\mathbb A(\mu)$-socle.
\end{thm}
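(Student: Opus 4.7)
The plan is to establish the bijection in three stages: first verify that $F$ sends simple torsion free Casimir $\sl(2)$-modules of level $\mu$ to simple $\mathbb B$-modules, so that $\widehat F$ is well defined; then identify, for every simple torsion free $V\in\mathcal C_{\mu,\mathrm{tf}}$, its image inside $F(V)$ with the $\mathbb A(\mu)$-socle $\operatorname{soc}_{\mathbb A(\mu)}(F(V))$, which simultaneously yields injectivity of $\widehat F$ and the description of the inverse map; and finally show that for every simple $\mathbb B$-module $M$, the socle $\operatorname{soc}_{\mathbb A(\mu)}(M)$ is a simple torsion free Casimir module of level $\mu$ whose localization recovers $M$. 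Step one rests on a clearing-denominators argument: since $V$ is $\C[z]$-torsion free, the unit map $V\hookrightarrow S^{-1}V=F(V)$ is injective; for any nonzero $\mathbb B$-submodule $U\subseteq F(V)$ and any $0\neq u\in U$ written as $u=v/s$ with $v\in V$, $s\in S$, the element $v=s\cdot u$ lies in $U\cap V$ and is nonzero because $F(V)$ is a $\C(z)$-vector space, hence $\C[z]$-torsion free. Simplicity of $V$ then forces $V\subseteq U$, so $F(V)=\mathbb B\cdot V\subseteq U$, proving $U=F(V)$.

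For step two, the same argument upgrades to a uniqueness statement: if $V,V'$ are two simple $\mathbb A(\mu)$-submodules of $F(V)$ (or more generally of any $\C(z)$-vector space with an $\mathbb A(\mu)$-action) and $0\neq v'\in V'$ is written as $v'=v/s$ with $v\in V$, $s\in S$, then $sv'\in V\cap V'$ is nonzero, so simplicity of both modules yields $V=V'$. Applied to the tautological inclusion $V\hookrightarrow F(V)$ this identifies $V=\operatorname{soc}_{\mathbb A(\mu)}(F(V))$, giving the injectivity of $\widehat F$ along with the claim about the inverse map. For the surjectivity in step three, given a simple $\mathbb B$-module $M$ and assuming momentarily that $\operatorname{soc}_{\mathbb A(\mu)}(M)\neq 0$, the uniqueness just proved shows that $M$ contains a unique simple $\mathbb A(\mu)$-submodule $V_0$ and that $\operatorname{soc}_{\mathbb A(\mu)}(M)=V_0$ is itself simple; $V_0$ is torsion free as a $\C[z]$-submodule of the $\C(z)$-vector space $M$, and by step one the inclusion $V_0\hookrightarrow M=S^{-1}M$ localizes to a nonzero $\mathbb B$-morphism $F(V_0)\to M$ that must be an isomorphism by simplicity of both sides.

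The main obstacle is therefore to prove that $\operatorname{soc}_{\mathbb A(\mu)}(M)$ is nonzero for every simple $\mathbb B$-module $M$. This is the step that genuinely uses the structure of $\mathbb B$ and cannot be reduced to soft category theory. The strategy I would follow relies on $\mathbb B$ being a left principal ideal domain, so that $M\cong\mathbb B/\mathbb B b$ for some irreducible $b\in\mathbb B$; after clearing denominators one can replace $b$ by an element of $\mathbb A(\mu)$ (since $\mathbb B\simeq S^{-1}\mathbb A(\mu)$) and, exploiting the Euclidean division in $\mathbb B$ together with the $\Z$-grading of $\mathbb A(\mu)$ by weights, exhibit a nonzero residue class in $\mathbb B/\mathbb B b$ whose $\mathbb A(\mu)$-annihilator is a maximal left ideal of $\mathbb A(\mu)$. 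This is precisely Bavula's analysis of irreducible elements of $\mathbb B$ versus those of $\mathbb A(\mu)$; once it is in hand, the bijection $\widehat F$ is assembled from the three steps described.
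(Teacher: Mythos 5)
The paper does not prove this theorem; it is stated with citations to Bavula and to Mazorchuk's book, so there is no in-paper proof to compare against. That said, the formal portions of your proposal are correct and match the standard presentation of this material: the clearing-denominators argument showing that $F(V)=S^{-1}V$ is $\mathbb B$-simple when $V$ is simple, torsion free and Casimir; the uniqueness argument identifying $V$ with $\operatorname{soc}_{\mathbb A(\mu)}(F(V))$ and giving injectivity of $\widehat F$; and the conditional surjectivity argument once one knows $\operatorname{soc}_{\mathbb A(\mu)}(M)\neq 0$ for a simple $\mathbb B$-module $M$.

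The gap, which you flag yourself, is that $\operatorname{soc}_{\mathbb A(\mu)}(M)\neq 0$ is never actually proved. Your third step sketches a strategy (write $M\cong\mathbb B/\mathbb B b$, clear denominators to get $b\in\mathbb A(\mu)$, then use Euclidean division and the $\Z$-grading to exhibit a nonzero class with maximal $\mathbb A(\mu)$-annihilator) and then defers its execution to ``Bavula's analysis''. But this step is the non-formal heart of the theorem: the existence of a simple $\mathbb A(\mu)$-submodule of an infinite-dimensional module is not automatic (the $\C[z]$-socle of $\C[z]$ inside $\C(z)$ is zero, for instance), and producing one inside an arbitrary simple $\mathbb B$-module is precisely what Bavula's theorem accomplishes. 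As written, the proposal replaces the paper's citation by your own citation rather than by a proof, so it is incomplete. A route that stays closer to the paper's toolbox would combine Proposition \ref{p:simple-rational} (every simple $\mathbb B$-module is finite $\C(z)$-dimensional) with a finite-length result for cyclic $\mathbb A(\mu)$-modules contained in finite-dimensional $\C(z)$-vector spaces -- the paper invokes such a result from Mazorchuk in a later remark -- to guarantee that any nonzero cyclic $\mathbb A(\mu)$-submodule of $M$ contains a simple submodule; but whichever route you choose, the socle step must actually be carried out.
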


Hence we have the following ``reasonable size'' property.

\begin{cor}
The categories ${\Simpl}({\mathcal C}_{\mu,\mathrm{tf}})$, ${\Simpl}({\mathbb B\Mod})$ are essentially small.
\end{cor}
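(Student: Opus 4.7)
The plan is to reduce essential smallness of both categories to a cardinality observation about the algebra $\mathbb B$. Recall that a category is essentially small precisely when its isomorphism classes of objects form a set (rather than a proper class), so it suffices to show that $\widehat{\Simpl}({\mathbb B\Mod})$ and $\widehat{\Simpl}({\mathcal C}_{\mu,\mathrm{tf}})$ are sets.

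First I would handle $\widehat{\Simpl}({\mathbb B\Mod})$. By the proposition immediately preceding the corollary, there is a natural identification
\[
\widehat{\Simpl}({\mathbb B\Mod}) \;=\; \widehat{\mathrm{Irr}}(\mathbb B) \;=\; \mathrm{Irr}(\mathbb B)/\!\sim.
\]
Now $\mathbb B = \C(z)[X,X^{-1};\nabla]$ is one fixed $\C$-algebra, so its underlying collection of elements is a bona fide set; the irreducible elements $\mathrm{Irr}(\mathbb B)$ form a subset, and the similarity classes are a quotient of that subset. Hence $\widehat{\Simpl}({\mathbb B\Mod})$ is a set, and ${\Simpl}({\mathbb B\Mod})$ is essentially small.

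For $\widehat{\Simpl}({\mathcal C}_{\mu,\mathrm{tf}})$, I would invoke Theorem \ref{thm:isom-classes}, which supplies a bijection
\[
\widehat F\colon \widehat{\Simpl}({\mathcal C}_{\mu,\mathrm{tf}})\xrightarrow{\;\sim\;} \widehat{\Simpl}({\mathbb B\Mod}).
\]
Since the target is a set by the previous step, so is the source, and therefore ${\Simpl}({\mathcal C}_{\mu,\mathrm{tf}})$ is essentially small as well.

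There is essentially no obstacle here: the entire content of the corollary is a transfer of cardinality along the established bijection $\widehat F$, together with the observation that similarity classes of elements of a fixed algebra automatically form a set. The only thing one must be a little careful about is distinguishing ``small'' (the class of objects is a set) from ``essentially small'' (the isomorphism classes form a set); it is the latter that follows, and it is precisely what is needed.
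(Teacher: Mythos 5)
Your proof is correct and takes essentially the same approach the paper intends: the preceding proposition identifies $\widehat{\Simpl}({\mathbb B\Mod})$ with the set of similarity classes of irreducible elements of the fixed algebra $\mathbb B$, and Theorem \ref{thm:isom-classes} transfers this smallness to $\widehat{\Simpl}({\mathcal C}_{\mu,\mathrm{tf}})$ via the bijection $\widehat F$.
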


We would like to extend this type of result to other invariants of the category of torsion free $\sl(2)$-modules. In order to do this we need to recall the next property.

\begin{prop}[{\cite[Lemmas 2.11, 2.12]{Plaza-Tejero-Con}}]\label{p:simple-rational}
Any simple $\mathbb B$-module is a finite dimensional vector space over the field of rational functions $\C(z)$. That is, one has $$\Simpl(\mathbb B\text{-}\mathrm{Mod)}=\Simpl(\mathbb B\text{-}\mathrm{Mod}_{\mathrm{fd}}).$$ Therefore, the category $\Simpl(\mathbb B\text{-}\mathrm{Mod}_{\mathrm{fd}})$ is essentially small.
\end{prop}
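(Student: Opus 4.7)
The plan is to invoke the preceding proposition, which identifies every simple $\mathbb B$-module $W$ with $\mathbb B/\mathbb B p$ for some irreducible $p\in\mathbb B$, and then to exhibit an explicit finite $\C(z)$-basis of this quotient.

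First I would normalize $p$. Writing $p = \sum_{i=a}^{b} c_i X^i$ with $c_a, c_b\in\C(z)\setminus\{0\}$ and replacing $p$ by the unit multiple $X^{-a}p$ (which generates the same left ideal, since $X^{-a}\in\mathbb B^\times$), we may assume $p = c_0 + c_1 X + \cdots + c_n X^n$ with $c_0, c_n\neq 0$ and $n := b-a$.

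Next I would show that $\{1, X, \ldots, X^{n-1}\}$ is a $\C(z)$-basis of $\mathbb B/\mathbb B p$. For spanning in positive degrees, the relation $X^n \equiv -c_n^{-1}(c_0 + c_1 X + \cdots + c_{n-1} X^{n-1}) \pmod{\mathbb B p}$ reduces every $X^k$ with $k \geq n$ by induction on $k$. For negative degrees, the element $X^{-\ell} p \in \mathbb B p$, expanded via the commutation $X^{-\ell}\xi(z) = \nabla^{-\ell}(\xi(z))X^{-\ell}$, yields
\[
\nabla^{-\ell}(c_0) X^{-\ell} + \nabla^{-\ell}(c_1) X^{-\ell+1} + \cdots + \nabla^{-\ell}(c_n) X^{n-\ell} \equiv 0 \pmod{\mathbb B p};
\]
since $\nabla^{-\ell}(c_0)\neq 0$, this expresses $X^{-\ell}$ as a $\C(z)$-combination of strictly higher powers of $X$, and induction on $\ell$ places every negative power in the span of $\{1, X, \ldots, X^{n-1}\}$. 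For linear independence, suppose $b\cdot p = \sum_{i=0}^{n-1}\xi_i(z) X^i$ with $b = \sum_{j=-m}^k b_j X^j$ nonzero (and $b_k, b_{-m}\neq 0$): the extreme Laurent coefficients of $bp$ at indices $k+n$ and $-m$ equal $b_k\nabla^k(c_n)$ and $b_{-m}\nabla^{-m}(c_0)$, both nonzero because $c_n,c_0\neq 0$. For the support of $bp$ to be contained in $\{0,\ldots,n-1\}$ we would need $k\leq -1$ and $m\leq 0$, contradicting $-m\leq k$. Hence $b=0$ and all $\xi_i=0$.

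We conclude that $\dim_{\C(z)} W = n < \infty$, so $\Simpl(\mathbb B\Mod) = \Simpl(\mathbb B\Mod_{\mathrm{fd}})$. Essential smallness of $\Simpl(\mathbb B\Mod_{\mathrm{fd}})$ then follows from the preceding proposition, which identifies $\widehat{\Simpl}(\mathbb B\Mod) = \widehat{\Irr}(\mathbb B)$ with a set; alternatively one observes that every finite dimensional $\mathbb B$-module is isomorphic to one with underlying space some $\C(z)^m$ with $\nabla$-semilinear automorphism encoded by a matrix in $\GL_m(\C(z))$. The main delicacy is the linear independence step, which requires tracking how the leading and trailing Laurent coefficients of $b$ interact with $c_0$ and $c_n$ under the semilinear twist; the crucial preparatory input is the normalization to a $p$ with nonzero constant term, without which the finite basis would not be visible.
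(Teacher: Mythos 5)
Your proof is correct and self-contained. The paper itself does not prove this proposition: it simply cites Lemmas 2.11 and 2.12 of the authors' earlier work \cite{Plaza-Tejero-Con}, so there is no internal argument to compare against. What you have written is essentially the standard argument for the skew Laurent polynomial ring $\C(z)[X,X^{-1};\nabla]$ over a field: every simple module is cyclic of the form $\mathbb B/\mathbb B p$ with $p$ irreducible (by the left PID property already recorded in the paper), and after normalizing $p$ by a unit $X^{-a}$ so that it has nonzero constant and leading terms $c_0, c_n$, the cosets $1,X,\ldots,X^{n-1}$ span by the two reduction recursions you give, and are independent by the degree-interval argument on the support of $bp$. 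Your observation that the extremal Laurent coefficients $b_k\nabla^k(c_n)$ and $b_{-m}\nabla^{-m}(c_0)$ cannot cancel (there is a unique pair $(j,i)$ contributing to each extremal index) closes the linear-independence step cleanly, and the normalization to nonzero $c_0$ is, as you say, the crucial preparatory move — without it the negative powers would not reduce. The deduction that $\Simpl(\mathbb B\Mod)=\Simpl(\mathbb B\Mod_{\mathrm{fd}})$ and the essential-smallness conclusion both follow as you state. In short, your proof fills in, in full, a step that the paper delegates to a reference.
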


This motivates the following:

\begin{defn}\label{d:rational-repr}
We say that an $\sl(2)$-module $W$ is rational or that we have a  rational representation of $\sl(2)$ if $W$ endowed with its $\C[z]$-module structure is a finite dimensional $\C(z)$-vector space. We denote by $\mathcal R$ the abelian category of rational $\sl(2)$-modules and $\mathcal{RC}$ (resp. \,$\mathcal{RC}_\mu$) is the full preadditive subcategory formed by rational Casimir representations (resp. of level $\mu$).  
\end{defn}

Thanks to Proposition \ref{prop:homs-casimir} we have a natural identification $\mathcal{RC}=\coprod_{\mu\in\C}\mathcal{RC}_\mu$ of preadditive categories. On the other hand, Proposition~\ref{p:simple-rational} just states that the image of simple $\mathbb B$-modules under $i_\mu\colon \mathbb B\text{-}\mathrm{Mod}\hookrightarrow \mathcal C_\mu$ is contained in $\mathcal{RC}_\mu$. Combining this with Remark \ref{rem:functor-extension-scalars} and Theorem \ref{thm:isom-classes}, we get:

\begin{thm}\label{thm:simple-tf-is-of-fr} Every simple, torsion free $\sl(2)$-module has finite rank and thus $$\Simpl(\sl(2)\text{-}\mathrm{Mod}_\mathrm{tf})= \Simpl(\sl(2)\text{-}\mathrm{Mod}_{\mathrm{tffr}}).$$
\end{thm}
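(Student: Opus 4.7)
The plan is to assemble three results already established in the excerpt: the Schur--Dixmier theorem (Theorem \ref{thm:Schur-Dixmier}), Bavula's bijection between simple torsion free Casimir modules and simple $\mathbb{B}$-modules (Theorem \ref{thm:isom-classes}), and Proposition \ref{p:simple-rational} which states that simple $\mathbb{B}$-modules are finite dimensional over $\C(z)$.

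Concretely, let $V$ be a simple torsion free $\sl(2)$-module. By Theorem \ref{thm:Schur-Dixmier}, $V$ is a Casimir module of some level $\mu\in\C$, so $V\in\Simpl(\mathcal C_{\mu,\mathrm{tf}})$. Applying the extension of scalars functor $F=\mathbb B\otimes_{\mathbb A(\mu)}(-)$ described in \eqref{eq:functorextension} and Remark \ref{rem:functor-extension-scalars}, we obtain the $\mathbb{B}$-module $F(V)=S^{-1}V$. Theorem \ref{thm:isom-classes} asserts that the induced map $\widehat F$ is a bijection onto $\widehat{\Simpl}(\mathbb B\Mod)$, so $S^{-1}V$ is a simple $\mathbb{B}$-module. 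By Proposition \ref{p:simple-rational}, $\dim_{\C(z)}S^{-1}V <\infty$.

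The final step is to translate finiteness over $\C(z)$ back to finite rank over $\C[z]$. Since $V$ is torsion free as a $\C[z]$-module (this is the condition used to form the $\C(z)$-vector space $S^{-1}V$ cleanly), the natural map $V\hookrightarrow S^{-1}V$ is injective, and the standard identity
\[
\rank_{\C[z]} V \;=\; \dim_{\C(z)} (S^{-1}V)
\]
holds. Therefore $\rank_{\C[z]} V<\infty$, i.e. $V\in\sl(2)\Mod_{\mathrm{tffr}}$. Conversely any simple object in $\sl(2)\Mod_{\mathrm{tffr}}$ is obviously simple in $\sl(2)\Mod_{\mathrm{tf}}$, giving the claimed equality of the two classes of simple objects.

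There is no serious obstacle: the only conceptual point is the equality $\rank_{\C[z]}V=\dim_{\C(z)}S^{-1}V$ for torsion free $V$, which is a standard fact from commutative algebra (it follows from flatness of localization and the fact that $S^{-1}V$ is obtained from $V$ by inverting nonzerodivisors). Everything else is bookkeeping of the named functors and the isomorphism classes they identify.
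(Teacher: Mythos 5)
Your proof is correct and uses essentially the same ingredients the paper assembles for this statement (Theorem~\ref{thm:Schur-Dixmier}, Theorem~\ref{thm:isom-classes}, Proposition~\ref{p:simple-rational}, and the identification $\rank_{\C[z]}V=\dim_{\C(z)}S^{-1}V$ from Remark~\ref{rem:functor-extension-scalars}). The only presentational difference is that the paper phrases the deduction in the ``socle'' direction---a simple $\mathbb B$-module has finite $\C(z)$-dimension, hence its $\mathbb A(\mu)$-socle, which realizes the given simple torsion free module, is a submodule of a rational module and so of finite rank---whereas you run the bijection forward through $F=S^{-1}(-)$; both routes are just the two faces of the same bijection in Theorem~\ref{thm:isom-classes}.
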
 

Considering the extension of scalars functor, we have the following dichotomy:

\begin{cor}\label{cor:dichotomy-simple-torsion-free} A simple, torsion free $\sl(2)$-module is either a polynomial module or a non finite type $\sl(2)$-submodule of a rational module. \end{cor}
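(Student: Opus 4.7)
The plan is to split into two cases according to whether the simple torsion free module $V$ is of finite type over $\C[z]$ or not, and to extract each case from the machinery already established.

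First, by Theorem~\ref{thm:simple-tf-is-of-fr}, any simple object $V$ of $\sl(2)\Mod_\tf$ has finite rank, so we may assume $V\in \sl(2)\Mod_\tffr$ from the outset. By Theorem~\ref{thm:Schur-Dixmier}, $V$ is moreover a Casimir module, say of some level $\mu\in\C$, hence $V\in \mathcal{C}_{\mu,\tf}$.

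Case 1: Suppose $V$ is of finite type as a $\C[z]$-module. Then $V\in \sl(2)\Mod_\mathrm{tfft}$ and it is simple in this subcategory (any $\sl(2)$-submodule of $V$ is an $\sl(2)$-submodule in the larger category). By the result \cite[Thm.~2.10]{Plaza-Tejero-Con} quoted in the excerpt, every simple, torsion free, finite type $\sl(2)$-module is a polynomial Casimir representation; so $V$ is polynomial and we are in the first alternative of the dichotomy.

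Case 2: Suppose $V$ is not of finite type over $\C[z]$. Apply the extension of scalars functor \eqref{eq:functorextension} to obtain the $\mathbb B$-module $S^{-1}V=\mathbb B\otimes_{\mathbb A(\mu)}V$. Since $V$ is $\C[z]$-torsion free, localization at $S=\C[z]\setminus\{0\}$ is injective and the canonical $\sl(2)$-module map $V\hookrightarrow S^{-1}V$ is a monomorphism. By Theorem~\ref{thm:isom-classes}, the functor $\widehat F$ sends the isomorphism class of $V$ to a simple $\mathbb B$-module, so $S^{-1}V$ is simple over $\mathbb B$. By Proposition~\ref{p:simple-rational}, any simple $\mathbb B$-module is finite dimensional over $\C(z)$, whence $S^{-1}V$ lies in $\mathcal{RC}_\mu\subset \mathcal R$; that is, $S^{-1}V$ is a rational $\sl(2)$-module. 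Combining, $V$ is an $\sl(2)$-submodule of the rational module $S^{-1}V$, and by hypothesis of this case it is not of finite type. This places $V$ in the second alternative.

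The two cases are exhaustive and mutually exclusive, which proves the dichotomy. There is no substantial obstacle here: the statement is essentially a repackaging of Theorem~\ref{thm:simple-tf-is-of-fr}, Theorem~\ref{thm:Schur-Dixmier}, Theorem~\ref{thm:isom-classes} and Proposition~\ref{p:simple-rational}, together with the observation that localization at $S$ is injective on torsion free modules. The only point that deserves a brief check is that the localization map $V\to S^{-1}V$ is an $\sl(2)$-module homomorphism, which is built into the construction of the adjoint pair $(F,i_\mu)$ recalled above.
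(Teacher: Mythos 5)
Your argument is correct and follows exactly the route the paper intends: after reducing to finite rank via Theorem~\ref{thm:simple-tf-is-of-fr} and to the Casimir case via Theorem~\ref{thm:Schur-Dixmier}, you split on finite type, using the quoted \cite[Thm.~2.10]{Plaza-Tejero-Con} for the finite type case and the extension-of-scalars machinery (Theorem~\ref{thm:isom-classes} plus Proposition~\ref{p:simple-rational}) for the other. The paper itself only signals ``Considering the extension of scalars functor'' and leaves the details to the reader, so your write-up is a correct fleshing out of the same idea rather than a different approach.
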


\begin{cor}  The simple $\sl(2)$-modules of the categories ${{\mathcal C}_{\mu,\mathrm{tf}}}$ and ${{\mathcal C}_{\mu,\mathrm{tffr}}}$ coincide. That is, one has $$\Simpl({{\mathcal C}_{\mu,\mathrm{tf}}})=\Simpl({{\mathcal C}_{\mu,\mathrm{tffr}}})$$ 
and therefore, 
the category ${\Simpl}({{\mathcal C}_{\mu,\mathrm{tffr}}})$ is essentially small. \end{cor}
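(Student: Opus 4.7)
The statement is a direct corollary of the preceding results, so the plan is short. The key observation is that for a torsion free Casimir module of level $\mu$, every $\sl(2)$-submodule is automatically torsion free and Casimir of level $\mu$, since these properties are inherited by arbitrary $\sl(2)$-submodules. Consequently, for $V\in\mathcal C_{\mu,\mathrm{tf}}$, being simple in the full subcategory $\mathcal C_{\mu,\mathrm{tf}}$ is equivalent to being simple as an $\sl(2)$-module in $\sl(2)\text{-}\mathrm{Mod}_\mathrm{tf}$; and similarly for $\mathcal C_{\mu,\mathrm{tffr}}$, with the extra remark that any $\sl(2)$-submodule of a finite rank module is again of finite rank.

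Given this, the equality $\Simpl(\mathcal C_{\mu,\mathrm{tf}})=\Simpl(\mathcal C_{\mu,\mathrm{tffr}})$ will follow by chasing the two inclusions. The inclusion $\Simpl(\mathcal C_{\mu,\mathrm{tffr}})\subseteq\Simpl(\mathcal C_{\mu,\mathrm{tf}})$ is immediate, since any simple object of the smaller category is, by the observation above, simple as an $\sl(2)$-module and hence simple in the larger one. For the reverse inclusion, I take $V\in\Simpl(\mathcal C_{\mu,\mathrm{tf}})$; then $V$ is a simple, torsion free $\sl(2)$-module, so Theorem \ref{thm:simple-tf-is-of-fr} applies and yields that $V$ has finite rank, i.e.\ $V\in\mathcal C_{\mu,\mathrm{tffr}}$, and simplicity is preserved in this smaller full subcategory.

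For the essentially small assertion, I will invoke the corollary immediately after Theorem \ref{thm:isom-classes}, which together with the bijection $\widehat F\colon\widehat{\Simpl}(\mathcal C_{\mu,\mathrm{tf}})\xrightarrow{\sim}\widehat{\Simpl}(\mathbb B\text{-}\mathrm{Mod})$ already ensures that $\Simpl(\mathcal C_{\mu,\mathrm{tf}})$ is essentially small. Since the two categories of simple objects coincide, the same holds for $\Simpl(\mathcal C_{\mu,\mathrm{tffr}})$.

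There is no genuine obstacle here; the only point that deserves a line of justification is the transfer of simplicity between a module and its image in a full subcategory, and this is handled by noting that the defining conditions (torsion freeness, Casimir level, finite rank) are all preserved under taking $\sl(2)$-submodules.
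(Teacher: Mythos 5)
Your proof is correct and follows the route the paper intends: the equality is a direct consequence of Theorem \ref{thm:simple-tf-is-of-fr}, and the essential smallness then carries over from the corollary following Theorem \ref{thm:isom-classes}. Your preliminary discussion about submodules inheriting the Casimir and torsion-freeness conditions is harmless but strictly unnecessary here, since in the paper's notation $\Simpl(\mathcal A)$ already denotes the objects of $\mathcal A$ that are simple as $\sl(2)$-modules, not the $\mathcal A$-simple objects $\Simpl_{\mathcal A}(\mathcal A)$.
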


\begin{rem}\label{rem:morphisms-rat-reps} The category of rational representations $\mathcal R$ is a full subcategory of $\sl(2)\text{-} \mathrm{Mod}_\mathrm{tffr}$. Given two rational representations $W_1$, $W_2$, one easily checks that $$\Hom_{\mathcal R}(W_1,W_2)=\Hom_{\sl(2)}(W_1,W_2)$$ is a $\C$ vector subspace of $ \Hom_{\C(z)}(W_1,W_2)$. 
\end{rem}

\begin{defn}
If $W$ is a  $\C(z)$-vector space, we denote by $\mathcal{RC}(W)$ (resp. $\mathcal{RC}_{\mu}(W)$) the set of rational Casimir $\sl(2)$-module structures (resp. of level $\mu$) such that $\rho(L_0)=z$.  
\end{defn}

Having in mind \eqref{eq:casimir-module},  one proves straightforwardly the following result.

\begin{prop}\label{prop:only-one-condition-for-rational-Casimir} 
There is a bijective correspondence between $\mathcal{RC}_{\mu}(W)$  and the set of $\C(z)$-semilinear automorphisms $\Aut^1_{\C(z)}(W),$ such that $$\rho(L_{-1}):=\pi_\mu(z) \rho(L_1)^{-1}\in \Aut^{-1}_{\C(z)}(W),\quad  \rho(L_1)\in \Aut^1_{\C(z)}(W)$$ noting that the commutation relation (\ref{eq:commutation-relation}) follows automatically.
\end{prop}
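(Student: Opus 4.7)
The plan is to verify the correspondence directly, using the key observation that $\pi_\mu(z)$ and $\pi_\mu(z+1)$, being nonzero polynomials, act invertibly on any $\C(z)$-vector space.

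First, starting from $(W,\rho)\in\mathcal{RC}_{\mu}(W)$, the relations \eqref{eq:casimir-module} read $\rho(L_{-1})\circ\rho(L_1) = \pi_\mu(z)$ and $\rho(L_1)\circ\rho(L_{-1}) = \pi_\mu(z+1)$. Since both right-hand sides are invertible $\C(z)$-linear operators on $W$, the first relation forces $\rho(L_1)$ to be injective and the second forces it to be surjective. Hence $\rho(L_1)\in\Aut^1_{\C(z)}(W)$, and multiplying on the right by $\rho(L_1)^{-1}$ recovers $\rho(L_{-1}) = \pi_\mu(z)\rho(L_1)^{-1}$. This defines the forward assignment $(W,\rho)\mapsto \rho(L_1)$.

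For the converse, given $\sigma\in\Aut^1_{\C(z)}(W)$, I would set $\rho(L_0):=z$, $\rho(L_1):=\sigma$, and $\rho(L_{-1}):=\pi_\mu(z)\sigma^{-1}$, and check the rational Casimir module axioms. Since $\sigma^{-1}$ is $\nabla^{-1}$-semilinear and left multiplication by the $\C(z)$-linear operator $\pi_\mu(z)$ preserves the semilinearity degree, $\rho(L_{-1})$ lies in $\Aut^{-1}_{\C(z)}(W)$. The Casimir relations \eqref{eq:casimir-module} follow immediately: $\rho(L_{-1})\circ\rho(L_1) = \pi_\mu(z)\sigma^{-1}\sigma = \pi_\mu(z)$, while $\rho(L_1)\circ\rho(L_{-1}) = \sigma\,\pi_\mu(z)\,\sigma^{-1} = \pi_\mu(z+1)$ by the $\nabla$-semilinearity of $\sigma$. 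Subtracting these yields $[\rho(L_{-1}),\rho(L_1)] = \pi_\mu(z)-\pi_\mu(z+1) = -2z$, so the commutation relation \eqref{eq:commutation-relation} comes for free. The two constructions are mutually inverse by inspection, and no real obstacle arises since everything is dictated by invertibility of $\pi_\mu(z)$ in $\C(z)$; the only point requiring care is the bookkeeping of $\nabla$-semilinearity degrees.
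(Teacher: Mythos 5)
Your proof is correct and fills in the details that the paper itself declares ``straightforward'' from \eqref{eq:casimir-module}. You correctly observe that for a Casimir module of level $\mu$ the two equations $\rho(L_{-1})\circ\rho(L_1)=\pi_\mu(z)$ and $\rho(L_1)\circ\rho(L_{-1})=\pi_\mu(z+1)$ have invertible $\C(z)$-linear right-hand sides (since $\pi_\mu(z),\pi_\mu(z+1)$ are nonzero elements of the field $\C(z)$), which forces $\rho(L_1)$ to be bijective and gives the forward map; the converse computation, including the conjugation identity $\sigma\circ\pi_\mu(z)\circ\sigma^{-1}=\pi_\mu(z+1)$ and the resulting commutator $\pi_\mu(z)-\pi_\mu(z+1)=-2z$, is exactly the verification the paper has in mind. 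This is the same argument, just spelled out.
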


\begin{exam}\label{ex:rhomu}
For any $\mu\in\C$ one has a Casimir representation $\rho^{(\mu)}$ defined on $\C(z)$ by $$\rho^{(\mu)}(L_{-1})=\pi_\mu(z)\nabla^{-1},\quad \rho^{(\mu)}(L_{0})=z,\quad \rho^{(\mu)}(L_{1})=\nabla.$$
\end{exam}

Via the embedding $i_\mu\colon \mathbb B\text{-}\mathrm{Mod}\hookrightarrow \mathcal C_\mu$ one has that $\mathbb B$-modules are exactly the $\sl(2)$-Casimir modules of level $\mu$ that are $\C(z)$-vector spaces. In particular, those $\mathbb B$-modules that are finite dimensional $\C(z)$-vector spaces belong to $\mathcal{RC}_\mu$.  Taking into account 
(\ref{eq:casimir-module}) one has the following:

\begin{thm}\label{thm:isom-rational-N-mod} The functor $i_\mu\colon \mathbb B\text{-}\mathrm{Mod}_{\mathrm{fd}}\to\mathcal{RC}_\mu$ is an equivalence of categories whose inverse is the functor  $F\colon \mathcal{RC}_\mu\to \mathbb B\text{-}\mathrm{Mod}_{\mathrm{fd}}$ of extension of scalars; that is, $F\circ i_\mu\cong \Id_{\mathbb B\Mod}$, $ i_\mu \circ F\cong \Id_{\mathcal{RC}_\mu}$. Moreover, $F$ is naturally isomorphic to the forgetful functor   $j_\mu\colon \mathcal{RC}_\mu\to \mathbb B\text{-}\mathrm{Mod}_{\mathrm{fd}}$ defined by $j_\mu(W,\rho)=(W,\rho(L_1))$.\end{thm}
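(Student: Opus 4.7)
The pivotal observation is that on any rational Casimir module $(W,\rho)\in\mathcal{RC}_\mu$, the underlying $\C[z]$-module $W$ is already a $\C(z)$-vector space, so every element of the multiplicative set $S=\C[z]\setminus\{0\}$ already acts invertibly. Consequently the canonical localization map $W\to S^{-1}W$ is an isomorphism, and by the explicit description of the $\mathbb B$-module structure on $S^{-1}W$ recalled in Remark \ref{rem:functor-extension-scalars}, one has $F(W,\rho)=(W,\rho(L_1))$ up to a canonical natural isomorphism. This identifies $F|_{\mathcal{RC}_\mu}$ with the forgetful functor $j_\mu$, proves the last claim of the theorem, and in particular shows that $F$ indeed takes values in $\mathbb B\text{-}\mathrm{Mod}_{\mathrm{fd}}$ when restricted to $\mathcal{RC}_\mu$.

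Next I check that $i_\mu$ restricts to a functor $\mathbb B\text{-}\mathrm{Mod}_{\mathrm{fd}}\to\mathcal{RC}_\mu$. Given $(W,\sigma)\in\mathbb B\text{-}\mathrm{Mod}_{\mathrm{fd}}$, restriction along $\Phi_\mu$ equips $W$ with the $\sl(2)$-action $\rho(L_1)=\sigma$, $\rho(L_0)=z\cdot$, $\rho(L_{-1})=\pi_\mu(z)\sigma^{-1}$. Since $\sigma\in\Aut^1_{\C(z)}(W)$, Proposition \ref{prop:only-one-condition-for-rational-Casimir} applies and produces an object of $\mathcal{RC}_\mu$. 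The functorial isomorphism $F\circ i_\mu\cong\Id$ on $\mathbb B\text{-}\mathrm{Mod}$ was recalled immediately after (\ref{eq:functorextension}) and restricts tautologically to $\mathbb B\text{-}\mathrm{Mod}_{\mathrm{fd}}$. For the reverse composition, using the identification of the first paragraph, the module $i_\mu F(W,\rho)$ has $L_1$ acting as $\rho(L_1)$, $L_0$ as multiplication by $z$, and $L_{-1}$ as $\pi_\mu(z)\rho(L_1)^{-1}$; the only thing to check is $\rho(L_{-1})=\pi_\mu(z)\rho(L_1)^{-1}$, which rearranges the Casimir relation $\rho(L_{-1})\rho(L_1)=\pi_\mu(z)$ of (\ref{eq:casimir-module}) once we know $\rho(L_1)$ is invertible.

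Invertibility of $\rho(L_1)$ on a rational module is where the whole argument is anchored, and I view it as the one substantive step rather than a serious obstacle. From $\rho(L_{-1})\circ\rho(L_1)=\pi_\mu(z)$ and $\rho(L_1)\circ\rho(L_{-1})=\pi_\mu(z+1)$, together with the fact that $\pi_\mu(z)$ and $\pi_\mu(z+1)$ are nonzero rational functions and hence act as $\C(z)$-linear automorphisms of $W$, one reads off immediately that $\rho(L_1)$ is both injective and surjective. (Alternatively, this is already encoded in Proposition \ref{prop:only-one-condition-for-rational-Casimir}.) With this in hand, the equality $i_\mu\circ F=\Id_{\mathcal{RC}_\mu}$ holds strictly, not merely up to natural isomorphism, which together with $F\circ i_\mu\cong\Id$ establishes that $F$ and $i_\mu$ are mutually quasi-inverse equivalences between $\mathcal{RC}_\mu$ and $\mathbb B\text{-}\mathrm{Mod}_{\mathrm{fd}}$.
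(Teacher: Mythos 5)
Your proof is correct and follows essentially the same route the paper leaves implicit before the theorem statement: on a rational Casimir module the multiplicative set $S$ already acts invertibly, so the extension-of-scalars functor $F$ collapses to the forgetful functor $j_\mu$; Proposition \ref{prop:only-one-condition-for-rational-Casimir} shows $i_\mu$ lands in $\mathcal{RC}_\mu$; and the relations (\ref{eq:casimir-module}) force $\rho(L_1)$ to be invertible. The one small overstatement is that $i_\mu\circ F$ agrees with $\Id_{\mathcal{RC}_\mu}$ only up to the canonical isomorphism $W\cong S^{-1}W=\mathbb B\otimes_{\mathbb A(\mu)}W$ rather than strictly, but this is exactly what the theorem asserts and does not affect the argument.
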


\begin{cor}\label{cor:B-simples-Racionales-simples} The isomorphism of categories $i_\mu\colon \mathbb B\text{-}\mathrm{Mod}_{\mathrm{fd}}\to\mathcal{RC}_\mu$ induces an identification $$\Simpl(\mathbb B\Mod)\xrightarrow[\sim]{i_\mu}\Simpl_{\mathcal{RC}_\mu}(\mathcal{RC}_\mu)$$ between the categories of simple objects.
\end{cor}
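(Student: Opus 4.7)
The plan is to reduce the statement to a straightforward consequence of the equivalence of categories $i_\mu\colon \mathbb B\Mod_{\mathrm{fd}}\to\RC_\mu$ provided by Theorem \ref{thm:isom-rational-N-mod}.

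First, I would unpack the two sides. By Proposition \ref{p:simple-rational}, every simple $\mathbb B$-module is finite dimensional as a $\C(z)$-vector space. Since any $\mathbb B$-submodule of a finite dimensional $\C(z)$-vector space is again a finite dimensional $\C(z)$-vector space (being a $\C(z)$-subspace of it), the subcategory $\mathbb B\Mod_{\mathrm{fd}}$ is stable under $\mathbb B$-submodules. Consequently there is no distinction between being simple in $\mathbb B\Mod$ and being simple in $\mathbb B\Mod_{\mathrm{fd}}$; that is, one has the identification $\Simpl(\mathbb B\Mod)=\Simpl(\mathbb B\Mod_{\mathrm{fd}})$.

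Second, since $i_\mu$ is an equivalence of categories, for any object $M\in\mathbb B\Mod_{\mathrm{fd}}$ it induces an order-preserving bijection between the lattice of subobjects of $M$ in $\mathbb B\Mod_{\mathrm{fd}}$ and the lattice of subobjects of $i_\mu(M)$ in $\RC_\mu$. Thus $M$ admits no proper nonzero subobject in $\mathbb B\Mod_{\mathrm{fd}}$ if and only if $i_\mu(M)$ admits no proper nonzero subobject in $\RC_\mu$, which by the notational convention recalled in the introduction amounts to $M\in\Simpl(\mathbb B\Mod_{\mathrm{fd}})$ if and only if $i_\mu(M)\in\Simpl_{\RC_\mu}(\RC_\mu)$. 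Composing with the reduction above yields the asserted bijection $\Simpl(\mathbb B\Mod)\xrightarrow[\sim]{i_\mu}\Simpl_{\RC_\mu}(\RC_\mu)$.

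The argument is purely formal, so there is no substantive obstacle. The only point worth stressing is that the subobject correspondence can be made completely explicit via the natural isomorphism $F\cong j_\mu$ of Theorem \ref{thm:isom-rational-N-mod}, under which the $\RC_\mu$-subobjects of a rational Casimir module $(W,\rho)$ correspond precisely to those $\C(z)$-subspaces of $W$ that are stable under $\rho(L_1)$ (equivalently, under the ambient $\mathbb B$-action); this makes transparent the fact that simplicity as a $\mathbb B$-module and $\RC_\mu$-simplicity are literally the same condition on $(W,\rho)$.
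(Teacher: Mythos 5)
Your proof is correct and is essentially the argument the paper has in mind (the corollary is stated without proof, as an immediate consequence of Theorem \ref{thm:isom-rational-N-mod} together with Proposition \ref{p:simple-rational}). The two ingredients you isolate — the stability of $\mathbb B\Mod_{\mathrm{fd}}$ under $\mathbb B$-submodules, which collapses the distinction between $\Simpl(\mathbb B\Mod)$ and $\Simpl(\mathbb B\Mod_{\mathrm{fd}})$, and the transfer of subobject lattices across the equivalence $i_\mu$ — are exactly the formal points one needs to check.
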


For any two levels   $\mu,\nu\in\C$, we define a functor $$\Phi_{\mu\nu}\colon \mathcal{RC}_\mu\to \mathcal{RC}_\nu$$ by $\Phi_{\mu\nu}=i_\nu\circ j_\mu$. Taking into account Proposition \ref{prop:only-one-condition-for-rational-Casimir} the proof of the following result is straightforward.

\begin{cor}\label{cor:isom-Casimir-mu-nu}  The categories $\mathcal{RC}_\mu$, $\mathcal{RC}_\nu$ are isomorphic through the pair of exact functors $\Phi_{\mu\nu}\colon \mathcal{RC}_\mu\to \mathcal{RC}_\nu$, $\Phi_{\nu\mu}\colon \mathcal{RC}_\nu\to \mathcal{RC}_\mu$. Moreover, given $(W,\rho)\in \mathcal{RC}_\mu$ one has $\Phi_{\mu\nu}(W,\rho)=(W,\Phi_{\mu\nu}(\rho))$ with  $$\Phi_{\mu\nu}(\rho)(L_{-1})=\frac{\pi_\nu(z)}{\pi_\mu(z)}\rho(L_{-1}),\quad \Phi_{\mu\nu}(\rho)(L_1)=\rho(L_1).$$
\end{cor}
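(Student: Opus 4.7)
The plan is to unpack the definition $\Phi_{\mu\nu}=i_\nu\circ j_\mu$ and exploit the equivalence given by Theorem \ref{thm:isom-rational-N-mod}, from which both the isomorphism of categories and the explicit formula follow essentially by formal manipulation.

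First, I would observe that exactness and equivalence are almost immediate. By Theorem \ref{thm:isom-rational-N-mod}, the forgetful functor $j_\mu$ is naturally isomorphic to the extension of scalars functor $F\colon\mathcal{RC}_\mu\to\mathbb{B}\Mod_{\mathrm{fd}}$, hence an equivalence, and $i_\nu\colon\mathbb{B}\Mod_{\mathrm{fd}}\to\mathcal{RC}_\nu$ is also an equivalence. Composing two equivalences yields an equivalence, so $\Phi_{\mu\nu}$ is exact. To upgrade equivalence to isomorphism of categories (identity on the underlying $\C(z)$-space, not just natural isomorphism), I would note that $j_\mu$ and $i_\nu$ are both identities on the underlying data: $j_\mu(W,\rho)=(W,\rho(L_1))$ keeps $W$ and the $L_1$-action verbatim, while $i_\nu(W,\sigma)$ simply reinterprets $(W,\sigma)$ as an $\sl(2)$-module. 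Thus $\Phi_{\nu\mu}\circ\Phi_{\mu\nu}$ genuinely returns the same underlying pair $(W,\rho)$, not a mere naturally isomorphic copy, because by Proposition \ref{prop:only-one-condition-for-rational-Casimir} a rational Casimir representation of level $\mu$ with $\rho(L_0)=z$ is completely determined by $\rho(L_1)\in\Aut^1_{\C(z)}(W)$.

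Next I would compute the explicit formula. Given $(W,\rho)\in\mathcal{RC}_\mu$, the functor $j_\mu$ produces the $\mathbb{B}$-module $(W,\rho(L_1))$, where $X$ acts as $\rho(L_1)$. Applying $i_\nu$ to this $\mathbb{B}$-module yields the Casimir representation of level $\nu$ determined, by the image of the embedding $\Phi_\nu\colon\mathbb{A}(\nu)\hookrightarrow\mathbb{B}$ satisfying $\Phi_\nu(L_1)=X$ and $\Phi_\nu(L_{-1})=\pi_\nu(z)X^{-1}$, by
\[
\Phi_{\mu\nu}(\rho)(L_1)=X=\rho(L_1),\qquad \Phi_{\mu\nu}(\rho)(L_{-1})=\pi_\nu(z)\rho(L_1)^{-1}.
\]
To rewrite the second expression in terms of $\rho(L_{-1})$, I would invoke Proposition \ref{prop:only-one-condition-for-rational-Casimir}, which tells us that $\rho(L_{-1})=\pi_\mu(z)\rho(L_1)^{-1}$, hence $\rho(L_1)^{-1}=\rho(L_{-1})/\pi_\mu(z)$ (a legitimate operation in $\End_{\C(z)}^{-1}(W)$ since $\pi_\mu(z)\in\C[z]\subset\C(z)^\times$). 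Substituting yields $\Phi_{\mu\nu}(\rho)(L_{-1})=\frac{\pi_\nu(z)}{\pi_\mu(z)}\rho(L_{-1})$, as claimed.

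I do not anticipate a serious obstacle here; the statement is essentially a bookkeeping consequence of Theorem \ref{thm:isom-rational-N-mod} and Proposition \ref{prop:only-one-condition-for-rational-Casimir}. The only subtlety worth flagging is that the composition $\Phi_{\nu\mu}\circ\Phi_{\mu\nu}$ must give an actual equality of functors (for a genuine isomorphism of categories), not just a natural isomorphism, which requires checking that the two intermediate forgetting/reattaching steps truly return the original $L_1$-action untouched; this is clear from the explicit descriptions of $i_\cdot$ and $j_\cdot$ above.
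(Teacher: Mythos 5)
Your proof is correct and follows exactly the route the paper intends: the paper offers no written proof, stating only that the result is straightforward from $\Phi_{\mu\nu}=i_\nu\circ j_\mu$ together with Proposition~\ref{prop:only-one-condition-for-rational-Casimir}, which is precisely how you unpack the formula $\Phi_{\mu\nu}(\rho)(L_{-1})=\pi_\nu(z)\rho(L_1)^{-1}=\frac{\pi_\nu(z)}{\pi_\mu(z)}\rho(L_{-1})$. Your extra care in pointing out that $j_\mu$ and $i_\nu$ act identically on the underlying $\C(z)$-space and $L_1$-action (so $\Phi_{\nu\mu}\circ\Phi_{\mu\nu}$ is the identity on the nose, giving a genuine isomorphism of categories rather than a mere equivalence) is a useful clarification of what the paper leaves implicit.
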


By means of the natural embedding $\C[z]\hookrightarrow\C(z)=S^{-1}\C[z]$ we can generalize now the functor of extension of scalars to rational representations, see \eqref{eq:functorextension} and Remark \ref{rem:functor-extension-scalars}.  
\begin{defn}\label{defn:rationalization-functor}
The rationalization functor   $$F_{\mathrm{rat}}\colon \sl(2)\text{-}\mathrm{Mod}_{\mathrm{tffr}}\to \mathcal R$$  sends a finite rank $\sl(2)$-module $(V,\rho)$ to $F_{\mathrm{rat}}(V,\rho)=(S^{-1}V,\rho_\mathrm{rat})$  where the $\C(z)$ vector space $S^{-1}V$ obtained by localization of the $\C[z]$ module $V$ is endowed with the $\sl(2)$-representation $ \rho_\mathrm{rat}=S^{-1}(\rho)$ defined by 
	\begin{align*}
	\rho_\mathrm{rat}(L_{-1})\left(\frac{v}{p(z)}\right)&:=\frac{\rho(L_{-1})(v)}{p(z-1)},\\ 
	\rho_\mathrm{rat}(L_{0})\left(\frac{v}{p(z)}\right)&:=\frac{z\cdot v}{p(z)},\\
	\rho_\mathrm{rat}(L_{1})\left(\frac{v}{p(z)}\right)&:=\frac{\rho(L_{1})(v)}{p(z+1)}.\end{align*} 
\end{defn}

One straightforwardly checks that $F_{\mathrm{rat}}$ is a faithful and exact functor that by restriction induces faithful and exact functors $F_{\mathrm{rat}}\colon \mathcal C_\mathrm{tffr}\to \mathcal{RC}$, $F_{\mathrm{rat}}\colon {\mathcal C}_{\mu,\mathrm{tffr}}\to \mathcal{RC}_\mu$.

These results and Theorem \ref{thm:Schur-Dixmier} show that Theorem \ref{thm:isom-classes} can be restated  as follows:

\begin{thm}\label{thm:isom-classes-generalized} The rationalization  functor $F_{\mathrm{rat}}\colon {\mathcal C}_{\mathrm{tffr}}\longrightarrow \mathcal{RC}$
induces a bijection $$\widehat F_{\mathrm{rat}}\colon \widehat{\Simpl}({{\mathcal C}_{\mathrm{tffr}}})\xrightarrow{\sim} \widehat{\Simpl}_{\mathcal{RC}}({\mathcal{RC}}).$$ Furthermore, given $\mu\in\C$, there is a bijection 
	$$\widehat F_{\mathrm{rat}}\colon \widehat{\Simpl}({{\mathcal C}_{\mu,\mathrm{tffr}}})\xrightarrow{\sim} \widehat{\Simpl}_{\mathcal{RC}_\mu}({\mathcal{RC}_\mu}),$$ hence ${\Simpl}_{\mathcal{RC}_\mu}({\mathcal{RC}_\mu})$ is an essentially small category.
\end{thm}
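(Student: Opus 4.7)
The plan is to assemble the desired bijection by chaining several previously established identifications at each fixed level $\mu$, and then to take the disjoint union over $\mu\in\C$. Since all the conceptual work has already been done in the excerpt, the proof is organizational: it only exhibits $\widehat F_\mathrm{rat}$ as a composition of bijections whose factors are already in hand.

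\textbf{Reduction to a fixed level.} By Proposition \ref{prop:homs-casimir} one has $\mathcal{C}=\coprod_{\mu\in\C}\mathcal{C}_\mu$ and analogously $\mathcal{RC}=\coprod_{\mu\in\C}\mathcal{RC}_\mu$, and from Definition \ref{defn:rationalization-functor} the functor $F_\mathrm{rat}$ manifestly preserves Casimir levels, so it restricts to $F_\mathrm{rat}\colon \mathcal{C}_{\mu,\mathrm{tffr}}\to\mathcal{RC}_\mu$ for every $\mu\in\C$. Simple (respectively $\mathcal{RC}$-irreducible) objects of a Hom-orthogonal coproduct of preadditive categories are concentrated in exactly one component, whence $\widehat{\Simpl}(\mathcal{C}_\mathrm{tffr})=\coprod_\mu \widehat{\Simpl}(\mathcal{C}_{\mu,\mathrm{tffr}})$ and $\widehat{\Simpl}_{\mathcal{RC}}(\mathcal{RC})=\coprod_\mu \widehat{\Simpl}_{\mathcal{RC}_\mu}(\mathcal{RC}_\mu)$. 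Hence the global bijection is the coproduct of the level-$\mu$ bijections, and it suffices to treat a single $\mu$.

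\textbf{Fixed-level chain of bijections.} A direct comparison of Definition \ref{defn:rationalization-functor} with Remark \ref{rem:functor-extension-scalars} shows that $F_\mathrm{rat}|_{\mathcal{C}_{\mu,\mathrm{tffr}}}=i_\mu\circ F$, where $F\colon \mathcal{C}_\mu\to \mathbb B\Mod$ is the extension of scalars of \eqref{eq:functorextension} and $i_\mu$ is the restriction of scalars through $\Phi_\mu$: both send a Casimir module $(V,\rho)$ of level $\mu$ to the localization $S^{-1}V$ endowed with the $\C(z)$-semilinear operator $S^{-1}(\rho(L_1))$. I then realize $\widehat F_\mathrm{rat}$ as the composite
$$\widehat{\Simpl}(\mathcal{C}_{\mu,\mathrm{tffr}}) \;=\; \widehat{\Simpl}(\mathcal{C}_{\mu,\mathrm{tf}}) \;\xrightarrow[\sim]{\widehat F}\; \widehat{\Simpl}(\mathbb B\Mod) \;=\; \widehat{\Simpl}(\mathbb B\Mod_{\mathrm{fd}}) \;\xrightarrow[\sim]{\widehat i_\mu}\; \widehat{\Simpl}_{\mathcal{RC}_\mu}(\mathcal{RC}_\mu),$$
where the first equality uses Theorem \ref{thm:simple-tf-is-of-fr}, the first arrow is Bavula's Theorem \ref{thm:isom-classes}, the middle equality uses Proposition \ref{p:simple-rational}, and the final arrow is Corollary \ref{cor:B-simples-Racionales-simples}. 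Each factor is a bijection, hence so is the composite.

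\textbf{Essential smallness and the main obstacle.} Essential smallness of $\Simpl_{\mathcal{RC}_\mu}(\mathcal{RC}_\mu)$ is transported across the composite from the essential smallness of $\Simpl(\mathbb B\Mod)$ already recorded in the excerpt. The genuine content of the theorem is borrowed entirely from Theorem \ref{thm:isom-classes} and Proposition \ref{p:simple-rational}; the only point requiring any verification is the equality of functors $F_\mathrm{rat}|_{\mathcal{C}_{\mu,\mathrm{tffr}}}=i_\mu\circ F$, which is immediate from the definitions. Thus no serious obstacle arises: the theorem is a clean repackaging of Bavula's classification once rational representations have been introduced as a concrete subcategory of $\sl(2)\Mod_\mathrm{tffr}$.
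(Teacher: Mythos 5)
Your proof is correct and takes essentially the same route as the paper, whose argument is the one-line remark that ``these results and Theorem \ref{thm:Schur-Dixmier} show that Theorem \ref{thm:isom-classes} can be restated as follows''; you simply unpack that remark into the explicit chain $\widehat{\Simpl}(\mathcal{C}_{\mu,\mathrm{tffr}})=\widehat{\Simpl}(\mathcal{C}_{\mu,\mathrm{tf}})\xrightarrow{\widehat F}\widehat{\Simpl}(\mathbb B\Mod)=\widehat{\Simpl}(\mathbb B\Mod_{\mathrm{fd}})\xrightarrow{\widehat i_\mu}\widehat{\Simpl}_{\mathcal{RC}_\mu}(\mathcal{RC}_\mu)$ together with the factorization $F_\rat|_{\mathcal C_{\mu,\tffr}}=i_\mu\circ F$, which holds because the Casimir relation $\rho(L_{-1})\circ\rho(L_1)=\pi_\mu(z)$ localizes to $S^{-1}(\rho(L_{-1}))=\pi_\mu(z)\,S^{-1}(\rho(L_1))^{-1}$. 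The only cosmetic quibble is that the first equality is more precisely the unnamed corollary following Theorem \ref{thm:simple-tf-is-of-fr} ($\Simpl(\mathcal C_{\mu,\mathrm{tf}})=\Simpl(\mathcal C_{\mu,\mathrm{tffr}})$) rather than Theorem \ref{thm:simple-tf-is-of-fr} itself, but the content is the same.
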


\begin{rem}\label{rem:uncountable}
Notice that every rational $\sl(2)$-module is  torsion free. However, rational modules are always not simple as  $\sl(2)$-modules  since they have uncountable dimension as $\C$-vector spaces, whereas simple  $\sl(2)$-modules, which can be realized as quotients of the universal enveloping algebra $U(\sl(2))$, have at most countable $\C$-dimension due to the Poincar\'e-Birkhoff-Witt theorem.
\end{rem}

\section{Generalized Casimir modules}

\begin{defn} 
We say that an $\sl(2)$-module $(V,\rho)$ is a generalized Casimir $\sl(2)$-module of level $\mu\in \C$, if its Casimir operator $C_\rho$ fulfills $$(C_\rho-\mu)^n=0,$$ for some $n\in{\mathbb N}$. Moreover, if $(C_\rho-\mu)^n=0$ and $(C_\rho-\mu)^{n-1}\neq 0$, then we say that the generalized Casimir module $V$ has exponent $n$.

The full subcategory of $\sl(2)\Mod$ (resp. ${\mathcal R}$)  consisting of generalized Casimir $\sl(2)$-modules of level $\mu\in \C$ will be denoted by  ${\mathcal C}_\mu^\bullet$ (resp. $\mathcal{RC}_\mu^\bullet$) and ${\mathcal C}_\mu^{(n)}$ (resp. $\mathcal{RC}_\mu^{(n)}$) denotes its full subcategory formed by all modules of exponent $n$. We denote by $\mathcal C^\bullet$ (resp. $\mathcal{RC}^\bullet$) the category formed by all (resp. rational) generalized  Casimir $\sl(2)$-modules.
\end{defn}

One easily checks that generalized Casimir modules of level $\mu$  and exponent $n$ are exactly the $\C[z]$-modules  endowed with $\C[z]$-semilinear endomorphisms $\rho(L_{-1})$, $\rho(L_{1})$ and a $\C[z]$-linear $n$-th nilpotent endomorphism $N$ that verify:
\begin{equation}\label{eq:gen-casimir-module}
\rho(L_{-1})\circ\rho(L_1) =\pi_\mu(z)-N, \quad \rho(L_1)\circ\rho(L_{-1}) =\pi_\mu(z+1)-N.
\end{equation}

The commutation relation, $ [\rho(L_{-1}),\rho(L_1)]=-2z$, follows automatically from these expressions and the nilpotent endomorphism $N$ is related to the Casimir operator $C_\rho$   by  the equality $N=C_\rho-\mu$.
The next result follows straightforwardly.
\begin{prop}\label{prop:filtration}
	Every generalized Casimir $\sl(2)$-module $(V,\rho)$ of level $\mu$ and exponent $n$,  has a canonical strictly increasing filtration of length $n$ by $\sl(2)$-submodules:
	\begin{equation}\label{e:nat-filtration}
	V^0:= (0) \, \subsetneq \, V^1 \, \subsetneq \, \ldots \, \subsetneq \, V^n:=V
	\end{equation}
	where  $V^i=\ker(C_\rho-\mu)^i$, for $i=1,\ldots, n$,  is a generalized Casimir $\sl(2)$-module of level $\mu$ and exponent $i$ and $V^{i}/ V^{i-1}$ is a Casimir $\sl(2)$-module of level $\mu$.
	
	Moreover, if $V$  is a finite rank torsion free   module, then $V^i$ and $V^{i}/ V^{i-1}$ are also torsion free modules and for every $i=1,\ldots,n-1$, one has  $$\rk\left(V^i/V^{i-1}\right)\geq \rk\left(V^{i+1}/V^{i}\right).$$  In particular, if $V$ is a rational module, then $V^i$ and $V^{i}/ V^{i-1}$ are rational modules.  \end{prop}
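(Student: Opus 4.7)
The plan is to work with the nilpotent $\sl(2)$-endomorphism $N := C_\rho - \mu$ throughout. Since the Casimir is central, $N$ commutes with the $\sl(2)$-action, so each $V^i = \ker N^i$ is an $\sl(2)$-submodule, and the filtration is evidently increasing. It is strictly increasing because if $V^i = V^{i+1}$ for some $i < n$, then an easy induction shows $V^j = V^i$ for every $j \geq i$, contradicting $V^n = V \supsetneq V^{n-1}$. The exponent of $V^i$ equals $i$: by definition $N^i$ vanishes on $V^i$, while $N^{i-1}$ does not, because $V^{i-1} \subsetneq V^i$. For the quotient, $v \in V^i$ implies $N v \in V^{i-1}$, so $N$ acts as zero on $V^i/V^{i-1}$, that is $C_\rho$ acts as $\mu\,\Id$, making $V^i/V^{i-1}$ a Casimir module of level $\mu$.

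For the torsion-free statement, the key observation is that $N$ is $\C[z]$-linear, since $C_\rho$ commutes with $\rho(L_0)$, which is multiplication by $z$. When $V$ is torsion free each submodule $V^i$ inherits torsion-freeness automatically. For the quotient, suppose $v \in V^i$ satisfies $p(z) v \in V^{i-1}$ for some nonzero $p(z) \in \C[z]$; then $p(z) N^{i-1}(v) = N^{i-1}(p(z) v) = 0$, and torsion-freeness of $V$ forces $N^{i-1}(v) = 0$, so $v \in V^{i-1}$. Hence $V^i/V^{i-1}$ is torsion free.

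For the rank inequality, $N$ induces a $\C[z]$-linear map $\bar N \colon V^{i+1}/V^i \to V^i/V^{i-1}$, which is injective because $N v \in V^{i-1}$ for $v \in V^{i+1}$ forces $N^i v = 0$, hence $v \in V^i$. As both source and target are finite-rank torsion-free $\C[z]$-modules, applying the exact localization $S^{-1}(-)$ yields an injection of $\C(z)$-vector spaces, which delivers $\rk(V^{i+1}/V^i) \leq \rk(V^i/V^{i-1})$. Finally, when $V$ is rational, any $\C[z]$-linear endomorphism of the $\C(z)$-vector space $V$ is automatically $\C(z)$-linear, so each $V^i$ is a $\C(z)$-subspace of $V$ and hence a rational module; the same follows for the quotients by finite-dimensional linear algebra over $\C(z)$. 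I do not foresee any real obstacle in this plan; the one subtlety worth flagging is the $\C[z]$-linearity of $N$, which silently underpins both the torsion-freeness of the quotients and the rank inequality.
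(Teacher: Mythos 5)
Your proof is correct and fills in the details the paper leaves implicit (it only remarks that the result ``follows straightforwardly''). The argument is the natural one: work with the nilpotent $\C[z]$-linear $\sl(2)$-endomorphism $N = C_\rho - \mu$, use strictness of the kernel filtration, get torsion-freeness of the quotients from torsion-freeness of $V$, and obtain the rank inequality from the injective induced map $\bar N\colon V^{i+1}/V^i \to V^i/V^{i-1}$ after localizing; the observation that a $\C[z]$-linear endomorphism of a $\C(z)$-vector space is automatically $\C(z)$-linear correctly handles the rational case.
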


\begin{prop}\label{prop:orthogonality-generalized-casimirs} Let us consider $\mu_1,\mu_2\in\C$. If  $\mu_1\neq\mu_2$, then for any $n_1,n_2\in \mathbb N$ the categories $\mathcal  C_{\mu_1}^{(n_1)}, \mathcal  C_{\mu_2}^{(n_2)}$ are $\Hom$-orthogonal.
\end{prop}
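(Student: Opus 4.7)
The plan is to prove both directions of $\Hom$-orthogonality by the standard coprime-polynomial trick applied to the Casimir intertwining relation. By symmetry it suffices to prove $\perp\!(\mathcal C^{(n_1)}_{\mu_1},\mathcal C^{(n_2)}_{\mu_2})$, since the argument does not use any ordering of $\mu_1,\mu_2$.

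First I would take an arbitrary $\sl(2)$-homomorphism $\phi\colon V_1\to V_2$ with $V_i\in \mathcal C^{(n_i)}_{\mu_i}$ and note that the Casimir is central in $U(\sl(2))$, so $\phi$ intertwines the two Casimir operators. More generally, for any polynomial $p(t)\in\C[t]$ one has $p(C_{\rho_2})\circ\phi=\phi\circ p(C_{\rho_1})$. Applying this with $p(t)=(t-\mu_1)^{n_1}$ and using that $V_1$ is generalized Casimir of level $\mu_1$ and exponent at most $n_1$, we obtain
\[
(C_{\rho_2}-\mu_1)^{n_1}\circ\phi \,=\, \phi\circ(C_{\rho_1}-\mu_1)^{n_1} \,=\, 0.
\]

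Next, since $\mu_1\ne \mu_2$, the polynomials $(t-\mu_1)^{n_1}$ and $(t-\mu_2)^{n_2}$ are coprime in $\C[t]$, so there exist $a(t),b(t)\in\C[t]$ with
\[
a(t)(t-\mu_1)^{n_1}+b(t)(t-\mu_2)^{n_2}=1.
\]
Substituting $C_{\rho_2}$ and composing on the right with $\phi$ gives
\[
a(C_{\rho_2})(C_{\rho_2}-\mu_1)^{n_1}\phi + b(C_{\rho_2})(C_{\rho_2}-\mu_2)^{n_2}\phi \,=\, \phi.
\]
The first summand vanishes by the display above; the second vanishes because $(C_{\rho_2}-\mu_2)^{n_2}=0$ on $V_2$. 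Hence $\phi=0$, which establishes $\Hom_{\sl(2)}(V_1,V_2)=0$, and the symmetric argument (swap indices) gives $\Hom_{\sl(2)}(V_2,V_1)=0$.

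This proof is entirely formal and there is no substantive obstacle; the only mild subtlety is recognizing that although the exponents $n_i$ depend on the particular module, the Bezout identity only requires $\mu_1\ne\mu_2$, so the argument applies uniformly. Note also that this gives a categorical strengthening of Proposition~\ref{prop:homs-casimir}(1) (which is recovered as the case $n_1=n_2=1$), and it will serve later when identifying $\mathcal C^\bullet$ with the $\Hom$-orthogonal direct sum $\bigoplus_{\mu\in\C}\mathcal C^\bullet_\mu$ invoked in the introduction.
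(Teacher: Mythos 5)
Your proof is correct, and it takes a genuinely different route from the paper's. The paper argues by induction along the canonical filtration $V_i^\bullet$ of Proposition~\ref{prop:filtration}, whose successive quotients are honest Casimir modules, and then invokes Proposition~\ref{prop:homs-casimir}(1) for the base case; this keeps the argument internal to the machinery of filtrations that the paper is building and re-uses the level-$1$ result directly. Your proof instead uses the Bezout identity for the coprime polynomials $(t-\mu_1)^{n_1}$ and $(t-\mu_2)^{n_2}$, together with the centrality of $C$, to kill $\phi$ in one stroke. This is shorter and more self-contained: it does not need the filtration of Proposition~\ref{prop:filtration} nor the base case from Proposition~\ref{prop:homs-casimir}(1), and it is exactly the classical generalized-eigenspace orthogonality argument. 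Both are valid; the paper's phrasing also has the side benefit of rehearsing the filtration technique that recurs later (e.g.\ in the devissage theorems), whereas your approach isolates the purely polynomial mechanism behind the orthogonality. Your observation that one direction suffices because the argument is symmetric in the indices is correct and streamlines the write-up.
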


\begin{proof}
Let us suppose that $\mu_1\neq \mu_2$. For any $V_i\in {\mathcal C}_{\mu_i}^{(n_i)}$ we have to check that $\Hom_{\sl(2)}(V_1,V_2)=(0)$. Let  $V_i^\bullet$ be the filtration of $V_i$ described in Proposition \ref{prop:filtration}. By the first part of Proposition \ref{prop:homs-casimir}  it follows  that $\Hom_{\sl(2)}(V_{1}^1,V_{2}^1)=0$.  Proceeding by induction on the terms of the filtration $V_1^\bullet$ one easily shows by considering its successive quotients  that $\Hom_{\sl(2)}(V_1,V_2^1)=0$.   By means of the natural filtration $V_2^\bullet$  of $V_2$ one shows now in a similar way that $\Hom_{\sl(2)}(V_1,V_2)=0$.
\end{proof}

\section{Rational $\sl(2)$-modules}

\subsection{$\C$-rationality of Casimir operators and endomorphisms} Let us recall that a $\C$-linear endomorphism of a complex vector space of infinite dimension does not have in general a minimal polynomial. Therefore, if we take into account that a rational module has complex dimension equal to $\mathfrak c$ -the cardinality of the continuum- it is remarkable, and also crucial for obtaining further results, that the Casimir operators of rational $\sl(2)$-modules always have a minimal polynomial.  

\begin{thm}\label{t:caracCasC}
If $(W,\rho)$ is a rational $\sl(2)$-representation, then the minimal polynomial  $M_{C_\rho}(t)$ of its Casimir operator  $C_\rho$, considered as a $\C(z)$-linear endomorphism of $W$, has its coefficients in $\C$; that is, $M_{C_\rho}(t)\in \C[t]$. Therefore, the Casimir operator $C_\rho$ understood as a $\C$-linear endomorphism of $W$ has minimal polynomial  $M_{C_\rho}(t)$.
\end{thm}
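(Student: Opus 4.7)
The plan is to show that the minimal polynomial $M(t):=M_{C_\rho}(t)\in\C(z)[t]$ of $C_\rho$ as a $\C(z)$-linear endomorphism of $W$ is invariant under the automorphism $\nabla\colon z\mapsto z+1$ acting on its coefficients; since a rational function in $\C(z)$ fixed by $\nabla$ must be a constant in $\C$, this is equivalent to $M(t)\in\C[t]$.

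The key inputs are that $C_\rho$ is $\C(z)$-linear (it commutes with $\rho(L_0)=z$), that $T_+:=\rho(L_1)$ is $\nabla$-semilinear and commutes with $C_\rho$, and that the Casimir identity $C=L_0(L_0+1)-L_1L_{-1}$ together with $\rho(L_0)=z$ yields $T_+T_-=z(z+1)-C_\rho$, where $T_-:=\rho(L_{-1})$. Applying $T_+$ to the identity $M(C_\rho)=0$ and using the intertwining $T_+\cdot a(z)=a(z+1)\,T_+$ gives $M^\nabla(C_\rho)\,T_+=0$, where $M^\nabla(t)$ denotes $M(t)$ with $\nabla$ applied to its coefficients. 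Post-composing with $T_-$ produces the operator identity
\[
M^\nabla(C_\rho)\,\bigl(z(z+1)I-C_\rho\bigr)=0,
\]
so $M(t)\mid M^\nabla(t)(z(z+1)-t)$ in $\C(z)[t]$. Both sides have degree $n+1$ (where $n=\deg M$) and $M,M^\nabla$ are monic of degree $n$, so a comparison of leading coefficients in $t$ yields a unique identity $M^\nabla(t)(z(z+1)-t)=M(t)(q_0-t)$ with $q_0\in\C(z)$. Substituting $t=z(z+1)$ forces either $q_0=z(z+1)$ — in which case cancellation gives $M^\nabla=M$ and we are done — or $z(z+1)$ is a root of $M$ in $\C(z)$, i.e.\ a $\C(z)$-eigenvalue of $C_\rho$.

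The main obstacle is to rule out this second alternative; I plan to do so by an eigenvalue-chain argument exploiting the companion identity $T_-T_+=z(z-1)-C_\rho$. Assuming $W^{z(z+1)}:=\ker(C_\rho-z(z+1))\neq 0$, I would show by induction on $k\geq 0$ that $W^{(z+k)(z+k+1)}\neq 0$: for any nonzero $v\in W^{(z+k)(z+k+1)}$,
\[
T_-T_+v=\bigl(z(z-1)-(z+k)(z+k+1)\bigr)v=\bigl(-(2k+2)z-k(k+1)\bigr)v,
\]
which is a nonzero element of $\C(z)$ times $v$ for every $k\geq 0$; hence $T_+v\neq 0$, and the semilinearity $C_\rho T_+=T_+C_\rho$ combined with $T_+(a(z)v)=a(z+1)T_+v$ places $T_+v$ inside $W^{(z+k+1)(z+k+2)}$. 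This produces infinitely many pairwise distinct $\C(z)$-eigenvalues of $C_\rho$, contradicting $\dim_{\C(z)}W<\infty$.

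Thus the first alternative must hold: $M^\nabla=M$, every coefficient of $M$ is fixed by $\nabla$ and lies in $\C$, so $M(t)\in\C[t]$. The last assertion in the theorem follows at once: any $P\in\C[t]\subset\C(z)[t]$ that annihilates $C_\rho$ as a $\C$-linear endomorphism also annihilates it as a $\C(z)$-linear endomorphism, so it is divisible by $M$ in $\C(z)[t]$ and therefore, since $M\in\C[t]$, also in $\C[t]$; hence $M(t)$ is the minimal polynomial of $C_\rho$ viewed as a $\C$-linear endomorphism of $W$ as well.
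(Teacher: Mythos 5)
Your proof is correct, but it takes a genuinely different route from the paper's. Both proofs begin the same way: applying $\rho(L_1)$ to $M_{C_\rho}(C_\rho)=0$ and then multiplying by $\rho(L_{-1})$, together with the Casimir formula $\rho(L_1)\rho(L_{-1})=z(z+1)-C_\rho$, yields the divisibility relation
\[
(z(z+1)-t)\,\nabla(M_{C_\rho})(t)=(q_0-t)\,M_{C_\rho}(t)\qquad\text{for some }q_0\in\C(z).
\]
At this point the paper repeats the construction with the \emph{other} Casimir formula $\rho(L_{-1})\rho(L_1)=z(z-1)-C_\rho$, obtaining a second identity $(t-z(z-1))\nabla^{-1}(M_{C_\rho})(t)=(t-\zeta)M_{C_\rho}(t)$; applying $\nabla$ and comparing the two produces $\nabla(M_{C_\rho})=M_{C_\rho}$ by a purely polynomial (gcd/divisibility) argument. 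You instead work with only the one identity and rule out the spurious alternative $M_{C_\rho}(z(z+1))=0$ by an eigenvalue-chain argument: if $z(z+1)$ were a $\C(z)$-eigenvalue, the $\nabla$-semilinearity of $\rho(L_1)$ together with the invertibility of $\rho(L_{-1})\rho(L_1)$ on each such eigenspace propagates this to eigenvalues $(z+k)(z+k+1)$ for all $k\ge 0$, contradicting $\dim_{\C(z)}W<\infty$. Your route trades the symmetry and purely formal divisibility manipulation of the paper for a more analytic/representation-theoretic finiteness argument; both are valid, and yours has the minor advantage of invoking only one of the two Casimir expressions, at the cost of an extra (though short) inductive lemma. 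Your closing paragraph also makes explicit the deduction of the ``Therefore'' clause of the statement (that $M_{C_\rho}$ is also the $\C$-minimal polynomial), which the paper leaves tacit; the argument via monic division is correct. One small wording issue: what you call ``post-composing with $T_-$'' is really right-multiplication by $T_-$ (i.e.\ forming $M^\nabla(C_\rho)T_+T_-$), but the calculation itself is right.
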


\begin{proof} Notice that the field automorphisms $\nabla$ and $\nabla^{-1}$ of $\C(z)$ induce in a natural way automorphisms of the polynomial ring $\C(z)[t]$ such that given $P= a_n t^n+\cdots+a_1 t+a_0\in\C(z)[t]$ one has $\nabla(P):=\nabla(a_n) t^n+\cdots+\nabla(a_1) t+\nabla(a_0)$ and a similar formula holds for $\nabla^{-1}(P)$. The minimal polynomial of $C_\rho$ is the monic generator of its annihilating ideal $\mathrm{Ann}(C_\rho)\subset \C(z)[t]$. One has 
\begin{equation}\label{eq:clave}
\rho(L_1)\circ\rho(L_{-1})\circ \nabla(M_{C_\rho})(C_\rho)=\rho(L_1)\circ M_{C_\rho}(C_\rho)\circ\rho(L_{-1})=0.
\end{equation}
 
By (\ref{align:casimir}) we have $\rho(L_1)\circ\rho(L_{-1})=z(z+1)-C_\rho$, plugging this into (\ref{eq:clave}) we get $$\left( z(z+1)-C_\rho \right)\circ \nabla(M_{C_\rho})(C_\rho)=0.$$ This shows that
 the polynomial $\left(t-z(z+1)\right)\cdot \nabla(M_{C_\rho})(t)$ belongs to $\mathrm{Ann}({C_\rho})$. Since $M_{C_\rho}(t)$ is the monic generator of  this ideal and   $\nabla(M_{C_\rho})(t)$ is also monic of the same degree, there exists $\xi\in\C(z)$ such that we have the equality \begin{equation}\label{eq:pol-eq1}
\left(t-z(z+1)\right)\cdot \nabla(M_{C_\rho})(t)=(t-\xi)\cdot M_{C_\rho}(t) .\end{equation}
Proceeding in a similar way with $\rho(L_{-1})\circ\rho(L_1)\circ \nabla(M_{C_\rho})(C_\rho)$ one proves that there exists $\zeta\in\C(z)$ satisfying the identity
\begin{equation*}
\left(t-z(z-1)\right)\cdot \nabla^{-1}(M_{C_\rho})(t)=(t-\zeta)\cdot M_{C_\rho}(t). \end{equation*} Applying to it the automorphism $\nabla$ we get \begin{equation}\label{eq:pol-eq2}
\left(t-z(z+1)\right)\cdot M_{C_\rho}(t)=(t-\nabla(\zeta))\cdot \nabla(M_{C_\rho})(t). \end{equation} Now, equations (\ref{eq:pol-eq1}) and (\ref{eq:pol-eq2}) straightforwardly imply that $$\nabla(M_{C_\rho})(t)=M_{C_\rho}(t)$$ and thus the claim follows.
\end{proof}

\begin{cor}\label{c:L1invertible} If $(W,\rho)$ is a rational representation, then $\rho(L_1)$ and $\rho(L_{-1})$  are $\C(z)$-semilinear automorphisms of $W$. 
\end{cor}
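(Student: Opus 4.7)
\medskip

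\noindent\textbf{Proof plan.} The strategy is to exploit the two Casimir factorizations from \eqref{align:casimir}, namely
\[
\rho(L_{-1})\circ\rho(L_1) \,=\, z(z-1)-C_\rho \, ,\qquad \rho(L_1)\circ\rho(L_{-1}) \,=\, z(z+1)-C_\rho \, ,
\]
which are genuinely $\C(z)$-linear endomorphisms of $W$. If I can show that both right-hand sides are bijective $\C(z)$-linear endomorphisms, then the first identity forces $\rho(L_1)$ to be injective and $\rho(L_{-1})$ to be surjective, while the second forces $\rho(L_{-1})$ to be injective and $\rho(L_1)$ to be surjective; together this yields the desired conclusion.

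The heart of the argument is therefore to prove that $z(z-1)-C_\rho$ and $z(z+1)-C_\rho$ are invertible as $\C(z)$-linear endomorphisms of the finite dimensional $\C(z)$-vector space $W$. This is exactly where the previously established Theorem \ref{t:caracCasC} enters. Since the minimal polynomial $M_{C_\rho}(t)\in \C[t]$ of $C_\rho$ splits over $\C$ as $\prod_i(t-\lambda_i)^{m_i}$ with $\lambda_i\in\C$, the eigenvalues of $C_\rho$ (viewed as a $\C(z)$-linear operator) all lie in $\C$. The quantities $z(z\pm 1)\in \C(z)$ are manifestly not constants, so they cannot coincide with any $\lambda_i$. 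Consequently $0$ is not an eigenvalue of $z(z\pm 1)-C_\rho$, and these operators are bijective.

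More concretely, one may argue as follows: if $w\in W\setminus\{0\}$ satisfied $(z(z-1)-C_\rho)(w)=0$, then $C_\rho(w)=z(z-1)w$, and applying the polynomial identity $M_{C_\rho}(C_\rho)=0$ yields $M_{C_\rho}\bigl(z(z-1)\bigr)\cdot w=0$ in the $\C(z)$-vector space $W$. Since $w\neq 0$, this demands $M_{C_\rho}\bigl(z(z-1)\bigr)=0$ in $\C(z)$; but $M_{C_\rho}\bigl(z(z-1)\bigr)$ is a monic polynomial in $z$ of degree $2\deg M_{C_\rho}$, hence nonzero---contradiction. The same reasoning with $z(z+1)$ in place of $z(z-1)$ handles $\rho(L_{-1})$.

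The main subtlety---and what could have been the obstacle---is the passage from the algebraic identity $M_{C_\rho}\bigl(z(z\pm 1)\bigr)\cdot w=0$ to a genuine contradiction; this passage works only because the coefficients of $M_{C_\rho}$ are constants, so that $M_{C_\rho}\bigl(z(z\pm 1)\bigr)$ is a nontrivial polynomial in $z$. That is precisely the content of Theorem \ref{t:caracCasC}, which is why this corollary is its immediate companion.
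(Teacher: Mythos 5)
Your proof is correct and follows essentially the same route as the paper: both invoke Theorem \ref{t:caracCasC} to conclude that $z(z\pm 1)-C_\rho$ are invertible $\C(z)$-linear operators (the paper appeals to Cayley--Hamilton and the fact that the eigenvalues of $C_\rho$ lie in $\C$, while you spell out the same fact via a direct minimal-polynomial calculation), and both then read off injectivity and surjectivity of $\rho(L_1)$ and $\rho(L_{-1})$ from the two Casimir factorizations in \eqref{align:casimir}.
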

  
\begin{proof}
Bearing in mind the Cayley-Hamilton theorem, it follows that Theorem~\ref{t:caracCasC} implies that all eigenvalues of $C_\rho$ belong to $\C$, hence the $\C(z)$-endomorphisms  $C_\rho-z(z-1)$, $C_\rho-z(z+1)$  are invertible. By the first equality of (\ref{align:casimir}) one has $C_\rho-z(z-1)=-\rho(L_{-1})\circ \rho(L_1)$, thus $\rho(L_1)$ is injective and $\rho(L_{-1})$ is surjective. Similarly, the second equality of   (\ref{align:casimir})  gives $C_\rho-z(z+1)=-\rho(L_1)\circ \rho(L_{-1})$ and thus  $\rho(L_1)$ is surjective and $\rho(L_{-1})$ is injective.
\end{proof}

\begin{prop}\label{p:finite+extension} 
Let $(W,\rho)$ be a rational $\sl(2)$-representation. For every $T\in \End_{\sl(2)}(W)$  the minimal polynomial  $M_{T}(t)$ of $T$ considered as a $\C(z)$-linear endomorphism of $W$ has its coefficients in $\C$; that is, $M_{T}(t)\in \C[t]$. Therefore,  $T$ understood as a $\C$-linear endomorphism of $W$ has minimal polynomial  $M_{T}(t)$.
\end{prop}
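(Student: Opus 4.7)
The plan is to mimic the proof of Theorem~\ref{t:caracCasC}, replacing the Casimir operator by the general $\sl(2)$-endomorphism $T$. The argument actually simplifies because Corollary~\ref{c:L1invertible} provides the invertibility of $\rho(L_1)\circ\rho(L_{-1})$, which lets me cancel the factor $z(z+1)-C_\rho$ directly instead of juggling two polynomial identities as was done in Theorem~\ref{t:caracCasC}.

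First I would verify that $T$ is $\C(z)$-linear, so that its minimal polynomial $M_T(t)\in\C(z)[t]$ is well defined. Since $T$ commutes with $\rho(L_0)$, which is multiplication by $z$, it is $\C[z]$-linear, and because $W$ is a $\C(z)$-vector space $\C[z]$-linearity upgrades automatically to $\C(z)$-linearity. Next I would carry out the key semilinear commutation identity: as $\rho(L_1)$ is $\nabla$-semilinear and commutes with $T$, for any $P(t)=\sum a_i(z)t^i\in\C(z)[t]$ one obtains
\[
\rho(L_1)\circ P(T)=\nabla(P)(T)\circ\rho(L_1),
\]
where $\nabla$ acts on $\C(z)[t]$ coefficientwise. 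Applying this with $P=M_T$ and composing on the right with $\rho(L_{-1})$ yields
\[
0=\rho(L_1)\circ M_T(T)\circ\rho(L_{-1})=\nabla(M_T)(T)\circ\rho(L_1)\circ\rho(L_{-1})=\nabla(M_T)(T)\circ\bigl(z(z+1)-C_\rho\bigr),
\]
where the last equality is \eqref{align:casimir}.

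By Corollary~\ref{c:L1invertible}, $z(z+1)-C_\rho=\rho(L_1)\circ\rho(L_{-1})$ is a $\C(z)$-linear automorphism of $W$, so composing on the right with its inverse gives $\nabla(M_T)(T)=0$. Because $\nabla(M_T)$ is monic of the same degree as $M_T$, the difference $M_T-\nabla(M_T)$ has strictly smaller degree and still annihilates $T$, so it must vanish by minimality; hence $\nabla(M_T)=M_T$. Each coefficient of $M_T$ is therefore fixed by $\nabla$ and thus lies in $\C(z)^\nabla=\C$, giving $M_T(t)\in\C[t]$. The final assertion follows at once, since a polynomial with coefficients in $\C$ annihilates $T$ as a $\C$-endomorphism if and only if it annihilates $T$ as a $\C(z)$-endomorphism, so the two minimal polynomials agree. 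I do not anticipate a serious obstacle; the only point requiring care is the legitimacy of cancelling by $z(z+1)-C_\rho$, which is already ensured by Theorem~\ref{t:caracCasC} via Corollary~\ref{c:L1invertible}.
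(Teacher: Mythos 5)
Your proof is correct and follows essentially the same route as the paper's: commute $\rho(L_1)$ past $M_T(T)$ to obtain a $\nabla$-twisted annihilating polynomial, invoke Corollary~\ref{c:L1invertible} for invertibility, and conclude $\nabla(M_T)=M_T$ from monicity and degree. The only (cosmetic) difference is the detour of composing on the right with $\rho(L_{-1})$ and cancelling $z(z+1)-C_\rho$; this is unnecessary, since $\rho(L_1)\circ M_T(T)=\nabla(M_T)(T)\circ\rho(L_1)=0$ together with the surjectivity of $\rho(L_1)$ already yields $\nabla(M_T)(T)=0$, which is precisely the paper's argument (with $\nabla^{-1}$ in place of $\nabla$, a symmetric choice).
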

	
\begin{proof} One has $\rho(L_1)\circ \nabla^{-1}(M_T)(T)=M_T(T)\circ \rho(L_1)=0.$ Since $\rho(L_1)$ is invertible by Corollary \ref{c:L1invertible}, it follows that $\nabla^{-1}(M_T)(t)$ belongs to the annihilating ideal $\mathrm{Ann}(T)\subset\C(z)[t]$ of $T$ as a $\C(z)$-linear map.  Taking into account that $\mathrm{Ann}(T)$ is generated by the monic polynomial $M_{T}(t)$ and that $\deg(\nabla^{-1}(M_T)(t))=\deg(M_T(t))$, we conclude $\nabla^{-1}(M_T)(t)=M_{T}(t)$ and the proof is finished.
\end{proof}

\subsection{Categorical properties of rational modules}

\begin{prop}\label{p:abelian}
	${\mathcal R}$ is a $\C$-linear abelian subcategory of the category  $\C(z)\text{-}\mathrm{Vect}_\mathrm{fd}$ of finite dimensional $\C(z)$-vector spaces.
\end{prop}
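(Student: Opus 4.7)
The plan is to verify three things: that $\mathcal R$ really sits inside $\C(z)\text{-}\mathrm{Vect}_\mathrm{fd}$ as a (not necessarily full) subcategory, that it is $\C$-linear, and that it is abelian. The objects of $\mathcal R$ are, by Definition \ref{d:rational-repr}, finite dimensional $\C(z)$-vector spaces, so the only nontrivial point on the subcategory side is that morphisms in $\mathcal R$ are $\C(z)$-linear, not just $\C[z]$-linear.

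First, I would show that every $\sl(2)$-homomorphism $\phi\colon W_1\to W_2$ between rational modules is automatically $\C(z)$-linear. Since $\phi$ commutes with $\rho(L_0)$, it is $\C[z]$-linear. For a fraction $q/p\in\C(z)$ and $w\in W_1$, the relation $p\cdot((q/p)\cdot w)=q\cdot w$ together with $\C[z]$-linearity of $\phi$ gives $p\cdot\phi((q/p)\cdot w)=\phi(q\cdot w)=q\cdot\phi(w)$, and since multiplication by the nonzero polynomial $p$ is an automorphism of the $\C(z)$-vector space $W_2$, we can divide to obtain $\phi((q/p)\cdot w)=(q/p)\cdot\phi(w)$. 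This also confirms, via Remark \ref{rem:morphisms-rat-reps}, the $\C$-linearity of $\mathcal R$: $\Hom_{\mathcal R}(W_1,W_2)$ is a $\C$-vector subspace of $\Hom_{\C(z)}(W_1,W_2)$, and composition is $\C$-bilinear.

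Next, I would verify the abelian axioms by exhibiting $\mathcal R$ as closed under the natural abelian structure inherited from $\sl(2)\Mod$. The zero module and finite direct sums of rational modules are clearly rational, since a finite direct sum of finite dimensional $\C(z)$-vector spaces is again such. For any morphism $\phi\colon W_1\to W_2$ in $\mathcal R$, its kernel computed in $\sl(2)\Mod$ is an $\sl(2)$-submodule of $W_1$; by the $\C(z)$-linearity of $\phi$ established above, $\ker\phi$ is in fact a $\C(z)$-subspace of $W_1$, hence is itself a finite dimensional $\C(z)$-vector space endowed with an $\sl(2)$-action, i.e., an object of $\mathcal R$. The same reasoning applies to $\Ima\phi$, and therefore the cokernel $W_2/\Ima\phi$ is a quotient $\C(z)$-vector space of finite dimension carrying a natural $\sl(2)$-structure, so it belongs to $\mathcal R$ as well.

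Finally, since $\sl(2)\Mod$ is abelian and $\mathcal R$ is a (non necessarily full) additive subcategory closed under kernels and cokernels, the canonical factorization $\Coker(\ker\phi)\xrightarrow{\sim}\Ker(\coker\phi)$ in $\sl(2)\Mod$ takes place inside $\mathcal R$, so every mono in $\mathcal R$ is the kernel of its cokernel and every epi is the cokernel of its kernel, finishing the abelian axioms. The only subtle step is the first one — passing from $\C[z]$-linearity to $\C(z)$-linearity of morphisms — and once that is in place, everything else follows formally from the abelianness of $\sl(2)\Mod$; I do not anticipate a genuine obstacle.
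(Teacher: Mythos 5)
Your proof is correct and follows the same route as the paper: establish that $\sl(2)$-morphisms between rational modules are automatically $\C(z)$-linear (this is precisely the content of Remark \ref{rem:morphisms-rat-reps}, which the paper's proof invokes), then check closure under kernels, cokernels, and direct sums so that the abelian structure is inherited. The paper states the latter verification is "straightforward to check," and you have simply supplied those details; your observation that $\mathcal R$ is not a full subcategory of $\C(z)\text{-}\mathrm{Vect}_\mathrm{fd}$, so the abelian axioms must actually be checked rather than inherited, is a point worth making explicit.
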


\begin{proof}
	Thanks to  Remark \ref{rem:morphisms-rat-reps}, it follows that ${\mathcal R}$ is a $\C$-linear subcategory of the category of finite dimensional $\C(z)$-vector spaces, which is an abelian category. It is straightforward to check that ${\mathcal R}$ is an abelian category. Moreover, since every exact sequence of rational $\sl(2)$-modules is an exact sequence of $\C(z)$-vector spaces, the claim is proved. \end{proof}
 
\begin{thm}\label{thm:Jordan-Holder-Krull-Schmidt} $\mathcal R$ is a finite-length abelian category and thus every rational module has rational Jordan-H\"older filtrations. Moreover,  $\mathcal R$ is a Krull-Schmidt category and therefore every rational module has a Remak decomposition into a finite direct sum of indecomposable rational modules, unique up to reordering of the direct summands.  
\end{thm}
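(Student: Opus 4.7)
The plan is to derive both assertions from the observation that, by Proposition \ref{p:abelian} and Remark \ref{rem:morphisms-rat-reps}, subobjects of $W\in\mathcal R$ are exactly the $\sl(2)$-stable $\C(z)$-subspaces of $W$. Indeed, any $\sl(2)$-morphism $\phi\colon W_1\to W_2$ between rational modules is automatically $\C(z)$-linear, since $p(z)\cdot\phi(w/p(z))=\phi(w)$ and $p(z)$ acts invertibly on the rational target $W_2$; consequently the monomorphisms in $\mathcal R$ are $\C(z)$-linear injections. Any strictly monotone chain of such subobjects therefore has length bounded by $\dim_{\C(z)}W<\infty$, so $\mathcal R$ satisfies both the ascending and descending chain conditions. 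Hence $\mathcal R$ is a finite-length abelian category and the Jordan--H\"older theorem for abelian categories yields composition series for every rational module.

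For the Krull--Schmidt property, the existence of a Remak decomposition is immediate by induction on $\dim_{\C(z)}W$: either $W$ is indecomposable, or $W=W_1\oplus W_2$ with $0<\dim_{\C(z)}W_i<\dim_{\C(z)}W$ and the inductive hypothesis applies. For uniqueness, I would invoke the classical Krull--Schmidt theorem and reduce to showing that $\End_{\mathcal R}(W)$ is a local ring whenever $W$ is indecomposable. Given $T\in\End_{\mathcal R}(W)$, Proposition \ref{p:finite+extension} provides a minimal polynomial $M_T(t)\in\C[t]$, which splits over the algebraically closed field $\C$. The associated primary decomposition
\[
W\,=\,\bigoplus_\lambda \ker(T-\lambda\Id)^{n_\lambda}
\]
is a decomposition in $\mathcal R$, since each generalized eigenspace is both $\sl(2)$-stable (because $T$ commutes with the $\sl(2)$-action) and a $\C(z)$-subspace of $W$. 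Indecomposability of $W$ forces a single eigenvalue $\lambda\in\C$, whence $T=\lambda\Id+N$ with $N$ nilpotent; in particular every endomorphism of $W$ is either invertible or nilpotent.

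The remaining step is to verify that the nilpotent endomorphisms form an ideal in $\End_{\mathcal R}(W)$. If $T,T'$ are nilpotent but $S:=T+T'$ were invertible, then applying the preceding analysis to $S^{-1}T$ would yield either that $S^{-1}T$ is invertible, forcing $T$ to be invertible, or that $S^{-1}T$ is nilpotent, in which case $\Id-S^{-1}T=S^{-1}T'$ would be invertible and hence $T'$ invertible; both alternatives are contradictions. Thus the non-units form an ideal, $\End_{\mathcal R}(W)$ is local, and the classical Krull--Schmidt theorem supplies the uniqueness of the Remak decomposition. The only delicate point in the argument is this final local-ring check; everything else rests on the finite length of $\mathcal R$ just established, combined with the algebraic closedness of $\C$ as made available by Proposition \ref{p:finite+extension}.
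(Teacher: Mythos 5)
Your proof is correct, but for the Krull--Schmidt part it takes a genuinely different and more self-contained route than the paper. The paper, having observed that $\mathcal R$ is a finite-length abelian category (essentially your argument: subobjects are $\C(z)$-subspaces, so chains are bounded by $\dim_{\C(z)}W$), immediately invokes Krause's general results \cite[Lemma 5.1, Theorem 5.5]{Krause} to conclude Krull--Schmidt, together with Atiyah's theorem \cite[Theorem 1]{Atiyah} for the Remak decomposition. You instead verify by hand that $\End_{\sld}(W)$ is local for indecomposable $W$, exploiting the fact---special to this category and supplied by Proposition \ref{p:finite+extension}---that $\sld$-endomorphisms of rational modules admit minimal polynomials in $\C[t]$. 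The generalized eigenspace decomposition, which is $\sld$-stable and $\C(z)$-linear, then forces a unique eigenvalue, so every endomorphism is invertible or nilpotent; this is exactly the content the paper proves later (and with the same idea) as Proposition \ref{prop:indec-endomorphism-iso-or-nilpotent}, but there it appears after the present theorem rather than being used to prove it. Your route has the advantage of not relying on external references and of making visible the role of Theorem \ref{t:caracCasC}/Proposition \ref{p:finite+extension}; the paper's route is shorter by delegating to general category-theoretic machinery. Two small points worth polishing in your argument: (i) when verifying that the non-units form an ideal you checked closure under addition, but should also note that for $T$ nilpotent and $U$ arbitrary, $TU$ and $UT$ are non-units (immediate, since a nilpotent $T$ on nonzero $W$ is neither injective nor surjective); (ii) the absorption of the unit case ``$S^{-1}T$ invertible $\Rightarrow T$ invertible'' silently uses that $S$ is invertible, which was the hypothesis being contradicted, so the logic is sound but deserves a word.
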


\begin{proof} Since $\mathcal R$ is an abelian subcategory of $\C(z)\text{-}\mathrm{Vect}_\mathrm{fd}$, it follows that every rational module $(W,\rho)$ is both Artinian and Noetherian. Hence $\mathcal R$ is a finite-length abelian category. By \cite[Lemma 5.1 and Theorem 5.5]{Krause} it follows that $\mathcal R$ is Krull-Schmidt. The existence  of a Remak decomposition, which is essentially the Krull-Schmidt Theorem, follows from \cite[Theorem 1]{Atiyah} taking into account  \cite[Lemma 5.1]{Krause}.
\end{proof}

\begin{defn}
For a rational representation $W$, we define its length, $\length(W)$, as the maximal length of all filtrations of $W$  by rational subrepresentations; that is, by subobjects of $W$ in ${\mathcal R}$. 
\end{defn}

It is clear that for any rational $\sl(2)$-module $W$ it holds that:
	\[ 
	\length(W)\, \leq \, \dim_{\C(z)} W.
	\]
Now we prove that $\mathcal R$ has a ``reasonable size'' to have a Grothendieck group. 
\begin{thm}\label{thm:rational-essentially-small} $\mathcal R$ is an essentially small category. Therefore, $\mathcal{RC}^\bullet$, $\mathcal{RC}$, $\mathcal{RC}_\mu^\bullet$ and $\mathcal{RC}_\mu$, for every $\mu\in \C$, are also essentially small categories.
\end{thm}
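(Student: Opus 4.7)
The plan is to show directly that the isomorphism classes of objects of $\mathcal R$ form a set, and then deduce the statement for the subcategories from the fact that they are full subcategories of $\mathcal R$.

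First, I would observe that every object $(W,\rho)\in \mathcal R$ has an underlying $\C(z)$-vector space of finite dimension, hence $W$ is $\C(z)$-linearly isomorphic to $\C(z)^n$ for some $n\in \mathbb N$. Consequently, every rational module is $\sl(2)$-isomorphic to a rational module structure on one of the vector spaces in the set $\{\C(z)^n\}_{n\in\mathbb N}$: transporting the action of $\rho$ along a $\C(z)$-linear isomorphism $W\simeq \C(z)^n$ yields an $\sl(2)$-isomorphic rational module whose underlying $\C(z)$-vector space is $\C(z)^n$.

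Next, I would bound the collection of rational $\sl(2)$-module structures on a fixed $\C(z)^n$. Recall that the $\C[z]$-module structure forces $\rho(L_0)$ to act as multiplication by $z$, so the structure is completely determined by the pair $(\rho(L_{-1}),\rho(L_1))$. These are elements of $\End^{-1}_{\C(z)}(\C(z)^n)\times \End^{1}_{\C(z)}(\C(z)^n)\subset \End_\C(\C(z)^n)\times \End_\C(\C(z)^n)$, which is a set since $\C(z)^n$ is a set. Hence the rational $\sl(2)$-module structures on $\C(z)^n$ form a set $\mathcal S_n$. Taking the union $\bigsqcup_{n\in\mathbb N}\mathcal S_n$ one obtains a set that surjects onto the isomorphism classes of $\mathcal R$, so these form a set, proving that $\mathcal R$ is essentially small.

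Finally, I would deduce the statement for $\mathcal{RC}^\bullet$, $\mathcal{RC}$, $\mathcal{RC}_\mu^\bullet$ and $\mathcal{RC}_\mu$ by observing that each of these is a full subcategory of $\mathcal R$; hence their isomorphism classes inject into $\widehat{\operatorname{Ob}}(\mathcal R)$ and thus form a set as well. I do not expect any serious obstacle: the content of the argument is purely set-theoretic and rests only on the fact, already established in the excerpt, that the underlying $\C(z)$-vector space of a rational representation is finite dimensional and that $\rho(L_0)$ acts by $z$.
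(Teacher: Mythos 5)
Your proposal is correct and follows essentially the same route as the paper: fix a representative $\C(z)^n$ for each dimension, transport the $\sl(2)$-structure along a $\C(z)$-linear isomorphism, observe that the collection of structures on $\C(z)^n$ is a set (the paper makes this explicit by identifying structures with pairs of matrices $\sigma(z),\tau(z)\in M(n,\C(z))$ and quotienting by the conjugation action of $GL(n,\C(z))$, but the content is the same), and take the union over $n$. The deduction for the subcategories via fullness matches the paper as well.
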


\begin{proof} Let $(W,\rho)$ be an $\sl(2)$-rational representation such that $\dim_{\C(z)}W=m$. Choosing an isomorphism of $\C(z)$-vector spaces  $\varphi\colon W\xrightarrow{\sim} \C(z)^m$ one has that $(W,\rho)$ is $\mathcal R$-isomorphic to $(\C(z)^m,\rho')$ where $\rho'=\varphi\circ\rho\circ\varphi^{-1}$.  One has $$\rho'(L_1)=\sigma(z)\circ\nabla,\quad \rho'(L_0)=z,\quad \rho'(L_{-1})=\tau(z)\circ\nabla^{-1}$$ for certain $\sigma(z),\tau(z)\in M(n,\C(z))$ such that (\ref{eq:commutation-relation}) holds. One has an identification $$\mathcal R(\C(z)^m)\simeq\{(\sigma(z),\tau(z))\in  M(n,\C(z))^2\colon \sigma(z)\circ\tau(z+1)-\tau(z)\circ\sigma(z-1)=2z\Id_n\}.$$ By Remark \ref{rem:morphisms-rat-reps} it follows that $T(z)\in GL(n,\C(z))$ acts on $\mathcal R(\C(z)^m)$ by conjugation. In terms of the previous identification this action is written $$T(z)\bullet(\sigma(z),\tau(z))=(T(z)\circ\sigma(z)\circ T(z+1)^{-1}, T(z)\circ\tau(z)\circ T(z-1)^{-1}).$$ Thus, the isomorphism classes of $\sl(2)$-representations on $\C(z)^m$ is given by the set $$\Iso(\mathcal R(\C(z)^m))= \mathcal R(\C(z)^m)/GL(m,\C(z)).$$
Finally, the isomorphism classes of rational representations in given by the set $$\Iso(\mathcal R)=\coprod_{m\geq 0}\Iso(\mathcal R(\C(z)^m).$$Therefore $\mathcal R$ is essentially small. The other claims follow immediately since all the categories considered are full subcategories of $\mathcal R$.
\end{proof}

\subsection{Decomposition of the category of rational modules}

\begin{thm}\label{thm:finite+decomp}
	The abelian category ${\mathcal R}$ of rational $\sl(2)$-modules decomposes into the $\Hom$-orthogonal direct sum of the abelian subcategories of generalized rational Casimir modules:
		\[
		{\mathcal R}\,=\, \bigoplus_{\mu\in \C } {\mathcal{RC}}^\bullet_{\mu}\, . 
		\] This is compatible with the coproduct decomposition $\mathcal{RC}\,=\, \coprod_{\mu\in \C} {\mathcal{RC}}_{\mu}\, .$
\end{thm}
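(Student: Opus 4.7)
The plan is to combine Theorem \ref{t:caracCasC} with a primary decomposition argument and the $\Hom$-orthogonality already established in Proposition \ref{prop:orthogonality-generalized-casimirs}. Let $(W,\rho)\in\mathcal R$. By Theorem \ref{t:caracCasC}, its Casimir operator $C_\rho$ has a minimal polynomial $M_{C_\rho}(t)\in\C[t]$, which since $\C$ is algebraically closed factors as $M_{C_\rho}(t)=\prod_{i=1}^{r}(t-\mu_i)^{n_i}$ with pairwise distinct $\mu_i\in\C$. The classical primary decomposition for a single operator on a finite dimensional $\C(z)$-vector space then yields a direct sum decomposition of $\C(z)$-vector spaces
\[
W\,=\,\bigoplus_{i=1}^{r} W_{\mu_i},\qquad W_{\mu_i}:=\ker(C_\rho-\mu_i)^{n_i}\, ,
\]
and the same primary decomposition remains valid when $C_\rho$ is viewed as a $\C$-linear endomorphism, again by Theorem \ref{t:caracCasC}. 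Since $C_\rho\in\End_{\sl(2)}(W)$ commutes with $\rho(L_{\pm 1})$, each subspace $W_{\mu_i}$ is stable under $\rho(L_{\pm 1})$ and hence is an $\sl(2)$-submodule; it is then a rational module and, by construction, an object of $\mathcal{RC}^\bullet_{\mu_i}$.

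The next step is to upgrade this object-wise decomposition to a categorical direct sum in the sense of $\Hom$-orthogonal additive categories. The $\Hom$-orthogonality of the summands $\mathcal{RC}^\bullet_{\mu}$ for distinct $\mu$ is precisely the content of Proposition \ref{prop:orthogonality-generalized-casimirs} (applied to the union of the subcategories $\mathcal C_\mu^{(n)}$, which exhausts $\mathcal C^\bullet_\mu$ and hence its subcategory $\mathcal{RC}^\bullet_\mu$). This forces any morphism $\phi\colon W\to W'$ in $\mathcal R$ between rational modules with primary decompositions $W=\bigoplus_i W_{\mu_i}$ and $W'=\bigoplus_j W'_{\nu_j}$ to split as a direct sum of morphisms $\phi_i\colon W_{\mu_i}\to W'_{\mu_i}$, since the off-diagonal components land in $\Hom$-orthogonal pairs and therefore vanish. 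This is exactly the statement that
\[
\mathcal R\,=\,\bigoplus_{\mu\in\C}\mathcal{RC}^\bullet_{\mu}
\]
as a $\Hom$-orthogonal direct sum of abelian subcategories.

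Finally, compatibility with $\mathcal{RC}=\coprod_{\mu\in\C}\mathcal{RC}_\mu$ is immediate: if $W\in\mathcal{RC}_\mu$, then $C_\rho=\mu\Id_W$, so $M_{C_\rho}(t)=t-\mu$, and the primary decomposition of $W$ above has a single summand sitting inside $\mathcal{RC}_\mu\subset\mathcal{RC}^\bullet_\mu$. Hence the embedding $\mathcal{RC}\hookrightarrow\mathcal R$ respects both decompositions index by index.

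The only potentially delicate point is verifying that a $\C(z)$-linear primary decomposition of $C_\rho$ agrees with its $\C$-linear primary decomposition, since $W$ has uncountable $\C$-dimension; but this is guaranteed by Theorem \ref{t:caracCasC} because the minimal polynomial is the same over $\C$ and over $\C(z)$, and the generalized eigenspaces $\ker(C_\rho-\mu)^{n}$ are intrinsically defined as kernels of $\C$-linear (equivalently $\C(z)$-linear) operators. Once this is in place, the remaining verifications are routine, and the Krull-Schmidt and finite length properties from Theorem \ref{thm:Jordan-Holder-Krull-Schmidt} could alternatively be invoked to shortcut part of the argument.
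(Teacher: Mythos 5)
Your proposal is correct and follows essentially the same approach as the paper: apply Theorem \ref{t:caracCasC} to obtain a $\C$-valued minimal polynomial, take the primary decomposition of $C_\rho$, observe that the generalized eigenspaces are $\sl(2)$-submodules since $C_\rho$ is central, and then invoke Proposition \ref{prop:orthogonality-generalized-casimirs} for $\Hom$-orthogonality. You add a slightly more detailed discussion of the categorical direct sum structure and of the compatibility with $\mathcal{RC}=\coprod_\mu\mathcal{RC}_\mu$, but the core argument coincides with the paper's.
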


\begin{proof}
	Let $(W,\rho)$ be a rational $\sl(2)$-module. Theorem~\ref{t:caracCasC} implies that the minimal  $M_{C_\rho}(t)$ polynomial of its Casimir operator $C_\rho$ as a $\C(z)$-endomorphism of $W$  space splits over $\C$; that is,
	\[
	M_{C_\rho}(t)\,=\, (t-\mu_1)^{n_1}\cdot \ldots \cdot (t-\mu_r)^{n_r}
	\]
	(where $\mu_i\neq \mu_j$ for $i\neq j$) and, accordingly, $W$ decomposes as a $\C(z)$-vector space as follows:
	\[
	W\,= \, \ker(C_\rho-\mu_1)^{n_1}\oplus \cdots \oplus \ker(C_\rho-\mu_r)^{n_r}
	\, . \]
	
	Having in mind that $C_\rho$ commutes with $\rho(L_i)$ for $i=-1,0,1$, the previous expression is also a decomposition as $\sl(2)$-modules, where  $\ker(C_\rho-\mu_i)^{n_i}$ belongs to $ {\mathcal R}_{\mu_i}^{(n_i)}$. The orthogonality of the decomposition follows from Proposition \ref{prop:orthogonality-generalized-casimirs}. Thus $\mathcal R$ is the direct sum of the abelian subcategories $\mathcal{RC}^\bullet_\mu$, see  \cite[page 3]{Etingof}.
\end{proof}

As a formal consequence we get the:

\begin{cor} For every $\mu\in\C$, the abelian category $\mathcal{RC}_\mu^{\bullet}$ is closed under extensions in $\Ra$.
\end{cor}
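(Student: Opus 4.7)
The plan is to deduce this from Theorem \ref{thm:finite+decomp} combined with the Hom-orthogonality of Proposition \ref{prop:orthogonality-generalized-casimirs}. Given a short exact sequence
\[
0 \longrightarrow W_1 \longrightarrow W \xrightarrow{\;p\;} W_2 \longrightarrow 0
\]
in $\Ra$ with $W_1, W_2 \in \RC_\mu^\bullet$, I would first apply Theorem \ref{thm:finite+decomp} to the middle term and write $W = \bigoplus_{\nu \in \C} W_{(\nu)}$ with $W_{(\nu)} \in \RC_\nu^\bullet$; only finitely many summands are nonzero since the minimal polynomial of the Casimir operator on $W$, which is a polynomial in $\C[t]$ by Theorem \ref{t:caracCasC}, has finitely many roots.

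The next step is to pin down each summand by a pair of orthogonality arguments. For each $\nu \neq \mu$, the composition $W_1 \hookrightarrow W \twoheadrightarrow W_{(\nu)}$ is a morphism from an object of $\RC_\mu^\bullet$ to an object of $\RC_\nu^\bullet$, hence vanishes by Proposition \ref{prop:orthogonality-generalized-casimirs}; consequently $W_1$ is contained in $W_{(\mu)}$. Dually, for $\nu \neq \mu$ the composition $W_{(\nu)} \hookrightarrow W \xrightarrow{p} W_2$ is a morphism from $\RC_\nu^\bullet$ to $\RC_\mu^\bullet$, and therefore vanishes as well. Exactness then forces $W_{(\nu)} \subseteq \ker p = \operatorname{im}(W_1) \subseteq W_{(\mu)}$, and since the decomposition is direct, $W_{(\nu)} \cap W_{(\mu)} = 0$, so $W_{(\nu)} = 0$. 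Thus $W = W_{(\mu)} \in \RC_\mu^\bullet$.

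I do not anticipate any real obstacle: the argument is purely formal once one has the orthogonal decomposition of $\Ra$ at one's disposal. The only mild subtlety is that Proposition \ref{prop:orthogonality-generalized-casimirs} is phrased in terms of the categories $\mathcal C_{\mu_1}^{(n_1)}$, $\mathcal C_{\mu_2}^{(n_2)}$ of fixed exponent, but since each object of $\RC_\mu^\bullet$ has some finite exponent this covers the situation at hand. In particular, this is the standard fact that in an $\Hom$-orthogonal direct sum decomposition of an abelian category the summands are automatically extension-closed.
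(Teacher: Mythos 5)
Your proposal is correct and matches the paper's intent: the paper presents this corollary as a ``formal consequence'' of the orthogonal direct sum decomposition $\Ra = \bigoplus_{\mu\in\C}\RC^\bullet_\mu$ (Theorem~\ref{thm:finite+decomp}), and your argument simply spells out the standard reasoning behind that formal consequence, using exactly the ingredients the paper points to — the decomposition of the middle term and the $\Hom$-orthogonality of Proposition~\ref{prop:orthogonality-generalized-casimirs}. The one subtlety you flag, that orthogonality is stated for fixed exponents but each object of $\RC_\nu^\bullet$ has some finite exponent, is handled correctly.
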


\subsection{Indecomposable and irreducible rational modules}

Let us recall that a rational $\sl(2)$-module $(W,\rho)$ is $\sl(2)$-inde\-com\-po\-sable or simply indecomposable, (resp. ${\mathcal R}$-indecomposable) if it can not be written as $W=W_1\oplus W_2$ where $W_1$, $W_2$ are  $\sl(2)$-modules (resp. rational $\sl(2)$-modules).   

\begin{prop}\label{prop:indecomposable-generalized-Casimir} A rational $\sl(2)$-module is indecomposable if and only if it is $\mathcal R$-indecomposable. Moreover, every indecomposable rational module is a generalized Casimir module. Hence, we have $\Ind_{\mathcal R}(\mathcal R)=\Ind(\mathcal R)=\Ind_{\mathcal{RC}^\bullet}(\mathcal{RC}^\bullet)=\Ind(\mathcal{RC}^\bullet)$.
\end{prop}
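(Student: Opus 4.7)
The plan is to proceed in three conceptual steps, with the final string of equalities following formally. The crucial preliminary observation, which I would establish first, is that every $\sl(2)$-endomorphism $T$ of a rational module $(W,\rho)$ is automatically $\C(z)$-linear, not merely $\C[z]$-linear. Indeed, commutation with $\rho(L_0)$ gives $\C[z]$-linearity, and for any $\lambda=p(z)/q(z)\in\C(z)$ with $q(z)\neq 0$ and any $w\in W$, setting $u:=q(z)^{-1}w\in W$ and applying $T$ to the identity $q(z)\cdot u=w$ yields $q(z)T(u)=T(w)$, whence $T(u)=q(z)^{-1}T(w)$ and therefore $T(\lambda w)=p(z)T(u)=\lambda T(w)$. (This is tacitly used already in the statement of Proposition~\ref{p:finite+extension}.)

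With this in hand, the equivalence \emph{$\sl(2)$-indecomposable $\Leftrightarrow$ $\mathcal R$-indecomposable} follows at once. One direction is trivial: any decomposition within $\mathcal R$ is a fortiori an $\sl(2)$-decomposition. Conversely, if $W=W_1\oplus W_2$ as $\sl(2)$-modules with corresponding projector $\pi_1\in\End_{\sl(2)}(W)$, then by the first step $\pi_1$ is $\C(z)$-linear, so $W_1=\pi_1(W)$ is a $\C(z)$-subspace of $W$; being a subspace of a finite-dimensional $\C(z)$-vector space, it is itself finite-dimensional over $\C(z)$, hence a rational submodule. The same holds for $W_2$, yielding a decomposition in $\mathcal R$.

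Finally, to see that every indecomposable rational module is generalized Casimir, I would invoke Theorem~\ref{thm:finite+decomp}: for $(W,\rho)\in\mathcal R$ the decomposition $W=\bigoplus_{\mu\in\C}\ker(C_\rho-\mu)^{n_\mu}$ is a direct sum in $\mathcal R$ with summands in $\mathcal{RC}^\bullet_\mu$, so $\sl(2)$-indecomposability forces all but one summand to vanish. Conversely, if $W\in\mathcal{RC}^\bullet_\mu$, then any $\sl(2)$-submodule of $W$ is automatically annihilated by $(C_\rho-\mu)^n$ for some $n$ and hence lies in $\mathcal{RC}^\bullet_\mu$; combined with the previous step, this shows that indecomposability in $\mathcal R$, in $\mathcal{RC}^\bullet$, and as an $\sl(2)$-module all coincide for such $W$, yielding the chain of equalities $\Ind_{\mathcal R}(\mathcal R)=\Ind(\mathcal R)=\Ind_{\mathcal{RC}^\bullet}(\mathcal{RC}^\bullet)=\Ind(\mathcal{RC}^\bullet)$. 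There is no serious obstacle once the automatic $\C(z)$-linearity is secured; without it, an $\sl(2)$-direct summand would only a priori be a $\C[z]$-submodule, and not obviously a $\C(z)$-subspace of $W$.
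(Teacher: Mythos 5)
Your proof is correct and rests on the same core observation the paper uses, namely that $\sl(2)$-morphisms between rational modules are automatically $\C(z)$-linear (this is exactly the content of Remark~\ref{rem:morphisms-rat-reps}): the paper shows directly that an $\sl(2)$-direct summand of $W$ is a $\C(z)$-subspace via the computation $q(z)\cdot\xi\cdot(v_1,0)=(p(z)v_1,0)$, while you package the same calculation as ``the idempotent $\pi_1$ is $\C(z)$-linear, hence its image is a $\C(z)$-subspace''; both then conclude the ``generalized Casimir'' claim by invoking Theorem~\ref{thm:finite+decomp}. The phrasing via endomorphisms is a slightly cleaner abstraction of the paper's direct summand argument, but the underlying idea and logical structure are the same.
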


\begin{proof} For the first statement, one implication is obvious. On the other hand, let $W$ be a rational $\sl(2)$-module that is  $\mathcal R$-indecomposable and suppose that there is a decomposition $W=V_1\oplus V_2$ as $\sl(2)$-modules. It follows that $V_1$, $V_2$ are $\C[z]$-submodules. Given $v_1\in V_1$ and $\xi=\frac{p(z)}{q(z)}\in\C(z)$, since $W$ is a $\C(z)$-vector space one has 
	\[
	\xi\cdot (v_1,0) =(u_1,u_2),
	\]
for certain $u_1\in V_1$, $u_2\in V_2$. On the other hand 
	\[ 
	q(z)\cdot(\xi\cdot v)=(q(z)\cdot u_1,q(z)\cdot u_2)=
	(q(z)\cdot\xi)\cdot (v_1,0)=(p(z)\cdot v_1,0).
	\]
Therefore, $q(z)\cdot u_2=0$ and this implies $u_2=0$. Hence, $\xi\cdot v_1\in V_1$ and so $V_1$ is a $\C(z)$-vector space. In a similar way one shows that $V_2$ is also a $\C(z)$-vector subspace. Hence, $W$ would be $\mathcal R$-decomposable, giving a contradiction. Therefore, $W$ is necessarily $\sl(2)$-indecomposable. This finishes the proof of the first claim. The second one is an immediate consequence of Theorem \ref{thm:finite+decomp}.
\end{proof}

\begin{prop}\label{prop:indec-endomorphism-iso-or-nilpotent} If $(W,\rho)$ is an indecomposable rational $\sl(2)$-module, then every $\phi\in \End_{\sl(2)}(W)$ is either an isomorphism or nilpotent.
\end{prop}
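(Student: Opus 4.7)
The plan is to use Proposition~\ref{p:finite+extension} to produce a minimal polynomial $M_\phi(t)\in \C[t]$ for $\phi$ (viewed either as a $\C(z)$- or a $\C$-linear endomorphism of $W$), and then to read off the required dichotomy from a primary decomposition argument combined with indecomposability.

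First, by Proposition~\ref{p:finite+extension}, write $M_\phi(t)=\prod_{i=1}^r(t-\lambda_i)^{n_i}\in\C[t]$ with distinct $\lambda_i\in\C$. The primary decomposition of $W$ with respect to $\phi$ gives
\[
W\,=\,\bigoplus_{i=1}^r \ker(\phi-\lambda_i)^{n_i}
\]
as a $\C(z)$-vector space. Each summand $W_i:=\ker(\phi-\lambda_i)^{n_i}$ is a $\C(z)$-subspace of $W$, and since $\phi\in\End_{\sl(2)}(W)$ commutes with $\rho(L_{-1}),\rho(L_0),\rho(L_1)$, each $W_i$ is also stable under the $\sl(2)$-action. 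Hence the decomposition above is a decomposition of $W$ as a rational $\sl(2)$-module, i.e., in $\mathcal R$.

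Now apply indecomposability: since $W$ is $\mathcal R$-indecomposable (equivalently $\sl(2)$-indecomposable, by Proposition~\ref{prop:indecomposable-generalized-Casimir}), exactly one of the summands is nonzero, so $r=1$ and $M_\phi(t)=(t-\lambda)^n$ for a single $\lambda\in\C$. If $\lambda=0$ then $\phi^n=0$ and $\phi$ is nilpotent. If $\lambda\neq 0$, expand
\[
0\,=\,(\phi-\lambda\Id)^n\,=\,\sum_{k=0}^n\binom{n}{k}(-\lambda)^{n-k}\phi^k,
\]
whose constant term $(-\lambda)^n$ is nonzero. Solving this relation for $\Id$ shows that $\phi$ admits a polynomial inverse in $\phi$ itself, so $\phi$ is an isomorphism.

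There is essentially no obstacle: the only point worth checking carefully is that the primary decomposition of $W$ with respect to $\phi$ lives in the category $\mathcal R$ (not merely in $\C(z)\text{-}\mathrm{Vect}_\mathrm{fd}$), and this is immediate because the eigenspaces $W_i$ are simultaneously $\C(z)$-subspaces and $\sl(2)$-submodules by the commutation of $\phi$ with the representation. Everything else is a formal consequence of the $\C$-rationality of the minimal polynomial provided by Proposition~\ref{p:finite+extension}.
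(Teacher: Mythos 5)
Your proof is correct and follows essentially the same route as the paper's: use Proposition~\ref{p:finite+extension} to get $M_\phi(t)\in\C[t]$, decompose $W$ into generalized eigenspaces of $\phi$ (the paper points to ``the ideas used in the proof of Theorem~\ref{thm:finite+decomp}''), observe that these are $\sl(2)$-submodules because $\phi$ is an intertwiner, and invoke indecomposability to force a single eigenvalue. You merely spell out explicitly the last step (that $(\phi-\lambda)^n=0$ with $\lambda\neq 0$ gives a polynomial inverse for $\phi$), which the paper leaves implicit.
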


\begin{proof} By Proposition \ref{p:finite+extension} any $\phi\in \End_{\sl(2)}(W)$ has minimal polynomial $M_\phi(t)$ in $\C[t]$. Since $\phi$ commutes with $\rho(L_1)$ and $\rho(L_{-1})$, the ideas used in the proof of Theorem \ref{thm:finite+decomp} show  that $(W,\rho)$ would be decomposable unless $M_\phi(t)$ has a unique root, say $\mu\in\C$. Hence  $M_\phi(t)=(t-\mu)^m$ for certain integer $m\geq 1$. This implies $(\phi-\mu)^m=0$ and thus $\phi=\mu+ N$ with $N$ an $m$-th nilpotent endomorphism, proving the claim.
\end{proof}

In a similar way let us recall that a rational module is $\sl(2)$-simple or just simple,  (resp. ${\mathcal R}$-simple) if it has no non-trivial $\sl(2)$-submodules (resp. rational $\sl(2)$-sub\-mo\-dules). In particular, $W$ is $\mathcal R$-simple if and only if $\length(W)=1$. Notice that  one has the following implications: $$\mathcal{R}\text{-simple}\Longrightarrow \mathcal{R}\text{-indecomposable}\Longleftrightarrow \sl(2)\text{-indecomposable}.$$

\begin{prop}\label{prop:R-irred-rational-is-Casimir} An $\mathcal R$-simple, rational $\sl(2)$-module is a Ca\-si\-mir module. Hence, one has $$\Spl_\Ra(\Ra)=\Spl_{\Ra\mathcal C^\bullet}(\Ra\mathcal C^\bullet)=\Spl_{\Ra\mathcal C}(\Ra\mathcal C),\quad \Spl_{\Ra\mathcal C_\mu^\bullet}({\Ra\mathcal C}_\mu^\bullet)=\Spl_{\Ra\mathcal C_\mu}(\Ra\mathcal C_\mu).$$
\end{prop}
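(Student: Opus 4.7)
The plan is to combine the decomposition theorem with the canonical filtration of generalized Casimir modules. Let $(W,\rho)$ be an $\Ra$-simple rational representation. Since $\Ra$-simplicity implies $\Ra$-indecomposability, Proposition \ref{prop:indecomposable-generalized-Casimir} (equivalently, the $\Hom$-orthogonal decomposition $\Ra=\bigoplus_{\mu\in\C}\Ra\mathcal C^\bullet_\mu$ of Theorem \ref{thm:finite+decomp}) forces $W$ to lie entirely in a single summand $\Ra\mathcal C^\bullet_\mu$ for some $\mu\in\C$. Thus $W$ is a generalized Casimir module of some level $\mu$ and exponent $n\geq 1$.

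Next I would invoke Proposition \ref{prop:filtration} to obtain the canonical filtration
\[
0=V^0\subsetneq V^1\subsetneq\cdots\subsetneq V^n=W
\]
by rational $\sl(2)$-submodules, where $V^i=\ker(C_\rho-\mu)^i$. Since each $V^i$ is a subobject of $W$ in $\Ra$ and $W$ is $\Ra$-simple, the filtration must be trivial, so $n=1$ and $V^1=W$. This means $(C_\rho-\mu)W=0$, i.e.\ $C_\rho=\mu\Id_W$, and hence $W\in\Ra\mathcal C_\mu\subset\Ra\mathcal C$. This establishes the inclusion $\Spl_\Ra(\Ra)\subseteq\Ra\mathcal C$.

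For the displayed chain of equalities, the key observation is that every rational $\sl(2)$-submodule $W'$ of a Casimir module $W\in\Ra\mathcal C_\mu$ inherits the property $C_{\rho|_{W'}}=\mu\Id_{W'}$, and analogously a rational submodule of a module in $\Ra\mathcal C^\bullet_\mu$ is automatically in $\Ra\mathcal C^\bullet_\mu$ (by restriction of the nilpotent $C_\rho-\mu$). Consequently, the notions of $\Ra$-submodule, $\Ra\mathcal C^\bullet$-submodule and $\Ra\mathcal C$-submodule coincide for objects of $\Ra\mathcal C$, and similarly the notions of $\Ra\mathcal C^\bullet_\mu$-submodule and $\Ra\mathcal C_\mu$-submodule coincide for objects of $\Ra\mathcal C_\mu$. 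Combining this with the first paragraph yields $\Spl_\Ra(\Ra)=\Spl_{\Ra\mathcal C^\bullet}(\Ra\mathcal C^\bullet)=\Spl_{\Ra\mathcal C}(\Ra\mathcal C)$, and the filtration argument of the second paragraph applied within a fixed level gives $\Spl_{\Ra\mathcal C^\bullet_\mu}(\Ra\mathcal C^\bullet_\mu)=\Spl_{\Ra\mathcal C_\mu}(\Ra\mathcal C_\mu)$.

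There is no real obstacle: the work has essentially been done in Theorem \ref{thm:finite+decomp} and Proposition \ref{prop:filtration}. The only point that requires a little care is verifying that rational subobjects automatically respect the Casimir (or generalized Casimir) structure, which is immediate once one observes that the Casimir operator commutes with inclusions and restricts to the scalar $\mu$ (respectively to a nilpotent operator) on any submodule.
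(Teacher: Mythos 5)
Your proof is correct and follows essentially the same route as the paper: reduce to indecomposability, use Proposition \ref{prop:indecomposable-generalized-Casimir} (itself resting on Theorem \ref{thm:finite+decomp}) to land in a generalized Casimir category, then use the canonical filtration of Proposition \ref{prop:filtration} to force exponent $1$. You spell out more explicitly than the paper does why the chain of $\Spl$-equalities follows, but the underlying mechanism is the same.
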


\begin{proof} A rational $\sl(2)$-module that is $\mathcal R$-irreducible is $\mathcal R$-inde\-com\-po\-sable, and therefore is a generalized Casimir module by Proposition \ref{prop:indecomposable-generalized-Casimir}. Furthermore, its exponent has to be $1$ since otherwise by Proposition \ref{prop:filtration} it would have a non trivial filtration and therefore it could not be $\mathcal R$-simple.
\end{proof}

\begin{thm}\label{thm:identification-isomorphism-classes} One has the following identifications $$\widehat{\Simpl}({\sl(2)\mathrm{\Mod}_\mathrm{tf}})=\widehat{\Simpl}({\sl(2)\mathrm{\Mod}_\mathrm{tffr}})\xrightarrow[\sim]{\widehat F_\mathrm{rat}}\widehat{\Simpl}_{\mathcal{RC}}({\mathcal{RC}})=\widehat{\Simpl}_{\mathcal R}({\mathcal R}).$$
\end{thm}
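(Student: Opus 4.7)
The plan is to assemble the statement from three separate identifications that have been carefully prepared by earlier results, so the work consists of matching each arrow or equality with the appropriate previous theorem or proposition and checking compatibility. I do not expect a substantial new obstacle, since the genuinely hard input (the classification of simple $\mathbb B$-modules and the $\C$-rationality of the Casimir operator) has already been invoked to establish Theorem \ref{thm:isom-classes-generalized}.

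First I would dispose of the outer equality $\widehat{\Simpl}(\sl(2)\Mod_{\mathrm{tf}})=\widehat{\Simpl}(\sl(2)\Mod_{\mathrm{tffr}})$, which is exactly Theorem \ref{thm:simple-tf-is-of-fr}: every simple torsion free $\sl(2)$-module has finite rank, so the inclusion functor $\sl(2)\Mod_{\mathrm{tffr}}\hookrightarrow \sl(2)\Mod_{\mathrm{tf}}$ induces a bijection on isomorphism classes of simple objects. This already shows that the isomorphism class of any simple torsion free module is represented inside $\sl(2)\Mod_{\mathrm{tffr}}$.

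For the middle bijection $\widehat F_{\mathrm{rat}}\colon \widehat{\Simpl}(\sl(2)\Mod_{\mathrm{tffr}})\xrightarrow{\sim}\widehat{\Simpl}_{\mathcal{RC}}(\mathcal{RC})$, I would first apply Theorem \ref{thm:Schur-Dixmier} (the Dixmier-Schur lemma) which forces every simple $\sl(2)$-module to be a Casimir module; restricting this to the finite rank torsion free setting gives the identification $\widehat{\Simpl}(\sl(2)\Mod_{\mathrm{tffr}})=\widehat{\Simpl}(\mathcal{C}_{\mathrm{tffr}})$. Then the bijection is precisely the content of Theorem \ref{thm:isom-classes-generalized}, which I would simply cite, verifying only that the rationalization functor $F_{\mathrm{rat}}\colon \sl(2)\Mod_{\mathrm{tffr}}\to \mathcal R$ restricts to $F_{\mathrm{rat}}\colon \mathcal{C}_{\mathrm{tffr}}\to \mathcal{RC}$ (which was noted right after Definition \ref{defn:rationalization-functor}).

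Finally, the last equality $\widehat{\Simpl}_{\mathcal{RC}}(\mathcal{RC})=\widehat{\Simpl}_{\mathcal R}(\mathcal R)$ is Proposition \ref{prop:R-irred-rational-is-Casimir}: an $\mathcal R$-simple rational module is automatically an honest Casimir module, so the inclusion $\mathcal{RC}\hookrightarrow \mathcal R$ induces an equality of the classes of $\mathcal{RC}$-simple and $\mathcal R$-simple objects. Concatenating the three identifications in the given order yields the theorem. The only mildly delicate point worth double-checking is that the isomorphism classes referred to in each position (simple in $\sl(2)\Mod$, $\mathcal{RC}$-simple in $\mathcal{RC}$, and $\mathcal R$-simple in $\mathcal R$) are computed with respect to the correct morphism spaces; this is handled by Remark \ref{rem:morphisms-rat-reps}, which guarantees that $\sl(2)$-morphisms between rational modules coincide with $\mathcal R$-morphisms, so notions of simplicity and of isomorphism class are consistent across all categories appearing in the statement.
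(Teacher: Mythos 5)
Your proposal is correct and follows essentially the same route as the paper's own proof, assembling the statement from Theorems \ref{thm:simple-tf-is-of-fr}, \ref{thm:Schur-Dixmier}, \ref{thm:isom-classes-generalized}, and Proposition \ref{prop:R-irred-rational-is-Casimir}. The paper groups Schur--Dixmier with the first equality rather than with the middle arrow, but the underlying chain of reductions is identical, and your explicit remark that $\sl(2)$-morphisms and $\mathcal R$-morphisms agree on rational modules is a sound (if tacit in the paper) verification that the various notions of simplicity and isomorphism are consistent across the chain.
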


 \begin{proof} The first one is due to Theorems  \ref{thm:Schur-Dixmier} and \ref{thm:simple-tf-is-of-fr}, whereas the second one is given by Theorem \ref{thm:isom-classes-generalized}. The  last equality follows from Proposition \ref{prop:R-irred-rational-is-Casimir}.
\end{proof} 

In particular, it is clear that every rational $\sl(2)$-module that is one dimensional over $\C(z)$ is $\Ra$-simple. Therefore we get the:

\begin{cor}\label{cor:rational-onedim-are-irred-Casimir} Every rational $\sl(2)$-module that is one dimensional over $\C(z)$ is an $\Ra$-simple Casimir  $sl(2)$-module.
\end{cor}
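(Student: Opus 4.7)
The plan is to split the corollary into its two assertions and dispatch them in order: first the $\Ra$-simplicity, then the Casimir property.

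For $\Ra$-simplicity, I would argue purely at the level of $\C(z)$-vector spaces. Let $W$ be a rational $\sl(2)$-module with $\dim_{\C(z)} W = 1$, and let $W'\subseteq W$ be a subobject of $W$ in $\Ra$. By Proposition~\ref{p:abelian}, $\Ra$ is an abelian subcategory of $\C(z)\text{-}\mathrm{Vect}_\mathrm{fd}$, so $W'$ is in particular a $\C(z)$-subspace of $W$. Since $\dim_{\C(z)} W = 1$, the only such subspaces are $(0)$ and $W$, hence $W$ has no non-trivial subobjects in $\Ra$, i.e., $\length(W)=1$, so $W$ is $\Ra$-simple.

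For the Casimir property, I would simply invoke Proposition~\ref{prop:R-irred-rational-is-Casimir}: every $\Ra$-simple rational $\sl(2)$-module is a Casimir module, as it is $\Ra$-indecomposable, hence a generalized Casimir module by Proposition~\ref{prop:indecomposable-generalized-Casimir}, and its exponent must be $1$ by the filtration argument in Proposition~\ref{prop:filtration}. Combining with the first step gives that $W$ is an $\Ra$-simple Casimir $\sl(2)$-module, as required.

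There is no real obstacle here; the corollary is a direct formal consequence of the preceding structural results. The only thing worth making explicit is the passage from ``subobject in $\Ra$'' to ``$\C(z)$-subspace'', which uses Proposition~\ref{p:abelian} and Remark~\ref{rem:morphisms-rat-reps}. Everything else follows by citation.
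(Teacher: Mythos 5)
Your proposal is correct and follows essentially the same route as the paper: the paper notes immediately before the corollary that one-dimensionality over $\C(z)$ makes the module $\Ra$-simple (because any $\Ra$-subobject is in particular a $\C(z)$-subspace), and then invokes Proposition~\ref{prop:R-irred-rational-is-Casimir} to conclude it is a Casimir module. You have simply made the first step a touch more explicit, which is fine.
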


\subsection{Action of automorphisms on rational Casimir representations}

Recalling Proposition~\ref{prop:only-one-condition-for-rational-Casimir} and Corollary~\ref{c:L1invertible}, one proves the following fact. 

\begin{prop}\label{prop:Aut1-sl2}
	Given a $\C(z)$-vector space $W$, there is an identification:
		\[\Aut^{1}_{\C(z)}(W)\longleftrightarrow \mathcal{RC}_\mu(W)
		\, ,
		\]
	defined by sending $\varphi\in \Aut^{1}_{\C(z)}(W)$ to the $\sl(2)$-representation $\rho$  defined on $W$ by $\rho(L_{-1}):=\pi_\mu(z)\, \varphi^{-1}\in \Aut^{-1}_{\C(z)}(W)$, $\rho(L_1):=\varphi\in \Aut^{1}_{\C(z)}(W)$. 
\end{prop}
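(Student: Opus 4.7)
The plan is to deduce the identification as a direct consequence of Proposition~\ref{prop:only-one-condition-for-rational-Casimir} reinforced by Corollary~\ref{c:L1invertible}. Proposition~\ref{prop:only-one-condition-for-rational-Casimir} already establishes that an element of $\mathcal{RC}_\mu(W)$ is completely determined by a single $\nabla$-semilinear automorphism playing the role of $\rho(L_1)$, with $\rho(L_{-1})$ recovered as $\pi_\mu(z)\rho(L_1)^{-1}$; what remains is to package this into the explicit bijection with $\Aut^{1}_{\C(z)}(W)$.

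First, I would define the forward map as follows: given $\varphi\in\Aut^{1}_{\C(z)}(W)$, set $\rho(L_0):=z$, $\rho(L_1):=\varphi$ and $\rho(L_{-1}):=\pi_\mu(z)\varphi^{-1}$. Since $\varphi^{-1}$ is $\nabla^{-1}$-semilinear and multiplication by the nonzero rational function $\pi_\mu(z)$ is a $\C(z)$-linear bijection, $\rho(L_{-1})$ indeed lies in $\Aut^{-1}_{\C(z)}(W)$. A short computation using the twist identity $\varphi\circ\pi_\mu(z)=\pi_\mu(z+1)\circ\varphi$ coming from $\nabla$-semilinearity yields the Casimir relations \eqref{eq:casimir-module}, so Proposition~\ref{prop:only-one-condition-for-rational-Casimir} guarantees $\rho\in\mathcal{RC}_\mu(W)$.

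For the inverse map, I would send $\rho\in\mathcal{RC}_\mu(W)$ to $\rho(L_1)$. A priori $\rho(L_1)$ is only an element of $\End^{1}_{\C(z)}(W)$, and here is where Corollary~\ref{c:L1invertible} becomes essential: it asserts that for rational representations $\rho(L_1)$ is automatically a $\C(z)$-semilinear automorphism, so the assignment lands in $\Aut^{1}_{\C(z)}(W)$. The two maps are mutually inverse: composing in one direction is tautological, while in the other the Casimir relation $\rho(L_{-1})\circ\rho(L_1)=\pi_\mu(z)$ of \eqref{eq:casimir-module} forces $\rho(L_{-1})=\pi_\mu(z)\rho(L_1)^{-1}$, recovering the original data. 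There is no substantial obstacle here; the only nontrivial content is the well-definedness of the inverse map, i.e., the automatic invertibility of $\rho(L_1)$, which is supplied by Corollary~\ref{c:L1invertible}.
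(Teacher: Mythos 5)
Your proof is correct and takes essentially the same route as the paper, which simply invokes Proposition~\ref{prop:only-one-condition-for-rational-Casimir} together with Corollary~\ref{c:L1invertible} as its entire proof. You have merely spelled out the forward and inverse maps explicitly; one minor remark is that invertibility of $\rho(L_1)$ for a Casimir module can also be read directly off the relations~\eqref{eq:casimir-module} (since $\pi_\mu(z)$ and $\pi_\mu(z+1)$ act invertibly on a $\C(z)$-vector space), so Corollary~\ref{c:L1invertible} is slightly stronger than needed here, but it is exactly the ingredient the paper cites.
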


More generally, given a rational representation $(W,\rho)$ and an automorphism $\varphi\in \Aut_{\C(z)}(W)$ we  define a map  $\varphi\cdot\rho\colon\sl(2)\to\End_\C(W)$ such that  \begin{align}\label{eq:action-automorphism-on-rpresentation}\varphi\cdot\rho(L_{-1})&=\rho(L_{-1})\circ\varphi^{-1}\in\Aut_{\C(z)}^{-1}(W),\nonumber\\
\varphi\cdot\rho(L_0)&=z,\\
 \varphi\cdot\rho(L_{1})&=\varphi\circ \rho(L_{1})\in\Aut_{\C(z)}^1(W).\nonumber
\end{align}
One checks that $\varphi\cdot\rho$ is a rational $\sl(2)$-representation if and only if $$\varphi\circ C_\rho=C_\rho\circ\varphi.$$ Therefore, the group $\Aut_{\C(z)}(W;C_\rho)=\{\varphi\in\Aut_{\C(z)}(W)\colon [\varphi,C_\rho]=0\}$, which naturally contains $\C^\times(z)$,  acts on $\rho\in\mathcal R(W)$. This action preserves the Casimir operator, that is $C_{\varphi\cdot\rho}=C_\rho$. In what follows we denote $\varphi\cdot(W,\rho):=(W,\varphi\cdot\rho)$. 

In particular, one has:

\begin{prop}\label{prop:paramtrization-Casimir-reps} $\Aut_{\C(z)}(W)$ acts freely on $\mathcal{RC}(W)$ and its orbits are $\mathcal{RC}_\mu(W)$. Thus, $\Aut_{\C(z)}(W)$ acts freely and transitively on  $\mathcal{RC}_\mu(W)$, for every $\mu\in\C$.
\end{prop}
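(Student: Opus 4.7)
The plan is to observe first that for $\rho\in\mathcal{RC}(W)$, the Casimir operator $C_\rho=\mu\,\Id_W$ is a scalar, so it commutes with every element of $\Aut_{\C(z)}(W)$; hence the entire group $\Aut_{\C(z)}(W)$ (and not only the centralizer of $C_\rho$) acts on $\mathcal{RC}(W)$ via formula \eqref{eq:action-automorphism-on-rpresentation}. The comment preceding the statement already guarantees that this action preserves the Casimir operator, so each orbit lies inside a single $\mathcal{RC}_\mu(W)$. Thus once freeness and transitivity on each $\mathcal{RC}_\mu(W)$ are verified, the orbit description follows automatically.

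For freeness, suppose $\varphi\cdot\rho=\rho$. Comparing the $L_1$ components in \eqref{eq:action-automorphism-on-rpresentation} gives $\varphi\circ\rho(L_1)=\rho(L_1)$, and since $\rho(L_1)\in\Aut^1_{\C(z)}(W)$ is invertible by Corollary~\ref{c:L1invertible}, we conclude $\varphi=\Id_W$.

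For transitivity on $\mathcal{RC}_\mu(W)$, given $\rho,\rho'\in\mathcal{RC}_\mu(W)$, define
\[
\varphi:=\rho'(L_1)\circ \rho(L_1)^{-1}.
\]
Since $\rho(L_1),\rho'(L_1)\in\Aut^1_{\C(z)}(W)$, one has $\varphi\in\Aut_{\C(z)}(W)$. By construction $\varphi\circ\rho(L_1)=\rho'(L_1)$, which is the $L_1$ component of $\varphi\cdot\rho$. For the $L_{-1}$ component one computes, using $\rho(L_{-1})=\pi_\mu(z)\rho(L_1)^{-1}$ and the analogous formula for $\rho'$ coming from Proposition~\ref{prop:only-one-condition-for-rational-Casimir},
\[
\rho(L_{-1})\circ\varphi^{-1}=\pi_\mu(z)\rho(L_1)^{-1}\circ\rho(L_1)\circ\rho'(L_1)^{-1}=\pi_\mu(z)\rho'(L_1)^{-1}=\rho'(L_{-1}).
\]
Hence $\varphi\cdot\rho=\rho'$, proving transitivity.

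No step is really an obstacle here; the only conceptual point worth highlighting is that the action is well-defined on all of $\Aut_{\C(z)}(W)$ because $C_\rho$ is a scalar on $\mathcal{RC}(W)$, while on the larger category $\mathcal R$ only the centralizer $\Aut_{\C(z)}(W;C_\rho)$ would act. The key technical input is the invertibility of $\rho(L_1)$ provided by Corollary~\ref{c:L1invertible}, which is used both to define $\varphi$ in the transitivity argument and to deduce freeness.
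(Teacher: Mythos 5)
Your proof is correct, and it is essentially the argument the paper implicitly intends: the Proposition is stated with no explicit proof (it is presented as an immediate consequence, "In particular, one has\ldots") of the setup in \eqref{eq:action-automorphism-on-rpresentation} together with Propositions~\ref{prop:only-one-condition-for-rational-Casimir} and~\ref{prop:Aut1-sl2} and Corollary~\ref{c:L1invertible}. Your observation that the action is defined on all of $\Aut_{\C(z)}(W)$ precisely because $C_\rho$ is scalar on $\mathcal{RC}(W)$, and the explicit $\varphi=\rho'(L_1)\circ\rho(L_1)^{-1}$ for transitivity (which is just left translation under the identification of Proposition~\ref{prop:Aut1-sl2}), match the paper's intended reasoning.
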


\subsection{Hom-finiteness of the category of rational modules}

Let $\Bbbk$ be a commutative ring. Recall that an additive category $\mathcal A$ is $\Hom_\Bbbk$-finite if it is $\Bbbk$-linear and $\Hom_{\mathcal A}(A,B)$ is a $\Bbbk$-module of finite length for all objects $A,B$.

\begin{prop}\label{prop:end=C}
Let $W$ be a rational $\sl(2)$-representation. If $W$ is $\mathcal R$-simple, then $\End_{\sl(2)}(W)=\C$. 
\end{prop}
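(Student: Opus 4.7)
The plan is to take an arbitrary $\phi\in\End_{\sl(2)}(W)$ and show $\phi$ is a scalar multiple of the identity by combining the minimal-polynomial result of Proposition~\ref{p:finite+extension} with $\Ra$-simplicity.

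First I would observe that any $\sl(2)$-endomorphism of $W$ is automatically $\C(z)$-linear. Indeed, since $L_0$ acts by $z$, the $\sl(2)$-linearity of $\phi$ gives $\C[z]$-linearity; then torsion-freeness of $W$ as a $\C[z]$-module (it is a $\C(z)$-vector space) upgrades this to $\C(z)$-linearity by the standard clearing-denominators argument. This is the mild technical point but is completely routine.

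Next, since $W$ is $\Ra$-simple it is $\Ra$-indecomposable, hence $\sl(2)$-indecomposable by Proposition~\ref{prop:indecomposable-generalized-Casimir}. The proof of Proposition~\ref{prop:indec-endomorphism-iso-or-nilpotent} shows, together with Proposition~\ref{p:finite+extension}, that the minimal polynomial of $\phi$ as a $\C(z)$-endomorphism has the form $M_\phi(t)=(t-\mu)^m\in\C[t]$ for some $\mu\in\C$ and $m\geq 1$. Writing $\phi=\mu\Id+N$, the endomorphism $N$ is nilpotent, $\C(z)$-linear and, being the difference of two $\sl(2)$-endomorphisms, itself $\sl(2)$-linear.

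The final step, which is the heart of the proof, is to rule out $N\neq 0$ using $\Ra$-simplicity. Consider $\ker(N)\subseteq W$. Because $N$ is $\C(z)$-linear, $\ker(N)$ is a $\C(z)$-subspace of $W$; because $N$ commutes with $\rho(L_{-1}),\rho(L_0),\rho(L_1)$ (it is $\sl(2)$-linear), $\ker(N)$ is also an $\sl(2)$-submodule. Thus $\ker(N)$ is a rational subrepresentation of $W$. Nilpotency of $N$ forces $\ker(N)\neq 0$, so $\Ra$-simplicity of $W$ forces $\ker(N)=W$, i.e.\ $N=0$ and $\phi=\mu\Id$. This gives $\End_{\sl(2)}(W)=\C$. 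The only subtlety I anticipate is the verification that $\sl(2)$-linearity forces $\C(z)$-linearity, and that $\ker(N)$ qualifies as a rational subrepresentation; both go through cleanly once the setup of the previous propositions is invoked.
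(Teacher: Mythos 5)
Your proof is correct and rests on the same two pillars as the paper's: Proposition~\ref{p:finite+extension} supplies a minimal polynomial over $\C$ for $\phi$, and $\Ra$-simplicity forces the kernel of a suitable scalar shift of $\phi$ to be all of $W$. The paper reaches the conclusion a bit more directly: it does not pass through indecomposability and Proposition~\ref{prop:indec-endomorphism-iso-or-nilpotent} to show $M_\phi$ has a single root $\mu$; instead it simply picks \emph{any} root $\alpha$ of $M_\phi(t)\in\C[t]$, observes $\ker(\phi-\alpha)$ is a nonzero rational subrepresentation, and applies simplicity to get $\phi=\alpha$ at once — your detour is harmless but unnecessary.
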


\begin{proof}
Let $\phi\in \End_{\sl(2)}(W)$. By Proposition~\ref{p:finite+extension}, the minimal polynomial of $\phi$, $M_{\phi}(t)$, has its coefficients in $\C$. Considering a root $\alpha$ of 
	$M_{\phi}(t)$, the space of eigenvectors $\ker(\phi-\alpha)$ is a non-zero rational subrepresentation of $W$. Hence, the hypothesis implies that  $W=\ker(\phi-\alpha)$; that is, $\phi=\alpha\in \C$. 
\end{proof}

\begin{rem} Let us point out that Proposition \ref{prop:end=C} can not be obtained by more conventional means. On the one hand, Dixmier's generalization, \cite[Lemma 4.1.4]{Dixmier}, of Schur's Lemma can not be applied since rational $\sl(2)$-modules have $\C$-dimension equal to $\mathfrak c$. On the other hand, Quillen's Lemma, \cite{Quillen}, can not be applied since, by Remark \ref{rem:uncountable}, rational $\sl(2)$-modules are never simple as $\sl(2)$-modules.
\end{rem}

\begin{prop}\label{prop:hom-irreds-is-one-dimensional} If  $(W_1,\rho_1)$,  $(W_2,\rho_2)$ are two $\mathcal R$-irreducible rational $\sl(2)$-repre\-sen\-ta\-tions, then every element of $\Hom_\sld(W_1,W_2)$ is either zero or an isomorphism. Moreover, if $W_1$ is not isomorphic to $W_2$, then $\Hom_\sld(W_1,W_2)=0$,  whereas $\dim_\C\Hom_\sld(W_1,W_2)=1$ if $W_1\simeq W_2$.
\end{prop}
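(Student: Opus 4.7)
The plan is to prove this as a Schur-type statement by combining the abelian-subcategory structure of $\mathcal R$ inside $\C(z)$-vector spaces with Proposition~\ref{prop:end=C}. The main point to keep in mind is Remark~\ref{rem:morphisms-rat-reps}: any $\sl(2)$-homomorphism $\phi\colon W_1\to W_2$ between rational representations commutes with $\rho_i(L_0)=z$ and therefore is automatically $\C[z]$-linear, and hence (since the $W_i$ are $\C(z)$-vector spaces and $W_2$ is $\C[z]$-torsion free) $\C(z)$-linear. This is the one observation that makes the proof run smoothly, but it is not really an obstacle since it is already recorded in the paper.

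With this in hand, the first claim is immediate. Let $\phi\in\Hom_{\sld}(W_1,W_2)$ be nonzero. Since $\phi$ is $\C(z)$-linear and commutes with the $\sl(2)$-action, $\ker\phi\subseteq W_1$ and $\Ima\phi\subseteq W_2$ are $\C(z)$-subspaces stable under $\sl(2)$; that is, they are rational subrepresentations. By the $\mathcal R$-irreducibility of $W_1$ one has $\ker\phi=0$ (it cannot be $W_1$ because $\phi\neq 0$), and by the $\mathcal R$-irreducibility of $W_2$ one has $\Ima\phi=W_2$ (it cannot be $0$ because $\phi\neq 0$). Therefore $\phi$ is bijective, hence an $\sl(2)$-isomorphism.

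For the second claim, if $W_1\not\simeq W_2$ as rational representations, then no nonzero $\phi$ can exist, since by the previous paragraph any such $\phi$ would be an isomorphism; hence $\Hom_{\sld}(W_1,W_2)=0$. If, on the other hand, there is an isomorphism $\alpha\colon W_1\xrightarrow{\sim} W_2$, then the map
\[
\End_{\sl(2)}(W_1)\longrightarrow \Hom_{\sld}(W_1,W_2),\qquad \psi\longmapsto \alpha\circ\psi,
\]
is a $\C$-linear isomorphism with inverse $\phi\mapsto \alpha^{-1}\circ\phi$. By Proposition~\ref{prop:end=C} we have $\End_{\sl(2)}(W_1)=\C$, so $\dim_\C\Hom_{\sld}(W_1,W_2)=1$, generated by $\alpha$ itself.

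Thus there is essentially no obstacle: the whole argument is a transcription of the classical Schur lemma in the setting of the abelian category $\mathcal R$, once the automatic $\C(z)$-linearity of $\sl(2)$-maps and the computation $\End_{\sl(2)}(W_i)=\C$ of Proposition~\ref{prop:end=C} are invoked.
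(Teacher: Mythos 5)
Your proof is correct and follows essentially the same route as the paper's: show that the kernel and image of a nonzero $\sl(2)$-map are rational subrepresentations so that $\mathcal R$-irreducibility forces the map to be an isomorphism, then reduce the dimension count to $\End_{\sld}(W_i)=\C$ via Proposition~\ref{prop:end=C}. The only cosmetic difference is that the paper post-composes with $\phi^{-1}$ to land in $\End_{\sld}(W_2)$, whereas you pre-compose with a fixed isomorphism $\alpha$ to land in $\End_{\sld}(W_1)$.
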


\begin{proof} Let $\phi\in \Hom_\sld(W_1,W_2)$ be a non-zero homomorphism. Since $\ker\phi$ is a rational subrepresentation of $W_1$, we conclude that $\phi$ is injective. On the other hand, since $\operatorname{Im}\phi\subset W_2$ is a rational subrepresentation, $\phi$ is surjective. Hence, $\phi$ is an isomorphism.  Let us consider an arbitrary homomorphism $\phi' \in \Hom_{\sl(2)}(W_1, W_2)$.  Then, $\phi'\circ \phi^{-1} \in \End_{\sl(2)}(W_2)$ and, by Proposition~\ref{prop:end=C}, we obtain that $\phi'\circ \phi^{-1}\in \C$ and thus $ \Hom_{\sl(2)}(W_1, W_2)$ is of dimension $1$ over $\C$. 
\end{proof}

\begin{thm}\label{t:hom-finite}
	Let $(W_1,\rho_1)$,  $(W_2,\rho_2)$ be two rational $\sl(2)$-repre\-sen\-ta\-tions. Then, 
	\[
	\dim_{\C} \Hom_{\sl(2)}(W_1, W_2)\, \leq \, \length W_1\cdot \length  W_2
	\, . \] Therefore, the category $\mathcal R$ of rational $\sl(2)$-modules is $\Hom_\C$-finite.
\end{thm}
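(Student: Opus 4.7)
The $\Hom_\C$-finiteness assertion follows immediately from the dimension bound, so I focus on the inequality. The idea is a double induction on the lengths, with base case provided by Proposition~\ref{prop:hom-irreds-is-one-dimensional}, and the inductive step driven by the left-exactness of $\Hom_{\sl(2)}(-,-)$ together with the fact that, by Remark~\ref{rem:morphisms-rat-reps}, $\Hom_{\sl(2)}(W_1,W_2)=\Hom_{\mathcal R}(W_1,W_2)$ whenever $W_1,W_2$ are rational. Because $\mathcal R$ is abelian (Proposition~\ref{p:abelian}) and of finite length (Theorem~\ref{thm:Jordan-Holder-Krull-Schmidt}), every rational subobject and every rational quotient of a rational module is itself rational, and the length function is additive on short exact sequences of $\mathcal R$.

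\textbf{Base step.} First I would handle the case $\length W_1=1$ by induction on $n_2:=\length W_2$. The case $n_2=1$ is exactly Proposition~\ref{prop:hom-irreds-is-one-dimensional}, which gives $\dim_\C\Hom_{\sl(2)}(W_1,W_2)\leq 1$. For the inductive step, pick an $\mathcal R$-simple subobject $W_2'\subset W_2$ and form the short exact sequence $0\to W_2'\to W_2\to W_2''\to 0$ in $\mathcal R$, so that $\length W_2''=n_2-1$. Applying the left-exact functor $\Hom_{\sl(2)}(W_1,-)$ yields
\[
0\longrightarrow \Hom_{\sl(2)}(W_1,W_2')\longrightarrow \Hom_{\sl(2)}(W_1,W_2)\longrightarrow \Hom_{\sl(2)}(W_1,W_2''),
\]
from which $\dim_\C\Hom_{\sl(2)}(W_1,W_2)\leq \dim_\C\Hom_{\sl(2)}(W_1,W_2')+\dim_\C\Hom_{\sl(2)}(W_1,W_2'')\leq 1+(n_2-1)=n_2=\length W_1\cdot\length W_2$ by the inductive hypothesis.

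\textbf{Inductive step.} Now I argue by induction on $n_1:=\length W_1$, the case $n_1=1$ being the one just established. For $n_1\geq 2$, choose an $\mathcal R$-simple quotient, i.e.\ a short exact sequence $0\to W_1'\to W_1\to W_1''\to 0$ in $\mathcal R$ with $W_1''$ of length $1$ and $\length W_1'=n_1-1$. Applying the left-exact contravariant functor $\Hom_{\sl(2)}(-,W_2)$ produces
\[
0\longrightarrow \Hom_{\sl(2)}(W_1'',W_2)\longrightarrow \Hom_{\sl(2)}(W_1,W_2)\longrightarrow \Hom_{\sl(2)}(W_1',W_2),
\]
so that, using the inductive hypothesis on both ends,
\[
\dim_\C\Hom_{\sl(2)}(W_1,W_2)\leq \length W_1''\cdot \length W_2+\length W_1'\cdot \length W_2=\length W_1\cdot \length W_2.
\]

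\textbf{Conclusion and remarks on difficulty.} The $\Hom_\C$-finiteness of $\mathcal R$ follows at once: every $\Hom_{\sl(2)}(W_1,W_2)$ has finite $\C$-dimension. There is no genuine obstacle; the only point requiring care is the verification that the short exact sequences chosen in $\mathcal R$ really do split off simple rational subquotients and that they remain exact when viewed in $\sl(2)\Mod$, which is automatic since $\mathcal R$ is a full abelian subcategory whose morphism spaces coincide with those in $\sl(2)\Mod$ (Remark~\ref{rem:morphisms-rat-reps}). The substance of the estimate is concentrated entirely in the $1$-dimensional bound supplied by Proposition~\ref{prop:hom-irreds-is-one-dimensional}, which in turn rests on Proposition~\ref{prop:end=C}, i.e.\ on the minimal polynomial result of Theorem~\ref{t:caracCasC}.
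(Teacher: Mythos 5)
Your proof is correct and takes essentially the same route as the paper: a double induction on the two lengths, with base case given by Proposition~\ref{prop:hom-irreds-is-one-dimensional} and the inductive steps driven by the left-exact sequences obtained from $\Hom_{\sl(2)}(W_1,-)$ and $\Hom_{\sl(2)}(-,W_2)$ applied to short exact sequences in $\mathcal R$. The only cosmetic difference is that you specify choosing $\mathcal R$-simple subquotients, whereas the paper allows an arbitrary non-trivial rational subrepresentation; both variants yield the same bound.
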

	
\begin{proof}
We proceed by double induction on $\length  W_1$ and $\length  W_2$. If we have  $\length  W_1 =  \length  W_2=1$, then the claim follows from Proposition \ref{prop:hom-irreds-is-one-dimensional}. 
Suppose now  that $\length  W_1 =1 $ and $ \length  W_2>1$ and choose a non-trivial rational subrepresentation of $W_2$, say $W'_2$. Consider the exact sequence:
	\[
	\Hom_{\sl(2)}(W_1, W'_2) \,\longrightarrow\, 
	\Hom_{\sl(2)}(W_1, W_2)\,\longrightarrow\, 
	\Hom_{\sl(2)}(W_1, W_2/W'_2)\, .
	\]
Due to the induction hypothesis, the statement holds for the terms on the left and on the right. Bearing in mind  how the dimension varies with the above exact sequences, the claim holds for the central term. 

Finally, if  $\length  W_1 >1 $ and $ \length  W_2>1$, we consider  a non-trivial rational subrepresentation of $W_1$, say $W'_1$, and the exact sequence:
	\[
	\Hom_{\sl(2)}(W_1/W'_1, W_2) \,\longrightarrow\, 
	\Hom_{\sl(2)}(W_1, W_2)\,\longrightarrow\, 
	\Hom_{\sl(2)}(W'_1, W_2)
	\, .\]
A similar argument as above yields the result.
\end{proof}

\subsection{Extensions of rational $\sl(2)$-representations}

Let $(W',\rho')$ and $(W'',\rho'')$ be rational $\sl(2)$-representations. Any rational $\sl(2)$-extension $(W,\rho)$ of $(W'',\rho'')$ by $(W',\rho')$ gives rise to a short exact sequence of rational representations $$0\to W'\to W\to W''\to 0,$$ that splits as $\C(z)$-vector spaces, thus $W=W'\oplus W''$ and we can write $$\rho(L_{-1})=\begin{pmatrix} \rho'(L_{-1}) & B_{-1}\\ 0 & \rho''(L_{-1})
\end{pmatrix},\quad \rho(L_1)=\begin{pmatrix} \rho'(L_{1}) & B_{1}\\ 0 & \rho''(L_{1})
\end{pmatrix}$$ where $B_{-1}\in \Hom_{\C(z)}^{-1}(W'',W'), B_{1}\in \Hom_{\C(z)}^{1}(W'',W')$. The representation condition for $\rho$ is equivalent to the equality of $\C(z)$-linear maps in $\Hom_{\C(z)}(W'',W')$ \begin{equation}\label{eq:def-T} T:=\rho'(L_{-1})\circ B_1+B_{-1}\circ \rho''(L_1)=\rho'(L_1)\circ B_{-1}+B_1\circ \rho''(L_{-1}).\end{equation} This implies the identity \begin{multline}\label{eq:identidad-T}
\rho'(L_1)\circ T-T\circ\rho''(L_1)=\rho'(L_1)\circ \rho'(L_{-1})\circ B_1-B_1\circ \rho''(L_{-1})\circ\rho''(L_1)=\\=\frac{1}{4}(B_1\circ C_{\rho''}-C_\rho\circ B_1),\end{multline} where $C_{\rho'}$, $C_{\rho''}$ are the Casimir operators of $(W',\rho')$ and  $(W'',\rho'')$, respectively. Conversely, given any $T\in\Hom_{\C(z)}(W'',W')$ that satisfies (\ref{eq:identidad-T}), since $\rho''(L_{-1})$ is invertible by Corollary \ref{c:L1invertible},  we can define $B_{-1}:=[T-\rho'(L_{-1})\circ B_1]\circ\rho''(L_1)^{-1}$, then $B_{-1}, B_1 $ satisfy (\ref{eq:def-T}). Therefore, the space ${\mathcal E}\!{xt}^1_{\mathcal R}(W''_{\rho''},W'_{\rho'})$ of data for rational extensions  of the rational $\sl(2)$-module $W''_{\rho''}$ by the rational $\sl(2)$-module $W'_{\rho'}$ is given by \begin{align*}{\mathcal E}\!{xt}^1_{\mathcal R}(W''_{\rho''},W'_{\rho'})=\{(&B_1,T)\in \Hom^1_{\C(z)}(W'',W')\times\Hom_{\C(z)}(W'',W')\colon\\ &\rho'(L_1)\circ T-T\circ\rho''(L_1)=\frac{1}{4}(B_1\circ C_{\rho''}-C_{\rho'}\circ B_1)\}.\end{align*} We denote the extension data $(B_1,T)$ by $(\rho',\rho''; B_1, T)$ when we want to express explicitly the underlying representations involved. Moreover,  $W(\rho',\rho''; B_1,T)\in \mathcal R(W)$ denotes the representation defined by $(\rho',\rho''; B_1, T)\in {\mathcal E}\!{xt}^1_{\mathcal R}(W''_{\rho''},W'_{\rho'})$. There is a natural injective map $$\Ext\colon {\mathcal E}\!{xt}^1_{\mathcal R}(W''_{\rho''},W'_{\rho'}) \hookrightarrow \mathcal R(W)$$ defined by  $\Ext(\rho',\rho''; B_1, T):=W(\rho',\rho''; B_1,T)$.

In particular, we get:

\begin{prop}\label{prop:extension-data-for-rational-Casimir-same-level} If $W'_{\rho'}, W''_{\rho''}$ belong to $\mathcal{RC}_\mu$, then  one has $${\mathcal E}\!{xt}^1_{\mathcal R}(W''_{\rho''},W'_{\rho'})=\Hom^1_{\C(z)}(W'',W')\times\Hom_{\sl(2)}(W''_{\rho''},W'_{\rho'})$$and ${\mathcal E}\!{xt}^1_{\mathcal R}(W''_{\rho''},W'_{\rho'})={\mathcal E}\!{xt}^1_{\mathcal{RC}^\bullet_\mu}(W''_{\rho''},W'_{\rho'})\subset \mathcal {RC}^{(2)}_\mu(W)$.
\end{prop}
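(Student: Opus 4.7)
The plan is to specialize the general compatibility condition \eqref{eq:identidad-T} to Casimir modules of level $\mu$ and to compute directly the Casimir operator of the resulting extension. The hypothesis $C_{\rho'}=\mu\Id_{W'}$ and $C_{\rho''}=\mu\Id_{W''}$ will trivialize the right-hand side of \eqref{eq:identidad-T}, and will also force the Casimir of the extension to be close to $\mu\Id$.

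First, I would observe that under the Casimir assumption the right-hand side of \eqref{eq:identidad-T} becomes $\tfrac14(\mu B_1-\mu B_1)=0$, so the condition on $T$ reduces to $\rho'(L_1)\circ T=T\circ \rho''(L_1)$. The next step is to identify the set of such $T\in\Hom_{\C(z)}(W'',W')$ with $\Hom_{\sl(2)}(W''_{\rho''},W'_{\rho'})$. Since every rational module is a $\C(z)$-vector space and hence torsion free as a $\C[z]$-module, Proposition \ref{prop:homs-casimir}(2) applies: a $\C[z]$-linear map intertwining the action of $L_1$ is automatically an $\sl(2)$-homomorphism. Conversely, any $\C[z]$-linear map between $\C(z)$-vector spaces is automatically $\C(z)$-linear (because $\C(z)$ is the localization of $\C[z]$ and the target has no $\C[z]$-torsion), so $\sl(2)$-homomorphisms are exactly the $\C(z)$-linear intertwiners. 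Combined with the free choice of $B_1\in\Hom^1_{\C(z)}(W'',W')$, this yields the claimed description of $\mathcal E\!xt^1_{\mathcal R}(W''_{\rho''},W'_{\rho'})$.

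For the second assertion, I would compute the Casimir operator $C_\rho$ of the extension using the formula $C=L_0(L_0-1)-L_{-1}L_1$ and the block expressions of $\rho(L_{-1}),\rho(L_1)$. A direct multiplication shows that
\[
\rho(L_{-1})\circ\rho(L_1)=\begin{pmatrix}\rho'(L_{-1})\rho'(L_1) & \rho'(L_{-1})B_1+B_{-1}\rho''(L_1)\\ 0 & \rho''(L_{-1})\rho''(L_1)\end{pmatrix}=\begin{pmatrix}z(z-1)-\mu & T\\ 0 & z(z-1)-\mu\end{pmatrix},
\]
where I have used \eqref{eq:def-T} for the off-diagonal block and the Casimir hypothesis \eqref{eq:casimir-module} for the diagonal ones. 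Therefore
\[
C_\rho-\mu\Id_W=\begin{pmatrix}0 & -T\\ 0 & 0\end{pmatrix},
\]
and $(C_\rho-\mu\Id_W)^2=0$. Hence $W(\rho',\rho'';B_1,T)$ lies in $\mathcal{RC}^{(2)}_\mu(W)$ (in $\mathcal{RC}_\mu(W)$ when $T=0$), and in particular in $\mathcal{RC}^\bullet_\mu$.

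Finally, the equality $\mathcal E\!xt^1_{\mathcal R}(W''_{\rho''},W'_{\rho'})=\mathcal E\!xt^1_{\mathcal{RC}^\bullet_\mu}(W''_{\rho''},W'_{\rho'})$ follows because every $\mathcal R$-extension of $W''_{\rho''}$ by $W'_{\rho'}$ has been shown to land in $\mathcal{RC}^\bullet_\mu$, and $\mathcal{RC}^\bullet_\mu$ is closed under extensions in $\mathcal R$ by the corollary to Theorem \ref{thm:finite+decomp}. The only genuinely delicate point in the whole argument is the upgrade from $\C[z]$-linearity to $\C(z)$-linearity needed to identify $T$ with an $\sl(2)$-homomorphism; everything else is a direct block computation.
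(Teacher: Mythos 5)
Your proof is correct and follows the paper's implicit line of reasoning: the paper presents this proposition as a direct specialization of the extension analysis for general rational modules (``In particular, we get''), and you carefully spell out the two required observations, namely that the constraint on $T$ collapses to $\rho'(L_1)\circ T = T\circ\rho''(L_1)$, identified via Proposition \ref{prop:homs-casimir}(2) with $\Hom_{\sl(2)}(W''_{\rho''},W'_{\rho'})$, and that the block computation of $C_\rho$ gives $(C_\rho-\mu)^2=0$. The only point worth flagging is that the inclusion in $\mathcal{RC}^{(2)}_\mu(W)$ should be read as exponent at most $2$ (when $T=0$ the extension has exponent $1$, as the subsequent corollary in the paper makes explicit); your computation reflects this correctly.
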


To determine the isomorphism classes of extensions we have to find those morphisms  $\phi\in \Hom_{\sl(2)}( W(\rho',\rho''; B_1^{(1)},T^{(1)}), W(\rho',\rho''; B_1^{(2)},T^{(2)}))$, such that the diagram 
	\[ \\
	\xymatrix{
		0\ar[r] &  W' \ar@{=}[d]  \ar[r] & W(\rho',\rho''; B_1^{(1)},T^{(1)}) \ar[d]^{\phi} \ar[r] &  W'' \ar@{=}[d]
		\ar[r] &  0 
		\\
		0\ar[r] &  W'  \ar[r] & W(\rho',\rho''; B_1^{(2)},T^{(2)}) \ar[r] &  W'' 
				\ar[r] &  0	}
	\]
is commutative. Therefore $$\phi=\begin{pmatrix} \Id_{W'} &\alpha\\ 0 & \Id_{W''}
\end{pmatrix}$$ with $\alpha\in\Hom_{\C(z)}(W'',W')$ and one checks that the condition that $\phi$ is a morphism of $\sl(2)$-modules is equivalent to the equalities \begin{align*}B_{1}^{(2)}-B_1^{(1)}&=\alpha\circ \rho''(L_1)-\rho'(L_1)\circ\alpha,\\ 
T^{(2)}-T^{(1)} &=\frac{1}{4}(C_{\rho'}\circ \alpha-\alpha\circ C_{\rho''}).
\end{align*} Equivalently, $\Hom_{\C(z)}(W'',W')$ acts on ${\mathcal E}\!{xt}_{\mathcal R}(W''_{\rho''},W'_{\rho'})$ and the $\C$-vector space $\Ext^1_{\mathcal R}(W''_{\rho''},W'_{\rho'})$ of extensions  of $W''_{\rho''}$ by $W'_{\rho'}$ is the quotient under this action $$\Ext^1_{\mathcal R}(W''_{\rho''},W'_{\rho'})={\mathcal E}\!{xt}^1_{\mathcal R}(W''_{\rho''},W'_{\rho'})/\Hom_{\C(z)}(W'',W').$$

Taking into account the previous results and Theorem \ref{thm:finite+decomp}, we obtain for rational Casimir representations the following description of the extension groups.  

\begin{prop}\label{prop:extension-group-rational-Casimir} Let  $W'_{\rho'}, W''_{\rho''}$ be two rational Casimir representations of levels   $\mu',\mu''\in\C$, respectively. \begin{enumerate} \item If $\mu'\neq \mu''$, then $\Ext^1_{\mathcal R}(W''_{\rho''},W'_{\rho'})=0.$
\item If  $\mu'= \mu''$, then  one has \begin{align*}
\Ext^1_{\mathcal R}(&W''_{\rho''},W'_{\rho'})=\\&=\left(\Hom^1_{\C(z)}(W'',W')/\Hom_{\C(z)}(W'',W')\right)\times\Hom_{\sl(2)}(W''_{\rho''},W'_{\rho'}),\end{align*}where $\alpha\in \Hom_{\C(z)}(W'',W')$ acts on $B_1\in \Hom^1_{\C(z)}(W'',W')$ by $$\alpha\cdot B_1:= B_1+\alpha\circ \rho''(L_1)-\rho'(L_1)\circ\alpha.$$
\end{enumerate}
\end{prop}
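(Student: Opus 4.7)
For part (1), the plan is to exploit the decomposition given by Theorem \ref{thm:finite+decomp} together with the Hom-orthogonality of Proposition \ref{prop:orthogonality-generalized-casimirs}. Given an arbitrary extension $0 \to W' \to W \to W'' \to 0$ in $\mathcal R$, the middle term $W$ is a rational $\sl(2)$-module and hence decomposes canonically as $W = \bigoplus_\mu W_\mu$ with $W_\mu \in \mathcal{RC}^\bullet_\mu$. Since $W'$ lies in $\mathcal{RC}_{\mu'} \subset \mathcal{RC}^\bullet_{\mu'}$ and $W''$ lies in $\mathcal{RC}_{\mu''} \subset \mathcal{RC}^\bullet_{\mu''}$, Hom-orthogonality forces the inclusion $W' \hookrightarrow W$ to factor through $W_{\mu'}$, while the quotient $W \twoheadrightarrow W''$ vanishes on $W_{\mu'}$. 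Combining both facts yields $W' = W_{\mu'}$, so that the composition $W_{\mu''} \hookrightarrow W \twoheadrightarrow W''$ becomes an $\sl(2)$-linear isomorphism providing a canonical splitting of the extension.

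For part (2), I would start from the general description of rational extensions developed in the paragraph preceding the proposition, combined with the identification
\[
{\mathcal E}\!{xt}^1_{\mathcal R}(W''_{\rho''},W'_{\rho'}) \simeq \Hom^1_{\C(z)}(W'',W') \times \Hom_{\sl(2)}(W''_{\rho''},W'_{\rho'})
\]
furnished by Proposition \ref{prop:extension-data-for-rational-Casimir-same-level}. It only remains to quotient by the action of $\Hom_{\C(z)}(W'',W')$, which on a pair $(B_1,T)$ is given by the formulas displayed above. The decisive observation is that when both representations are Casimir of the same level $\mu$ one has $C_{\rho'} = \mu\,\Id_{W'}$ and $C_{\rho''} = \mu\,\Id_{W''}$, so that
\[
\alpha\cdot T \,=\, T + \tfrac{1}{4}(C_{\rho'}\circ\alpha - \alpha\circ C_{\rho''}) \,=\, T + \tfrac{1}{4}(\mu\alpha - \mu\alpha) \,=\, T.
\]
Hence $\Hom_{\C(z)}(W'',W')$ fixes the second coordinate and acts only on $B_1$ by the prescribed formula, and the quotient decomposes as the stated product.

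The main subtlety lies in part (1): one must verify that the canonical Casimir decomposition of $W$ is compatible with both the inclusion and the projection of the extension, so that the projection onto $W_{\mu''}$ genuinely produces an $\sl(2)$-linear section of $W\twoheadrightarrow W''$. This follows from the canonicity of the generalized Casimir decomposition together with the Hom-orthogonality property, both already established. Part (2) is then essentially formal, its only new input being the scalar nature of the Casimir operators in the same-level Casimir case, which kills the action on $T$ and leaves the action on $B_1$ exactly as stated.
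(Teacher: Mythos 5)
Your proposal is correct and follows the same route the paper intends: the vanishing in part (1) comes from the block decomposition of Theorem \ref{thm:finite+decomp} together with the Hom-orthogonality of Proposition \ref{prop:orthogonality-generalized-casimirs}, which forces $W'=W_{\mu'}$ and makes $W_{\mu''}\to W''$ a splitting, while part (2) is read off from Proposition \ref{prop:extension-data-for-rational-Casimir-same-level} once one notes that $C_{\rho'}=C_{\rho''}=\mu\,\Id$ makes the action on $T$ trivial. The paper's own proof is essentially a one-line citation of the same ingredients, which you have correctly unpacked.
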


\begin{cor}  If $W'_{\rho'}, W''_{\rho''}$ are two rational Casimir representations of level $\mu$, then $W(\rho',\rho''; B_1,T)$ is a Casimir representation, necessary of level $\mu$, if and only if $T=0$. Therefore, the category $\mathcal{RC}_\mu$ is not closed under extensions in $\mathcal R$. 
\end{cor}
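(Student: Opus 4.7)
The plan is to compute the Casimir operator $C_\rho$ of the extension $W(\rho',\rho''; B_1,T)$ directly as a block-matrix operator on $W=W'\oplus W''$ and read off the condition that it be a scalar multiple of the identity. The key observation is that the quantity $T$ introduced in \eqref{eq:def-T} is exactly the off-diagonal block of $\rho(L_{-1})\circ\rho(L_1)$.

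Using the block forms of $\rho(L_{-1})$ and $\rho(L_1)$ displayed just before \eqref{eq:def-T}, together with the identity $T=\rho'(L_{-1})\circ B_1+B_{-1}\circ\rho''(L_1)$ itself, one computes
\[
\rho(L_{-1})\circ\rho(L_1)\,=\,\begin{pmatrix}\rho'(L_{-1})\circ\rho'(L_1) & T \\ 0 & \rho''(L_{-1})\circ\rho''(L_1)\end{pmatrix}.
\]
Since both $W'_{\rho'}$ and $W''_{\rho''}$ are Casimir of level $\mu$, relation \eqref{eq:casimir-module} tells us that each diagonal entry equals $\pi_\mu(z)$. Using \eqref{align:casimir} in the form $C_\rho=z(z-1)\Id_W-\rho(L_{-1})\circ\rho(L_1)$, this yields at once
\[
C_\rho-\mu\,\Id_W\,=\,\begin{pmatrix}0 & -T \\ 0 & 0\end{pmatrix}.
\]

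From this matrix identity both parts of the first assertion are immediate. On the one hand $C_\rho=\mu\,\Id_W$ if and only if $T=0$; on the other hand, if $W(\rho',\rho''; B_1,T)$ is Casimir of some level $\nu$, then restricting the equality $C_\rho=\nu\,\Id_W$ to the $\sl(2)$-submodule $W'$, where $C_{\rho'}=\mu\,\Id_{W'}$, forces $\nu=\mu$. Note that this is consistent with the inclusion ${\mathcal E}\!{xt}^1_{\mathcal R}(W''_{\rho''},W'_{\rho'})\subset \mathcal{RC}^{(2)}_\mu(W)$ already established in Proposition \ref{prop:extension-data-for-rational-Casimir-same-level}.

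For the second assertion, Proposition \ref{prop:extension-data-for-rational-Casimir-same-level} shows that $T$ may be chosen to be any element of $\Hom_{\sl(2)}(W''_{\rho''},W'_{\rho'})$, so any non-zero $\sl(2)$-morphism between level-$\mu$ rational Casimir modules produces an extension lying in $\mathcal{RC}^{(2)}_\mu\setminus\mathcal{RC}_\mu$. A concrete witness is obtained by taking $W'_{\rho'}=W''_{\rho''}$ a non-zero rational Casimir module of level $\mu$ (for instance the $\rho^{(\mu)}$ of Example \ref{ex:rhomu}) and $T=\Id$, which exhibits explicitly an extension of objects of $\mathcal{RC}_\mu$ that is not itself in $\mathcal{RC}_\mu$. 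The computation is essentially mechanical; the only mildly subtle point is the restriction argument forcing the level of any hypothetical Casimir extension to equal $\mu$.
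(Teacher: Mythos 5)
Your proposal is correct and follows the natural line implicit in the paper: the block computation of $\rho(L_{-1})\circ\rho(L_1)$ identifies $T$ with the off-diagonal entry, so by \eqref{align:casimir} and \eqref{eq:casimir-module} one gets $C_\rho - \mu\,\Id_W = \left(\begin{smallmatrix}0 & -T\\ 0 & 0\end{smallmatrix}\right)$, from which both the characterization ($T=0$) and the forced level $\mu$ follow immediately. Your explicit witness (taking $W'=W''$ a one-dimensional Casimir module and $T=\Id$, which is legitimate since $T$ ranges over $\Hom_{\sl(2)}(W''_{\rho''},W'_{\rho'})$ by Proposition \ref{prop:extension-data-for-rational-Casimir-same-level}) cleanly establishes the non-closure of $\mathcal{RC}_\mu$ under extensions, exactly as intended.
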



\section{Finite rank torsion free $\sl(2)$-modules}

Notice that the abelian category of rational representations $\mathcal R$ is a full subcategory of the category $\sl(2)\Mod_\mathrm{tffr}$ of finite rank torsion free $\sl(2)$-modules. In this section we aim at studying what properties of $\mathcal R$ are valid in the larger category  $\sl(2)\Mod_\mathrm{tffr}$.

\subsection{Minimal polynomials for Casimir operators and endomorphisms} Since finite rank torsion free $\sl(2)$-modules always have infinite dimension as $\C$-vector spaces, the following result is not obvious.

\begin{thm}\label{thm:minimalCasC}
If $(V,\rho)$ is a finite rank torsion free $\sl(2)$-module, then its Casimir operator  $C_\rho$ considered as a $\C$-linear endomorphism has a minimal polynomial and it coincides with the minimal polynomial of the Casimir operator $C_{\rho_\mathrm{rat}}$ of its rationalization. 
\end{thm}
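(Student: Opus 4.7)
The strategy is to bootstrap from Theorem \ref{t:caracCasC} by exploiting the fact that the rationalization functor $F_\rat$ embeds $V$ into the $\C(z)$-vector space $S^{-1}V$ in a way that identifies $C_\rho$ with the restriction of $C_{\rho_\rat}$.

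First I would observe that since $V$ is torsion free as a $\C[z]$-module, the localization map $\iota\colon V\hookrightarrow S^{-1}V$ is $\C$-linearly injective, and by the very construction of $\rho_\rat$ (Definition \ref{defn:rationalization-functor}) one has the intertwining $\rho_\rat(L_i)\circ\iota=\iota\circ\rho(L_i)$ for $i=-1,0,1$. Consequently $C_{\rho_\rat}\circ\iota=\iota\circ C_\rho$, so for any polynomial $P(t)\in\C[t]$ one has $P(C_{\rho_\rat})\circ\iota=\iota\circ P(C_\rho)$. Moreover, because $C$ lies in the center of $U(\sl(2))$, the operator $C_\rho$ commutes with $\rho(L_0)$ and is therefore $\C[z]$-linear; its localization is then $\C(z)$-linear and coincides with $C_{\rho_\rat}$.

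Now apply Theorem \ref{t:caracCasC} to the rational representation $(S^{-1}V,\rho_\rat)$ to obtain a minimal polynomial $M(t):=M_{C_{\rho_\rat}}(t)\in\C[t]$, which is the minimal polynomial of $C_{\rho_\rat}$ both as a $\C(z)$-linear and as a $\C$-linear endomorphism of $S^{-1}V$. Since $M(C_{\rho_\rat})=0$ on $S^{-1}V$, restricting along $\iota$ gives $M(C_\rho)=0$ on $V$. This shows that $C_\rho$, as a $\C$-linear endomorphism of $V$, has a minimal polynomial $M_{C_\rho}(t)\in\C[t]$, and that $M_{C_\rho}(t)$ divides $M(t)$.

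For the reverse divisibility, suppose $P(t)\in\C[t]$ annihilates $C_\rho$ on $V$. Every element of $S^{-1}V$ has the form $v/p(z)$ with $v\in V$ and $p(z)\in S$, and $P(C_{\rho_\rat})$ is $\C(z)$-linear; thus
\[
P(C_{\rho_\rat})\!\left(\frac{v}{p(z)}\right)\,=\,\frac{1}{p(z)}\,\iota(P(C_\rho)(v))\,=\,0,
\]
so $P(C_{\rho_\rat})=0$ on $S^{-1}V$ and therefore $M(t)\mid P(t)$. Taking $P=M_{C_\rho}$ yields $M(t)\mid M_{C_\rho}(t)$, and combined with the previous divisibility this gives $M_{C_\rho}(t)=M(t)$, as required. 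The only subtlety worth double-checking is the centrality argument ensuring that $C_\rho$ is $\C[z]$-linear so that $C_{\rho_\rat}$ really is its localization; everything else is a straightforward transport of the rational-case theorem along the faithful exact functor $F_\rat$.
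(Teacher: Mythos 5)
Your proof is correct and follows essentially the same route as the paper: embed $V$ into its rationalization $S^{-1}V$, identify $C_\rho$ with the restriction of $C_{\rho_\rat}$, and invoke Theorem \ref{t:caracCasC}. The paper states this rather tersely ("$C_\rho = {C_{\rho_\rat}}_{|V}$ and so the result follows"), whereas you usefully spell out both divisibilities — in particular the step that any $P\in\C[t]$ annihilating $C_\rho$ on $V$ must, by $\C(z)$-linearity of $P(C_{\rho_\rat})$, annihilate $C_{\rho_\rat}$ on all of $S^{-1}V$ — which the paper leaves implicit.
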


\begin{proof} The rationalization of $(V,\rho)$ gives an injection of  $\sl(2)$-modules $(V,\rho)\hookrightarrow (S^{-1}V,\rho_\mathrm{rat})$. Therefore, one has $C_\rho={C_{\rho_\mathrm{rat}}}_{|V}$ and so the result follows from Theorem \ref{t:caracCasC}.
\end{proof}

\begin{cor}\label{cor:L1injective} If $(V,\rho)$ is a finite rank torsion free $\sl(2)$-module, then $\rho(L_1)$ and $\rho(L_{-1})$  are injective $\C[z]$-semilinear endomorphisms of $V$ and their images are essential $\C[z]$-submodules. 
\end{cor}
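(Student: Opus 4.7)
The strategy is to deduce everything from the rational case by restriction. Since $(V,\rho)$ is torsion free, the localization map $V\hookrightarrow S^{-1}V$ is an injection of $\sl(2)$-modules, and by the definition of the rationalization functor the action of $L_{\pm 1}$ on $V$ is literally the restriction of the action of $\rho_{\rat}(L_{\pm 1})$ on $F_{\rat}(V)=(S^{-1}V,\rho_{\rat})$. By Corollary \ref{c:L1invertible} applied to the rational module $(S^{-1}V,\rho_{\rat})$, the operators $\rho_{\rat}(L_1)$ and $\rho_{\rat}(L_{-1})$ are $\C(z)$-semilinear automorphisms of $S^{-1}V$. Since the restriction of an injective map is injective, this immediately gives that $\rho(L_1)$ and $\rho(L_{-1})$ are injective $\C[z]$-semilinear endomorphisms of $V$.

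Next I will verify that $\Ima\rho(L_1)$ is genuinely a $\C[z]$-submodule of $V$ (and similarly for $\rho(L_{-1})$). This follows from $\nabla$-semilinearity: for every $p(z)\in\C[z]$ and $v\in V$,
\[
p(z)\cdot \rho(L_1)(v)\,=\,\nabla(p(z-1))\cdot \rho(L_1)(v)\,=\,\rho(L_1)\bigl(p(z-1)\cdot v\bigr)\in\Ima\rho(L_1).
\]

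For essentiality, I will use the characterization that a submodule $M\subseteq N$ of a torsion free $\C[z]$-module $N$ is essential if and only if $S^{-1}M=S^{-1}N$, equivalently, if for every $0\neq v\in N$ there exists a nonzero $p(z)\in\C[z]$ with $p(z)\cdot v\in M$ (this element is automatically nonzero by torsion freeness). Given $0\neq v\in V\subseteq S^{-1}V$, surjectivity of $\rho_{\rat}(L_1)$ provides some $\tfrac{w}{q(z)}\in S^{-1}V$ with $\rho_{\rat}(L_1)\bigl(\tfrac{w}{q(z)}\bigr)=v$; by the definition of the rationalization functor this reads
\[
\frac{\rho(L_1)(w)}{q(z+1)}\,=\,v\quad\text{in }S^{-1}V,
\]
so in $V$ we obtain the equality $\rho(L_1)(w)=q(z+1)\cdot v$. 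This exhibits the nonzero element $q(z+1)\cdot v$ of $V$ inside $\Ima\rho(L_1)$, proving essentiality. The same argument with $\rho_{\rat}(L_{-1})$ in place of $\rho_{\rat}(L_1)$ handles $\Ima\rho(L_{-1})$.

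There is no substantive obstacle here: the corollary is essentially a restriction statement from the rational setting, and Corollary \ref{c:L1invertible} does all the real work. The only minor point requiring care is remembering the $\nabla$-shift when unwinding the formula for $\rho_{\rat}(L_1)$, which produces the factor $q(z+1)$ rather than $q(z)$ in the essentiality argument.
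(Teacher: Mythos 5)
Your proof is correct and follows exactly the paper's approach: reduce to the rational module $F_\rat(V,\rho)$ and invoke Corollary \ref{c:L1invertible}. The paper states the deduction in a single sentence; you have merely unwound the details (semilinear restriction, the $\nabla$-shift producing $q(z+1)$, and the torsion characterization of essentiality), all of which are correct.
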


\begin{proof}
By Corollary~\ref{c:L1invertible} we know that $S^{-1}(\rho(L_1))$, $S^{-1}(\rho(L_{-1}))$ are $\C(z)$-semilinear automorphisms of $S^{-1}V$. This implies the injectivity of $\rho(L_1)$ and $\rho(L_{-1})$  and also that their images are essential $\C[z]$-submodules.  \end{proof}

Taking into account Proposition \ref{p:finite+extension} and using the same ideas as in the proof of Theorem \ref{thm:minimalCasC} we get the following result.

\begin{prop}\label{p:finite+extension-frtf} 
Let $(V,\rho)$ be a finite rank torsion free $\sl(2)$-re\-pre\-sen\-ta\-tion. Every $\sl(2)$-endomorphism $\phi\in \End_{\sl(2)}(V)$ considered as a $\C$-linear endomorphism has a minimal polynomial and it coincides with the minimal polynomial of its rationalization $S^{-1}(\phi)\in \End_{\sl(2)}(F_\mathrm{rat}(V,\rho))$.
\end{prop}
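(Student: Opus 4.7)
The plan is to pass to the rationalization and use Proposition \ref{p:finite+extension}, exactly as Theorem \ref{thm:minimalCasC} reduces the minimal polynomial statement for the Casimir operator from $\sl(2)\Mod_\tffr$ to $\Ra$. Since $V$ is torsion free, the canonical map $V\hookrightarrow S^{-1}V=F_\rat(V,\rho)$ is injective and $\C[z]$-linear, and because $\phi\in\End_{\sl(2)}(V)$ commutes with the $\C[z]$-action (it commutes with $\rho(L_0)=z$) it extends to the rationalization via $(S^{-1}\phi)(v/p(z)):=\phi(v)/p(z)$, giving $S^{-1}\phi\in\End_{\sl(2)}(S^{-1}V)$ with $(S^{-1}\phi)_{|V}=\phi$.

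The key formal observation is that localization is a ring homomorphism at the level of polynomial evaluation: for every $P(t)\in\C[t]$ one has $P(S^{-1}\phi)=S^{-1}(P(\phi))$, that is,
\[
P(S^{-1}\phi)\bigl(v/p(z)\bigr)=P(\phi)(v)/p(z)\qquad\text{for all }v\in V,\ p(z)\in S.
\]
Since $V$ is torsion free, $v\mapsto v/1$ is injective, so $P(\phi)=0$ if and only if $P(S^{-1}\phi)=0$. Consequently the annihilating ideals $\Ann(\phi),\Ann(S^{-1}\phi)\subset\C[t]$ of $\phi$ and $S^{-1}\phi$ coincide.

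By Proposition \ref{p:finite+extension} applied to the rational representation $S^{-1}V$, the endomorphism $S^{-1}\phi$ admits a minimal polynomial $M_{S^{-1}\phi}(t)\in\C[t]$ both as a $\C(z)$-linear and as a $\C$-linear endomorphism. The equality of annihilating ideals established above then shows that $\Ann(\phi)=\langle M_{S^{-1}\phi}(t)\rangle$ is a nonzero principal ideal, so $\phi$ (viewed as a $\C$-linear endomorphism of $V$) admits a minimal polynomial and it equals $M_{S^{-1}\phi}(t)$, proving the proposition.

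There is no substantial obstacle; the only point to check carefully is that $\phi$ really is $\C[z]$-linear, so that the formula $(S^{-1}\phi)(v/p(z))=\phi(v)/p(z)$ is well defined, and that the torsion freeness of $V$ is used precisely to transfer annihilation from $S^{-1}\phi$ back to $\phi$.
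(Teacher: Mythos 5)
Your argument is correct and is precisely the route the paper indicates: rationalize, use torsion-freeness to identify $\phi$ with the restriction of $S^{-1}\phi$, check that the annihilating ideals in $\C[t]$ coincide, and invoke Proposition~\ref{p:finite+extension}. The paper compresses this into a reference to "the same ideas as in the proof of Theorem~\ref{thm:minimalCasC}," and your write-up simply spells out the identity $P(S^{-1}\phi)=S^{-1}(P(\phi))$ and the two inclusions of annihilators that that one-liner leaves implicit.
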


\subsection{Categorical properties of finite rank torsion free modules}

We begin by analyzing the exact structure.

\begin{prop}\label{prop:exact-tffr} The natural inclusion $\sl(2)\Mod_{\mathrm{tffr}}\hookrightarrow \sl(2)\Mod$
makes the category $\sl(2)\Mod_{\mathrm{tffr}}$ into a $\C$-linear exact category. The abelian category $\mathcal R$ is a thick subcategory of the exact category $\sld\Mod_\tffr$.
\end{prop}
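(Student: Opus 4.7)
My plan is to deduce the exact structure on $\sld\Mod_\tffr$ from the standard principle that an extension-closed full additive subcategory of an abelian category inherits a canonical Quillen exact structure, and then to establish thickness of $\Ra$ by checking closure under extensions and under direct summands inside $\sld\Mod_\tffr$.

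First I would verify that $\sld\Mod_\tffr$ is $\C$-linear and additive (immediate, since finite direct sums of torsion-free $\C[z]$-modules are torsion-free and ranks add), and then that it is closed under extensions in $\sld\Mod$. For the latter, given $0\to V'\to V\to V''\to 0$ in $\sld\Mod$ with $V',V''\in\sld\Mod_\tffr$, a short diagram chase shows $V$ is torsion-free as a $\C[z]$-module (if $p(z)\cdot v=0$ then its image in $V''$ is torsion, hence zero, forcing $v\in V'$, which is itself torsion-free), and additivity of rank gives $\rk V=\rk V'+\rk V''<\infty$. This closure under extensions, combined with additivity, supplies the desired exact structure.

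For thickness of $\Ra$, I would verify two conditions. Closure under extensions: given $0\to W'\to V\to W''\to 0$ in $\sld\Mod_\tffr$ with $W',W''\in\Ra$, the five lemma applied to the ladder of $\C[z]$-modules with vertical maps given by multiplication by an arbitrary nonzero $p(z)\in\C[z]$ shows that $p(z)$ acts bijectively on $V$, promoting the $\C[z]$-structure to a $\C(z)$-vector space structure of dimension $\dim_{\C(z)}W'+\dim_{\C(z)}W''<\infty$. Closure under direct summands: the argument already used in Proposition~\ref{prop:indecomposable-generalized-Casimir} transplants verbatim, since torsion-freeness of each factor in an $\sld$-decomposition $V=U_1\oplus U_2$ of a rational module forces each $U_i$ to be a $\C(z)$-subspace of $V$, hence rational.

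The only step requiring real care is the extension step for $\Ra$, where one must promote an a priori $\C[z]$-module structure on the middle term to a $\C(z)$-vector space structure. Once the five-lemma observation is in hand, the rest is essentially bookkeeping; should the definition of \emph{thick} additionally demand closure under admissible sub- and quotient objects, this falls out of the same reasoning, because any $\sld$-submodule of a rational module that lies in $\sld\Mod_\tffr$ is automatically a $\C(z)$-subspace, and the corresponding quotient in $\sld\Mod$ is a finite-dimensional $\C(z)$-vector space.
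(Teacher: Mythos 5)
Your argument for the first claim (the exact structure on $\sld\Mod_\tffr$) matches the paper in spirit: both reduce the question to closure under extensions inside $\sld\Mod$ and invoke the standard fact that an extension-closed full additive subcategory of an abelian category carries a canonical exact structure. The paper is terser, citing exactness of $F_\rat$ and left exactness of the torsion subfunctor, but your explicit diagram chase is also correct.

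For the thickness of $\Ra$, however, there is a genuine gap. The paper's notion of thick is the two-out-of-three condition: for any short exact sequence $0\to V'\to V\to V''\to 0$ lying in $\sld\Mod_\tffr$, one has $V\in\Ra$ if and only if $V',V''\in\Ra$. Your initial checks (closure under extensions and under direct summands) only give one direction plus retract closure. You then try to cover closure under admissible sub- and quotient objects with the remark that ``any $\sld$-submodule of a rational module that lies in $\sld\Mod_\tffr$ is automatically a $\C(z)$-subspace.'' This is false as stated: with $(\C(z),\rho^{(0)})$ as in Example~\ref{ex:rhomu}, the polynomial module $\C[z]\subset\C(z)$ is an $\sld$-submodule that is torsion-free of finite rank, yet is certainly not a $\C(z)$-subspace. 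The indispensable ingredient you omit is admissibility: the quotient $V/V'$ must itself be torsion-free (equivalently, the sequence must be an admissible exact sequence of $\sld\Mod_\tffr$). Given that, the argument is short---for $v'\in V'$ and nonzero $p(z)$, the element $v'/p(z)\in V$ exists because $V$ is rational, its class in $V/V'$ is killed by $p(z)$ and hence vanishes by torsion-freeness, so $v'/p(z)\in V'$---but without it the claim is simply wrong. The paper packages exactly this reasoning via the snake lemma applied to multiplication by $z-\lambda$, where injectivity of $z-\lambda$ on the torsion-free $V''$ is what forces surjectivity on $V'$; that observation, not something ``automatic,'' is the content of the proposition.
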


\begin{proof} Since  $\sl(2)\Mod_{\mathrm{tffr}}$ is a full additive subcategory of the abelian category $\sl(2)\Mod$, according to \cite[Lemma 10.20]{B} we get the first claim if $\sl(2)\Mod_{\mathrm{tffr}}$ is closed under extensions in $\sl(2)\Mod$. This follows from the exactness of the rationalization functor and the left exactness of the torsion subfunctor. To prove the second claim we must show that if  $0\to V'\to V\to V''\to 0$ is exact in $\sl(2)\Mod_{\mathrm{tffr}}$, then $V\in \mathcal R$ if and only if $V',V''\in\mathcal R$. Since $V\in\mathcal R$ if and only if the underlying $\C[z]$-module structure on $V$ is such that for every $\lambda\in\C$ multiplication by $z-\lambda$  is an isomorphism, the result follows immediately by  the snake Lemma.
\end{proof}

\begin{rem}\label{rem:finite-length-missed}
The main difference with rational representations is that $\sl(2)\Mod_{\mathrm{tffr}}$ is not a finite length category. This is so because although every rational module $W$ has finite $\Ra$-length, it never has finite length as an object of $\sl(2)\Mod_{\mathrm{tffr}}$, because if this were true then $W$ would have countable $\C$-dimension, which  is impossible.
\end{rem}

Following the ideas of Remark \ref{rem:functor-extension-scalars} one gets the following result.

\begin{prop}\label{prop:rationalization-is-a-retraction} Let $F_\rat\colon \sld\Mod_\tffr\to\Ra$ be the rationalization functor. If $i_\tffr\colon \Ra\hookrightarrow \sld\Mod_\tffr$ is the natural embedding, then there is an isomorphism of functors $F_\rat \circ i_\tffr\cong \Id_{\Ra}$ and thus  $F_\rat$ is a retraction of $i_\tffr$. 
\end{prop}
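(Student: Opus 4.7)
The plan is to make precise and exploit the observation that a rational $\sl(2)$-module is already ``localized'' in the relevant sense: by Definition \ref{d:rational-repr}, the underlying $\C[z]$-module structure on any $W\in\Ra$ extends to a $\C(z)$-vector space structure, so every element of $S=\C[z]\setminus\{0\}$ acts on $W$ as a $\C$-linear automorphism. Consequently, the canonical localization map $W\to S^{-1}W$ given by $w\mapsto w/1$ is a $\C[z]$-module isomorphism, and the only real content is to check that this identification intertwines the original $\sl(2)$-action $\rho$ with the rationalized action $\rho_\rat$ supplied by Definition \ref{defn:rationalization-functor}.

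Concretely, for each $(W,\rho)\in\Ra$ I would define
\[
\eta_W\colon F_\rat(i_\tffr(W,\rho))\longrightarrow (W,\rho),\qquad \eta_W\bigl(w/p(z)\bigr):=p(z)^{-1}\cdot w,
\]
where $p(z)^{-1}\cdot w$ denotes the unique $u\in W$ with $p(z)\cdot u=w$, guaranteed by the $\C(z)$-vector space structure on $W$. A standard localization argument shows $\eta_W$ is a well-defined $\C[z]$-linear isomorphism with inverse $w\mapsto w/1$. It then remains to verify that $\eta_W$ is a morphism of $\sl(2)$-modules. Compatibility with $\rho(L_0)=z$ is immediate; for $\rho(L_1)$, the $\nabla$-semilinearity relation $\rho(L_1)(p(z)\cdot v)=p(z+1)\cdot \rho(L_1)(v)$ yields $\rho(L_1)(p(z)^{-1}\cdot w)=p(z+1)^{-1}\cdot \rho(L_1)(w)$, and so
\begin{align*}
\eta_W\bigl(\rho_\rat(L_1)(w/p(z))\bigr)&=\eta_W\bigl(\rho(L_1)(w)/p(z+1)\bigr)=p(z+1)^{-1}\cdot\rho(L_1)(w)\\
&=\rho(L_1)\bigl(\eta_W(w/p(z))\bigr).
\end{align*}
The verification for $\rho(L_{-1})$ is analogous, using $\nabla^{-1}$-semilinearity and the shift by $p(z-1)$ in Definition \ref{defn:rationalization-functor}.

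Finally, I would check naturality: for any $f\colon(W_1,\rho_1)\to(W_2,\rho_2)$ in $\Ra$, the functor $F_\rat$ sends $f$ to $S^{-1}f\colon w/p(z)\mapsto f(w)/p(z)$, and since $f$ is $\C[z]$-linear one has $f(p(z)^{-1}\cdot w)=p(z)^{-1}\cdot f(w)$, giving $f\circ\eta_{W_1}=\eta_{W_2}\circ F_\rat(f)$. Thus $\eta\colon F_\rat\circ i_\tffr\Rightarrow \Id_\Ra$ is a natural isomorphism, proving that $F_\rat$ is a retraction of $i_\tffr$. The main obstacle is not really conceptual, since localization at $S$ is idempotent on modules where $S$ already acts invertibly; the one point requiring care is tracking the semilinear shifts $p(z\pm 1)$, which fortunately match on the two sides precisely because of the semilinearity of $\rho(L_{\pm 1})$.
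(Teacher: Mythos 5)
Your proof is correct and takes essentially the same approach as the paper: since $S=\C[z]\setminus\{0\}$ already acts invertibly on any $W\in\Ra$, localization is idempotent and the canonical map $w\mapsto w/1$ is an isomorphism, which the paper invokes implicitly by referencing Remark \ref{rem:functor-extension-scalars}. You have simply made the natural isomorphism $\eta$ and the semilinearity bookkeeping explicit, which the paper leaves to the reader.
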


Now we prove a result that later will be crucial for studying the structure of the Grothedieck group of the category $\sl(2)\text{-} \mathrm{Mod}_\mathrm{tffr}$.

\begin{thm}\label{thm:rational-is-reflective-localization-of-tffr} The embedding functor $i_\tffr\colon\mathcal R\hookrightarrow \sl(2)\text{-} \mathrm{Mod}_\mathrm{tffr}$ is a right adjoint of the rationalization functor $F_{\mathrm{rat}}\colon \sl(2)\text{-}\mathrm{Mod}_{\mathrm{tffr}}\to \mathcal R$. Therefore, $\mathcal R$  is a reflective localization of $\sl(2)\text{-} \mathrm{Mod}_\mathrm{tffr}$ with localization functor $F_{\mathrm{rat}}$. Moreover, $F_{\mathrm{rat}}$ is faithful.
\end{thm}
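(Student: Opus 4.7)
The plan is to prove the adjunction $F_\rat \dashv i_\tffr$ by exhibiting an explicit unit, verifying the universal property, and then deducing the remaining claims formally.

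First, I would define the unit of the adjunction. For every $V\in \sld\Mod_\tffr$, the localization map $\eta_V\colon V\to S^{-1}V=i_\tffr F_\rat(V)$, $v\mapsto v/1$, is a morphism of $\sl(2)$-modules; it is injective because $V$ is torsion free (this is where torsion-freeness is used in an essential way, as noted in the construction of $F_\rat$). Naturality of $\eta$ in $V$ is immediate from the functoriality of localization.

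Second, I would verify the universal property. Given any $g\colon V\to W$ with $W\in\mathcal{R}$, define $\tilde g\colon S^{-1}V\to W$ by $\tilde g(v/p(z)):=p(z)^{-1}\cdot g(v)$, where the inverse makes sense because $W$ is a $\C(z)$-vector space. The map is well-defined since $V$ is torsion free, so $v/p=v'/p'$ forces $p'v=pv'$ in $V$ and hence $p'g(v)=pg(v')$ in $W$. It is $\C(z)$-linear by construction. To check that $\tilde g$ commutes with the $\sl(2)$-action, one uses the defining formulas of $\rho_\rat$ in Definition \ref{defn:rationalization-functor} together with the $\nabla^{\pm 1}$-semilinearity of $\rho_W(L_{\pm 1})$; for instance
\begin{equation*}
\rho_W(L_1)\bigl(\tilde g(v/p(z))\bigr)=\rho_W(L_1)\bigl(p(z)^{-1}g(v)\bigr)=p(z+1)^{-1}\rho_W(L_1)(g(v))=\tilde g\bigl(\rho_\rat(L_1)(v/p(z))\bigr),
\end{equation*}
and analogously for $L_{-1}$ and $L_0$. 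Uniqueness is forced by $\tilde g(v/p)=p^{-1}\tilde g(v/1)=p^{-1}g(v)$. This gives the natural bijection
\begin{equation*}
\Hom_{\sld\Mod_\tffr}(V,i_\tffr W)\;\xrightarrow{\sim}\;\Hom_{\mathcal{R}}(F_\rat V,W),\qquad g\longmapsto \tilde g,
\end{equation*}
establishing $F_\rat\dashv i_\tffr$.

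Third, the statement that $\mathcal{R}$ is a reflective localization of $\sld\Mod_\tffr$ with localization functor $F_\rat$ is a purely formal consequence: by Proposition \ref{prop:rationalization-is-a-retraction} one has $F_\rat\circ i_\tffr\cong \Id_{\mathcal{R}}$, which is equivalent, in the presence of the adjunction just proved, to the fact that $i_\tffr$ is fully faithful and that the counit is an isomorphism. This is exactly the definition of a reflective localization (see e.g.\ Gabriel--Zisman).

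Finally, for faithfulness of $F_\rat$: if $f,g\colon V_1\to V_2$ satisfy $F_\rat(f)=F_\rat(g)$, then by naturality of $\eta$ we have $\eta_{V_2}\circ f=F_\rat(f)\circ\eta_{V_1}=F_\rat(g)\circ\eta_{V_1}=\eta_{V_2}\circ g$, and since $\eta_{V_2}$ is injective (because $V_2$ is torsion free) we conclude $f=g$. The main subtle point in the whole argument is checking that $\tilde g$ is $\sl(2)$-equivariant; everything else is essentially formal once the correct unit is identified.
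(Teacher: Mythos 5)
Your proposal is correct and follows essentially the same route as the paper: exhibit the localization map $\eta_V\colon V\hookrightarrow S^{-1}V$ as the unit of the adjunction, deduce reflective localization from full faithfulness of $i_\tffr$ (resp.\ $F_\rat\circ i_\tffr\cong\Id_{\mathcal R}$) via Gabriel--Zisman, and obtain faithfulness of $F_\rat$ from the fact that $\eta_V$ is a monomorphism. The only difference is one of exposition — you spell out the universal property check and the faithfulness argument that the paper labels ``straightforward'' and ``well known,'' and all those details are right.
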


\begin{proof} One checks the first claim straightforwardly. The second  follows from the first one thanks to \cite[Proposition 1.3, pag. 7 ]{GZ} since  $i_\tffr\colon\mathcal R\hookrightarrow \sl(2)\text{-} \mathrm{Mod}_\mathrm{tffr}$ is fully faithful.  Finally, it is well known that $F_{\mathrm{rat}}$ is faithful precisely if the component of the unit of the adjunction $\eta$ over a finite rank torsion free $\sl(2)$-module $V$ is a monomorphism. This is immediate since $\eta_V$ is the natural inclusion $V\hookrightarrow S^{-1} V$.
\end{proof}

We end this section proving that $\sl(2)\text{-} \mathrm{Mod}_\mathrm{tffr}$ has a ``suitable size'' for $K_0$-groups.

\begin{thm}\label{thm:tffr-is-essentially-small}
The category $\sl(2)\text{-} \mathrm{Mod}_\mathrm{tffr}$ is essentially small and thus $\mathcal{C}^\bullet_\tffr$, $\mathcal{C}_\tffr$, $\mathcal{C}_{\mu,\tffr}^\bullet$ and $\mathcal{C}_{\mu,\tffr}$, for every $\mu\in \C$, are also essentially small categories.
\end{thm}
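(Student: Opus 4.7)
The plan is to reduce the essential smallness of $\sl(2)\Mod_\tffr$ to that of $\Ra$, which was already established in Theorem \ref{thm:rational-essentially-small}. The key observation is that the rationalization functor $F_\rat$, together with the injectivity of the unit of the associated adjunction, allows us to exhibit every object of $\sl(2)\Mod_\tffr$ as an $\sl(2)$-submodule of some rational $\sl(2)$-module; since submodules of a fixed module form a set, we get the result.

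More precisely, I would proceed as follows. First, observe that for any $V\in \sl(2)\Mod_\tffr$, the unit $\eta_V\colon V \to F_\rat(V)$ of the adjunction of Theorem \ref{thm:rational-is-reflective-localization-of-tffr} is the natural map $V\hookrightarrow S^{-1}V$, which is an $\sl(2)$-homomorphism and is injective because $V$ is torsion free as a $\C[z]$-module (as was already noted in the proof of Theorem \ref{thm:rational-is-reflective-localization-of-tffr}). Hence $V$ is isomorphic as an $\sl(2)$-module to the $\sl(2)$-submodule $\eta_V(V) \subseteq F_\rat(V)$.

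Next, by Theorem \ref{thm:rational-essentially-small}, the class $\Iso(\Ra)$ of isomorphism classes of rational $\sl(2)$-modules is a set. Choosing a representative $W_{[W]}$ for each $[W]\in \Iso(\Ra)$, the collection $\mathcal{S}_{[W]}$ of $\sl(2)$-submodules of $W_{[W]}$ is a subset of the power set of the underlying set of $W_{[W]}$, hence a set. Therefore the disjoint union
$$\bigsqcup_{[W]\in \Iso(\Ra)} \mathcal{S}_{[W]}$$
is a set, and by the previous paragraph every object of $\sl(2)\Mod_\tffr$ is isomorphic to an element of this union. Consequently, the isomorphism classes of $\sl(2)\Mod_\tffr$ form a set, proving that the category is essentially small.

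Finally, $\mathcal{C}^\bullet_\tffr$, $\mathcal{C}_\tffr$, $\mathcal{C}_{\mu,\tffr}^\bullet$ and $\mathcal{C}_{\mu,\tffr}$ are full subcategories of $\sl(2)\Mod_\tffr$, so their isomorphism classes form subsets of those of $\sl(2)\Mod_\tffr$ and are therefore also sets, which yields essential smallness for each of them. No serious obstacle is expected here; the entire argument rests on Theorem \ref{thm:rational-essentially-small} together with the fact that $V\hookrightarrow F_\rat(V)$ is an $\sl(2)$-monomorphism, which is precisely the content of torsion freeness.
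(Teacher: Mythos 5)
Your proposal is correct and follows essentially the same strategy as the paper's proof: embed each torsion free finite rank module into its rationalization via the (injective) unit $\eta_V\colon V\hookrightarrow S^{-1}V$, invoke Theorem \ref{thm:rational-essentially-small} for the essential smallness of $\Ra$, and observe that the collection of $\sl(2)$-submodules of a fixed rational module forms a set. The only cosmetic difference is that where you appeal directly to the power set of the underlying set, the paper cites the fact that $\sld\Mod$ is well powered (Schubert), which is the same set-theoretic point packaged as a standard categorical fact.
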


\begin{proof} The rationalization functor  induces a map $F_\rat\colon \Iso(\sld\Mod_\tffr)\to\Iso(\Ra)$.
By Theorem \ref{thm:rational-essentially-small} the category $\mathcal R$ is essentially small. Therefore, to prove the claim it is enough to show that given $\{W\}\in \Iso(\Ra)$,  the isomorphism classes $\{V\}$  of finite rank torsion free $\sl(2)$-modules such that $F_\rat(V)\simeq W$ form a set. Without loss of generality we may assume that $V$ is an $\sl(2)$-submodule of $W$. Now the condition $F_\rat(V)= W$ is equivalent to saying that $V$ has rank  $r=\rk(W)$. It is well known, \cite[10.6.2, pag. 92]{Schubert}, that $\sld\Mod$, as the category of modules over the enveloping algebra $U(\sld)$, is well powered and therefore there is a set $\Sub(W)$ of  $\sl(2)$-submodules of $W$. The collection $\Sub_r(W)$ formed by those submodules of $W$ whose rank  is $r$ is a subset of $\Sub(W)$, finishing the proof.
\end{proof}

\subsection{Decomposition of the category of finite rank torsion free modules}

Taking into account Theorem \ref{thm:minimalCasC}, the following result is proved in the same way as Theorem \ref{thm:finite+decomp}.

\begin{thm}\label{thm:finite+decomp-tffr}
	The exact category $\sl(2)\Mod_\mathrm{tffr}$ decomposes into the $\Hom$-orthogonal direct sum of the exact subcategories of generalized Casimir modules:
		\[
		\sl(2)\Mod_\mathrm{tffr}\,=\, \bigoplus_{\mu\in \C} {\mathcal{C}}^\bullet_{\mu,\mathrm{tffr}}\, . 
		\] This is compatible with the coproduct decomposition $\mathcal C_\mathrm{tffr}\,=\, \coprod_{\mu\in \C } {\mathcal{C}}_{\mu,\mathrm{tffr}}.$
\end{thm}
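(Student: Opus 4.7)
The plan is to mirror almost verbatim the proof of Theorem \ref{thm:finite+decomp}, with Theorem \ref{thm:minimalCasC} replacing Theorem \ref{t:caracCasC} as the tool that furnishes a $\C$-coefficient minimal polynomial for the Casimir operator of an object in the ambient category (rational in the first case, torsion free finite rank here). The orthogonality half of the statement is then essentially for free, inherited from results already proved in the full category $\sl(2)\Mod$.

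First, I would take $(V,\rho)\in\sl(2)\Mod_\tffr$ and invoke Theorem \ref{thm:minimalCasC} to obtain a minimal polynomial $M_{C_\rho}(t)\in\C[t]$ for the Casimir endomorphism $C_\rho\in\End_{\sl(2)}(V)$ viewed as a $\C$-linear map. Factoring it as
\[
M_{C_\rho}(t)=(t-\mu_1)^{n_1}\cdots(t-\mu_r)^{n_r}
\]
with the $\mu_i\in\C$ pairwise distinct, standard primary decomposition gives a $\C$-linear direct sum
\[
V\,=\,\ker(C_\rho-\mu_1)^{n_1}\,\oplus\,\cdots\,\oplus\,\ker(C_\rho-\mu_r)^{n_r}.
\]
Since $C_\rho$ commutes with $\rho(L_{-1}),\rho(L_0),\rho(L_1)$, each summand $V_i:=\ker(C_\rho-\mu_i)^{n_i}$ is an $\sl(2)$-submodule and hence belongs to $\mathcal C^{(n_i)}_{\mu_i}$; being a submodule of the torsion free finite rank module $V$, it is itself torsion free with $\rk(V_i)\le\rk(V)$, so $V_i\in \mathcal C^\bullet_{\mu_i,\tffr}$. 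This realizes $V$ as a finite direct sum in $\sl(2)\Mod_\tffr$ of objects drawn from the subcategories $\mathcal C^\bullet_{\mu,\tffr}$.

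For the $\Hom$-orthogonality, I would simply restrict: for $\mu\neq\nu$ and $V_\mu\in\mathcal C^\bullet_{\mu,\tffr}$, $V_\nu\in\mathcal C^\bullet_{\nu,\tffr}$, Proposition \ref{prop:orthogonality-generalized-casimirs} already gives $\Hom_{\sl(2)}(V_\mu,V_\nu)=0$ in the full category, and this property descends to any full subcategory. Combined with the decomposition above, this is precisely the definition of the $\Hom$-orthogonal direct sum of exact subcategories. The compatibility with $\mathcal C_\tffr=\coprod_{\mu\in\C}\mathcal C_{\mu,\tffr}$ follows by restricting the coproduct decomposition $\mathcal C=\coprod_\mu\mathcal C_\mu$ of Proposition \ref{prop:homs-casimir}(1) to torsion free finite rank objects, noting that if $V$ is a genuine Casimir module (not merely generalized), then the minimal polynomial has a single linear factor so the decomposition reduces to a single summand.

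The main obstacle I anticipate is conceptual rather than computational: Theorem \ref{thm:finite+decomp} was stated for an \emph{abelian} category, whereas $\sl(2)\Mod_\tffr$ is only \emph{exact} (see Proposition \ref{prop:exact-tffr}), so one must ensure that the reference to a direct sum of exact subcategories is meaningful. This boils down to verifying that the decomposition constructed above respects the exact structure: the inclusions $V_i\hookrightarrow V$ and projections $V\twoheadrightarrow V_i$ are admissible morphisms in $\sl(2)\Mod_\tffr$ (which they are, since each $V_i$ and each quotient $V/\bigoplus_{j\neq i}V_j\cong V_i$ lies in $\sl(2)\Mod_\tffr$), and that the decomposition is functorial, which follows from the $\Hom$-orthogonality established above. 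Aside from this bookkeeping, the argument is a clean transcription of the rational case.
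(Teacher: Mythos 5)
Your proposal is correct and matches the paper's intended argument exactly: the paper's own proof is a one-line pointer ("taking into account Theorem~\ref{thm:minimalCasC}, the following result is proved in the same way as Theorem~\ref{thm:finite+decomp}"), which is precisely the transcription you carry out, replacing the $\C(z)$-linear primary decomposition of the rational case by the $\C$-linear one furnished by the minimal polynomial of $C_\rho$ and closing with Proposition~\ref{prop:orthogonality-generalized-casimirs}. Your added remarks on the exact-versus-abelian subtlety and the compatibility with the Casimir coproduct are sound elaborations of points the paper leaves implicit.
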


\subsection{Indecomposable and irreducible finite rank torsion free modules}

\begin{prop}\label{prop:irred-tffr-is-Casimir} 
Let $V$ be an object of $\sl(2)\Mod_\mathrm{tffr}$. If $V$ is
	\begin{enumerate}
		\item indecomposable, then $V\in\mathcal C^\bullet_\tffr$; that is, $\Ind(\sld\Mod_\tffr)=\Ind(\mathcal C^\bullet_\tffr)$. Moreover, every $\phi\in \End_{\sl(2)}(V)$ is either an isomorphism or nilpotent.;
		\item simple, then $V\in\mathcal C_\tffr$; that is, $\Spl(\sld\Mod_\tffr)=\Spl(\mathcal C_\tffr)$.
	\end{enumerate}
\end{prop}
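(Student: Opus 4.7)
The plan is to deduce both statements from the decomposition of $\sld\Mod_\tffr$ given by Theorem~\ref{thm:finite+decomp-tffr} together with the minimal polynomial results of Theorem~\ref{thm:minimalCasC} and Proposition~\ref{p:finite+extension-frtf}. The arguments will closely parallel those used in the rational setting, cf.\ Propositions~\ref{prop:indecomposable-generalized-Casimir} and~\ref{prop:indec-endomorphism-iso-or-nilpotent}.

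For~(1), the inclusion $V\in\mathcal C^\bullet_\tffr$ is essentially automatic: Theorem~\ref{thm:finite+decomp-tffr} expresses $\sld\Mod_\tffr=\bigoplus_{\mu\in\C}\mathcal C^\bullet_{\mu,\tffr}$ as a $\Hom$-orthogonal direct sum, so each $V$ splits (via the primary decomposition of $C_\rho$, whose minimal polynomial lies in $\C[t]$ by Theorem~\ref{thm:minimalCasC}) into summands inside the $\mathcal C^\bullet_{\mu,\tffr}$, and indecomposability leaves only one nonzero summand. For the dichotomy on endomorphisms, I would take $\phi\in\End_\sld(V)$, obtain a minimal polynomial $M_\phi(t)\in\C[t]$ from Proposition~\ref{p:finite+extension-frtf}, and apply the same primary-decomposition idea to $\phi$: any non-trivial coprime factorization $M_\phi=P\cdot Q$ would split $V$ as $\ker P(\phi)\oplus\ker Q(\phi)$ in $\sld\Mod_\tffr$, contradicting indecomposability. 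Hence $M_\phi(t)=(t-\alpha)^m$, so $\phi=\alpha\Id_V+N$ with $N^m=0$; when $\alpha=0$ this is nilpotent, and when $\alpha\neq 0$ the standard inverse $\sum_{k=0}^{m-1}(-1)^k\alpha^{-(k+1)}N^k\in\C[\phi]$ is again an $\sld$-endomorphism, so $\phi$ is an isomorphism.

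For~(2), I would simply combine Theorems~\ref{thm:simple-tf-is-of-fr} and~\ref{thm:Schur-Dixmier}: the former identifies $\Spl(\sld\Mod_{\mathrm{tf}})=\Spl(\sld\Mod_\tffr)$, and the latter ensures every simple $\sld$-module is Casimir, giving $V\in\mathcal C_\tffr$ at once. A self-contained alternative takes any root $\mu\in\C$ of $M_{C_\rho}$: then $\ker(C_\rho-\mu)$ is a nonzero $\sld$-submodule of $V$, so simplicity forces $V=\ker(C_\rho-\mu)\in\mathcal C_{\mu,\tffr}$. The only delicate point in the whole argument is verifying that the primary-decomposition projectors respect the $\sld$-action; this is handled by noting that they lie in $\C[\phi]$ while $\phi$ commutes with each $\rho(L_i)$, so no serious obstacle arises once the rational-case machinery has been transported to $\sld\Mod_\tffr$ via Theorems~\ref{thm:minimalCasC} and~\ref{thm:finite+decomp-tffr}.
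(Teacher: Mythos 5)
Your proof is correct and follows essentially the same route as the paper: part (1) uses the $\Hom$-orthogonal decomposition of Theorem~\ref{thm:finite+decomp-tffr} for membership in $\mathcal C^\bullet_\tffr$ and the minimal polynomial result of Proposition~\ref{p:finite+extension-frtf} together with a primary-decomposition argument (as in Proposition~\ref{prop:indec-endomorphism-iso-or-nilpotent}) for the endomorphism dichotomy, while part (2) invokes Theorem~\ref{thm:Schur-Dixmier}. Your self-contained alternative for (2)---taking a root $\mu$ of $M_{C_\rho}$ and using simplicity to conclude $V=\ker(C_\rho-\mu)$---matches the paper's secondary argument via the proof of Proposition~\ref{prop:R-irred-rational-is-Casimir} and the canonical filtration.
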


\begin{proof}
$(1)$ The first part follows from Theorem \ref{thm:finite+decomp-tffr}. For the second one, having in mind Proposition \ref{p:finite+extension-frtf}, one may argue  as in Proposition \ref{prop:indec-endomorphism-iso-or-nilpotent}. $(2)$ follows from Theorem \ref{thm:Schur-Dixmier}. Alternatively, a finite rank torsion free module that is simple is also indecomposable. Therefore, recalling the proof of Proposition \ref{prop:R-irred-rational-is-Casimir}, we are done. 
\end{proof}

\subsection{Hom-finiteness and Krull-Schmidt property for finite rank torsion free modules}

Bearing in mind Proposition~\ref{p:finite+extension-frtf},  Proposition \ref{prop:end=C} gives:

\begin{prop}\label{prop:end=C-tffr}
Let $(V,\rho)$ be a finite rank torsion free $\sl(2)$-module. If $(V,\rho)$ is irreducible, then $\End_{\sl(2)}(V)=\C$. 
\end{prop}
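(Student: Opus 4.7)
The plan is to imitate directly the proof of Proposition~\ref{prop:end=C} for rational modules, replacing the appeal to $\mathcal{R}$-simplicity by the ordinary $\sl(2)$-simplicity of $V$, and using Proposition~\ref{p:finite+extension-frtf} in place of Proposition~\ref{p:finite+extension} to secure the existence of a minimal polynomial over $\C$.

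Concretely, take any $\phi\in\End_{\sl(2)}(V)$. By Proposition~\ref{p:finite+extension-frtf}, regarded as a $\C$-linear endomorphism of $V$, the operator $\phi$ admits a minimal polynomial $M_\phi(t)\in\C[t]$. Pick a root $\alpha\in\C$ of $M_\phi$ and write $M_\phi(t)=(t-\alpha)Q(t)$ with $\deg Q<\deg M_\phi$; then $Q(\phi)\neq 0$ by minimality, so there exists $v\in V$ with $w:=Q(\phi)(v)\neq 0$, and $(\phi-\alpha)(w)=M_\phi(\phi)(v)=0$. Hence $\ker(\phi-\alpha\,\Id_V)$ is a non-zero submodule of $V$; it is an $\sl(2)$-submodule because $\phi$ commutes with the $\sl(2)$-action. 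Since $V$ is irreducible as an $\sl(2)$-module, this kernel must equal $V$, that is $\phi=\alpha\,\Id_V\in\C$, which is the claim.

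No real obstacle arises here: the delicate step was already dealt with in Proposition~\ref{p:finite+extension-frtf}, whose proof relied on the non-trivial Theorem~\ref{t:caracCasC}. As an alternative approach, one could instead appeal to the fact that the rationalization $F_\rat(V)=S^{-1}V$ is $\mathcal{R}$-irreducible (by Theorem~\ref{thm:identification-isomorphism-classes}), apply Proposition~\ref{prop:end=C} to obtain $S^{-1}\phi=\alpha\,\Id\in\C$, and then restrict along the inclusion $V\hookrightarrow S^{-1}V$; but the direct argument above is more economical.
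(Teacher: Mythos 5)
Your proof is correct and matches the paper's intended argument: the paper itself only writes ``Bearing in mind Proposition~\ref{p:finite+extension-frtf}, Proposition \ref{prop:end=C} gives:'' before the statement, which is exactly the transcription you carry out — use Proposition~\ref{p:finite+extension-frtf} to obtain the minimal polynomial $M_\phi(t)\in\C[t]$, then observe that for a root $\alpha$ the (non-zero, by the factoring $M_\phi=(t-\alpha)Q$ with $Q(\phi)\neq 0$) kernel $\ker(\phi-\alpha)$ is an $\sl(2)$-submodule, so irreducibility forces $\phi=\alpha\Id_V$. Your explicit verification that $\ker(\phi-\alpha)\neq 0$ is a worthwhile detail to spell out here, since unlike the rational case $V$ is not finite-dimensional over any field, so the existence of a non-zero eigenvector genuinely rests on the existence of the minimal polynomial rather than on linear algebra over $\C(z)$; the alternative route via rationalization that you mention is also valid and equally short.
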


This result and the same ideas used in the proof of Proposition \ref{prop:hom-irreds-is-one-dimensional} give the following:

\begin{prop}\label{prop:hom-irreds-is-one-dimensional-tffr} If  $(V_1,\rho_1)$,  $(V_2,\rho_2)$ are two irreducible finite rank torsion free $\sl(2)$-modules, then every element of $\Hom_\sld(V_1,V_2)$ is either zero or an isomorphism. Moreover, if $V_1$ is not isomorphic to $V_2$, then $\Hom_\sld(V_1,V_2)=0$,  whereas $\dim_\C\Hom_\sld(V_1,V_2)=1$ if $V_1\simeq V_2$.
\end{prop}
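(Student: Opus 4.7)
The plan is to mimic the proof of Proposition \ref{prop:hom-irreds-is-one-dimensional} almost verbatim, with Proposition \ref{prop:end=C-tffr} replacing Proposition \ref{prop:end=C}. The critical observation is that both statements in the proposition are formal consequences of Schur-type arguments once we know the endomorphism ring of an irreducible tffr module is $\C$, and this latter fact is precisely what Proposition \ref{prop:end=C-tffr} provides.

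First I would treat the dichotomy zero-or-isomorphism. Given a nonzero $\phi\in \Hom_{\sl(2)}(V_1,V_2)$, note that $\ker\phi$ is an $\sl(2)$-submodule of $V_1$ and $\Ima\phi$ is an $\sl(2)$-submodule of $V_2$; since submodules of a tffr module are themselves torsion free of finite rank (as $\sl(2)$-submodules are in particular $\C[z]$-submodules via $L_0$), the irreducibility hypotheses on $V_1$ and $V_2$ apply. Hence $\ker\phi=0$ and $\Ima\phi=V_2$, so $\phi$ is an isomorphism. The immediate corollary is that $\Hom_{\sl(2)}(V_1,V_2)=0$ when $V_1\not\simeq V_2$.

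Next I would compute the dimension in the isomorphic case. Pick any isomorphism $\phi\colon V_1\xrightarrow{\sim} V_2$. Given an arbitrary $\phi'\in \Hom_{\sl(2)}(V_1,V_2)$, the composition $\phi'\circ\phi^{-1}$ lies in $\End_{\sl(2)}(V_2)$, which equals $\C$ by Proposition \ref{prop:end=C-tffr}. Therefore $\phi'=\lambda\,\phi$ for some $\lambda\in\C$, showing $\Hom_{\sl(2)}(V_1,V_2)=\C\cdot\phi$ is one dimensional over $\C$.

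The main obstacle has in fact already been dealt with before we get to this proposition: it is the identification $\End_{\sl(2)}(V)=\C$ for irreducible tffr modules, which is highly non-obvious because tffr modules have uncountable $\C$-dimension, so neither Dixmier's generalization of Schur's lemma nor Quillen's lemma applies directly. That obstacle is circumvented by the existence of a minimal polynomial over $\C$ for $\sl(2)$-endomorphisms of tffr modules (Proposition \ref{p:finite+extension-frtf}), which in turn rests on Theorem \ref{t:caracCasC}. Once Proposition \ref{prop:end=C-tffr} is in hand, the present proposition follows by the routine Schur-style argument sketched above.
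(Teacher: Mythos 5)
Your proposal is correct and mirrors the paper's approach exactly: the paper states only that the result follows from Proposition \ref{prop:end=C-tffr} "and the same ideas used in the proof of Proposition \ref{prop:hom-irreds-is-one-dimensional}," which is precisely the Schur-style argument you spell out (kernel and image are $\sl(2)$-submodules, so irreducibility forces a nonzero map to be an isomorphism; then $\phi'\circ\phi^{-1}\in\End_{\sl(2)}(V_2)=\C$ gives one-dimensionality). Your parenthetical aside about submodules of a tffr module being tffr is harmless but unnecessary, since irreducibility here simply means no proper nonzero $\sl(2)$-submodules.
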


By Theorem \ref{thm:rational-is-reflective-localization-of-tffr}, $F_\rat$ is faithful,  this together with Theorem \ref{t:hom-finite}, yields:

\begin{thm}\label{t:hom-finite-frtf}
	Let $(V_1,\rho_1)$,  $(V_2,\rho_2)$ be two finite rank torsion free $\sl(2)$-modules. Then, 
	\[
	\dim_{\C} \Hom_{\sl(2)}(V_1, V_2)\, \leq \, \rank(V_1)\cdot \rank(V_2)
	\, . \] Therefore, the category $\sl(2)\Mod_{\mathrm{tffr}}$ of finite rank torsion free $\sl(2)$-modules is $\Hom_\C$-finite.
\end{thm}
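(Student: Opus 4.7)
The plan is to reduce the claim to the rational case, Theorem \ref{t:hom-finite}, by exploiting the faithfulness of the rationalization functor $F_{\mathrm{rat}}\colon \sl(2)\Mod_{\mathrm{tffr}}\to \Ra$ established in Theorem \ref{thm:rational-is-reflective-localization-of-tffr}. Since $F_{\mathrm{rat}}$ is a $\C$-linear faithful functor, the natural map
\[
F_{\mathrm{rat}}\colon \Hom_{\sl(2)}(V_1,V_2)\longrightarrow \Hom_{\Ra}(F_{\mathrm{rat}}(V_1),F_{\mathrm{rat}}(V_2))
\]
is an injective $\C$-linear map. Therefore the dimension on the left is bounded by the dimension on the right.

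Next, I would apply Theorem \ref{t:hom-finite} to the pair of rational modules $F_{\mathrm{rat}}(V_1),F_{\mathrm{rat}}(V_2)$, giving
\[
\dim_\C \Hom_{\Ra}(F_{\mathrm{rat}}(V_1),F_{\mathrm{rat}}(V_2))\,\leq\, \length(F_{\mathrm{rat}}(V_1))\cdot \length(F_{\mathrm{rat}}(V_2)).
\]
The final ingredient is the comparison between $\Ra$-length and rank. Recall that for any rational module $W$ one has $\length(W)\leq \dim_{\C(z)}W$, stated right after the definition of length. By construction of the rationalization, $\dim_{\C(z)} F_{\mathrm{rat}}(V_i)=\rank(V_i)$ for $i=1,2$. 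Combining these three observations yields the desired bound
\[
\dim_\C \Hom_{\sl(2)}(V_1,V_2)\,\leq\, \rank(V_1)\cdot \rank(V_2).
\]

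I do not anticipate any serious obstacle: all the hard work has been done in proving that $F_{\mathrm{rat}}$ is faithful (Theorem \ref{thm:rational-is-reflective-localization-of-tffr}) and in establishing the Hom-finiteness bound in the rational case (Theorem \ref{t:hom-finite}). The only minor point worth checking is the identity $\dim_{\C(z)} F_{\mathrm{rat}}(V)=\rank(V)$, which is immediate from the very definition of $F_{\mathrm{rat}}$ as the localization at $S=\C[z]\setminus\{0\}$, since the $\C[z]$-rank of a finitely generated torsion free module coincides with the $\C(z)$-dimension of its localization. The Krull-Schmidt consequence alluded to in the introduction would then follow from $\Hom_\C$-finiteness together with the easy observation that idempotents split in $\sl(2)\Mod_{\mathrm{tffr}}$ (because the image and kernel of an idempotent $e\in\End_{\sl(2)}(V)$ remain torsion free submodules of finite rank), via the standard criterion in \cite[Lemma 5.1]{Krause}.
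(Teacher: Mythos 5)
Your argument is correct and is precisely the route the paper takes: the paper states the theorem as following from the faithfulness of $F_{\mathrm{rat}}$ (Theorem \ref{thm:rational-is-reflective-localization-of-tffr}) combined with Theorem \ref{t:hom-finite}, and you have filled in exactly those steps, including the observation $\dim_{\C(z)}F_{\mathrm{rat}}(V)=\rank(V)$ needed to translate the length bound into the rank bound.
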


This in turn implies: 

\begin{cor}\label{cor:tffr-is-KS} The category $\sl(2)\Mod_{\mathrm{tffr}}$ of finite rank torsion free $\sl(2)$-modules is a Krull-Schmidt category.
\end{cor}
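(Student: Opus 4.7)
The plan is to deduce the Krull--Schmidt property from the classical Azumaya--Atiyah theorem \cite[Theorem 1]{Atiyah}: it suffices to show that every object of $\sl(2)\Mod_{\mathrm{tffr}}$ decomposes as a finite direct sum of objects whose endomorphism rings are local, and then uniqueness up to reordering and isomorphism of the summands follows automatically. So there are two things to verify: existence of a finite decomposition into indecomposables, and locality of their endomorphism rings.

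\emph{Existence of finite decompositions.} I would proceed by induction on the rank. If $V$ is indecomposable there is nothing to do. Otherwise $V=V_1\oplus V_2$ with both $V_i$ nonzero. Both summands belong to $\sl(2)\Mod_{\mathrm{tffr}}$, since they are submodules of a torsion free module and rank is additive on direct sums; moreover, since every nonzero torsion free module has strictly positive rank, one has $\rk(V_i)<\rk(V)$. Applying the induction hypothesis to each $V_i$ and concatenating gives the required decomposition. This rank-induction replaces the finite-length argument used for $\Ra$ in Theorem \ref{thm:Jordan-Holder-Krull-Schmidt}, which is unavailable here since $\sl(2)\Mod_{\mathrm{tffr}}$ fails to be a finite-length category (see Remark \ref{rem:finite-length-missed}).

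\emph{Locality of endomorphism rings of indecomposables.} Let $V$ be indecomposable and set $A:=\End_{\sl(2)}(V)$. By Theorem \ref{t:hom-finite-frtf}, $A$ is a finite-dimensional $\C$-algebra, and by Proposition \ref{prop:irred-tffr-is-Casimir}(1) every $\phi\in A$ is either an isomorphism or nilpotent. Its Jacobson radical $J=J(A)$ is nilpotent, so idempotents lift modulo $J$, and the Wedderburn--Artin theorem over the algebraically closed field $\C$ gives $A/J\cong\prod_i M_{n_i}(\C)$. If this product had more than one factor, or any factor with $n_i\geq 2$, then $A/J$ would contain a nontrivial idempotent; lifting it yields a nontrivial idempotent $e\in A$ which is neither a unit (else $e=1$) nor nilpotent (else $e=0$), contradicting the dichotomy. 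Hence $A/J\cong\C$ and $A$ is local.

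The principal obstacle is the lack of any finite-length structure in $\sl(2)\Mod_{\mathrm{tffr}}$; it is overcome by exploiting rank as the unique natural finiteness invariant available, which controls the recursion once we know by Proposition \ref{prop:exact-tffr} that the exact structure behaves well under forming direct summands. All remaining ingredients—$\Hom_\C$-finiteness and the Casimir-operator dichotomy for endomorphisms—have already been established.
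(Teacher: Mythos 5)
Your proof is correct, and at the top level it travels the same road as the paper: both derive the Krull--Schmidt property from Atiyah's theorem using the $\Hom_\C$-finiteness of Theorem \ref{t:hom-finite-frtf}. The difference is one of granularity. The paper invokes as a black box the statement that a $\Hom_\Bbbk$-finite exact category over a field $\Bbbk$ is Krull--Schmidt, citing \cite{Atiyah}, and is done in two lines. You instead unpack the hypotheses of the Azumaya--Atiyah theorem directly: finite decomposability is obtained by induction on rank (a clean replacement for the finite-length induction used for $\Ra$ in Theorem \ref{thm:Jordan-Holder-Krull-Schmidt}, which Remark \ref{rem:finite-length-missed} rules out here), and locality of endomorphism rings of indecomposables is deduced from Wedderburn--Artin plus the nilpotent-or-invertible dichotomy of Proposition \ref{prop:irred-tffr-is-Casimir}(1). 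That dichotomy is actually slightly more than you need: since $\sl(2)\Mod_{\mathrm{tffr}}$ is closed under direct summands inside $\sl(2)\Mod$, indecomposability already forbids nontrivial idempotents in $\End_{\sl(2)}(V)$, and Wedderburn--Artin plus lifting gives locality from that alone. But invoking the dichotomy makes the contradiction immediate and is harmless. What your more explicit route buys is a self-contained argument that exposes exactly which structural features of $\sl(2)\Mod_{\mathrm{tffr}}$ are being used — rank as a finiteness measure, $\Hom_\C$-finiteness, and the Casimir-operator dichotomy — rather than deferring to a general theorem about $\Hom_\Bbbk$-finite exact categories.
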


\begin{proof}
If $\Bbbk$ is a field, then a $\Hom_\Bbbk$-finite exact category is a Krull-Schmidt category, this follows from  \cite[Corollary pag. 310, Theorem 1 pag. 313]{Atiyah}. Therefore, the claim follows from Theorem \ref{t:hom-finite-frtf}.
\end{proof}

\subsection{Purely irreducible modules}

Let us recall the following:

\begin{defn}
Given a finite rank torsion free $\C[z]$-module $V$, one says that a $\C[z]$-submodule $V'$ is: \begin{enumerate}
\item a pure submodule if  $V/V'$ is a torsion free $\C[z]$-module,
\item an essential submodule if $V/V'$ is a torsion $\C[z]$-module.
\end{enumerate}
\end{defn}

The following is well known.

\begin{prop} If $V$ is a finite rank torsion free $\C[z]$-module and $V'\subset V$ is a $\C[z]$-submodule, then $$P(V'):= (S^{-1}V')\cap V$$ is a pure submodule of $V$  and $V'$ is an essential submodule of $P(V')$. Moreover, $P(V')=V'$ if and only if $V'$ is a pure submodule and $P(V')=V$ if and only if $V'$ is an essential submodule.
\end{prop}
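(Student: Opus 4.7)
The plan is to establish the four assertions by a direct unpacking of the definitions, using the fact that since $V$ is torsion free the canonical map $V\to S^{-1}V$ is injective; this lets us regard $V'$, $P(V')$, and $V$ as $\C[z]$-submodules of $S^{-1}V$ sitting inside the natural chain $V'\subseteq P(V')\subseteq V\subseteq S^{-1}V$.

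First I would show that $P(V')$ is a pure submodule of $V$. Given $v\in V$ and $p(z)\in S$ with $p(z)\cdot v\in P(V')$, then $p(z)\cdot v\in S^{-1}V'$, and multiplying by $p(z)^{-1}\in S^{-1}\C[z]$ inside $S^{-1}V$ yields $v\in S^{-1}V'$; combined with $v\in V$ this gives $v\in P(V')$, so $V/P(V')$ is torsion free. Next I would verify that $V'$ is essential in $P(V')$: any $w\in P(V')$ lies in $S^{-1}V'$, so $w=v'/p(z)$ for some $v'\in V'$ and $p(z)\in S$, giving $p(z)\cdot w=v'\in V'$ and hence $P(V')/V'$ is torsion.

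For the ``moreover'' part, both biconditionals reduce to the previous two steps. For $P(V')=V'\Leftrightarrow V'$ pure: the forward direction is immediate from the first step, while for the reverse, if $V'$ is pure and $w\in P(V')$, the essentiality step produces $p(z)\in S$ with $p(z)\cdot w\in V'$, and torsion-freeness of $V/V'$ forces $w\in V'$. For $P(V')=V\Leftrightarrow V'$ essential: the forward direction follows from the essentiality step applied to $P(V')=V$; for the reverse, given $v\in V$, the hypothesis that $V/V'$ is torsion produces $p(z)\in S$ with $p(z)\cdot v\in V'\subseteq S^{-1}V'$, whence $v\in S^{-1}V'$, and therefore $v\in(S^{-1}V')\cap V=P(V')$.

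I do not anticipate any real obstacle here. The only subtlety worth flagging is the book-keeping with the injection $V\hookrightarrow S^{-1}V$ (which uses torsion-freeness of $V$) so that all the intersections, sums, and quotients make sense inside a single ambient $\C[z]$-module. Once this is fixed, every implication is an immediate manipulation of the defining condition of $P(V')$.
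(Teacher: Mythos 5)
Your proof is correct and is exactly the standard direct unpacking one would expect; the paper itself states this proposition as ``well known'' and offers no proof, so there is nothing to compare against. Your book-keeping is sound: the injection $V\hookrightarrow S^{-1}V$ (valid since $V$ is torsion free) lets you place $V'\subseteq P(V')\subseteq V\subseteq S^{-1}V$ in one ambient module, after which purity of $P(V')$, essentiality of $V'$ in $P(V')$, and both biconditionals follow from straightforward manipulation of the defining intersection, as you carry out.
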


\begin{defn} Let $V$ be a finite rank torsion free $\C[z]$-module. For any $\C[z]$-submodule $V'\subset V$, the pure submodule $P(V')$ is called the purification of $V'$ in $V$.
\end{defn}

\begin{prop}\label{prop:rationall-irred-is-purely-irred}
If $(W,\rho)$ is an $\mathcal R$-irreducible rational $\sl(2)$-module, then it has no proper pure $\sl(2)$-submodules. Therefore, every proper $\sl(2)$-submodule of $W$ is an essential submodule.
\end{prop}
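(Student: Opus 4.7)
The plan is to exploit the fact that $W$, being a $\C(z)$-vector space, satisfies $S^{-1}W = W$, so the purification inside $W$ takes a particularly simple form.

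First, I would observe that if $V' \subset W$ is any $\sl(2)$-submodule, then $S^{-1}V'$ sits naturally inside $S^{-1}W = W$ as a $\C(z)$-subspace. Moreover, since $V'$ is $\sl(2)$-stable, the localization $S^{-1}V'$ inherits an $\sl(2)$-structure from the rationalization functor and is thus a rational $\sl(2)$-submodule of $W$. The $\mathcal R$-irreducibility of $W$ then forces $S^{-1}V' \in \{0, W\}$.

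Next, suppose $V'$ is a pure $\sl(2)$-submodule of $W$. By the characterization of purification, $V' = P(V') = (S^{-1}V') \cap W$. Combining with the previous dichotomy, $V' = 0$ or $V' = W$, proving the first assertion.

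For the second assertion, let $V'$ be any proper $\sl(2)$-submodule of $W$ and form its purification $P(V')$, which is again an $\sl(2)$-submodule (the intersection $(S^{-1}V')\cap W$ is $\sl(2)$-stable since both terms are). By the first part, $P(V') \in \{0, W\}$. Since $V' \subset P(V')$ and $V' \neq W$, we must have $P(V') = W$, that is, $V'$ is essential in $W$. I do not expect any substantive obstacle here; the key conceptual point, already done in step one, is that localization at $S$ is the identity on $W$, which collapses the purification operation to an intersection with $W$ and lets $\mathcal R$-irreducibility do all the work.
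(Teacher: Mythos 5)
Your proposal follows essentially the same route as the paper: in both cases the engine is that $F_\rat(V')=S^{-1}V'$ is a rational submodule of $S^{-1}W=W$, so $\mathcal R$-irreducibility forces $S^{-1}V'\in\{0,W\}$; the paper phrases the consequence via the exact sequence $0\to V'\to W\to W/V'\to 0$ and $F_\rat(W/V')=0\Leftrightarrow W/V'$ torsion, while you phrase it via the purification $P(V')=(S^{-1}V')\cap W$, which (since $S^{-1}V'\subseteq W$) simply equals $S^{-1}V'$. These are the same argument in two dialects.

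There is one small logical slip in the last step. You write: ``Since $V'\subset P(V')$ and $V'\neq W$, we must have $P(V')=W$.'' But $V'\neq W$ does not rule out $P(V')=0$; what rules it out is $V'\neq 0$, since $V'\subseteq P(V')$ forces $P(V')\neq 0$ whenever $V'\neq 0$. (The paper's use of ``proper'' implicitly means nontrivial proper, i.e.\ $V'\neq 0$ as well as $V'\neq W$, which is the case you should invoke here.) With that one hypothesis corrected the argument is complete and correct.
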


\begin{proof} Let $V\hookrightarrow W$ be an $\sl(2)$-submodule. Then there is a commutative diagram of short exact sequences $$\xymatrix{& 0\ar[d] & 0\ar[d] & & \\0\ar[r] & V\ar[r]\ar[d] & W\ar[r]\ar[d] & W/V\ar[r]\ar[d] & 0\\ 0\ar[r] & F_\mathrm{rat}(V)\ar[r] & F_\mathrm{rat}(W)\ar[r] & F_\mathrm{rat}(W/V)\ar[r] & 0}$$
Since, $F_\mathrm{rat}(W)\simeq W$ and $W$ is $\mathcal R$-irreducible, we get either $F_\mathrm{rat}(V)=0$ or $F_\mathrm{rat}(V)=F_\mathrm{rat}(W)$. In the first case we have $V=0$, whereas in the second one  $F_\mathrm{rat}(W/V)=0$ and this is equivalent to say that $W/V$ is either zero or a torsion $\C[z]$-module.
\end{proof} 

\begin{prop}\label{prop:rat-irred-only-essential-submodules} 
Let $(V,\rho)$ be a finite rank torsion free $\sl(2)$-module. $F_\mathrm{rat}(V,\rho)$ is an $\mathcal R$-irreducible rational-module if and only if $V$ has no proper pure $\sl(2)$-submodules, if and only if  every proper $\sl(2)$-submodule of $V$ is an essential submodule. \end{prop}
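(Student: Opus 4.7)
The plan is to prove the two equivalences in turn. Let me first dispose of the equivalence between ``$V$ has no proper pure $\sl(2)$-submodules'' and ``every proper $\sl(2)$-submodule of $V$ is essential.'' The key observation is that a submodule $V'\subseteq V$ which is simultaneously pure and essential must equal $V$, since $V/V'$ is at once torsion free and torsion, hence zero. So if every proper submodule is essential, no proper submodule can be pure. Conversely, if $V$ has no proper pure $\sl(2)$-submodules, then for any proper $\sl(2)$-submodule $V'\subsetneq V$ its purification $P(V')=(S^{-1}V')\cap V$ is a pure $\sl(2)$-submodule (both $S^{-1}V'$ and $V$ are $\sl(2)$-submodules of $S^{-1}V$, so their intersection is one), and therefore $P(V')=V$; this exactly says that $V'$ is essential in $V$.

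For the main equivalence, I would work with the embedding $V\hookrightarrow S^{-1}V=F_\mathrm{rat}(V)$ and exploit the exactness of $F_\mathrm{rat}$. For the direct implication, assume $F_\mathrm{rat}(V)$ is $\mathcal R$-irreducible and let $V'$ be an $\sl(2)$-submodule of $V$. Then $F_\mathrm{rat}(V')=S^{-1}V'$ is a rational $\sl(2)$-submodule of $F_\mathrm{rat}(V)$, so by irreducibility either $S^{-1}V'=0$ or $S^{-1}V'=S^{-1}V$. Since $V'\hookrightarrow S^{-1}V'$ (as $V'$ is torsion free), the first option forces $V'=0$, while the second, by exactness of $F_\mathrm{rat}$ applied to $0\to V'\to V\to V/V'\to 0$, forces $F_\mathrm{rat}(V/V')=0$, i.e.\ $V/V'$ is torsion, i.e.\ $V'$ is essential. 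So every proper $\sl(2)$-submodule of $V$ is essential.

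For the converse, assume every proper $\sl(2)$-submodule of $V$ is essential (equivalently, no proper pure $\sl(2)$-submodule exists) and let $W'\subseteq F_\mathrm{rat}(V)$ be a rational $\sl(2)$-submodule. Set $V':=W'\cap V$; this is an $\sl(2)$-submodule of $V$, and the inclusion $V/V'\hookrightarrow F_\mathrm{rat}(V)/W'$ into a $\C(z)$-vector space shows that $V/V'$ is torsion free, so $V'$ is a pure $\sl(2)$-submodule. By hypothesis $V'=0$ or $V'=V$. In the first case, any $w\in W'$ has the form $w=v/s$ with $v\in V$, $s\in S$; then $v=s\cdot w\in W'\cap V=0$, giving $W'=0$. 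In the second case $V\subseteq W'$, and since $W'$ is a $\C(z)$-vector subspace and $V$ generates $S^{-1}V$ over $\C(z)$, we conclude $W'=F_\mathrm{rat}(V)$. Hence $F_\mathrm{rat}(V)$ is $\mathcal R$-irreducible. The only mildly delicate point is checking that the purification and the intersection $W'\cap V$ are honest $\sl(2)$-submodules, which is automatic from their construction as intersections or localizations of $\sl(2)$-stable pieces.
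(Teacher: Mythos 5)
Your proof is correct and follows essentially the same strategy as the paper's: both directions exploit the exactness of $F_\rat$ and the commutative diagram relating a submodule $V'$ (or $W'\cap V$) to its rationalization. Your derivation that $W'\cap V$ is pure — by injecting $V/(W'\cap V)$ into the torsion-free $\C(z)$-vector space $F_\rat(V)/W'$ — is in fact cleaner and more direct than the paper's rank-comparison at the corresponding step, and your explicit treatment of the equivalence ``no proper pure submodule $\Leftrightarrow$ every proper submodule is essential'' via the purification $P(V')=(S^{-1}V')\cap V$ supplies a detail the paper delegates to its preparatory lemma.
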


\begin{proof} Let $V'\hookrightarrow V$ be a proper $\sl(2)$-submodule. Then there is a commutative diagram of short exact sequences $$\xymatrix@R=16pt@C=30pt{& 0\ar[d] & 0\ar[d] & & \\0\ar[r] & V'\ar[r]\ar[d] & V\ar[r]\ar[d] & V/V'\ar[r]\ar[d] & 0\\ 0\ar[r] & F_\mathrm{rat}(V')\ar[r] & F_\mathrm{rat}(V)\ar[r] & F_\mathrm{rat}(V/V')\ar[r] & 0}$$
If $F_\mathrm{rat}(V)$ is $\mathcal R$-irreducible, then we get that either $F_\mathrm{rat}(V')=0$ or $F_\mathrm{rat}(V')=F_\mathrm{rat}(V)$. In the first case we would have $V'=0$, whereas in the second one $F_\mathrm{rat}(V/V')=0$ and this is equivalent to say that $V/V'$ is either $(0)$ or a torsion $\C[z]$-module. Since $V'$ is proper, the unique possibility left is that $V/V'$ is torsion, that is $V'$ is essential. 

In a similar way, let $W\subset F_\mathrm{rat}(V)$ be a proper rational $\sl(2)$-submodule. Then there is a commutative diagram of short exact sequences $$\xymatrix@R=16pt@C=30pt{& 0\ar[d] & 0\ar[d] & & \\0\ar[r] & W\cap V\ar[r]\ar[d] & V\ar[r]\ar[d] & V/W\cap V\ar[r]\ar[d] & 0\\ 0\ar[r] & W\ar[r] & F_\mathrm{rat}(V)\ar[r] & F_\mathrm{rat}(V/W\cap V)\ar[r] & 0}$$ Hence $\rank(W\cap V)\leq\rank W<\rank(V)$
and thus $V/W\cap V$ is not a torsion module. This implies that $W\cap V$ is a pure $\sl(2)$-submodule of $V$ and therefore either $W\cap V=0$ or $W\cap V=V$. In the first case the commutative diagram gives $W=0$ whereas in the second $W=F_\mathrm{rat}(V)$. This finishes the proof.
\end{proof} 

\begin{thm}\label{thm:char-rat-irred} Let $(V,\rho)$ be a finite rank torsion free $\sl(2)$-module.   If $(V,\rho)$ is $\sl(2)$-irreducible then its rationalization $F_\mathrm{rat}(V,\rho)$ is an $\mathcal R$-irreducible rational module. 

Conversely, if $F_\mathrm{rat}(V,\rho)$ is an $\mathcal R$-irreducible rational module then either $(V,\rho)$ is $\sl(2)$-irreducible or it has  a unique $\sl(2)$-irreducible essential $\sl(2)$-submodule $(V',\rho')\hookrightarrow(V,\rho)$. 
\end{thm}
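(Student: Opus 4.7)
The forward direction should be essentially immediate from Proposition \ref{prop:rat-irred-only-essential-submodules}: if $(V,\rho)$ is $\sl(2)$-irreducible then its only $\sl(2)$-submodules are $0$ and $V$, hence in particular it has no proper pure $\sl(2)$-submodules, and the cited proposition gives that $F_{\mathrm{rat}}(V,\rho)$ is $\mathcal R$-irreducible.

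For the converse, assume $F_{\mathrm{rat}}(V,\rho)$ is $\mathcal R$-irreducible and that $(V,\rho)$ is not $\sl(2)$-irreducible; I want to produce a unique simple essential $\sl(2)$-submodule. Since $F_{\mathrm{rat}}(V,\rho)$ is $\mathcal R$-simple, Proposition \ref{prop:R-irred-rational-is-Casimir} forces it to be a Casimir module, so $(V,\rho)\in \mathcal C_{\mu,\mathrm{tffr}}$ for some $\mu\in \C$. Under the equivalence $i_\mu\colon \mathbb B\text{-}\mathrm{Mod}_{\mathrm{fd}}\to\mathcal{RC}_\mu$ of Theorem \ref{thm:isom-rational-N-mod}, the module $F_{\mathrm{rat}}(V,\rho)$ corresponds to a simple $\mathbb B$-module, and Theorem \ref{thm:isom-classes} tells me that its $\mathbb A(\mu)$-socle $V_0$ is a simple, torsion free $\sl(2)$-module of level $\mu$.

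The key step is to show $V_0\subseteq V$ inside $F_{\mathrm{rat}}(V,\rho)=S^{-1}V$. Pick any nonzero $v_0\in V_0$; since $v_0\in S^{-1}V$, there exists $p(z)\in S$ with $p(z)\cdot v_0\in V$. Because $V_0$ is in particular a $\C[z]$-submodule and is torsion free (being a submodule of the $\C(z)$-vector space $F_{\mathrm{rat}}(V,\rho)$), the element $p(z)\cdot v_0$ is a nonzero element of $V_0\cap V$. Simplicity of $V_0$ then forces $V_0\cap V=V_0$, i.e.\ $V_0\subseteq V$. Since $V$ is assumed not to be $\sl(2)$-irreducible and $V_0$ is simple, $V_0$ is a proper $\sl(2)$-submodule of $V$; Proposition \ref{prop:rat-irred-only-essential-submodules} then guarantees that $V_0$ is essential in $V$.

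For uniqueness, suppose $V'\hookrightarrow V$ is any $\sl(2)$-irreducible essential submodule. Then $V'$ is a simple $\sl(2)$-submodule of $F_{\mathrm{rat}}(V,\rho)$, hence of its $\mathbb A(\mu)$-socle (which by construction is the sum of all simple $\mathbb A(\mu)$-submodules). Since by Theorem \ref{thm:isom-classes} this socle equals the simple $\sl(2)$-module $V_0$, one obtains $V'\subseteq V_0$, and simplicity of $V_0$ together with $V'\neq 0$ gives $V'=V_0$. The main obstacle is really the interplay between the ambient rational module and the finite rank submodule: ensuring that the abstract Bavula socle actually descends into $V$; once the clearing-denominators argument works, both existence and uniqueness fall out of the Bavula correspondence.
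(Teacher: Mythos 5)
Your forward direction is exactly the paper's: $\sl(2)$-irreducibility trivially implies the absence of proper pure submodules, and Proposition~\ref{prop:rat-irred-only-essential-submodules} finishes. For the converse you take a genuinely different route. The paper invokes \cite[Lemma 6.26]{Mazor} directly to obtain \emph{some} simple $\sl(2)$-submodule $V'$ of $F_\mathrm{rat}(V,\rho)$, shows $V'\cap V\neq 0$ (the same clearing-denominators observation you make explicitly), concludes $V'\subseteq V$ by simplicity, and then proves uniqueness by a rank/localization argument: any two simple essential submodules $V_1',V_2'$ satisfy $S^{-1}V_1'=S^{-1}V_2'=S^{-1}V$, so $V_1'\cap V_2'\neq 0$, forcing $V_1'=V_2'$. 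You instead \emph{construct} the simple submodule as the $\mathbb A(\mu)$-socle $V_0$ supplied by the Bavula correspondence (Theorems~\ref{thm:isom-classes} and~\ref{thm:isom-rational-N-mod}), which requires first noting via Proposition~\ref{prop:R-irred-rational-is-Casimir} that $F_\mathrm{rat}(V,\rho)$ is Casimir of some level $\mu$, hence $V\in\mathcal C_{\mu,\tffr}$. Your descent step (clearing denominators, using that $V_0$ is a torsion free $\C[z]$-module) is sound, and your uniqueness argument — any simple essential $V'$ is a simple $\mathbb A(\mu)$-submodule of $S^{-1}V$, hence sits inside the socle $V_0$, which is itself simple, so $V'=V_0$ — is correct and arguably cleaner, since it avoids the rank computation. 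What the paper's approach buys is independence from the Bavula correspondence at this point (just one citation to Mazorchuk); what your approach buys is that uniqueness falls out immediately from the simplicity of the socle, and it ties the result more tightly to the structure already developed in Section~4. Both proofs are valid.
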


\begin{proof} If $(V,\rho)$ is an irreducible finite rank torsion free $\sl(2)$-module, then it has ho proper $\sl(2)$-submodules and therefore  $F_\mathrm{rat}(V,\rho)$ is $\mathcal R$-irreducible by Proposition \ref{prop:rat-irred-only-essential-submodules}. On the other hand, if $F_\mathrm{rat}(V,\rho)$ is an $\mathcal R$-irreducible  rational module, then it follows from \cite[Lemma 6.26]{Mazor} that it has an $\sl(2)$-irreducible $\sl(2)$-submodule $(V',\rho')\hookrightarrow F_\mathrm{rat}(V,\rho)$. At the same time we have an inclusion of $\sl(2)$-modules $(V,\rho)\hookrightarrow F_\mathrm{rat}(V,\rho)$, hence we get an inclusion of $\sl(2)$-modules $$(V',\rho')\cap(V,\rho)\hookrightarrow (V',\rho')$$ and since $(V',\rho')$ is $\sl(2)$ irreducible we must have either $V'\cap V=0$ or $V'\cap V=V'$. However, the first possibility never occurs because $V'\subset S^{-1}V$ always intersects $V$ non trivially. Therefore, there is an injection of $\sl(2)$-modules $(V',\rho')\hookrightarrow (V,\rho)$ and by Proposition \ref{prop:rat-irred-only-essential-submodules} this is either an equality or an essential submodule. If there were two $\sl(2)$-irreducible essential $\sl(2)$-submodules $(V'_1,\rho'_1)$, $(V'_2,\rho'_2)$ of  $(V,\rho)$, then either one has $V'_1\cap V'_2=0$ or $V_1'=V_2'$. However, the first possibility never occurs since $S^{-1}V_1'=S^{-1}V_2'=S^{-1}V$.\end{proof}

Taking into account the previous results we can give the following:

\begin{defn} We say that an  $\sl(2)$-module $(V,\rho)$ is purely irreducible if it is a finite rank torsion free module and it satisfies the equivalent conditions:
\begin{enumerate}
\item it has no proper pure $\sl(2)$-submodules, 
\item its proper $\sl(2)$-submodules are essential, 
\item $F_\mathrm{rat}(V,\rho)$ is $\mathcal R$-irreducible.
\end{enumerate}
 We denote by $\mathrm{PSpl}(\sld)$ the category formed by all purely irreducible $\sld$-modules.
\end{defn}

\begin{rem} Thanks to Theorem \ref{thm:char-rat-irred}, every simple $\sl(2)$-module is purely irreducible. Therefore, there is an inclusion of categories $\Spl(\sld\Mod)\hookrightarrow\mathrm{PSpl}(\sl(2))$.

\end{rem}

\begin{defn}  If $(V,\rho)$ is a purely irreducible $\sl(2)$-module, let $(V'\rho')$ be its unique $\sl(2)$-irreducible essential $\sl(2)$-submodule and  let $i_{(V,\rho)}\colon(V',\rho)\hookrightarrow (V,\rho)$ be the natural inclusion. We say that the subobject of $(V,\rho)$ defined by the pair $(i_{(V,\rho)},(V',\rho'))$ is the  type of $(V,\rho)$ and we denote it $\tau(V,\rho)$.  The injection $i_{(V,\rho)}$ is called the structural morphism of the purely irreducible module $(V,\rho)$ and when there is no danger of confusion we identify $\tau(V,\rho)$ with the irreducible $\sl(2)$-module $(V' , \rho')$. \end{defn}

\begin{prop} A purely irreducible module is indecomposable.
\end{prop}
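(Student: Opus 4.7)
The plan is to argue by contradiction using the definition of purely irreducible via pure submodules, which is the most direct route. Suppose $(V,\rho)$ admits a non-trivial decomposition $V = V_1 \oplus V_2$ as $\sl(2)$-modules, with both $V_1, V_2$ nonzero. I will derive a contradiction with the defining property that $V$ has no proper pure $\sl(2)$-submodules.

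First, since $V$ is torsion free as a $\C[z]$-module and every $\sl(2)$-submodule is a $\C[z]$-submodule, both $V_1$ and $V_2$ are torsion free $\C[z]$-modules. Next, I claim that $V_1$ is a pure submodule of $V$: indeed, the quotient $V/V_1 \cong V_2$ is torsion free, which is exactly the definition of purity. Symmetrically, $V_2$ is also a pure submodule. Since both summands are assumed nonzero, each is a proper pure $\sl(2)$-submodule of $V$, contradicting condition~(1) of the definition of purely irreducible. Hence no such decomposition exists, and $(V,\rho)$ is indecomposable.

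As a sanity check, one can reach the same conclusion via condition~(3) and the exactness/additivity of the rationalization functor: a nontrivial decomposition $V=V_1\oplus V_2$ yields $F_{\rm rat}(V) = F_{\rm rat}(V_1)\oplus F_{\rm rat}(V_2)$, and since $F_{\rm rat}(V)$ is $\mathcal R$-irreducible (hence a fortiori $\mathcal R$-indecomposable by Proposition~\ref{prop:indecomposable-generalized-Casimir}), one of the factors $F_{\rm rat}(V_i)$ must vanish; but $V_i$ is torsion free, so $V_i \hookrightarrow S^{-1}V_i = F_{\rm rat}(V_i) = 0$ forces $V_i = 0$, contradicting non-triviality of the decomposition.

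No serious obstacle is anticipated: the result is essentially formal from the definitions once one observes that direct summands of torsion free modules are automatically pure. The only subtlety worth flagging is the implicit use of the fact that $\sl(2)$-direct-sum decompositions induce $\C[z]$-direct-sum decompositions (immediate, since the $\C[z]$-action is the restriction of the $\sl(2)$-action through $L_0$), together with exactness of $F_{\rm rat}$ in the alternative argument.
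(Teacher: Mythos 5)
Your proposal is correct, and your primary argument takes a genuinely different (and more elementary) route than the paper. The paper argues exclusively via condition~(3) of the definition: given a decomposition $V=V_1\oplus V_2$, it applies the exact functor $F_\rat$ to get $F_\rat(V)=F_\rat(V_1)\oplus F_\rat(V_2)$, invokes $\mathcal R$-irreducibility of $F_\rat(V)$ to force one factor to vanish, and then uses faithfulness of rationalization on torsion free modules to conclude $V_i=0$. That is exactly your ``sanity check.'' Your main argument, by contrast, works directly with condition~(1): a direct summand $V_1$ of $V$ has torsion free complement $V/V_1\cong V_2$, hence is automatically a pure $\sl(2)$-submodule; if both summands are nonzero, $V_1$ is a \emph{proper} pure submodule, contradicting pure irreducibility. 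This bypasses the rationalization functor and $\mathcal R$-irreducibility altogether, relying only on the observation that direct summands of torsion free modules are pure. Both proofs are valid; yours is somewhat more self-contained at this point in the text, while the paper's fits the running theme of transporting properties across $F_\rat$. One small point of care you correctly flagged: the equivalence of the three conditions in the definition of purely irreducible is what licenses arguing from whichever is most convenient.
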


\begin{proof} Let $(V,\rho)$ be a purely irreducible $\sl(2)$-module. Suppose there exists two finite rank torsion free $\sl(2)$ modules $(V_1,\rho_1)$, $(V_2,\rho_2)$ such that $(V,\rho)=(V_1,\rho_1)\oplus(V_2,\rho_2)$ then $$F_\rat(V,\rho)=F_\rat(V_1,\rho_1)\oplus F_\rat(V_2,\rho_2).$$ Since $F_\rat(V,\rho)$ is $\mathcal R$-irreducible, we have either $F_\rat(V_1,\rho_1)=0$ or $F_\rat(V_2,\rho_2)=0$ and this implies that either $V_1=0$ or $V_2=0$, proving the claim.
\end{proof}

\begin{prop}\label{prop:rank-1-tf-is-p-irred} Every torsion free $\sl(2)$-module of rank 1 is purely irreducible. \end{prop}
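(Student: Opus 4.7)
The plan is to verify condition $(3)$ in the definition of purely irreducible modules, namely that $F_\mathrm{rat}(V,\rho)$ is $\mathcal R$-irreducible, which by the equivalence stated in the definition implies the other two conditions and hence pure irreducibility.

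First I would observe that if $(V,\rho)$ is a torsion free $\sl(2)$-module of rank $1$, then by the very definition of the rationalization functor (Definition \ref{defn:rationalization-functor}) the underlying $\C(z)$-vector space of $F_\mathrm{rat}(V,\rho)=(S^{-1}V,\rho_\mathrm{rat})$ is $S^{-1}V$, which has dimension equal to $\rk(V)=1$. Therefore $F_\mathrm{rat}(V,\rho)$ belongs to the one-dimensional stratum $\mathcal R^1$.

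Next I would invoke Corollary \ref{cor:rational-onedim-are-irred-Casimir}, which asserts that every rational $\sl(2)$-module that is one-dimensional over $\C(z)$ is an $\mathcal R$-simple Casimir $\sl(2)$-module. Applying this to $F_\mathrm{rat}(V,\rho)$ yields that $F_\mathrm{rat}(V,\rho)$ is $\mathcal R$-irreducible. Since $(V,\rho)$ is a finite rank torsion free $\sl(2)$-module whose rationalization is $\mathcal R$-irreducible, condition $(3)$ of the definition of purely irreducible module holds, and hence $(V,\rho)$ is purely irreducible, as claimed.

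There is no real obstacle here: the result is essentially an immediate consequence of the one-dimensional case of the rationalization correspondence, packaged in Corollary \ref{cor:rational-onedim-are-irred-Casimir}. The only subtle point worth double-checking is the equality $\dim_{\C(z)} S^{-1}V=\rk(V)$ for a torsion free finite rank $\C[z]$-module, but this is the standard definition of rank for torsion free modules over the PID $\C[z]$ and follows from localization at the zero ideal.
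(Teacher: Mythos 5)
Your proof is correct, but it takes a different route than the paper's. The paper's proof is a one-liner working directly with condition (2) of the definition of purely irreducible: any nonzero proper $\sl(2)$-submodule of a rank-$1$ torsion free module necessarily has rank $1$, so its quotient has rank $0$ and is therefore torsion, i.e.\ the submodule is essential. Your proof instead verifies condition (3), passing through the rationalization functor: $F_{\mathrm{rat}}(V,\rho)$ is $1$-dimensional over $\C(z)$, hence $\mathcal R$-irreducible by Corollary \ref{cor:rational-onedim-are-irred-Casimir}. Both arguments are valid and short; the paper's is slightly more self-contained, relying only on elementary rank arithmetic, whereas yours leans on the equivalence of the three conditions (Proposition \ref{prop:rat-irred-only-essential-submodules}) and the earlier analysis of the one-dimensional rational stratum. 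The subtle point you flag — that $\dim_{\C(z)} S^{-1}V = \rk(V)$ — is indeed the only thing to double-check and is standard for torsion free modules over the PID $\C[z]$.
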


\begin{proof} Every proper submodule of a rank 1 torsion free $\sl(2)$-module has rank 1 and therefore is an essential submodule. 
\end{proof}

We also have an equivalence for rational modules.

\begin{prop}\label{prop:rational-R-irreducible-iff-purely-irreducible} A rational module is $\mathcal R$-irreducible if and only if it is purely irreducible.
\end{prop}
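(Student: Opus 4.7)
The plan is to derive the equivalence almost directly from the machinery already established, rather than reprove anything from scratch. First I would verify that a rational module $W$ genuinely fits the framework in which ``purely irreducible'' is defined, namely that $W$ is a finite rank torsion free $\sl(2)$-module: torsion-freeness holds because $W$ is a $\C(z)$-vector space, hence its $\C[z]$-module structure admits no $\C[z]$-torsion; finiteness of rank holds because $\rk(W)=\dim_{\C(z)}S^{-1}W=\dim_{\C(z)}W<\infty$, since $W$ is already a $\C(z)$-vector space.

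The forward implication ($\mathcal R$-irreducible $\Rightarrow$ purely irreducible) is furnished by Proposition~\ref{prop:rationall-irred-is-purely-irred}, which says precisely that an $\mathcal R$-irreducible rational module has no proper pure $\sl(2)$-submodules. This is condition $(1)$ in the definition of purely irreducible, so together with the observation above we conclude $W$ is purely irreducible.

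For the reverse implication, the cleanest route uses condition $(3)$ of the definition of purely irreducible together with Proposition~\ref{prop:rationalization-is-a-retraction}. Namely, if $W$ lies in $\mathcal R$, then applying $F_\rat\circ i_\tffr\cong\Id_\Ra$ gives a canonical isomorphism $F_\rat(W)\cong W$ of rational $\sl(2)$-modules. So if $W$ is purely irreducible, then by $(3)$ its rationalization $F_\rat(W)$ is $\mathcal R$-irreducible, and transporting along this isomorphism yields that $W$ itself is $\mathcal R$-irreducible.

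There is no real obstacle here; the statement is essentially a bookkeeping consequence of Proposition~\ref{prop:rationall-irred-is-purely-irred} in one direction and of the retraction property $F_\rat\circ i_\tffr\cong\Id_\Ra$ in the other. The only minor point to be careful about is not conflating $\mathcal R$-irreducibility (absence of proper rational subrepresentations) with $\sl(2)$-simplicity, since by Remark~\ref{rem:uncountable} rational modules are never $\sl(2)$-simple; but the definition of purely irreducible is formulated via condition $(3)$ precisely in a way that respects this distinction, so the argument goes through without complication.
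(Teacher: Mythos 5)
Your proof is correct. The forward direction matches the paper exactly (invoke Proposition~\ref{prop:rationall-irred-is-purely-irred}), but for the reverse direction you take a genuinely different, more categorical route: you appeal to condition~(3) of the definition of purely irreducible ($F_\rat(W)$ is $\mathcal R$-irreducible) together with the retraction isomorphism $F_\rat\circ i_\tffr\cong\Id_\Ra$ to transport $\mathcal R$-irreducibility back to $W$. The paper instead uses condition~(2): it supposes $W$ has a proper rational submodule $W'$, observes that purely irreducible forces $W'$ to be essential, and concludes $\dim_{\C(z)}W'=\dim_{\C(z)}W$, hence $W'=W$, a contradiction. Your version avoids the dimension count and is a bit cleaner formally, since it makes direct use of the characterization already packaged into the definition; the paper's argument is more elementary and self-contained in the sense that it does not invoke the functorial identity $F_\rat\circ i_\tffr\cong\Id_\Ra$. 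Both proofs ultimately lean on the equivalence of the three defining conditions established in Proposition~\ref{prop:rat-irred-only-essential-submodules}, so neither is circular. Your preliminary check that a rational module lies in $\sld\Mod_\tffr$, which the paper leaves implicit, is a worthwhile addition.
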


\begin{proof}
The direct implication follows from  Proposition \ref{prop:rationall-irred-is-purely-irred}. On the other hand if $W\in\Ra$ is purely irreducible and it had a proper rational submodule $W'\subset W$, then $W'$ would be essential and thus $\dim_{\C(z)}W'=\dim_{\C(z)}W$. This would imply $W'=W$ which is impossible.
\end{proof}

\begin{prop} Let $(V',\rho')$ be an irreducible finite rank torsion free $\sl(2)$-module. An $\sl(2)$-module $(V,\rho)$ is a purely irreducible module of type $V'$ if and only if $V$ is an $\sl(2)$-submodule of $F_\mathrm{rat}(V',\rho')$ that contains $V'$; that is, if and only if one has a chain of $\sl(2)$-modules $(V',\rho')\subseteq (V,\rho)\subseteq F_\mathrm{rat}(V',\rho')$.
\end{prop}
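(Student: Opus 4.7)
The plan is to prove the two implications separately, relying on the characterization of purely irreducible modules via their rationalization (Theorem \ref{thm:char-rat-irred}) and the uniqueness of the irreducible essential submodule built into the definition of the type.

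For the forward implication, assume $(V,\rho)$ is purely irreducible of type $V'$. By definition $V'$ sits inside $V$ via the structural morphism $i_{(V,\rho)}\colon V'\hookrightarrow V$, and $V'$ is essential, i.e.\ $V/V'$ is torsion. Applying the exact functor $F_\rat$ to $0\to V'\to V\to V/V'\to 0$ gives $F_\rat(V/V')=0$, hence $F_\rat(V')\xrightarrow{\sim}F_\rat(V)$. Since $(V',\rho')$ is $\sl(2)$-irreducible, Theorem \ref{thm:char-rat-irred} yields that $F_\rat(V')$ is $\mathcal R$-irreducible, and via the natural inclusion $V\hookrightarrow F_\rat(V)\cong F_\rat(V')$ we obtain the desired chain $V'\subseteq V\subseteq F_\rat(V')$.

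For the converse, suppose $V'\subseteq V\subseteq F_\rat(V')$. Since $V$ is an $\sl(2)$-submodule of the rational (hence finite rank torsion free) module $F_\rat(V')$, it is itself a finite rank torsion free $\sl(2)$-module. Applying $F_\rat$ to the chain and using $F_\rat\circ i_\tffr\cong \Id_{\mathcal R}$ from Proposition \ref{prop:rationalization-is-a-retraction}, we get $F_\rat(V')\subseteq F_\rat(V)\subseteq F_\rat(V')$, so $F_\rat(V)=F_\rat(V')$. Because $(V',\rho')$ is $\sl(2)$-irreducible, Theorem \ref{thm:char-rat-irred} tells us that $F_\rat(V')$ is $\mathcal R$-irreducible, and hence so is $F_\rat(V)$. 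By definition this means $(V,\rho)$ is purely irreducible.

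It remains to verify that the type of $(V,\rho)$ is precisely $V'$. The inclusion $V'\hookrightarrow V$ exhibits $V'$ as an $\sl(2)$-irreducible $\sl(2)$-submodule of $V$, and it is essential because $V/V'$ sits inside $F_\rat(V')/V'$, which is torsion (the localization of $V'$ being $F_\rat(V')$ itself). By Theorem \ref{thm:char-rat-irred} any purely irreducible module admits a unique irreducible essential submodule, so necessarily $\tau(V,\rho)=(V',\rho')$.

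The argument is essentially a bookkeeping exercise once one identifies $F_\rat(V)$ with $F_\rat(V')$ in both directions; the only subtle point, which I regard as the main obstacle, is making sure the uniqueness built into the definition of type is matched up correctly with the concrete submodule $V'$ appearing in the chain, and this is handled cleanly by invoking Theorem \ref{thm:char-rat-irred}.
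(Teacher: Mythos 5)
Your proof is correct and takes essentially the same route as the paper: both directions hinge on applying $F_\rat$ and invoking Theorem~\ref{thm:char-rat-irred}. You are somewhat more explicit than the paper (which treats the forward direction as immediate from Theorem~\ref{thm:char-rat-irred} and does not spell out why the type of $V$ is $V'$), but these additions are correct and welcome.
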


\begin{proof} The direct implication follows from Theorem \ref{thm:char-rat-irred}. On the other hand, if $(V,\rho)$ is an $\sl(2)$-module and we have a chain of $\sl(2)$-modules $(V',\rho')\subseteq (V,\rho)$$\subseteq F_\mathrm{rat}(V',\rho')$, it is clear that $(V,\rho)$ is a finite rank torsion free $\sl(2)$-module.  Moreover, one has  $$F_\mathrm{rat}(V',\rho')\subseteq F_\mathrm{rat}(V,\rho)\subseteq F_\mathrm{rat}(V',\rho'),$$ hence $F_\mathrm{rat}(V',\rho')=F_\mathrm{rat}(V,\rho)$ and therefore $(V,\rho)$ is purely irreducible.
\end{proof}

\begin{prop}\label{prop:purely-irreducible-is-Casimir} If $(V,\rho)$ is a purely irreducible $\sl(2)$-module, then it is a Casimir module and  $\End_\sld((V,\rho))=\C$. 
\end{prop}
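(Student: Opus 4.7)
The plan is to deduce both assertions from the $\mathcal{R}$-irreducibility of the rationalization $F_\mathrm{rat}(V,\rho)$, which is part of the definition of a purely irreducible module, by transferring the corresponding results for $\mathcal{R}$-irreducible rational modules along the embedding $V\hookrightarrow S^{-1}V$.

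First I would argue that $(V,\rho)$ is a Casimir module. By the definition of purely irreducible, $F_\mathrm{rat}(V,\rho)$ is an $\mathcal{R}$-irreducible rational module. Proposition \ref{prop:R-irred-rational-is-Casimir} then says that $F_\mathrm{rat}(V,\rho)$ is in fact a Casimir rational module, so there exists $\mu\in\C$ with $C_{\rho_\mathrm{rat}}=\mu\,\Id_{S^{-1}V}$. As observed in the proof of Theorem \ref{thm:minimalCasC}, the natural inclusion $V\hookrightarrow S^{-1}V$ is a morphism of $\sl(2)$-modules and $C_\rho = C_{\rho_\mathrm{rat}}|_V$; restricting $\mu\,\Id_{S^{-1}V}$ to $V$ therefore yields $C_\rho=\mu\,\Id_V$, so $(V,\rho)\in \mathcal{C}_{\mu,\mathrm{tffr}}$.

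Next I would compute $\End_\sld(V,\rho)$. Let $\phi\in\End_\sld(V,\rho)$. Applying the (exact and faithful) rationalization functor $F_\mathrm{rat}$ of Theorem \ref{thm:rational-is-reflective-localization-of-tffr} we obtain $F_\mathrm{rat}(\phi)=S^{-1}\phi\in\End_\sld(F_\mathrm{rat}(V,\rho))$. Since $F_\mathrm{rat}(V,\rho)$ is $\mathcal{R}$-irreducible, Proposition \ref{prop:end=C} gives $\End_\sld(F_\mathrm{rat}(V,\rho))=\C$, and hence $S^{-1}\phi=\alpha\,\Id_{S^{-1}V}$ for some $\alpha\in\C$. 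Restricting to the submodule $V\subset S^{-1}V$ (on which $S^{-1}\phi$ acts as $\phi$) we conclude $\phi=\alpha\,\Id_V$, which proves $\End_\sld(V,\rho)=\C$.

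I do not anticipate a serious obstacle: the two key ingredients (Proposition \ref{prop:R-irred-rational-is-Casimir} and Proposition \ref{prop:end=C}) are already available for rational modules, and the only point requiring a little care is the compatibility of endomorphisms and of the Casimir operator with the inclusion $V\hookrightarrow S^{-1}V$, which is immediate from the construction of $F_\mathrm{rat}$ in Definition \ref{defn:rationalization-functor} together with the faithfulness assertion in Theorem \ref{thm:rational-is-reflective-localization-of-tffr}.
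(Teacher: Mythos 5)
Your proof is correct and follows essentially the same strategy as the paper's: transfer information along the embedding $V\hookrightarrow S^{-1}V$, using the faithfulness of $F_\mathrm{rat}$ and the fact that $F_\mathrm{rat}(V,\rho)$ is $\mathcal R$-irreducible. The only difference is that the paper routes through the type $(V',\rho')$ of $(V,\rho)$, invoking Theorem \ref{thm:char-rat-irred} and Proposition \ref{prop:irred-tffr-is-Casimir}, whereas you apply Propositions \ref{prop:R-irred-rational-is-Casimir} and \ref{prop:end=C} directly to $F_\mathrm{rat}(V,\rho)$, which is a mild and harmless streamlining since $F_\mathrm{rat}(V',\rho')\simeq F_\mathrm{rat}(V,\rho)$ anyway.
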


\begin{proof} Let $(V',\rho')$ be the type of $(V,\rho)$. The chain of $\sl(2)$-modules $(V',\rho')\subseteq (V,\rho)\subseteq F_\mathrm{rat}(V',\rho')$ gives that the Casimir operators satisfy $C_\rho={C_{\rho'_\rat}}_{|V}$ and so the first claim follows from Proposition \ref{prop:irred-tffr-is-Casimir}. 

On the other hand, since $F_\rat$ is faithful by Theorem \ref{thm:rational-is-reflective-localization-of-tffr}, the equality $F_\mathrm{rat}(V',\rho')=F_\mathrm{rat}(V,\rho)$ obtained by applying the rationalization functor yields an injection $$F_\rat\colon \End_\sld((V,\rho))\hookrightarrow \End_\sld(F_\mathrm{rat}(V,\rho))=\End_\sld(F_\mathrm{rat}(V',\rho'))$$ and since $\End_\sld(F_\mathrm{rat}((V',\rho'))=\C$, due to Proposition \ref{prop:end=C-tffr}, we conclude that $\End_\sld((V,\rho))=\C$.
\end{proof}

\begin{rem}\label{rem:decopmposition-purely-irred-with-irred-Casimir-tffr} 
As a consequence of Theorem \ref{thm:finite+decomp-tffr} and Proposition \ref{prop:purely-irreducible-is-Casimir}, one has that the category $\mathrm{PSpl}(\sld)$ decomposes as follows  $$\mathrm{PSpl}(\sld)=\, \coprod_{\mu\in \C }\mathrm{PSpl}(\sld)_\mu,$$ where $\mathrm{PSpl}(\sld)_\mu$ is the full subcategory of $\mathrm{PSpl}(\sld)$ formed by those purely irreducible $\sld$-modules that are Casimir modules in $\mathcal C_{\mu,\tffr}$.
This decomposition is compatible with the one given in Theorem \ref{thm:finite+decomp-tffr} and there is a commutative diagram $$\xymatrix{\mathrm{PSpl}(\sld)\ar@{=}[r] &\coprod_{\mu\in \C}\mathrm{PSpl}(\sld)_\mu\\ \mathrm{Spl}({\mathcal C}_\tffr) \ar@{=}[r]\ar@{_{(}->}[u] & \coprod_{\mu\in \C} \mathrm{Spl}(\mathcal C_{\mu,\tffr})\ar@{_{(}->}[u] }$$
\end{rem}

\begin{prop}\label{prop:hom-purely-irreds-is-one-dimensional} If  $(V_1,\rho_1)$,  $(V_2,\rho_2)$ are two purely irreducible  $\sl(2)$-modules, then every element of $\Hom_\sld(V_1,V_2)$ is either zero or injective. Moreover, if $\tau(V_1,\rho_1)\not\simeq \tau(V_2,\rho_2)$, then $\Hom_\sld(V_1,V_2)=0$,  whereas if $\tau(V_1,\rho_1)\simeq \tau(V_2,\rho_2)$ and $\Hom_\sld(V_1,V_2)\neq 0$,  the rationalization morphism $$ F_\rat\colon \Hom_{\sl(2)}(V_1,V_2)\xrightarrow{\sim}\Hom_{\sl(2)}(F_\mathrm{rat}(V_1,\rho_1),F_\mathrm{rat}(V_2,\rho_2))$$ is an isomorphism of one dimensional vector spaces.
\end{prop}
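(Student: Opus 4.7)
The plan is to establish the three claims in sequence, relying only on Theorem~\ref{thm:char-rat-irred}, Theorem~\ref{thm:rational-is-reflective-localization-of-tffr}, and Proposition~\ref{prop:hom-irreds-is-one-dimensional}.

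For the injectivity claim, I take $\phi\in\Hom_{\sld}(V_1,V_2)$ nonzero and observe that since $V_2$ is torsion free, the $\sl(2)$-submodule $V_1/\ker\phi\hookrightarrow V_2$ given by the first isomorphism theorem is torsion free, so $\ker\phi$ is a pure $\sl(2)$-submodule of $V_1$. Because $V_1$ is purely irreducible, its only pure $\sl(2)$-submodules are $0$ and $V_1$, and $\phi\neq 0$ excludes the latter; hence $\phi$ is injective.

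For the second claim I argue by contrapositive: assuming a nonzero $\phi$, I restrict it to the structural embedding $\tau(V_1)\hookrightarrow V_1$, obtaining a nonzero morphism $\phi|_{\tau(V_1)}\colon \tau(V_1)\to V_2$, which is injective by $\sl(2)$-irreducibility of $\tau(V_1)$. The image $\phi(\tau(V_1))\cong \tau(V_1)$ is then a nonzero $\sl(2)$-irreducible submodule of $V_2$. Since $V_2$ is purely irreducible, every proper nonzero $\sl(2)$-submodule of $V_2$ is essential, so either $\phi(\tau(V_1))=V_2$, in which case $V_2$ is itself $\sl(2)$-irreducible and we set $\tau(V_2)=V_2$ by convention, or $\phi(\tau(V_1))$ is a proper essential $\sl(2)$-irreducible submodule of $V_2$, which by the uniqueness clause of Theorem~\ref{thm:char-rat-irred} must coincide with $\tau(V_2)$. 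In both cases $\tau(V_1)\simeq \tau(V_2)$, proving~(2) contrapositively.

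For the third claim, the essentiality of $\tau(V_i)\hookrightarrow V_i$ makes $V_i/\tau(V_i)$ a torsion $\C[z]$-module and therefore of zero rationalization; the exactness of $F_{\rat}$ then yields $F_{\rat}(V_i)\cong F_{\rat}(\tau(V_i))$. Together with the hypothesis $\tau(V_1)\simeq \tau(V_2)$, this gives $F_{\rat}(V_1)\simeq F_{\rat}(V_2)$, both $\Ra$-irreducible by the very definition of purely irreducible. Proposition~\ref{prop:hom-irreds-is-one-dimensional} then yields $\dim_{\C}\Hom_{\sld}(F_{\rat}(V_1),F_{\rat}(V_2))=1$. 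By Theorem~\ref{thm:rational-is-reflective-localization-of-tffr}, $F_{\rat}$ is faithful, so the natural map $F_{\rat}\colon \Hom_{\sld}(V_1,V_2)\hookrightarrow \Hom_{\sld}(F_{\rat}(V_1),F_{\rat}(V_2))$ is injective, and the assumption $\Hom_{\sld}(V_1,V_2)\neq 0$ pins the dimension of the domain at exactly $1$, forcing the map to be an isomorphism of one dimensional $\C$-vector spaces. I expect no serious obstacles; the only definitional subtlety to watch is the convention $\tau(V)=V$ when a purely irreducible $V$ happens to be $\sl(2)$-irreducible, which is harmless once made explicit.
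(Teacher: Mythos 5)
Your argument is correct, though for the first two claims you take a genuinely different route from the paper. For injectivity, the paper applies Proposition \ref{prop:hom-irreds-is-one-dimensional} to $F_\rat(\phi)$ and then pulls the conclusion back through faithfulness and torsion-freeness, whereas you observe directly that $\ker\phi$ is a pure $\sl(2)$-submodule (since $V_1/\ker\phi$ embeds in the torsion free $V_2$) and invoke pure irreducibility of $V_1$; this is cleaner and more self-contained. For the second claim, the paper proceeds contrapositively through the bijection on isomorphism classes of simples ($\tau(V_1)\not\simeq\tau(V_2)\Rightarrow F_\rat(V_1)\not\simeq F_\rat(V_2)\Rightarrow\Hom=0$), while you instead push the structural embedding through $\phi$, identify $\phi(\tau(V_1))$ with $\tau(V_2)$ via the uniqueness clause of Theorem \ref{thm:char-rat-irred}, and obtain the isomorphism of types directly; this is more module-theoretic but requires a small justification you left implicit, namely that $\phi|_{\tau(V_1)}\neq 0$ -- which does follow at once from the injectivity established in part one. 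The third claim is argued exactly as in the paper, via faithfulness of $F_\rat$ and Proposition \ref{prop:hom-irreds-is-one-dimensional}. What the paper's uniformly functorial approach buys is brevity (all three claims reduce to one application of Proposition \ref{prop:hom-irreds-is-one-dimensional} plus faithfulness); what your approach buys is a direct, hands-on understanding of how purity and types interact with morphisms, at the modest cost of invoking Theorem \ref{thm:char-rat-irred} explicitly.
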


\begin{proof} By Theorem \ref{thm:rational-is-reflective-localization-of-tffr} the functor $F_\rat$ is faithful, hence  the rationalization morphism is injective.  Since the rationalizations of $V_1$ and $V_2$ are 
$\mathcal R$-irreducible, from Proposition \ref{prop:hom-irreds-is-one-dimensional}  it follows that for any  $\phi\in \Hom_\sld(V_1,V_2)$ its rationalization $F_\rat(\phi)\in \Hom_{\sl(2)}(F_\mathrm{rat}(V_1),F_\mathrm{rat}(V_2))\simeq\C$ is either zero or an isomorphism. In the first case $\phi=0$, whereas in the second $\phi$ is injective. If $\tau(V_1,\rho_1)\not\simeq \tau(V_2,\rho_2)$, then $F_\mathrm{rat}(V_1)\not\simeq F_\mathrm{rat}(V_2)$ and  bearing in mind Proposition \ref{prop:hom-irreds-is-one-dimensional} we get $ \Hom_{\sl(2)}(F_\mathrm{rat}(V_1,\rho_1),F_\mathrm{rat}(V_2,\rho_2))=0$, thus $\Hom_\sld((V_1,\rho_1),(V_2,\rho_2))=0$. On the other hand, if   $\tau(V_1,\rho_1)\simeq \tau(V_2,\rho_2)$, then $F_\mathrm{rat}(V_1,\rho_1)\simeq F_\mathrm{rat}(V_2,\rho_2)$ and again by Proposition \ref{prop:hom-irreds-is-one-dimensional} we get 
	\[
	\Hom_{\sl(2)}(F_\mathrm{rat}(V_1,\rho_1),F_\mathrm{rat}(V_2,\rho_2))\simeq \C\, . 
	\]
Hence, if $\Hom_\sld((V_1,\rho_1),(V_2,\rho_2))\neq 0$, the rationalization morphism is also an isomorphism. \end{proof}

Therefore, we give the following: 

\begin{defn} Given two purely irreducible  $\sl(2)$-modules  $(V_1,\rho_1)$,  $(V_2,\rho_2)$ we write $$(V_1,\rho_1)\leq(V_2,\rho_2)$$ if $\tau(V_1,\rho_1)= \tau(V_2,\rho_2)$ and there is an injective morphism $V_1\hookrightarrow V_2$ of $\sl(2)$-modules.
\end{defn}

It is straightforward to check that this defines a partial order relation among the objects of the category $\mathrm{PSpl}(\sl(2))_\mu$ of purely irreducible modules of level $\mu$. For any $(V,\rho)\in\mathrm{PSpl}(\sl(2))_\mu$  we always have $$\tau(V,\rho)\leq (V,\rho)\leq F_\mathrm{rat}(V,\rho).$$ 

It also induces an order relation on the full subcategory  $\mathrm{PSpl}(\sl(2);\tau)_\mu$ (resp. $\mathrm{PSpl}(\sl(2);\hat\tau)_\mu$) of $\mathrm{PSpl}(\sl(2))_\mu$ formed by those purely irreducible modules of level $\mu$ with fixed type an irreducible Casimir module $\tau\in\mathrm{Spl}(\mathcal C_{\mu,\mathrm{tffr}})$ (resp. fixed isomorphic type defined by a class $\hat\tau\in\widehat{\mathrm{Spl}}(\mathcal C_{\mu,\mathrm{tffr}})$. 

\begin{prop}\label{prop:decomposition-purely-irred-types} One has the following decomposition  compatible with the order relation $$\mathrm{PSpl}(\sl(2))_\mu=\coprod_{\hat\tau\in\widehat{\mathrm{Spl}}(\mathcal C_{\mu,\mathrm{tffr}})} \mathrm{PSpl}(\sl(2);\hat\tau)_\mu.$$ This in turn gives rise to a decomposition of isomorphism classes $$ \widehat{\mathrm{PSpl}}(\sl(2))_\mu=\coprod_{\hat\tau\in\widehat{\mathrm{Spl}}(\mathcal C_{\mu,\mathrm{tffr}})} \widehat{\mathrm{PSpl}}(\sl(2);\hat\tau)_\mu.$$
\end{prop}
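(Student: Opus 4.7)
The statement is essentially a formal bookkeeping consequence of the theory already developed, so the plan is to verify that the type assignment $(V,\rho) \mapsto \tau(V,\rho)$ of Theorem~\ref{thm:char-rat-irred} organizes $\mathrm{PSpl}(\sl(2))_\mu$ into the stated disjoint union, and that it descends to isomorphism classes.

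First, I would recall that by definition every purely irreducible module $(V,\rho)$ contains a unique irreducible essential $\sl(2)$-submodule $(V',\rho') = \tau(V,\rho)$. By Proposition~\ref{prop:purely-irreducible-is-Casimir}, $(V,\rho)$ is a Casimir module, of some level $\mu\in\C$, and the inclusion $\tau(V,\rho)\hookrightarrow (V,\rho)$ forces the Casimir operator on $\tau(V,\rho)$ to be the restriction of $C_\rho=\mu\,\Id$, whence $\tau(V,\rho)\in\mathrm{Spl}({\mathcal C}_{\mu,\tffr})$. Therefore the type map is a well-defined function
\[
\tau\colon \mathrm{PSpl}(\sl(2))_\mu \longrightarrow \mathrm{Spl}({\mathcal C}_{\mu,\tffr})\,,
\]
and composing with the quotient $\mathrm{Spl}({\mathcal C}_{\mu,\tffr})\to \widehat{\mathrm{Spl}}({\mathcal C}_{\mu,\tffr})$ gives an assignment $\hat\tau$ whose fibre over $\hat\tau_0$ is, by definition, the subcategory $\mathrm{PSpl}(\sl(2);\hat\tau_0)_\mu$. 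This immediately produces the object-level decomposition
\[
\mathrm{PSpl}(\sl(2))_\mu=\coprod_{\hat\tau\in\widehat{\mathrm{Spl}}(\mathcal C_{\mu,\mathrm{tffr}})} \mathrm{PSpl}(\sl(2);\hat\tau)_\mu\,,
\]
the disjointness being that any $(V,\rho)$ has a single type class.

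Next I would verify compatibility with the partial order. If $(V_1,\rho_1)\leq (V_2,\rho_2)$, then by definition $\tau(V_1,\rho_1)=\tau(V_2,\rho_2)$ as subobjects, hence in particular $\hat\tau(V_1,\rho_1)=\hat\tau(V_2,\rho_2)$; therefore the relation never connects modules lying in different pieces of the coproduct, and each $\mathrm{PSpl}(\sl(2);\hat\tau)_\mu$ is a sub-poset. Conversely, the definition of $\leq$ makes sense within each $\mathrm{PSpl}(\sl(2);\hat\tau)_\mu$ by restriction, so the order decomposes as stated.

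Finally, for the isomorphism-class statement, I would show that the type assignment descends to a map $\hat\tau\colon\widehat{\mathrm{PSpl}}(\sl(2))_\mu \to \widehat{\mathrm{Spl}}({\mathcal C}_{\mu,\tffr})$. The key point is that any $\sl(2)$-isomorphism $\varphi\colon (V_1,\rho_1)\xrightarrow{\sim}(V_2,\rho_2)$ carries $\tau(V_1,\rho_1)$ onto an irreducible essential $\sl(2)$-submodule of $(V_2,\rho_2)$, which by the uniqueness clause in Theorem~\ref{thm:char-rat-irred} must coincide with $\tau(V_2,\rho_2)$; hence $\varphi$ restricts to an $\sl(2)$-isomorphism $\tau(V_1,\rho_1)\simeq \tau(V_2,\rho_2)$, giving a well-defined assignment of isomorphism classes. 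The fibres of this descended map are by definition $\widehat{\mathrm{PSpl}}(\sl(2);\hat\tau)_\mu$, yielding the second coproduct decomposition. The main (and really only) point requiring attention is precisely this last uniqueness argument, which is handled by invoking Theorem~\ref{thm:char-rat-irred}; the rest is formal.
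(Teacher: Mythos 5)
Your proposal is correct and is precisely the formal bookkeeping argument the paper leaves implicit (the paper states this proposition without proof, treating it as an immediate consequence of the definitions of $\tau$, $\mathrm{PSpl}(\sl(2);\hat\tau)_\mu$, and the order relation). You correctly isolate the one nontrivial ingredient: the uniqueness of the irreducible essential submodule from Theorem~\ref{thm:char-rat-irred}, which is what makes the type assignment descend to isomorphism classes.
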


Notice that any isomorphism of $\sl(2)$-modules $\phi\colon\tau_1\simeq\tau_2$, induces an isomorphism of categories $\Phi\colon\mathrm{PSpl}(\sl(2);\tau_1)_\mu\to \mathrm{PSpl}(\sl(2);\tau_2)_\mu$ that maps $(V_1,\rho_1)\in \mathrm{PSpl}(\sl(2);\tau_1)_\mu$ to the same $\sld$-module $(V_1,\rho_1)$ but now endowed with the structural map $i_{(V_1,\rho_1)}\circ\phi^{-1}$. The inverse functor $\Phi^{-1}\colon \mathrm{PSpl}(\sl(2);\tau_2)_\mu\to \mathrm{PSpl}(\sl(2);\tau_1)_\mu$ maps $(V_2,\rho_2)\in \mathrm{PSpl}(\sl(2);\tau_2)$ to $(V_1,\rho_1)$ with structural map $i_{(V_2,\rho_2)}\circ\phi$. It is obvious that the functors $\Phi$ and $\Phi^{-1}$ are monotonic, that is they are compatible with the order relations.

\begin{prop} Let $(V_1,\rho_1)$,  $(V_2,\rho_2)$ be two purely irreducible  $\sl(2)$-modules. One has $(V_1,\rho_1)\leq(V_2,\rho_2)$ and $(V_2,\rho_2)\leq(V_1,\rho_1)$ if and only if  $(V_1,\rho_1)\simeq(V_2,\rho_2)$.
\end{prop}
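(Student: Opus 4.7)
The plan is to prove the two implications separately, using the earlier structural results as the main input.

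For the implication $(V_1,\rho_1)\simeq (V_2,\rho_2) \Rightarrow V_1\leq V_2$ and $V_2\leq V_1$, fix an isomorphism $\phi\colon V_1\xrightarrow{\sim} V_2$. This immediately yields injective $\sld$-morphisms $\phi\colon V_1\hookrightarrow V_2$ and $\phi^{-1}\colon V_2\hookrightarrow V_1$. To verify the condition on types, note that $\phi$ sends the essential simple submodule $\tau(V_1,\rho_1)\hookrightarrow V_1$ isomorphically onto an essential simple submodule of $V_2$. By the uniqueness clause in Theorem~\ref{thm:char-rat-irred}, this image must equal $\tau(V_2,\rho_2)$, so the types are identified (equivalently, $\hat\tau(V_1,\rho_1)=\hat\tau(V_2,\rho_2)$ in the decomposition of Proposition~\ref{prop:decomposition-purely-irred-types}).

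For the converse, suppose we have injective $\sld$-morphisms $f\colon V_1\hookrightarrow V_2$ and $g\colon V_2\hookrightarrow V_1$. The key object to look at is the composition $g\circ f\in \End_{\sld}(V_1)$. Since a purely irreducible module is indecomposable (this was established just after the definition of purely irreducible modules), Proposition~\ref{prop:irred-tffr-is-Casimir}(1) applies: every $\sld$-endomorphism of $V_1$ is either an isomorphism or nilpotent. Now $g\circ f$ is injective as a composition of injective maps, and $V_1$ is nonzero (it contains its type $\tau(V_1,\rho_1)$ as a nontrivial submodule), so $g\circ f$ cannot be nilpotent. Therefore $g\circ f$ is an isomorphism of $V_1$. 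This forces $g$ to be surjective; being already injective, $g$ is itself an isomorphism $V_2\xrightarrow{\sim} V_1$.

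There is no real obstacle beyond assembling the ingredients: the whole argument rests on the indecomposability of purely irreducible modules plus the dichotomy in Proposition~\ref{prop:irred-tffr-is-Casimir}(1), both of which are already proved. The only minor subtlety is the interpretation of the equality $\tau(V_1,\rho_1)=\tau(V_2,\rho_2)$ in the definition of the order, but the handling above is consistent with both the strict reading (literal equality of the essential simple submodule) and the isomorphism reading used in the block decomposition of $\mathrm{PSpl}(\sld)_\mu$.
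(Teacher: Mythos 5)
Your proof is correct, and the overall structure matches the paper's: consider the composite endomorphism, show it must be an isomorphism, and conclude both maps are isomorphisms. The only real difference is the key lemma invoked to upgrade ``$g\circ f$ is injective'' to ``$g\circ f$ is an isomorphism.'' The paper goes through Proposition~\ref{prop:purely-irreducible-is-Casimir}, which gives $\End_{\sl(2)}(V_1)=\C$, so the composite is a nonzero scalar and hence invertible. You instead observe that a purely irreducible module is indecomposable and then use the Fitting-type dichotomy of Proposition~\ref{prop:irred-tffr-is-Casimir}(1) (endomorphisms of indecomposable objects of $\sld\Mod_\tffr$ are isomorphisms or nilpotent), discarding the nilpotent case because the composite is injective on a nonzero module. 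Your route uses a weaker structural hypothesis (indecomposability plus the iso/nilpotent dichotomy, rather than the full local-ring/$\End=\C$ statement), so it would transfer to other Krull--Schmidt settings even when endomorphism rings aren't just $\C$; on the other hand, the paper's version is marginally shorter once $\End=\C$ is in hand. Your additional care about the forward implication (tracking the type under an isomorphism via the uniqueness clause of Theorem~\ref{thm:char-rat-irred}) is a sensible expansion of what the paper dismisses as ``obvious.''
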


\begin{proof} If $(V_1,\rho_1)\leq(V_2,\rho_2)$ and $(V_2,\rho_2)\leq(V_1,\rho_1)$, then there are injective morphisms of $\sl(2)$-modules $i\colon V_1\hookrightarrow V_2$, $j\colon V_2\hookrightarrow V_1$. Then $j\circ i\in \End_\sld(V_1)$ is injective and therefore by Proposition \ref{prop:purely-irreducible-is-Casimir} there exists $z_1\in\C^\times$ such that $j\circ i=z_1\Id_{V_1}$. In a similar way, there exists $z_2\in\C^\times$ such that $i\circ j=z_2\Id_{V_2}$. Hence $i$ and $j$ are isomorphisms. The other implication is obvious. 
\end{proof}

\begin{prop}\label{prop:short-exact-sequence-ir-trivial} If $0\to V_1\to V_2\to V_3\to 0$ is a non vanishing short exact sequence of purely irreducible modules, then either $V_1=0$ and $V_2\simeq V_3$ or $V_3=0$ and $V_1\simeq V_2$.
\end{prop}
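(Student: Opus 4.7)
The plan is to apply the rationalization functor $F_\rat$ to the short exact sequence and exploit the $\mathcal R$-irreducibility of the rationalizations of the $V_i$.

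First, since the purely irreducible modules in the sequence are by definition finite rank torsion free, and since $F_\rat$ is exact (as noted after Definition \ref{defn:rationalization-functor}), applying $F_\rat$ to $0\to V_1\to V_2\to V_3\to 0$ yields a short exact sequence
\[
0\to F_\rat(V_1)\to F_\rat(V_2)\to F_\rat(V_3)\to 0
\]
in $\mathcal R$. By the third characterization in the definition of purely irreducible modules, $F_\rat(V_2)$ is $\mathcal R$-irreducible. Hence the subobject $F_\rat(V_1)\hookrightarrow F_\rat(V_2)$ in $\mathcal R$ must be either $0$ or the whole of $F_\rat(V_2)$, which gives the required dichotomy after a short descent argument back to $\sl(2)\Mod_\tffr$.

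Suppose first that $F_\rat(V_1)=0$. Since $V_1$ is torsion free and finite rank, the unit of the adjunction $V_1\hookrightarrow S^{-1}V_1=F_\rat(V_1)$ is injective (recall that $F_\rat$ is faithful by Theorem \ref{thm:rational-is-reflective-localization-of-tffr}, and this translates concretely into the injectivity of $V\hookrightarrow S^{-1}V$ for torsion free $V$). Therefore $V_1=0$, and the original short exact sequence collapses to the isomorphism $V_2\xrightarrow{\sim} V_3$. In the other case $F_\rat(V_1)=F_\rat(V_2)$, exactness forces $F_\rat(V_3)=0$, and the same faithfulness/injectivity argument gives $V_3=0$, so that $V_1\xrightarrow{\sim} V_2$ from the original sequence.

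The key steps are thus: (i) exactness of $F_\rat$, (ii) the $\mathcal R$-irreducibility of $F_\rat(V_2)$ provided by the hypothesis that $V_2$ is purely irreducible, and (iii) the observation that a finite rank torsion free $\sl(2)$-module with vanishing rationalization must itself vanish. The main subtlety, rather than a computational obstacle, is making sure that the dichotomy in $\mathcal R$ transfers back to $\sl(2)\Mod_\tffr$; this is handled cleanly by the injectivity of $V\hookrightarrow S^{-1}V$ and does not require any appeal to the more delicate structural theorems (such as Theorem \ref{thm:char-rat-irred}) concerning essential submodules of purely irreducible modules.
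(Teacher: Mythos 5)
Your proof is correct and takes essentially the same route as the paper: apply the exact rationalization functor, invoke the $\mathcal R$-irreducibility of $F_\rat(V_2)$ to force one of $F_\rat(V_1)$, $F_\rat(V_3)$ to vanish, then descend via the injectivity of $V\hookrightarrow S^{-1}V$ for torsion free $V$. The paper's proof is terser, but yours makes explicit the same reasoning.
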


\begin{proof} Applying the rationalization functor we get a non vanishing short exact sequence $0\to F_\mathrm{rat}(V_1)\to F_\mathrm{rat}(V_2)\to F_\mathrm{rat}(V_3)\to 0$ of $\mathcal R$-irreducible rational $\sl(2)$-modules. Therefore, either $F_\mathrm{rat}(V_1)=0$ or $F_\mathrm{rat}(V_3)=0$, this implies the claim. 
\end{proof}

\begin{cor}\label{cor:type-is-exact-functor} The type functor $\tau\colon \mathrm{PSpl}(\sld)\to \mathrm{Spl}(\sld\Mod_\mathrm{tffr})$ is exact.
\end{cor}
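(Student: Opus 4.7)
The plan is to exploit the drastic restriction that Proposition~\ref{prop:short-exact-sequence-ir-trivial} places on short exact sequences in $\mathrm{PSpl}(\sld)$, which makes exactness of $\tau$ almost automatic once the functor is shown to be well-defined on morphisms.

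First I would verify that $\tau$ is a well-defined functor from $\mathrm{PSpl}(\sld)$ to $\mathrm{Spl}(\sld\Mod_\tffr)$. Given a morphism $\phi\colon V_1\to V_2$ in $\mathrm{PSpl}(\sld)$, if $\phi=0$ we set $\tau(\phi)=0$. Otherwise $\phi$ is injective by Proposition~\ref{prop:hom-purely-irreds-is-one-dimensional}, so $\phi(\tau(V_1))\subseteq V_2$ is a simple $\sld$-submodule. Since $F_\rat(V_2)$ is $\Ra$-irreducible, any non-zero $\sld$-submodule $W\subseteq V_2$ must be essential: otherwise $F_\rat(W)$ would be a non-trivial proper rational subrepresentation of $F_\rat(V_2)$. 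Hence $\phi(\tau(V_1))$ is a simple essential submodule of $V_2$, and by the uniqueness of the type (Theorem~\ref{thm:char-rat-irred}), $\phi(\tau(V_1))=\tau(V_2)$. Thus $\phi$ restricts to an isomorphism $\tau(\phi)\colon\tau(V_1)\xrightarrow{\sim}\tau(V_2)$. Functoriality is then immediate.

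Next I would check exactness. Since $\mathrm{Spl}(\sld\Mod_\tffr)$ is viewed with the exact structure inherited from $\sld\Mod_\tffr$, a short exact sequence in $\mathrm{Spl}$ is necessarily of the trivial form (a term must be zero, since neither quotients nor subobjects of a simple module with a simple result can be non-trivial). Given any non-vanishing short exact sequence $0\to V_1\to V_2\to V_3\to 0$ in $\mathrm{PSpl}(\sld)$, Proposition~\ref{prop:short-exact-sequence-ir-trivial} yields two cases. If $V_1=0$ and $V_2\simeq V_3$, then $\tau(V_1)=0$ and the induced map $\tau(V_2)\to\tau(V_3)$ is the isomorphism coming from the first part of the argument; hence $0\to 0\to\tau(V_2)\to\tau(V_3)\to 0$ is short exact in $\mathrm{Spl}(\sld\Mod_\tffr)$. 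Symmetrically, if $V_3=0$ and $V_1\simeq V_2$, the sequence $0\to\tau(V_1)\to\tau(V_2)\to 0\to 0$ is short exact. In either case, the image of the sequence under $\tau$ is exact.

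I do not expect a serious obstacle. The only non-routine point is the verification that a non-zero morphism of purely irreducibles indeed restricts to a morphism between types, which rests on the uniqueness-of-type statement from Theorem~\ref{thm:char-rat-irred} combined with the fact that every non-zero submodule of a purely irreducible module is essential (a direct consequence of the definition via $F_\rat$). Once this is in hand, Proposition~\ref{prop:short-exact-sequence-ir-trivial} reduces exactness to a trivial case analysis.
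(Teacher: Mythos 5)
Your argument is correct and matches the paper's (implicit) approach: the paper gives no explicit proof, relying on the reader to see that the corollary is an immediate consequence of Proposition~\ref{prop:short-exact-sequence-ir-trivial}, which is exactly the case analysis you carry out. Your extra verification that $\tau$ is well-defined on morphisms (via injectivity of non-zero maps between purely irreducibles, essentiality of non-zero submodules, and uniqueness of the type) is a correct and useful supplement that the paper leaves unstated.
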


Moreover, it is easy to see that $\tau\colon \mathrm{PSpl}(\sld)\to \mathrm{Spl}(\sld\Mod_\mathrm{tffr})$ is compatible with the decompositions given in Remark \ref{rem:decopmposition-purely-irred-with-irred-Casimir-tffr}  and Proposition \ref{prop:decomposition-purely-irred-types}.

\begin{prop}\label{prop:finite-pure-length-category} The category $\sld\Mod_\tffr$ satisfies both the ascending and descending chain conditions on pure submodules. 
\end{prop}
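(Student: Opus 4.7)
The plan is to use the rank function as a strict monotone invariant on chains of pure $\sl(2)$-submodules, reducing both chain conditions to the finiteness of $\rk(V)$.

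The key observation is the following: if $V_1'\subsetneq V_2'$ are both pure $\sl(2)$-submodules of a finite rank torsion free $\sl(2)$-module $V$, then $\rk(V_1')<\rk(V_2')$. To establish this, I first note that since $V_1'$ is pure in $V$, the quotient $V/V_1'$ is a torsion free $\C[z]$-module. The natural inclusion $V_2'/V_1'\hookrightarrow V/V_1'$ then shows that $V_2'/V_1'$ is itself torsion free. But a torsion free $\C[z]$-module of rank zero is trivial, so the strict inclusion $V_1'\subsetneq V_2'$ forces $\rk(V_2'/V_1')>0$, and by additivity of rank in short exact sequences of finite rank torsion free modules we obtain $\rk(V_2')=\rk(V_1')+\rk(V_2'/V_1')>\rk(V_1')$.

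Once this strict monotonicity is in place, both chain conditions follow at once. Any strictly ascending chain
\[
V_1'\subsetneq V_2'\subsetneq \cdots \subseteq V
\]
of pure $\sl(2)$-submodules yields a strictly increasing sequence of nonnegative integers $\rk(V_1')<\rk(V_2')<\cdots$, all bounded above by $\rk(V)<\infty$, hence of length at most $\rk(V)+1$. The same bound applies to any strictly descending chain, by reading the ranks in the opposite direction. Therefore $\sld\Mod_\tffr$ satisfies both the ascending and descending chain conditions on pure submodules.

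No substantial obstacle is expected: the argument rests only on the basic fact that purity passes to subquotients in the indicated direction (that $V/V_1'$ torsion free implies $V_2'/V_1'$ torsion free), together with additivity of rank. The only point that requires a moment of care is ensuring that the intermediate quotient $V_2'/V_1'$ is torsion free, but this is immediate from the inclusion into $V/V_1'$ rather than requiring a direct purity statement for $V_1'$ inside $V_2'$.
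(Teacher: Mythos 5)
Your proof is correct and takes essentially the same approach as the paper, which simply observes that the length of a chain of pure submodules cannot exceed the rank of the module; you supply the details (strict monotonicity of rank along strict inclusions of pure submodules, via torsion freeness of $V_2'/V_1'$ inherited from $V/V_1'$) that the paper leaves implicit.
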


\begin{proof} This follows since the length of a chain of pure submodules cannot be greater than the rank of the module.
\end{proof}

\begin{defn}
For a finite rank torsion free $\sl(2)$-module $V$, we define its pure-length, $\plength(V)$, as the maximal length of all filtrations of $V$  by pure submodules. 
\end{defn}

\begin{prop}\label{prop:p-lenght-is-R-length-for-rational-modules} For any $W\in\Ra$ (resp. $V\in \sld\Mod_\tffr$) one has
$$\plength(W)=\length(W),\quad (\text{resp.} \plength(V)\, \leq \, \rank(V)).$$
\end{prop}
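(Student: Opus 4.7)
The plan is to treat the two claims separately, starting with the rational case and then handling the torsion free case.

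For $W\in\Ra$, I would prove the stronger statement that the pure $\sld$-submodules of $W$ coincide exactly with its rational $\sld$-subrepresentations; the equality $\plength(W)=\length(W)$ then follows at once from the definitions. The easy direction is that if $W'\subset W$ is a rational subrepresentation, then $W'$ is a $\C(z)$-vector subspace, so the quotient $W/W'$ is again a $\C(z)$-vector space and in particular torsion free over $\C[z]$, i.e.\ $W'$ is pure. For the converse, given a pure $\sld$-submodule $W'\subset W$, I need to upgrade its $\C[z]$-module structure to a $\C(z)$-structure. Given $w'\in W'$ and $\xi=p(z)/q(z)\in\C(z)$, the element $\xi\cdot w'$ lives in $W$ (which is a $\C(z)$-space) and satisfies $q(z)\cdot(\xi\cdot w')=p(z)\cdot w'\in W'$. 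Thus the class $[\xi\cdot w']\in W/W'$ is annihilated by the nonzero polynomial $q(z)$; purity of $W'$ forces this class to vanish, so $\xi\cdot w'\in W'$. Hence $W'$ is a finite dimensional $\C(z)$-subspace of $W$, and therefore a rational $\sld$-submodule.

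For $V\in\sld\Mod_\tffr$, I would show that any strict filtration
\[
0=V_0\,\subsetneq\, V_1\,\subsetneq\,\ldots\,\subsetneq\, V_n=V
\]
by pure $\sld$-submodules of $V$ has length $n\leq\rank(V)$. The key observation is that if $V_i\subset V$ is pure, then the inclusion $V_{i+1}/V_i\hookrightarrow V/V_i$ exhibits $V_{i+1}/V_i$ as a $\C[z]$-submodule of a torsion free $\C[z]$-module, hence $V_{i+1}/V_i$ is itself torsion free. Being nonzero, it then has rank at least $1$, so $\rank(V_{i+1})\geq \rank(V_i)+1$. Iterating yields $n\leq \rank(V_n)=\rank(V)$, and taking the supremum over pure filtrations gives $\plength(V)\leq\rank(V)$.

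The only mildly delicate step is the converse direction in the rational case, where one must recognize that purity of a $\C[z]$-submodule inside a $\C(z)$-vector space automatically promotes it to a $\C(z)$-subspace; everything else is bookkeeping with ranks and the torsion-free quotient property.
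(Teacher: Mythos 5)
Your proof is correct, and it is more explicit than the paper's. For the rational case you establish the clean key lemma that the pure $\sld$-submodules of $W\in\Ra$ are precisely its rational subrepresentations: rational implies pure because the quotient is a $\C(z)$-space (hence $\C[z]$-torsion free), and pure implies rational because $q(z)\cdot[\xi\cdot w']=0$ in the torsion free quotient forces $\xi\cdot w'\in W'$, upgrading $\C[z]$-stability to $\C(z)$-stability. With this submodule correspondence in hand the length equality is immediate. The paper's proof instead just cites Proposition \ref{prop:rational-R-irreducible-iff-purely-irreducible}, which only records the \emph{irreducibility} equivalence; to pass from that to the equality of \emph{lengths} one still needs to know that the terms of a pure filtration of $W$ are themselves rational, which is exactly the lemma you prove. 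So your argument makes a step explicit that the paper treats rather cursorily, and this is a genuine gain in transparency. (The same identification of pure submodules with $\C(z)$-subspaces is implicit elsewhere in the paper, e.g.\ in the proof of Proposition \ref{prop:indecomposable-generalized-Casimir}, but it is not invoked here.)

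For the torsion free case the paper argues via the rationalization functor, $\plength(V)\leq\length(F_\rat(V))\leq\rank(V)$, relying on the fact that $F_\rat$ carries a strict pure filtration to a strict $\Ra$-filtration. You instead observe directly that if $V_i\subsetneq V_{i+1}$ with $V_i$ pure in $V$, then $V_{i+1}/V_i$ embeds in the torsion free module $V/V_i$, so it is nonzero torsion free of rank $\geq 1$, giving $\rank(V_{i+1})\geq \rank(V_i)+1$ by additivity of rank. Both routes are correct; yours avoids the functorial detour and works at the level of rank arithmetic, which is slightly more elementary but essentially the same estimate.
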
 

\begin{proof}
The first claim follows from Proposition \ref{prop:rational-R-irreducible-iff-purely-irreducible}. The second is clear since $\plength(V)\leq \length(F_\rat(V))\leq\rank(V)$.
\end{proof}

Moreover, one has:

\begin{prop}\label{prop:existence-of-filtrations} 
Every finite rank torsion free $\sl(2)$-module $(V,\rho)$ has a finite filtration \begin{equation}\label{eq:purely-irreducible-filtration} (0)=V_0\subset V_1\subset \cdots\subset V_m=V\end{equation} such that every $V_i$ is a pure $\sl(2)$-submodule and the successive quotients $V_i/V_{i-1}$ are purely irreducible $\sl(2)$-modules.
\end{prop}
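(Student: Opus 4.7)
The plan is to obtain the desired filtration of $(V,\rho)$ by pulling back a Jordan--H\"older filtration of its rationalization. By Theorem \ref{thm:Jordan-Holder-Krull-Schmidt}, the rational module $F_\rat(V,\rho) = S^{-1}V$ admits a Jordan--H\"older filtration in $\Ra$
$$0 = W_0 \subsetneq W_1 \subsetneq \cdots \subsetneq W_m = S^{-1}V,$$
whose successive quotients $W_i/W_{i-1}$ are $\Ra$-irreducible rational $\sl(2)$-modules. I would set $V_i := W_i \cap V$ inside $S^{-1}V$ and claim that the chain $0 = V_0 \subseteq V_1 \subseteq \cdots \subseteq V_m = V$ has the required properties.

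First I would check that each $V_i$ is a pure $\sl(2)$-submodule of $V$. The natural map $V \to S^{-1}V/W_i$ has kernel exactly $V_i$, so $V/V_i$ embeds into the $\C(z)$-vector space $S^{-1}V/W_i$; since $\C(z)$-vector spaces are torsion-free as $\C[z]$-modules, $V/V_i$ is torsion-free, and thus $V_i$ is pure in $V$. The same argument (applied to $V_i$ in place of $V$ and $W_{i-1}$ in place of $W_i$) shows that $V_{i-1}$ is pure in $V_i$, so in particular each quotient $V_i/V_{i-1}$ is a finite rank torsion-free $\sl(2)$-module.

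Next I would verify the key identity $S^{-1}V_i = W_i$. The inclusion $S^{-1}V_i \subseteq W_i$ is immediate since $W_i$ is a $\C(z)$-subspace of $S^{-1}V$ containing $V_i$. Conversely, any $w \in W_i$ can be written as $v/p(z)$ with $v \in V$ and $p(z) \in S$; because $W_i$ is a $\C[z]$-submodule of $S^{-1}V$, the element $v = p(z)\cdot w$ lies in $V \cap W_i = V_i$, whence $w \in S^{-1}V_i$. Applying the exact rationalization functor to $0 \to V_{i-1} \to V_i \to V_i/V_{i-1} \to 0$ then gives a natural isomorphism $F_\rat(V_i/V_{i-1}) \simeq W_i/W_{i-1}$, which is $\Ra$-irreducible. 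By characterization (3) in the definition of purely irreducible modules, $V_i/V_{i-1}$ is purely irreducible.

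Finally, the faithfulness of $F_\rat$ (Theorem \ref{thm:rational-is-reflective-localization-of-tffr}) forces $V_i \neq V_{i-1}$ whenever $W_i \neq W_{i-1}$, so the filtration is strict and of the required form. No significant obstacle is expected; the only technical point is the identification $S^{-1}V_i = W_i$, and this follows immediately from the fact that $W_i$ is a $\C(z)$-subspace of the localization. The whole argument reduces the statement to the Jordan--H\"older property of $\Ra$ already established in Theorem \ref{thm:Jordan-Holder-Krull-Schmidt}.
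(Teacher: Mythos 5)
Your argument is correct, but it takes a genuinely different route from the paper's. The paper proves the proposition by induction on $\rank V$: the base case $\rank V = 1$ is Proposition \ref{prop:rank-1-tf-is-p-irred}, and for the inductive step one either observes that $V$ is already purely irreducible or picks a proper pure submodule $V'\subsetneq V$, extracts from it (by induction) a purely irreducible bottom layer $V_1$, and then applies the induction hypothesis to $V/V_1$, pulling back the resulting filtration. Your proposal instead fixes a Jordan--H\"older filtration $W_\bullet$ of $F_\rat(V)=S^{-1}V$ furnished by Theorem \ref{thm:Jordan-Holder-Krull-Schmidt} and contracts it to $V$ via $V_i:=W_i\cap V$; purity of each $V_i$ follows from the embedding $V/V_i\hookrightarrow S^{-1}V/W_i$ into a $\C(z)$-vector space, and the identity $S^{-1}V_i=W_i$ together with exactness of $F_\rat$ gives $F_\rat(V_i/V_{i-1})\simeq W_i/W_{i-1}$, so each quotient is purely irreducible by characterization (3) of that notion. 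Both proofs are sound; the paper's is more self-contained and elementary (it does not invoke the finite-length structure of $\Ra$), whereas yours is more conceptual and has the advantage of making completely explicit the correspondence between pure composition series of $V$ and $\Ra$-composition series of $F_\rat(V)$, a correspondence that the paper only exploits implicitly in the subsequent Theorem \ref{thm:JH-pure-composition-series}. One very minor remark: the strictness of your filtration follows at once from $S^{-1}V_i=W_i$ together with $W_{i-1}\subsetneq W_i$, without needing to invoke faithfulness of $F_\rat$.
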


\begin{proof} Notice first that the length of a chain of pure submodules cannot be greater than the rank of the entire module.We proceed by induction on the rank of $V$. If $\rank(V)=1$ then $V$ is purely irreducible by Proposition \ref{prop:rank-1-tf-is-p-irred}, hence $(0)\subset V$ is the required filtration.  Therefore, let us assume that the claim is true for   modules of rank less than or equal to $n$ and let us prove it for modules of rank $n+1$. So let us consider a torsion free $\sl(2)$-module $V$ of rank $n+1$. If $V$ is purely irreducible then $(0)\subset V$ has the required properties. If $V$ is not purely irreducible, then it has at least one proper pure $\sl(2)$-module $V'\subset V$. Since $\rank(V')<\rank(V)$, by the induction hypothesis $V'$ has a filtration $V'_\bullet$ verifying the required conditions. Then $V_1:=V'_1$ is a purely irreducible submodule  and $V/V_1$ is a torsion free $\sl(2)$-module of rank at most $n$. Then by the induction hypothesis $U=V/V_1$ has a filtration  $$(0)=U_0\subset U_1\subset \cdots\subset U_{m-1}=U$$ such that every $U_i$ is a pure $\sl(2)$-submodule and the successive quotients $U_i/U_{i-1}$ are purely irreducible $\sl(2)$-modules. If $\pi\colon V\to U=V/V_1$ is the natural projection, we define now $V_i=\pi^{-1}(U_{i-1})$ for $i\geq 2$. Then  $$(0)=V_0\subset V_1\subset \cdots\subset V_m=V$$ is the desired filtration.
\end{proof}

\begin{defn} Given a finite rank torsion free $sl(2)$-module $(V,\rho)$, a filtration having the same properties as (\ref{eq:purely-irreducible-filtration}) is called a pure composition series or a pure Jordan-H\"older filtration of $(V,\rho)$.
\end{defn}

Now we have the key result:

\begin{thm}[Jordan-H\"older]\label{thm:JH-pure-composition-series} Let $(V,\rho)$ be a finite rank torsion free $\sld$-module and let \begin{align*} (0)=V_0\subset V_1\subset \cdots\subset V_m=V,\\ (0)=V'_0\subset V'_1\subset \cdots\subset V'_n=V,
\end{align*}be two pure composition series of $(V,\rho)$ with purely irreducible successive quotients $\{E_i:=V_i/V_{i-1}\}_{i=1}^m$, $\{E'_j:=V'_j/V'_{j-1}\}_{j=1}^n$, respectively. Then $n=m$, and there exists a permutation $\sigma$ of $1,\ldots,n$ such that $E_i$ is isomorphic to $E'_{\sigma(i)}$.
\end{thm}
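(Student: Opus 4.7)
My approach combines the Jordan-H\"older theorem already available in the abelian category $\Ra$ (Theorem \ref{thm:Jordan-Holder-Krull-Schmidt}) with the exact and faithful rationalization functor $F_\rat\colon \sl(2)\Mod_\tffr\to\Ra$ of Theorem \ref{thm:rational-is-reflective-localization-of-tffr}. First I would verify that $V'\mapsto F_\rat(V')$ is a bijection between pure $\sl(2)$-submodules of $V$ and rational $\sl(2)$-submodules of $F_\rat(V)$, with inverse $W\mapsto W\cap V$. Under this correspondence, a pure composition series of $V$ rationalizes to a strictly increasing filtration of $F_\rat(V)$ whose successive quotients $F_\rat(E_i)$ and $F_\rat(E'_j)$ are $\Ra$-irreducible by Proposition \ref{prop:rational-R-irreducible-iff-purely-irreducible}; in particular both $F_\rat(V_\bullet)$ and $F_\rat(V'_\bullet)$ are composition series of $F_\rat(V)$ in $\Ra$.

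Next, applying the Jordan-H\"older theorem in $\Ra$ yields at once $m=n$ together with a permutation $\sigma$ of $\{1,\ldots,n\}$ such that $F_\rat(E_i)\simeq F_\rat(E'_{\sigma(i)})$ in $\Ra$. Through the identifications of Theorem \ref{thm:identification-isomorphism-classes}, this matches the types $\tau(E_i)\simeq \tau(E'_{\sigma(i)})$ as irreducible $\sl(2)$-modules, so the length statement and the matching of types come essentially for free from the corresponding result in $\Ra$.

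To promote this to the claimed isomorphism $E_i\simeq E'_{\sigma(i)}$ of purely irreducible $\sl(2)$-modules, I would argue by induction on $\rank V$. The base $\rank V=1$ is trivial: by Proposition \ref{prop:rank-1-tf-is-p-irred}, $V$ itself is purely irreducible and $(0)\subset V$ is its unique pure composition series. For the inductive step, if $V_{m-1}=V'_{n-1}$ one applies the induction hypothesis directly to $V_{m-1}$. Otherwise, set $W:=V_{m-1}\cap V'_{n-1}$, a pure $\sl(2)$-submodule of $V$. The second isomorphism theorem for $\sl(2)$-modules then yields canonical inclusions $V_{m-1}/W\hookrightarrow V/V'_{n-1}$ and $V'_{n-1}/W\hookrightarrow V/V_{m-1}$ whose images are $(V_{m-1}+V'_{n-1})/V'_{n-1}$ and $(V_{m-1}+V'_{n-1})/V_{m-1}$ respectively. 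Combining these identifications with the rational matching from the previous step and with the structure of purely irreducible modules inside their rationalizations, one identifies the top factors and matches $E_m$ with $E'_{\sigma(m)}$. Splicing with a pure composition series of $W$ and applying the inductive hypothesis to the shorter truncations finishes the argument.

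The main obstacle lies in the last step. A priori, two purely irreducible $\sl(2)$-modules with isomorphic rationalizations need not be isomorphic as $\sl(2)$-modules, because $\End_\sld(F_\rat(E))=\C$ (Proposition \ref{prop:end=C-tffr}) forces any rational isomorphism to lift only to a scalar multiple, and scaling does not alter the position of a given submodule inside the common rationalization. Producing genuine $\sl(2)$-isomorphisms of the successive quotients therefore requires delicate bookkeeping, and in practice it may be cleaner to bypass the inductive scheme via a Zassenhaus butterfly lemma adapted to pure $\sl(2)$-submodules (with sums of pure submodules replaced by their purifications) and a Schreier-type refinement argument; the fact that pure composition series are unrefinable among pure filtrations (since purely irreducible modules admit no proper pure $\sl(2)$-submodules) would then yield the theorem directly.
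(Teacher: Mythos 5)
Your proposal correctly extracts $m=n$ from the rationalization functor and the Jordan--H\"older theorem in $\Ra$, which is exactly how the paper starts. But you have not produced a complete proof: you explicitly acknowledge that the passage from ``$F_\rat(E_i)\simeq F_\rat(E'_{\sigma(i)})$'' to ``$E_i\simeq E'_{\sigma(i)}$'' is the key difficulty, and you leave it unresolved. Your concern is well-founded --- two purely irreducible $\sl(2)$-modules can have isomorphic rationalizations without being isomorphic (they are merely comparable inside the common rationalization), so the rational matching alone gives only the types $\tau(E_i)\simeq\tau(E'_{\sigma(i)})$, not isomorphisms of the $E_i$ themselves. This is precisely the gap.

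The paper's own proof avoids this entirely: it never attempts to lift rational isomorphisms. Instead, after using rationalization only to get $m=n$, it does an induction on $\rank V$ comparing the \emph{first} terms $V_1$, $V_1'$ of the two filtrations. Because $V_1$ and $V_1'$ are purely irreducible, their intersection $V_1\cap V_1'$ is either zero or essential in each, and the essential cases are ruled out by a diagram chase (they force $V_1=V_1'$). The remaining case $V_1\cap V_1'=0$ is handled by choosing a pure composition series of $U=V/(V_1\oplus V_1')$ and pulling it back to produce auxiliary pure composition series of $V/V_1$ and $V/V_1'$ whose head factors are $V_1'$ and $V_1$ respectively; the inductive hypothesis on these lower-rank quotients then matches $\{E_2,\dots,E_n\}$ against $\{V_1',\bar E_1,\dots,\bar E_{n-2}\}$ and $\{E'_2,\dots,E'_n\}$ against $\{V_1,\bar E_1,\dots,\bar E_{n-2}\}$, from which the equivalence $\{E_1,\dots,E_n\}\sim\{E'_1,\dots,E'_n\}$ follows purely formally. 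The genuine isomorphisms come from the inductive hypothesis applied to a common module, not from comparing rationalizations.

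Your fallback via Zassenhaus/Schreier is unlikely to repair the gap for the same reason you name: the second isomorphism theorem does not respect purity. The purification $P(V_1+V_2)$ modulo $V_1$ is only an essential extension of $V_2/(V_1\cap V_2)$, not equal to it, so the butterfly quotients on the two sides would again have matching types without matching isomorphism classes. In short, the approach you attempt (deduce $\sl(2)$-isomorphisms from rational ones) cannot succeed in that form; you need the paper's direct inductive comparison with a case analysis on the bottom terms, or some other device that produces honest morphisms of $\sl(2)$-modules rather than of their rationalizations.
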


\begin{proof} Applying the rationalization functor we get two composition series of the rational $\sl(2)$-module $F_\rat(V,\rho)$. Since the Jordan-H\"older theorem holds on $\Ra$ by Theorem \ref{thm:Jordan-Holder-Krull-Schmidt}, we conclude that $m=n$.

Now, we make an induction on the rank of $V$. If $\rank(V)=1$, then $V$ is purely irreducible by Proposition  \ref{prop:rank-1-tf-is-p-irred} and thus $(0)\subset V$ is its unique pure composition series. Therefore, let us assume that the claim is true for modules or rank at most $r$ and let us prove it for modules of rank $r+1$. Suppose that $\rank(V)=r+1$. 

If $V_1=V'_1$, then $V/V_1$ has rank less than $r$. Taking the quotient under $V_1$ of the original filtrations we get two filtrations of $V/V_1$ \begin{align*} (0)\subset V_2/V_1\subset \cdots\subset V_{n-1}/V_1\subset V/V_1,\\ (0)\subset V'_2/V_1\subset \cdots\subset V'_{n-1}/V_1\subset V/V_1,
\end{align*} whose successive quotients are $\{E_i\}_{i=2}^n$, $\{E'_i\}_{i=2}^n$. By the induction assumption there exists a permutation $\sigma$ of $2,\ldots,n$ such that $E_i$ is isomorphic to $E'_{\sigma(i)}$. Since the successive quotients of the original filtrations are $\{V_1\}\cup\{E_i\}_{i=2}^n$, $\{V_1\}\cup\{E'_i\}_{i=2}^n$, we are done.

So let us assume $V_1\neq V_1'$. Since $V_1$ and $V_1'$ are purely irreducible modules, and  $V_1\cap V_1'$ is a submodule of both of them, we conclude that either $V_1\cap V_1'$ is not a proper submodule or is an essential submodule of $V_1$ and $V_1'$. Therefore, there are four possibilities \begin{enumerate}
\item $V_1\cap V_1'\hookrightarrow V_1$, $V_1\cap V_1'\hookrightarrow V_1'$,
\item $V_1'\hookrightarrow V_1$,
\item $V_1\hookrightarrow V_1'$,
\item $V_1\cap V_1'=0$.
\end{enumerate}
Let us analyze these cases separately. 

(1) We have a commutative diagram $$\xymatrix@R=16pt@C=30pt{ & & & 0\ar[d] &\\ & 0\ar[d] & 0 \ar[d] & V_1/V_1\cap V_1'\ar[d] &\\ 0 \ar[r] & V_1\cap V_1'\ar[r]\ar[d] & V_2\ar[r]\ar[d] & V_2/V_1\cap V_1'\ar[r]\ar[d] & 0\\ 0 \ar[r] & V_1\ar[r]\ar[d] & V_2\ar[r]\ar[d] & V_2/V_1\ar[r]\ar[d] & 0\\ & V_1/V_1\cap V_1'\ar[d] & 0 & 0 &\\ & 0 & & &}$$ Since $V_1\cap V_1'$ is essential in $V_1$ we have $\rank(V_1\cap V_1')=\rank(V_1)$, hence  $\rank(V_1/V_1\cap V_1')=0$ and $\rank(V_2/V_1\cap V_1')=\rank(V_2/V_1)>0$ since $V_1$ is a pure submodule of $V_2$. Thus  $V_2/V_1\cap V_1'$ is a torsion free module and its zero rank submodule $V_1/V_1\cap V_1'$  is zero. Therefore $V_1\subset V_1'$. A similar argument shows that $V_1'\subset V_1$, hence $V_1=V_1'$ but this is not possible since we have assumed $V_1\neq V_1'$.

(2) Using the same commutative diagram as above we would also get $V_1=V_1'$ and this is again not possible.

(3) In this case the roles of $V_1$ and $V_1'$ are interchanged, and thus the same reasoning would show that this case is not possible. 

(4) Since $V_1\cap V_1'=0$ we have an natural inclusion of $\sl(2)$-modules $i\colon V_1\oplus V_1'\hookrightarrow V$.  We consider the quotient module $U=V//(V_1\oplus V_1')$ and the quotient map $\pi\colon V\to V/(V_1\oplus V_1')$. By  Proposition \ref{prop:existence-of-filtrations}, $U$ has a pure composition series $(0)=U_0\subset U_1\subset\cdots \subset  U_k=U$ and we denote its successive quotients by $\bar E_i:=U_i/U_{i-1}$. We have a commutative diagram 
$$\xymatrix@R=16pt@C=30pt{ 
& & 0\ar[d] & 0\ar[d] & 0\ar[d] &\\  
& 0\ar[r] & V_1\ar[d]\ar[r] &V_1\oplus V_1' \ar[d]^i\ar[r] & V_1'\ar[r]\ar[d] & 0\\ 
& 0\ar[r] & V\ar@{=}[r]\ar[d] & V\ar[d]^\pi\ar[r] & 0\ar[d]\ar[r]& 0\\ 
0\ar[r]& V_1' \ar[r] & V/V_1\ar[r]^{q}\ar[d] & U\ar[r]\ar[d] & 0\ar[r]\ar[d] & 0\\ 
& & 0 & 0 & 0 &}$$
and a similar one interchanging the roles of $V_1$ and $V_1'$ providing an exact sequence 
	\[
	\xymatrix{0\ar[r]& V_1 \ar[r] & V/V_1'\ar[r]^{q'} & U\ar[r] & 0 }\,.
	\]
Therefore we can define pure composition series $\{q^{-1}(U_i)\}_{i=0}^k$,  $\{q'^{-1}(U_i)\}_{i=0}^k$ of $V/V_1$ and $V/V_1'$, respectively, with successive quotients $E_1'\cup\{\bar E_i\}_{i=1}^k$, $E_1\cup\{\bar E_i\}_{i=1}^k$.

On the other hand taking the quotient of the original pure composition series by $V_1$ and $V_1'$, respectively, we get filtrations for $V/V_1$ and $V/V_1'$ given by \begin{align*} (0)\subset V_2/V_1\subset \cdots\subset V_{n-1}/V_1\subset V/V_1,\\ (0)\subset V'_2/V_1'\subset \cdots\subset V'_{n-1}/V_1'\subset V/V_1',
\end{align*} whose successive quotients are $\{E_i\}_{i=2}^n$, $\{E'_i\}_{i=2}^n$. Since both $V/V_1$ and $V/V_1'$ have rank less than $r$, by the induction hypothesis we have the following equivalences, up to permutation and isomorphism, between collections of purely irreducible $\sl(2)$-modules $$\{E_2,\ldots, E_n\}\sim\{E_1',\bar E_1,\ldots \bar E_k\},\quad \{E_2',\ldots, E_n'\}\sim\{E_1,\bar E_1,\ldots \bar E_k\}.$$ Therefore, $k=n-2$ and we also have equivalences $$ \{E_1,E_2,\ldots, E_n\}\sim\{E_1,E_1',\bar E_1,\ldots \bar E_{n-2}\}\sim\{E_1,E_2',\ldots, E_n'\},$$finishing the proof.
\end{proof} 

\section{Grothendieck groups}

If $\mathcal A$ is an abelian category and $\mathcal B$ is a full subcategory that is essentially small, then  one can  define its Grothendieck group $K_0(\mathcal B)$, see \cite[Definition, pag. 72]{Swan}, as the quotient of the free abelian group $\Z[\Iso(\mathcal B)]$  on the set of isomorphism classes  of objects of $\mathcal B$ by the subgroup of relations $R(\mathcal B)$ generated by the short exact sequences of $\mathcal A$ whose terms all belong to $\mathcal B$. Moreover, if $\mathcal B$ is additive then there is also the additive Grothendieck group $K_0^\oplus(\mathcal B)$ defined as the quotient of  $\Z[\Iso(\mathcal B)]$ by the  subgroup of relations $R^\oplus(\mathcal B)\subset R(\mathcal B)$ generated by split short exact sequences. Therefore, there is a natural  surjective group morphism $\pi^\oplus\colon K_0^\oplus(\mathcal B)\to K_0(\mathcal B)\to 0$. Besides the compatibility of $K_0$ with direct sums of exact categories, see \cite[7.1.6, pag. 142]{Weibel},  there are  two basic techniques,  introduced by  A. Heller, for computing $K_0$-groups: devissage and localization, see \cite[Theorems 3.1, 5.13, pags. 92, 115]{Swan}.

However in order to achieve our goals we need to generalize Heller's devissage theorem. Let $K_0^\bullet$ denote either $K_0^\oplus$ or $K_0$. We give the following:

\begin{defn}Let $\mathcal A$ be an abelian category,  $\mathcal B$, $\mathcal C$ be full subcategories. We say that $\mathcal C$ is a $K_0^\bullet$-devissage subcategory for $\mathcal B$ if $\mathcal C$ is a subcategory of $\mathcal B$ such that the embedding functor $i\colon \mathcal C\hookrightarrow \mathcal B$  induces a group isomorphism $K_0^\bullet(i)\colon K^\bullet_0(\mathcal C)\xrightarrow{\sim} K^\bullet_0(\mathcal B)$. Then $K_0(i)^{-1}\colon K_0(\mathcal B)\xrightarrow{\sim} K^\bullet_0(\mathcal C)$ is called the devissage isomorphism defined by  $\mathcal C$.
\end{defn}

Heller's devissage theorem says that if $\mathcal C$ is closed in $\mathcal A$ under subobjects and quotients and every object of $\mathcal B$ has a  $\mathcal C$-filtration (i.e an increasing finite filtration with all successive quotients in $\mathcal C$), then $\mathcal C$ is a $K_0$-devissage subcategory for $\mathcal B$. We need the following generalization.

\begin{thm}\label{thm:devissage-generalizado} Let $\mathcal A$ be an abelian category,  $\mathcal B$, $\mathcal C$ be full subcategories. If $\mathcal C$ is a subcategory of $\mathcal B$, such that
\begin{enumerate}
\item Every object of $\mathcal B$ has a  $\mathcal C$-filtration.
\item If $B_1\subseteq B_2\subseteq B_3$ are objects of $\mathcal B$ with $B_3/B_1\in\mathcal C$, then $B_2/B_1, B_3/B_2\in \mathcal C$.

\end{enumerate} then $\mathcal C$ is a $K_0$-devissage subcategory for $\mathcal B$. Moreover, the devissage isomorphism $$\chi:=K_0(i)^{-1}\colon K_0(\mathcal B)\to K_0(\mathcal C)$$ is given by $$\chi([B])=\sum_{i=1}^m[B_i/B_{i-1}]$$ where $B_i/B_{i-1}$ are the quotients of a $\mathcal C$-filtration $B_\bullet=(B_i)_{i=1}^m$ of $B\in\mathcal B$.
\end{thm}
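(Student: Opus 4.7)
The plan is to construct an explicit inverse $\chi\colon K_0(\mathcal B)\to K_0(\mathcal C)$ to $K_0(i)$, matching the formula announced in the statement. Surjectivity of $K_0(i)$ is immediate from hypothesis (1): given $B\in\mathcal B$ with a $\mathcal C$-filtration $0=B_0\subset B_1\subset\cdots\subset B_m=B$ in $\mathcal B$, telescoping the short exact sequences $0\to B_{i-1}\to B_i\to B_i/B_{i-1}\to 0$ yields $[B]=\sum_i[B_i/B_{i-1}]$ in $K_0(\mathcal B)$.

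For the inverse, I would tentatively set $\chi([B]):=\sum_i[B_i/B_{i-1}]\in K_0(\mathcal C)$ using a chosen $\mathcal C$-filtration and verify two things: independence of the filtration, and respect for the defining relations of $K_0(\mathcal B)$. The argument for independence is a Zassenhaus/Schreier refinement: given two $\mathcal C$-filtrations $B_\bullet$ and $B'_\bullet$ of $B$, insert the terms $B_{i-1}+(B_i\cap B'_j)$ between $B_{i-1}$ and $B_i$ to form a common refinement. In each refined step $B_{i-1}\subseteq B_{i-1}+(B_i\cap B'_j)\subseteq B_i$ the outer quotient $B_i/B_{i-1}$ already lies in $\mathcal C$, so hypothesis (2) forces both refined successive quotients into $\mathcal C$; the Zassenhaus butterfly lemma then guarantees that the resulting sums in $K_0(\mathcal C)$ agree. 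For additivity on a short exact sequence $0\to B'\to B\to B''\to 0$ in $\mathcal B$, I would concatenate a $\mathcal C$-filtration of $B'$ with the preimage under $B\twoheadrightarrow B''$ of a $\mathcal C$-filtration of $B''$; the successive quotients of the resulting filtration of $B$ are exactly the disjoint union of those of the two original filtrations, so $\chi([B])=\chi([B'])+\chi([B''])$.

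Combined with the trivial observation that $\chi([C])=[C]$ for $C\in\mathcal C$ (via the one-step filtration $0\subset C$), this shows $\chi\circ K_0(i)=\Id_{K_0(\mathcal C)}$; conversely, $K_0(i)\circ\chi([B])=\sum_i[B_i/B_{i-1}]=[B]$ in $K_0(\mathcal B)$ by the same telescoping used for surjectivity. Hence $K_0(i)$ is an isomorphism with inverse $\chi$, and the explicit formula of the statement is built into the construction.

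The main obstacle will be carrying through the independence step under hypothesis (2), which is strictly weaker than the ``closed under subobjects and quotients'' assumption used in Heller's classical devissage. In that classical setting the refined quotients automatically land in $\mathcal C$; here one must additionally ensure that the intermediate objects $B_{i-1}+(B_i\cap B'_j)$ appearing in the Zassenhaus refinement actually lie in $\mathcal B$, so that hypothesis (2) may legitimately be applied to produce a bona fide refined $\mathcal C$-filtration. Handling this closure issue, and keeping track of which short exact sequences produce legitimate relations in $K_0(\mathcal B)$ versus $K_0(\mathcal C)$, is where the technical care is concentrated.
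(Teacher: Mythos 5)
Your proposal follows essentially the same route as the paper: define the candidate inverse $\chi([B]):=\sum_i[B_i/B_{i-1}]$ explicitly, prove well-definedness via Schreier refinement with hypothesis~(2) forcing the inserted quotients into $\mathcal C$, prove additivity by concatenating a filtration of the subobject with the pulled-back filtration of the quotient, and then check $\chi\circ K_0(i)=\Id$ and $K_0(i)\circ\chi=\Id$ by telescoping. That is precisely the paper's argument, so as far as strategy goes the two proofs are the same.

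Your concluding worry, however, deserves to be taken more seriously than you present it, because it points to a genuine gap shared by the paper's own proof. Hypothesis~(2) is only usable when all three objects $B_1\subseteq B_2\subseteq B_3$ lie in $\mathcal B$, but the intermediates $B_{i-1}+(B_i\cap B'_j)$ produced by the Schreier/Zassenhaus refinement are a priori only objects of the ambient abelian category $\mathcal A$, not of $\mathcal B$. Neither your write-up nor the paper verifies this, and without it one cannot legitimately invoke~(2) to conclude that the refined quotients lie in $\mathcal C$, so independence of $\chi$ from the chosen filtration is not actually established. You should either add to the statement the hypothesis that $\mathcal B$ is closed in $\mathcal A$ under subobjects of objects of $\mathcal B$ (or at least under the Schreier intermediates of $\mathcal C$-filtrations), or note that in the intended application $\mathcal B=\mathcal A$ (namely $\mathcal B=\mathcal A=\mathcal{RC}^\bullet_\mu$ in Theorem~\ref{thm:first-devissage}), where every subobject of an object of $\mathcal B$ is automatically in $\mathcal B$ and the issue evaporates. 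As it stands, you have correctly located the technical pressure point, but you leave it unresolved rather than resolving it or restating the theorem with the needed closure condition.
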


\begin{proof} The embedding functor $i\colon \mathcal C\hookrightarrow\mathcal B$  gives raise to a morphism of groups $K_0(i)\colon K_0(\mathcal C)\to K_0(\mathcal B)$ and we write $i_*=K_0(i)$. Let us define a group morphism $\varphi\colon K_0(\mathcal B)\to K_0(\mathcal C)$ that is the inverse of $i_*$. By hypothesis every  object $B$ of $\mathcal B$ has a finite $\mathcal C$-filtration  $0=B_0\subseteq B_1\subseteq\cdots\subseteq B_m=B.$ Let $$\varphi(B):=\sum_{i=1}^m\left[B_i/B_{i-1}\right]\in K_0(\mathcal C).$$ In order to see that $\varphi$ is correctly defined we have to prove that it does not depend on the chosen $\mathcal C$-filtration of $B$. By  Schreier's Theorem for abelian categories, \cite[6.2, pag. 137]{Weibel}, any to $\mathcal C$-filtrations admit equivalent refinements; that is, their successive quotients are isomorphic up to a permutation. By induction we only need to check it for one insertion. Suppose that $B_{i-1}\subseteq B_i $ is changed to $B_{i-1}\subseteq B'\subseteq B_i $, then we have a short exact sequence $$0\to B'/B_{i-1}\to B_i/B_{i-1}\to B_i/B'\to 0$$ and since $B_i/B_{i-1}\in\mathcal C$, by condition (1) this is a short exact sequence in $\mathcal C$ and thus in $K_0(\mathcal C)$ we have the equality $[B_i/B_{i-1}]=[B'/B_{i-1}]+[B_i/B']$. This shows that $\varphi$ is well defined. 

By the universal property of $K_0(\mathcal B)$, to prove that $\varphi$ induces a group morphism $\chi\colon K_0(\mathcal B)\to K_0(\mathcal C)$ we must show that for any short exact sequence in $\mathcal B$ $$0\to B'\to B\xrightarrow{\pi} B''\to 0$$ one has $\varphi(B)=\varphi(B')+\varphi(B'')$. This is easy, because if $$B'_\bullet \equiv 0=B'_0\subseteq B'_1\subseteq\cdots\subseteq B'_{m'}=B',\quad B''_\bullet\equiv0=B''_0\subseteq B''_1\subseteq\cdots\subseteq B''_{m''}=B''$$ are $\mathcal C$-filtrations, then $$0=B'_0\subseteq B'_1\subseteq\cdots\subseteq B'_{m'}\subseteq \pi^{-1}(B''_1)\subseteq\cdots\subseteq \pi^{-1}(B''_{m''})=\pi^{-1}(B'')=B$$ is a $\mathcal C$-filtration whose successive quotients are those of $B'_\bullet$ together with those of $B''_\bullet$, proving thus the additivity of $\varphi$ on short exact sequences.  For every $C\in\mathcal C$ one has $\chi(i_*([C]))=[C]$ because $0\subseteq C$ is a $\mathcal C$-filtration.  On the other hand, if $B_\bullet$ is a $\mathcal C$-filtration of $B\in\mathcal B$, writing down the short exact sequences defined by the successive quotients, $0\to B_{i-1}\to B_i\to B_i/B_{i-1}\to 0$, it follows that $i_*(\chi([B]))=[B]$. Thus $i_*\colon K_0(\mathcal C)\to K_0(\mathcal B)$ is a group isomorphism whose inverse  is $\chi$. \end{proof}

\subsection{Grothendieck groups of the category of rational mo\-du\-les}\label{sec:Grothendieck-rational-modules}

We have proved in Theorem \ref{thm:rational-essentially-small} that the abelian category $\mathcal R$ of rational $\sl(2)$-modules is essentially small, therefore we can define its Grothendieck groups $K_0^\oplus(\mathcal R)$ and $K_0(\mathcal R)$. For the first one, if we denote by $[W]^\oplus\in K_0^\oplus(\mathcal R)$ the class defined by a rational module $W$, we have the following result. 

\begin{thm}[global Krull-Schmidt-Remak devissage]\label{thm:additive-Grothendieck-group} The category $\Ind_{\mathcal{RC}^\bullet}({\mathcal{RC}^\bullet})$ is a $K^\oplus_0$-devissage subcategory for the abelian category $\mathcal R$. Therefore, $K^\oplus_0(\mathcal R)$   is isomorphic to the free abelian group on the isomorphism classes of $\mathcal R$-indecomposable generalized Casimir rational $\sl(2)$-modu\-les. Moreover, the Krull-Schmidt-Remak devissage isomorphism  $$\chi_{_{\text{\tiny KSR}}}\colon K_0^\oplus(\mathcal R)\xrightarrow{\sim}\Z[\widehat\Ind_{\mathcal{RC}^\bullet}(\mathcal{RC}^\bullet)]$$ is given  by $$\chi_{_{KSR}}([W]^\oplus)=\sum_{i=1}^m[W_i]^\oplus$$ with $W_i$ the factors of a Remak decomposition $W=\oplus_{i=1}^m W_i$ of a module $W\in\Ra$. 
\end{thm}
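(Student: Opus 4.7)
The plan is to read the statement as follows: by Proposition~\ref{prop:indecomposable-generalized-Casimir} the indecomposable objects of $\Ra$ are exactly the objects of $\Ind_{\mathcal{RC}^\bullet}(\mathcal{RC}^\bullet)$, and since $\Ra$ is Krull--Schmidt by Theorem~\ref{thm:Jordan-Holder-Krull-Schmidt}, every rational module admits a Remak decomposition, unique up to permutation of the summands. The target group $\Z[\widehat\Ind_{\mathcal{RC}^\bullet}(\mathcal{RC}^\bullet)]$ is nothing but the additive Grothendieck group of the (non-additive) subcategory $\Ind_{\mathcal{RC}^\bullet}(\mathcal{RC}^\bullet)$, since the only split short exact sequences of indecomposables are the trivial ones. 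So I have to exhibit mutually inverse group homomorphisms between $K_0^\oplus(\Ra)$ and $\Z[\widehat\Ind_{\mathcal{RC}^\bullet}(\mathcal{RC}^\bullet)]$.

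First I would define $\chi_{_{\mathrm{KSR}}}$ on the free abelian group $\Z[\Iso(\Ra)]$ by the prescription in the statement: to $[W]^\oplus$ assign $\sum_{i=1}^m [W_i]^\oplus$, where $W=\bigoplus_{i=1}^m W_i$ is any Remak decomposition of $W$. The key point here is well-definedness: the uniqueness part of the Krull--Schmidt property (Theorem~\ref{thm:Jordan-Holder-Krull-Schmidt}) ensures that the multiset of isomorphism classes $\{[W_i]^\oplus\}$ does not depend on the chosen decomposition, and any $W\cong W'$ shares the same multiset of indecomposable summands. Note that each $W_i$ lies in $\Ind_{\mathcal{RC}^\bullet}(\mathcal{RC}^\bullet)$ by Proposition~\ref{prop:indecomposable-generalized-Casimir}, so $\chi_{_{\mathrm{KSR}}}$ indeed takes values in $\Z[\widehat\Ind_{\mathcal{RC}^\bullet}(\mathcal{RC}^\bullet)]$.

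Next I would check that $\chi_{_{\mathrm{KSR}}}$ annihilates the defining relations of $K_0^\oplus(\Ra)$, namely those coming from split short exact sequences $0\to W'\to W\to W''\to 0$ in $\Ra$. Indeed, for such a sequence one has $W\cong W'\oplus W''$, so concatenating Remak decompositions of $W'$ and $W''$ yields a Remak decomposition of $W$, giving $\chi_{_{\mathrm{KSR}}}([W]^\oplus)=\chi_{_{\mathrm{KSR}}}([W']^\oplus)+\chi_{_{\mathrm{KSR}}}([W'']^\oplus)$. Hence $\chi_{_{\mathrm{KSR}}}$ descends to a group homomorphism $K_0^\oplus(\Ra)\to \Z[\widehat\Ind_{\mathcal{RC}^\bullet}(\mathcal{RC}^\bullet)]$.

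For the inverse, consider the map $i_\ast\colon \Z[\widehat\Ind_{\mathcal{RC}^\bullet}(\mathcal{RC}^\bullet)]\to K_0^\oplus(\Ra)$ induced by the inclusion on generators, $[W_i]^\oplus\mapsto [W_i]^\oplus$. Composition in one direction gives $\chi_{_{\mathrm{KSR}}}\circ i_\ast=\Id$ trivially, since an indecomposable module has itself as its (length one) Remak decomposition. In the other direction, given $W\in\Ra$ with Remak decomposition $W=\bigoplus W_i$, the split exact sequences inductively yield $[W]^\oplus=\sum[W_i]^\oplus$ in $K_0^\oplus(\Ra)$, so $i_\ast\circ \chi_{_{\mathrm{KSR}}}=\Id$. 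The only possible obstacle is conceptual rather than technical: one has to be careful that $\Ind_{\mathcal{RC}^\bullet}(\mathcal{RC}^\bullet)$ is not additive, so $K_0^\oplus$ of this subcategory must be interpreted as the free abelian group on isomorphism classes (equivalently, split exact sequences among indecomposables are necessarily trivial by indecomposability); once this is accepted, the argument is a direct formalization of Krull--Schmidt--Remak.
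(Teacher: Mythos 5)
Your proof is correct and follows essentially the same approach as the paper: both rely on Proposition~\ref{prop:indecomposable-generalized-Casimir} to identify the indecomposables and on Theorem~\ref{thm:Jordan-Holder-Krull-Schmidt} for the Krull--Schmidt property. The only difference is that the paper outsources the devissage step to \cite[Theorem 2.1, pag. 76]{Swan}, whereas you unfold the argument explicitly by constructing $\chi_{_{\mathrm{KSR}}}$ and its inverse by hand, which is precisely what Swan's theorem encapsulates.
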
	

\begin{proof}
By Proposition \ref{prop:indecomposable-generalized-Casimir} one has $\Ind_{\mathcal R}(\mathcal R)=\Ind(\mathcal R)=\Ind_{\mathcal{RC}^\bullet}(\mathcal{RC}^\bullet)=\Ind(\mathcal{RC}^\bullet)$. The $K_0^\bullet$-devissage property follows from \cite[Theorem 2.1, pag. 76]{Swan} since we know, thanks to Theorem \ref{thm:Jordan-Holder-Krull-Schmidt}, that $\mathcal R$ is a Krull-Schmidt category. The result follows since on $\Ind({\mathcal R})$ there are no nontrivial split exact sequences.
\end{proof}

Now we refine this result by means of the particular structure of the category $\mathcal R$.

\begin{thm}[decomposition]\label{thm:descomposición-Grothendieck-aditivo}
There is a decomposition isomorphism $$\Phi^\oplus:K_0^\oplus(\mathcal R)\xrightarrow{\sim}\bigoplus_{\mu\in\C}K_0^\oplus(\mathcal {RC}^\bullet_\mu)$$ given by $$\Phi^\oplus([W]^\oplus)=\sum_{i=1}^p[W_{\mu_i}]^\oplus$$ where $W_{\mu_i}\in\mathcal {RC}_{\mu_i}^\bullet$ are the factors of the decomposition of the rational $\sld$-module $W$ provided by the minimal polynomial of its Casimir operator.  There is also a decomposition isomorphism $\Psi^\oplus\colon \Z[\widehat\Ind_{\mathcal{RC}^\bullet}(\mathcal{RC}^\bullet)]\xrightarrow{\sim}\bigoplus_{\mu\in\C} \Z[\widehat\Ind_{\mathcal{RC}^\bullet_\mu}(\mathcal{RC}^\bullet_\mu)]$.
\end{thm}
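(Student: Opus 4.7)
The plan is to deduce the decomposition isomorphism $\Phi^\oplus$ directly from the Hom-orthogonal splitting $\mathcal R = \bigoplus_{\mu\in\C} \mathcal{RC}^\bullet_\mu$ of Theorem~\ref{thm:finite+decomp}, combined with the Krull-Schmidt-Remak description of $K_0^\oplus(\mathcal R)$ from Theorem~\ref{thm:additive-Grothendieck-group}. First I would verify that each subcategory $\mathcal{RC}^\bullet_\mu$ is essentially small (inherited from $\mathcal R$ as a full subcategory, by Theorem~\ref{thm:rational-essentially-small}) and Krull-Schmidt; the latter follows because Hom-orthogonality (Proposition~\ref{prop:orthogonality-generalized-casimirs}) forces every summand of a module $W\in\mathcal{RC}^\bullet_\mu$ to again be a generalized Casimir module of level $\mu$, so the Remak decomposition of $W$ inside $\mathcal R$ already lives entirely in $\mathcal{RC}^\bullet_\mu$. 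Consequently Theorem~\ref{thm:additive-Grothendieck-group} applies to each $\mathcal{RC}^\bullet_\mu$ separately and yields $K_0^\oplus(\mathcal{RC}^\bullet_\mu) \simeq \Z[\widehat\Ind_{\mathcal{RC}^\bullet_\mu}(\mathcal{RC}^\bullet_\mu)]$.

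Next I would construct $\Phi^\oplus$ at the level of isomorphism classes via the canonical decomposition: Theorem~\ref{thm:finite+decomp} associates to each $W\in\mathcal R$ the factors $W_{\mu_i}\in\mathcal{RC}^\bullet_{\mu_i}$ dictated by the roots of the minimal polynomial of $C_\rho$, and the rule $[W]^\oplus \mapsto \sum_i [W_{\mu_i}]^\oplus$ is well-defined on $\Iso(\mathcal R)$. I would then show it descends to $K_0^\oplus(\mathcal R)$ by checking additivity on split short exact sequences: if $W \simeq W' \oplus W''$ in $\mathcal R$, then Hom-orthogonality together with the uniqueness of the generalized eigenspace decomposition of $C_\rho$ forces $W_\mu \simeq W'_\mu \oplus W''_\mu$ for every $\mu\in\C$, so the level components add. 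The inverse map is assembled from the embeddings $\mathcal{RC}^\bullet_\mu \hookrightarrow \mathcal R$, and the two composites reduce to the identity by inspection on generators (when $W$ already lies in a single $\mathcal{RC}^\bullet_\mu$ the canonical decomposition is trivial).

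For the second isomorphism $\Psi^\oplus$, I would use that by Proposition~\ref{prop:indecomposable-generalized-Casimir} every indecomposable in $\mathcal R$ is a generalized Casimir module, and by Proposition~\ref{prop:orthogonality-generalized-casimirs} it belongs to a unique $\mathcal{RC}^\bullet_\mu$. This produces a disjoint union $\widehat\Ind_{\mathcal{RC}^\bullet}(\mathcal{RC}^\bullet) = \coprod_{\mu\in\C} \widehat\Ind_{\mathcal{RC}^\bullet_\mu}(\mathcal{RC}^\bullet_\mu)$, and hence a splitting of the associated free abelian groups; together with the Krull-Schmidt-Remak isomorphism of Theorem~\ref{thm:additive-Grothendieck-group} applied in both directions, this commutes with $\Phi^\oplus$ and defines $\Psi^\oplus$ as an isomorphism.

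The main obstacle, modest in this setting, is confirming that the Hom-orthogonal decomposition of $\mathcal R$ transfers cleanly to additive Grothendieck groups while respecting split exact sequences. Once Hom-orthogonality is in hand it kills every cross-level morphism, so any isomorphism $W \simeq W' \oplus W''$ restricts to isomorphisms of level components, which forces additivity of the assignment $W \mapsto \sum_i [W_{\mu_i}]^\oplus$. The genuinely deep ingredients, namely the $\C$-rationality of the minimal polynomial of $C_\rho$ (Theorem~\ref{t:caracCasC}) and the Krull-Schmidt property of $\mathcal R$ (Theorem~\ref{thm:Jordan-Holder-Krull-Schmidt}), are already at our disposal, so what remains is essentially a diagrammatic verification.
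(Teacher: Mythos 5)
Your proposal is correct and is essentially the same approach as the paper's. The paper simply cites the general fact (from Weibel) that $K_0^\oplus$ is compatible with direct sums of abelian categories and applies it to the identification $\phi\colon \Ra\xrightarrow{\sim}\bigoplus_{\mu\in\C}\RC^\bullet_\mu$ from Theorem~\ref{thm:finite+decomp}; you unpack that citation into an explicit hands-on verification (well-definedness of the level-component map on isomorphism classes, additivity on split exact sequences via Hom-orthogonality, and the inverse assembled from the embeddings), which is exactly what the quoted general result amounts to here.
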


\begin{proof}
The first decomposition is obtained by taking into account  that $K_0^\oplus$ is compatible with direct sums of abelian categories, see \cite[pag. 125]{Weibel}, and the identification of categories $\phi\colon \Ra\xrightarrow{\sim}\bigoplus_{\mu\in\C}\RC^\bullet_\mu$ proved in Theorem \ref{thm:finite+decomp}. We define $\Phi^\oplus:=K_0(\phi)$. For the second decomposition, notice that $\phi$ induces also an identification $\psi\colon \Ind_{\mathcal{RC}^\bullet}(\mathcal{RC}^\bullet)\xrightarrow{\sim} \coprod_{\mu\in\C} \Ind_{\mathcal{RC}^\bullet_\mu}(\mathcal{RC}^\bullet_\mu)$. We put $\Psi^\oplus:=K_0(\psi)$ and consider the identification used in the proof of  Theorem \ref{thm:additive-Grothendieck-group}. 
\end{proof}

Following the proof of Theorem \ref{thm:additive-Grothendieck-group} we get now:
\begin{thm}[Krull-Schmidt-Remak devissage of level $\mu$]\label{thm:KRS-devissage-of-level-mu}
For every $\mu\in\C$ the category $\Ind_{\mathcal{RC}^\bullet_\mu}({\mathcal{RC}^\bullet_\mu})$ is a $K^\oplus_0$-devissage subcategory for the abelian category $\mathcal{RC}^\bullet_\mu$. Moreover, the Krull-Schmidt-Remak devissage isomorphism of level $\mu$   $$\chi_{_{KSR},\mu}\colon K_0^\oplus(\mathcal{RC}^\bullet_\mu)\xrightarrow{\sim}\Z[\widehat\Ind_{\mathcal{RC}^\bullet_\mu}(\mathcal{RC}^\bullet_\mu)]$$ is given  by $$\chi_{_{KSR},\mu}([W]^\oplus)=\sum_{i=1}^m[W_i]^\oplus$$ with $W_i$ the factors of a Remak decomposition $W=\oplus_{i=1}^m W_i$ of a module $W\in\RC^\bullet_\mu$.
\end{thm}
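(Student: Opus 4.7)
The plan is to mirror the proof of Theorem \ref{thm:additive-Grothendieck-group}, applied to the subcategory $\mathcal{RC}^\bullet_\mu$ instead of to $\mathcal{R}$. The essential point is that, thanks to the Hom-orthogonal decomposition $\mathcal{R}=\bigoplus_{\mu\in\C}\mathcal{RC}^\bullet_\mu$ established in Theorem \ref{thm:finite+decomp}, the Krull-Schmidt property of $\mathcal{R}$ descends to each summand. More precisely, I would first note that $\mathcal{RC}^\bullet_\mu$ is a full abelian subcategory of $\mathcal{R}$ (this is the corollary to Theorem \ref{thm:finite+decomp}), hence a finite-length abelian category since $\mathcal{R}$ is, and therefore by the same citation \cite[Lemma 5.1 and Theorem 5.5]{Krause} used to prove Theorem \ref{thm:Jordan-Holder-Krull-Schmidt}, $\mathcal{RC}^\bullet_\mu$ is Krull-Schmidt. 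Alternatively, this can be argued directly: if $W\in \mathcal{RC}^\bullet_\mu$ and $W=\bigoplus_{i=1}^m W_i$ is its Remak decomposition in $\mathcal{R}$, then each indecomposable factor $W_i$ lies in $\mathcal{RC}^\bullet_\mu$ by Proposition \ref{prop:indecomposable-generalized-Casimir} and the fact that the Casimir operator of $W$ has minimal polynomial a power of $t-\mu$.

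Next I would apply \cite[Theorem 2.1, pag. 76]{Swan} to the Krull-Schmidt category $\mathcal{RC}^\bullet_\mu$, yielding that $\Ind_{\mathcal{RC}^\bullet_\mu}(\mathcal{RC}^\bullet_\mu)$ is a $K_0^\oplus$-devissage subcategory of $\mathcal{RC}^\bullet_\mu$ and that $K_0^\oplus(\mathcal{RC}^\bullet_\mu)$ is the free abelian group on $\widehat\Ind_{\mathcal{RC}^\bullet_\mu}(\mathcal{RC}^\bullet_\mu)$. The notion of $\mathcal{RC}^\bullet_\mu$-indecomposability coincides with $\mathcal{R}$-indecomposability for objects of $\mathcal{RC}^\bullet_\mu$, as is immediate from the Hom-orthogonal decomposition: any decomposition in $\mathcal{R}$ respects levels. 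As in the proof of Theorem \ref{thm:additive-Grothendieck-group}, the split-exact sequences in the indecomposable subcategory are trivial, and so the formula for the devissage isomorphism is forced.

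The explicit expression $\chi_{_{KSR},\mu}([W]^\oplus)=\sum_{i=1}^m [W_i]^\oplus$ then follows: given a Remak decomposition $W=\bigoplus_{i=1}^m W_i$ in $\mathcal{RC}^\bullet_\mu$, the associated filtration $0\subset W_1\subset W_1\oplus W_2\subset\cdots\subset W$ is a $\Ind_{\mathcal{RC}^\bullet_\mu}(\mathcal{RC}^\bullet_\mu)$-filtration whose successive quotients are exactly the indecomposable factors $W_i$. The uniqueness (up to permutation) of this decomposition, guaranteed by Krull-Schmidt, ensures that the formula is well-defined.

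The main obstacle, if any, is a pure bookkeeping one: one must check that the two potential meanings of ``indecomposable'' in the subcategory $\mathcal{RC}^\bullet_\mu$ (as an object of $\mathcal{R}$ or as an object of $\mathcal{RC}^\bullet_\mu$) agree, and that the Krull-Schmidt property genuinely descends. Both points follow painlessly from the $\Hom$-orthogonality of the decomposition $\mathcal{R}=\bigoplus_{\mu\in\C}\mathcal{RC}^\bullet_\mu$, so no serious obstacle arises; the statement is essentially the level-$\mu$ restriction of Theorem \ref{thm:additive-Grothendieck-group}, consistent with the decomposition of Theorem \ref{thm:descomposición-Grothendieck-aditivo} applied summand-by-summand.
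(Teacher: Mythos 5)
Your proposal is correct and follows essentially the same route as the paper, which simply states that Theorem \ref{thm:KRS-devissage-of-level-mu} is obtained by repeating the proof of Theorem \ref{thm:additive-Grothendieck-group} for the summand $\mathcal{RC}^\bullet_\mu$. You have correctly identified the only point that needs checking — that indecomposability relative to $\mathcal{RC}^\bullet_\mu$ agrees with indecomposability relative to $\mathcal R$, and that the Krull-Schmidt property descends — and resolved it via the $\Hom$-orthogonal decomposition of Theorem \ref{thm:finite+decomp}, exactly as the paper intends.
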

As a consequence of Theorems \ref{thm:additive-Grothendieck-group},  \ref{thm:descomposición-Grothendieck-aditivo}  and \ref{thm:KRS-devissage-of-level-mu} we get the next result.
\begin{cor}[compatibility]\label{cor:compatibility-aditivo}
There is an identification $$\chi_{_{KRS}}=\bigoplus_{\mu\in\C}\chi_{_{KRS},\mu}\colon \bigoplus_{\mu\in\C} K_0^\oplus(\mathcal {RC}^\bullet_\mu)\xrightarrow{\sim}\bigoplus_{\mu\in\C} \Z[\widehat\Ind_{\mathcal{RC}^\bullet_\mu}(\mathcal{RC}^\bullet_\mu)] .$$ \end{cor}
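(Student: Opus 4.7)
The plan is to verify commutativity of the square whose top row is $\chi_{_{KSR}}$, bottom row is $\bigoplus_{\mu}\chi_{_{KSR},\mu}$, left column is $\Phi^\oplus$ and right column is $\Psi^\oplus$, by tracking an arbitrary generator $[W]^\oplus\in K_0^\oplus(\mathcal R)$ through both paths. Since $\Phi^\oplus$ and $\Psi^\oplus$ are isomorphisms by Theorem \ref{thm:descomposición-Grothendieck-aditivo}, commutativity will identify $\chi_{_{KSR}}$ with $\bigoplus_\mu \chi_{_{KSR},\mu}$ as claimed.

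First, I would apply Theorem \ref{thm:descomposición-Grothendieck-aditivo} to write $W\cong \bigoplus_{i=1}^p W_{\mu_i}$ according to the minimal polynomial of the Casimir operator $C_\rho$, with $W_{\mu_i}\in\mathcal{RC}^\bullet_{\mu_i}$ and the $\mu_i$ pairwise distinct. Next, by the Krull-Schmidt property (Theorem \ref{thm:Jordan-Holder-Krull-Schmidt}) each factor admits a Remak decomposition $W_{\mu_i}=\bigoplus_{j=1}^{m_i} W_{ij}$ with $W_{ij}\in\Ind_{\mathcal{RC}^\bullet_{\mu_i}}(\mathcal{RC}^\bullet_{\mu_i})$.

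The critical step is to note that the concatenated decomposition $W=\bigoplus_{i,j} W_{ij}$ is itself a Remak decomposition of $W$ in $\mathcal R$. This rests on Proposition \ref{prop:indecomposable-generalized-Casimir}, which identifies $\mathcal R$-indecomposables with $\mathcal{RC}^\bullet$-indecomposables, combined with the $\Hom$-orthogonality of the subcategories $\mathcal{RC}^\bullet_\mu$ for distinct $\mu$ provided by Proposition \ref{prop:orthogonality-generalized-casimirs}: orthogonality forces every $\mathcal R$-indecomposable summand to lie entirely in a single level, so each $W_{ij}$ is $\mathcal R$-indecomposable.

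With this in hand the diagram chase is routine. Going right then down, Theorem \ref{thm:additive-Grothendieck-group} gives $\chi_{_{KSR}}([W]^\oplus)=\sum_{i,j}[W_{ij}]^\oplus$, and $\Psi^\oplus$ then places each class $[W_{ij}]^\oplus$ into the $\mu_i$-summand of $\bigoplus_{\mu\in\C}\Z[\widehat\Ind_{\mathcal{RC}^\bullet_\mu}(\mathcal{RC}^\bullet_\mu)]$. Going down then right, $\Phi^\oplus([W]^\oplus)=\sum_i [W_{\mu_i}]^\oplus$, and applying $\chi_{_{KSR},\mu_i}$ (Theorem \ref{thm:KRS-devissage-of-level-mu}) to each summand yields $\sum_j [W_{ij}]^\oplus$ in the same $\mu_i$-component. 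The two outputs coincide, proving the identification. The principal obstacle is thus not the diagram chase but the concatenation step, and it is fully controlled by the $\Hom$-orthogonality of the level subcategories $\mathcal{RC}^\bullet_\mu$ together with the uniqueness built into Krull-Schmidt.
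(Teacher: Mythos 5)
Your proof is correct and takes essentially the same route as the paper, which obtains the corollary directly from Theorems \ref{thm:additive-Grothendieck-group}, \ref{thm:descomposición-Grothendieck-aditivo}, and \ref{thm:KRS-devissage-of-level-mu}; you have simply made explicit the underlying diagram chase and the key observation that the level-by-level Remak decompositions concatenate to a Remak decomposition of $W$ in $\mathcal R$ (because $\mathcal{RC}^\bullet_{\mu_i}$ is closed under direct summands inside $\mathcal R=\bigoplus_\mu\mathcal{RC}^\bullet_\mu$, so indecomposability in $\mathcal{RC}^\bullet_{\mu_i}$ agrees with indecomposability in $\mathcal R$).
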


From now on in this section devissage will mean $K_0$-devissage. For the general Grothedieck group, we have:

\begin{thm}[global Jordan-H\"older devissage]\label{t:Grothendieck-rational} The category $\Spl_\RC(\RC)$ is a devissage subcategory for the abelian category $\mathcal R$. Thus $K_0(\mathcal R)$ is isomorphic to the free abelian group on the isomorphism classes of $\mathcal R$-irreducible rational Casimir $\sld$-modules. Moreover, the Jordan-H\"older devissage isomorphism $$\chi_{_{JH}}\colon K_0(\mathcal R)\xrightarrow{\sim}\Z[\widehat{\Spl}_\RC(\RC)]$$ is given  by $$\chi_{_{JH}}([W])=\sum_{i=1}^m[W_i/W_{i-1}]$$ with $W_i/W_{i-1}$ the quotients of a composition series $W_\bullet=(W_i)_{i=0}^m$ of $W\in\mathcal R$. 
\end{thm}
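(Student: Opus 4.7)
The plan is to deduce this result directly from the generalized devissage theorem (Theorem \ref{thm:devissage-generalizado}) applied to the pair $(\mathcal B,\mathcal C)=(\mathcal R,\Spl_\RC(\RC))$, together with the characterization of $\mathcal R$-simple rational modules as Casimir modules already established in Proposition \ref{prop:R-irred-rational-is-Casimir}. For the devissage conditions to be legitimately applicable I will treat the zero rational module as tacitly adjoined to $\Spl_\RC(\RC)$; this is a harmless convention since the zero object contributes trivially to $K_0$ and plays no role in the statement.

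First, to verify condition (1) of Theorem \ref{thm:devissage-generalizado}, given $W\in\mathcal R$ I invoke Theorem \ref{thm:Jordan-Holder-Krull-Schmidt} to obtain a finite filtration $0=W_0\subsetneq W_1\subsetneq\cdots\subsetneq W_m=W$ by rational subobjects whose successive quotients $W_i/W_{i-1}$ are $\mathcal R$-simple; by Proposition \ref{prop:R-irred-rational-is-Casimir} these quotients are in fact rational Casimir, so they lie in $\Spl_\RC(\RC)$, yielding the desired $\Spl_\RC(\RC)$-filtration of $W$. To verify condition (2), I take $W_1\subseteq W_2\subseteq W_3$ in $\mathcal R$ with $W_3/W_1\in\Spl_\RC(\RC)$. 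The inclusion $W_2/W_1\hookrightarrow W_3/W_1$ is a monomorphism in $\mathcal R$, so $\mathcal R$-simplicity of $W_3/W_1$ forces $W_2/W_1$ to be either $0$ or isomorphic to $W_3/W_1$, and correspondingly $W_3/W_2$ is $W_3/W_1$ or $0$; in both situations both quotients belong to $\Spl_\RC(\RC)$ (with the convention above). Theorem \ref{thm:devissage-generalizado} now yields that $\Spl_\RC(\RC)$ is a $K_0$-devissage subcategory of $\mathcal R$, with the devissage isomorphism given by the stated formula.

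Finally, I must identify $K_0(\Spl_\RC(\RC))$ with the free abelian group $\Z[\widehat{\Spl}_\RC(\RC)]$. The point is that any short exact sequence $0\to A\to B\to C\to 0$ all of whose terms are $\mathcal R$-simple (or zero) is necessarily split and degenerate: simplicity of $B$ forces $A=0$ (so $B\simeq C$) or $A=B$ (so $C=0$). Hence the relations defining $K_0(\Spl_\RC(\RC))$ are all of the trivial form $[A]=[A]$, and $K_0(\Spl_\RC(\RC))$ is freely generated by the isomorphism classes of $\mathcal R$-irreducible rational Casimir modules. Composing with the devissage isomorphism yields $\chi_{_{JH}}\colon K_0(\mathcal R)\xrightarrow{\sim}\Z[\widehat{\Spl}_\RC(\RC)]$ with the claimed formula; its well-definedness (independence of composition series) is exactly the content of the Jordan-Hölder/Schreier argument used inside the proof of Theorem \ref{thm:devissage-generalizado}. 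The only real obstacle is the bookkeeping around the zero object in condition (2), which is dispatched by the convention above or by observing that the insertion of a repeated term in a filtration produces the trivial short exact sequence $0\to 0\to S\to S\to 0$ whose associated $K_0$-relation is automatic.
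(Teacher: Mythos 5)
Your proof is correct, and it takes a different (and arguably more satisfying) route than the paper's own proof. The paper proves this theorem by citing Heller's classical devissage theorem directly from Swan (Theorem 3.1, Corollary 3.2), pointing to the fact that $\mathcal R$ has been shown to be a finite-length abelian category in Theorem \ref{thm:Jordan-Holder-Krull-Schmidt}, and then invoking that there are no nontrivial short exact sequences among the $\mathcal R$-simple objects. You instead deduce the result entirely from the paper's own Theorem \ref{thm:devissage-generalizado}, by checking conditions (1) and (2) explicitly for $\mathcal B=\mathcal R$ and $\mathcal C=\Spl_\RC(\RC)$. This is a genuinely self-contained alternative, and it confirms that the paper's generalized devissage theorem really does subsume the classical Heller/Swan case, which is a nice sanity check that the paper itself never performs. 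Your handling of the zero-object technicality is exactly the right thing to worry about: Heller's classical hypothesis is closure under subobjects and quotients, which the category of simple objects satisfies only after $0$ is adjoined, and condition (2) of Theorem \ref{thm:devissage-generalizado} similarly produces degenerate quotients on trivial insertions. Dispatching this by either adjoining $0$ or noting that trivial insertions give the relation $[0]=0$ is standard and correct; the paper elides this point by leaning on Swan. One very small caveat: in your last paragraph you write that short exact sequences with all terms $\mathcal R$-simple ``or zero'' are ``split and degenerate''; more precisely, if all three terms are required to be nonzero simple there are \emph{no} such exact sequences at all (since a nonzero mono between simples is an iso, forcing the cokernel to be $0$), and with $0$ allowed the only sequences are the trivial ones $0\to 0\to S\to S\to 0$ and $0\to S\to S\to 0\to 0$. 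Either way the conclusion that $K_0(\Spl_\RC(\RC))\cong\Z[\widehat{\Spl}_\RC(\RC)]$ is correct, so this is a matter of phrasing rather than a gap.
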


\begin{proof}
We have proved in Theorem \ref{thm:Jordan-Holder-Krull-Schmidt} that $\mathcal R$ is a finite length category. In this case, Heller's devissage theorem \cite[Theorem 3.1,  pag. 92]{Swan} does apply to $\Spl_\RC(\RC)$ and $\Ra$, see \cite[Corollary 3.2, pag. 93]{Swan}.  The result follows since on $\Spl_\Ra(\mathcal R)$ there are no nontrivial short exact sequences.
\end{proof}

Now we reflect the structure of $\mathcal R$ on the Grothendieck group. We begin with a natural decomposition that, as Theorem  \ref{thm:descomposición-Grothendieck-aditivo}, is a consequence of Theorem  \ref{thm:finite+decomp}.   \begin{thm}[decomposition]\label{thm:decomposition-Grothendieck}
There is a decomposition isomorphism$$\Phi\colon K_0(\mathcal R)\xrightarrow{\sim}\bigoplus_{\mu\in\C}K_0(\mathcal {RC}^\bullet_\mu)$$   given by $$\Phi([W])=\sum_{i=1}^p[W_{\mu_i}]$$ where $W_{\mu_i}\in\mathcal {RC}_{\mu_i}^\bullet$ are the factors of the decomposition of the rational $\sld$-module $W$ provided by the minimal polynomial of its Casimir operator. 
There is also a decomposition isomorphism $\Psi\colon \Z[\widehat\Spl_{\mathcal{RC}}(\mathcal{RC})]\xrightarrow{\sim}\bigoplus_{\mu\in\C} \Z[\widehat\Spl_{\mathcal{RC}_\mu}(\mathcal{RC}_\mu)]$.
\end{thm}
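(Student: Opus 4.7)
The plan is to deduce this statement from Theorem \ref{thm:finite+decomp} by exactly the same argument used for the additive version in Theorem \ref{thm:descomposición-Grothendieck-aditivo}, now invoking the compatibility of the ordinary Grothendieck group $K_0$ (rather than $K_0^\oplus$) with $\Hom$-orthogonal direct sums of abelian categories.

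First, Theorem \ref{thm:finite+decomp} yields an equivalence of abelian categories $\phi\colon \mathcal R\xrightarrow{\sim}\bigoplus_{\mu\in\C}\mathcal{RC}^\bullet_\mu$, under which a rational module $W$ is sent to the tuple of its generalized Casimir components $(W_\mu)_{\mu\in\C}$ obtained from the splitting $W=\bigoplus_{i=1}^p\ker(C_\rho-\mu_i)^{n_i}$ determined by the minimal polynomial of $C_\rho$. Because the decomposition is $\Hom$-orthogonal, every short exact sequence in $\mathcal R$ splits componentwise as a family of short exact sequences in the $\mathcal{RC}^\bullet_\mu$, and conversely; therefore, by the compatibility of $K_0$ with direct sums of abelian categories (see \cite[7.1.6, pag. 142]{Weibel}), one obtains a canonical isomorphism
\[
\Phi:=K_0(\phi)\colon K_0(\mathcal R)\xrightarrow{\sim}\bigoplus_{\mu\in\C}K_0(\mathcal{RC}^\bullet_\mu).
\]
The explicit formula $\Phi([W])=\sum_{i=1}^p[W_{\mu_i}]$ is immediate from the definition of $\phi$ on objects and the fact that any element of $K_0(\mathcal R)$ is represented as the class of an actual object.

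For the second isomorphism, the equivalence $\phi$ preserves subobjects, quotients and isomorphism classes, so it restricts to a bijection on $\mathcal{RC}$-simple objects compatible with the identification $\mathcal{RC}=\coprod_{\mu\in\C}\mathcal{RC}_\mu$ of Proposition \ref{prop:homs-casimir} and with Proposition \ref{prop:R-irred-rational-is-Casimir}; this produces a bijection
\[
\widehat\Spl_{\mathcal{RC}}(\mathcal{RC})\xrightarrow{\sim}\coprod_{\mu\in\C}\widehat\Spl_{\mathcal{RC}_\mu}(\mathcal{RC}_\mu).
\]
Taking the associated free abelian group and using that $\Z[A\coprod B]\cong\Z[A]\oplus\Z[B]$ yields the decomposition isomorphism
\[
\Psi\colon \Z[\widehat\Spl_{\mathcal{RC}}(\mathcal{RC})]\xrightarrow{\sim}\bigoplus_{\mu\in\C}\Z[\widehat\Spl_{\mathcal{RC}_\mu}(\mathcal{RC}_\mu)].
\]

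No serious obstacle is expected, since the substance is already contained in Theorem \ref{thm:finite+decomp}; the only point to be careful about is that the direct sum on the right-hand side is the honest coproduct of abelian groups (supported on finitely many $\mu$), which matches the fact that any $W\in\mathcal R$ has a Casimir decomposition involving only finitely many levels $\mu_1,\dots,\mu_p$, so that $\Phi([W])$ lives in the coproduct and not merely in the direct product.
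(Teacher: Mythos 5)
Your argument is correct and matches the approach the paper itself indicates: just as in Theorem \ref{thm:descomposición-Grothendieck-aditivo}, the paper deduces the result from the categorical decomposition $\mathcal R\simeq\bigoplus_{\mu\in\C}\mathcal{RC}^\bullet_\mu$ of Theorem \ref{thm:finite+decomp} together with the compatibility of $K_0$ with direct sums, and the induced bijection on $\mathcal{RC}$-simple objects for $\Psi$. Your closing remark about the image landing in the coproduct (finitely many nonzero components) is a sensible sanity check, though not something the paper spells out.
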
 

Moreover, we have:

\begin{thm}[canonical filtration devissage of level $\mu$]\label{thm:first-devissage} The category $\mathcal{RC}_\mu$ is a devissage subcategory for the abelian category $\mathcal{RC}^\bullet_\mu$ and the canonical filtration devissage isomorphism of level $\mu$ $$\chi_{_{CF},\mu}\colon K_0(\RC^\bullet_\mu)\xrightarrow{\sim}K_0(\RC_\mu)$$ is given by $$\chi_{_{CF},\mu}([W])=\sum_{i=1}^l [W_i/W_{i-1}]$$ where $W_i/W_{i-1}\in\RC_\mu$ are the quotients of the canonical filtration $W_\bullet=(W_i)_{i=0}^l$ of a generalized Casimir rational $\sl(2)$-module $W$ of level $\mu$.
\end{thm}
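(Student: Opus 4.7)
The plan is to obtain this result as a direct application of the generalized devissage theorem (Theorem \ref{thm:devissage-generalizado}), taking $\mathcal{A}=\mathcal{R}$ (which is abelian by Proposition \ref{p:abelian}), $\mathcal{B}=\mathcal{RC}^\bullet_\mu$, and $\mathcal{C}=\mathcal{RC}_\mu$. What is needed is to verify the two hypotheses of that theorem and then observe that its formula for the devissage isomorphism specializes to the claimed expression when one uses the canonical filtration of Proposition \ref{prop:filtration}.

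For the first hypothesis, that every object of $\mathcal{RC}^\bullet_\mu$ admits an $\mathcal{RC}_\mu$-filtration, I would invoke Proposition \ref{prop:filtration} directly. Given $W\in\mathcal{RC}^\bullet_\mu$ of exponent $n$, the canonical filtration
\[
W^0=(0)\subsetneq W^1\subsetneq\cdots\subsetneq W^n=W,\qquad W^i=\ker(C_\rho-\mu)^i,
\]
has successive quotients $W^i/W^{i-1}$ that are Casimir of level $\mu$; since the $W^i$ are rational by the last part of Proposition \ref{prop:filtration}, each $W^i/W^{i-1}$ actually lies in $\mathcal{RC}_\mu$, giving the required $\mathcal{C}$-filtration.

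The second hypothesis requires that whenever $B_1\subseteq B_2\subseteq B_3$ are objects of $\mathcal{RC}^\bullet_\mu$ with $B_3/B_1\in\mathcal{RC}_\mu$, both intermediate quotients $B_2/B_1$ and $B_3/B_2$ belong to $\mathcal{RC}_\mu$. This is the step I expect to be the most routine, and not truly an obstacle: the condition $B_3/B_1\in\mathcal{RC}_\mu$ means that $C_\rho-\mu$ vanishes on $B_3/B_1$. Since $B_2/B_1$ is a (rational) submodule of $B_3/B_1$ and $B_3/B_2$ is a (rational) quotient of $B_3/B_1$ in $\mathcal{R}$, and since the Casimir operator is $\sl(2)$-equivariant and hence descends to any submodule or quotient, it vanishes on both $B_2/B_1$ and $B_3/B_2$, placing them in $\mathcal{RC}_\mu$ as required.

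Once the two hypotheses are in place, Theorem \ref{thm:devissage-generalizado} yields that $\mathcal{RC}_\mu$ is a $K_0$-devissage subcategory of $\mathcal{RC}^\bullet_\mu$, and the inverse of $K_0(i)\colon K_0(\mathcal{RC}_\mu)\to K_0(\mathcal{RC}^\bullet_\mu)$ sends $[W]$ to $\sum_{i=1}^m [B_i/B_{i-1}]$ for any chosen $\mathcal{RC}_\mu$-filtration; by Schreier's refinement (invoked inside the proof of Theorem \ref{thm:devissage-generalizado}) this value is independent of the filtration, so applying it to the canonical filtration of Proposition \ref{prop:filtration} produces exactly the stated formula $\chi_{_{CF},\mu}([W])=\sum_{i=1}^{l}[W_i/W_{i-1}]$, finishing the proof.
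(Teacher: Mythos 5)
Your proposal is correct and follows essentially the same route as the paper: both verify the two hypotheses of Theorem \ref{thm:devissage-generalizado} using the canonical filtration of Proposition \ref{prop:filtration} for hypothesis (1) and the fact that $C_\rho-\mu=0$ passes to subquotients for hypothesis (2). The only cosmetic difference is your choice of ambient abelian category $\mathcal A=\mathcal R$ versus the paper's $\mathcal A=\mathcal B=\mathcal{RC}^\bullet_\mu$, which is immaterial since $\mathcal{RC}^\bullet_\mu$ is a full abelian subcategory closed under extensions in $\mathcal R$, so the resulting $K_0$-groups agree.
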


\begin{proof} 

Let us show that $\RC_\mu$ and $\RC_\mu^\bullet$ satisfy the conditions of Theorem \ref{thm:devissage-generalizado} with $\mathcal C=\RC_\mu$ and $\mathcal B=\mathcal A=\RC_\mu^\bullet$. This choice is possible since we know that $\RC_\mu^\bullet$ is an abelian category. The existence of $\RC_\mu$-filtrations for every object of $\RC_\mu^\bullet$ is guaranteed by the existence of the canonical filtration given  in Proposition \ref{prop:filtration}. Let us assume now that we have on $\RC^\bullet_\mu$  a chain $V_1\subseteq V_2\subseteq V_3$ such that $V_3/V_1\in \RC_\mu$. From the short exact sequence in $\RC_\mu^\bullet$ $$0\to V_2/V_1\to V_3/V_1\to V_3/V_2\to 0$$ it follows that all of its terms belong to $\RC_\mu$.
\end{proof}
Proceeding as in Theorem \ref{t:Grothendieck-rational} we get: 
\begin{thm}[Jordan-H\"older devissage of level $\mu$]\label{thm:second-devissage} The category $\Spl_{\RC_\mu}(\RC_\mu)$ is a devissage subcategory for $\RC_\mu$. Hence $K_0(\RC_\mu)$ is isomorphic to the free abelian group on the isomorphism classes of $\mathcal R$-irreducible  Casimir  rational $\sld$-modules of level $\mu$. Moreover, the Jordan-H\"older devissage isomorphism  of level $\mu$ $$\chi_{_{JH},\mu}\colon K_0(\RC_\mu)\xrightarrow{\sim}\Z[\widehat{\Spl}_{\RC_\mu}(\RC_\mu)]$$ is given  by $$\chi_{_{JH},\mu}([W])=\sum_{i=1}^m[W_i/W_{i-1}]$$ with $W_i/W_{i-1}$ the quotients of a composition series $W_\bullet=(W_i)_{i=0}^m$ of $W\in\RC_\mu$. 
\end{thm}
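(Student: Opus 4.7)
The plan is to follow the same strategy used in Theorem~\ref{t:Grothendieck-rational}, now applied to the smaller category $\RC_\mu$ instead of $\Ra$. The proof proceeds in three steps: establish that $\RC_\mu$ is a finite-length abelian category, apply Heller's devissage theorem to the inclusion $\Spl_{\RC_\mu}(\RC_\mu)\hookrightarrow \RC_\mu$, and read off the formula for $\chi_{_{JH},\mu}$ from the Jordan–H\"older theorem.

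First I would verify that $\RC_\mu$ is an abelian category of finite length. Abelianness is already known from Theorem~\ref{thm:isom-rational-N-mod}, which exhibits $\RC_\mu$ as equivalent to the abelian category $\mathbb B\text{-}\mathrm{Mod}_\mathrm{fd}$; alternatively, one observes directly that $\RC_\mu$ is closed under subobjects and quotients inside $\Ra$, since if $W\in\RC_\mu$ and $W'\subseteq W$ is a rational subrepresentation, then $C_{W'}=C_W{\restriction_{W'}}=\mu\Id_{W'}$, and similarly for $W/W'$. Hence every kernel or cokernel computed in $\Ra$ already lies in $\RC_\mu$. Because any strictly increasing chain of subobjects of $W\in\RC_\mu$ inside $\RC_\mu$ is also such a chain inside $\Ra$, the finite-length property of $\Ra$ given by Theorem~\ref{thm:Jordan-Holder-Krull-Schmidt} descends to $\RC_\mu$. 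In particular, the Jordan–H\"older theorem holds in $\RC_\mu$ with simple factors belonging to $\Spl_{\RC_\mu}(\RC_\mu)$, which by Proposition~\ref{prop:R-irred-rational-is-Casimir} coincides with the class of $\Ra$-irreducible rational Casimir modules of level $\mu$.

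Next I would invoke Heller's devissage theorem \cite[Theorem~3.1, pag.~92]{Swan}, or more precisely its corollary for finite-length abelian categories \cite[Corollary~3.2, pag.~93]{Swan}, applied to the pair $\Spl_{\RC_\mu}(\RC_\mu)\subset \RC_\mu$. The hypotheses are satisfied: $\RC_\mu$ is abelian of finite length by the previous paragraph; the subcategory $\Spl_{\RC_\mu}(\RC_\mu)$ is trivially closed under subobjects and quotients in $\RC_\mu$ because an $\RC_\mu$-simple module admits no proper nonzero $\RC_\mu$-subobjects; and every object of $\RC_\mu$ admits a composition series with $\RC_\mu$-simple successive quotients thanks to the Jordan–H\"older theorem. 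The theorem therefore asserts that the induced map $K_0(\Spl_{\RC_\mu}(\RC_\mu))\to K_0(\RC_\mu)$ is an isomorphism, so $\Spl_{\RC_\mu}(\RC_\mu)$ is a devissage subcategory for $\RC_\mu$.

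Finally, I would conclude by identifying $K_0(\Spl_{\RC_\mu}(\RC_\mu))$ with $\Z[\widehat{\Spl}_{\RC_\mu}(\RC_\mu)]$: on the full subcategory of $\RC_\mu$-simples there are no nontrivial short exact sequences, so the subgroup of relations is trivial. The explicit formula for $\chi_{_{JH},\mu}$ then reads off from the construction of the devissage isomorphism (cf.\ Theorem~\ref{thm:devissage-generalizado}): given any composition series $W_\bullet=(W_i)_{i=0}^m$ of $W\in\RC_\mu$, one has $\chi_{_{JH},\mu}([W])=\sum_{i=1}^m [W_i/W_{i-1}]$, and the Jordan–H\"older theorem guarantees that this expression is independent of the chosen series up to permutation, so the map is well defined. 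I do not anticipate any genuine obstacle here; the only point requiring care is checking that finite length and abelianness are correctly transferred from $\Ra$ to $\RC_\mu$, which the closure of $\RC_\mu$ under subobjects and quotients immediately handles.
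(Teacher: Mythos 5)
Your proposal is correct and follows essentially the same route the paper takes: the paper's proof for this level-$\mu$ statement is literally the phrase ``proceeding as in Theorem~\ref{t:Grothendieck-rational}'', which invokes finite length, Heller's devissage theorem \cite[Theorem~3.1, Corollary~3.2]{Swan}, and the absence of nontrivial short exact sequences among simples. You simply spell out the preliminary step the paper leaves implicit — that $\RC_\mu$ inherits abelianness and finite length from $\Ra$ because it is closed under subobjects and quotients — which is a sound and useful clarification, not a departure from the intended argument.
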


As a consequence of Theorems \ref{t:Grothendieck-rational}, \ref{thm:decomposition-Grothendieck}, \ref{thm:first-devissage} and \ref{thm:second-devissage} we get the following result.

\begin{cor}[compatibility]\label{cor:compatibility}
There is an identification $$\chi_{_{JH}}=\bigoplus_{\mu\in\C}\chi_{_{JH},\mu}\circ\chi_{_{CF},\mu}\colon \bigoplus_{\mu\in\C} K_0(\mathcal {RC}^\bullet_\mu)\xrightarrow{\sim}\bigoplus_{\mu\in\C} \Z[\widehat\Spl_{\mathcal{RC}_\mu}(\mathcal{RC}_\mu)] .$$ \end{cor}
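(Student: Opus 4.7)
The plan is to check that the two sides of the claimed identification arise as the two ways of traversing a commutative square of isomorphisms, and that they agree on generators. Concretely, I consider the diagram
\[
\xymatrix{
K_0(\mathcal R) \ar[r]^-{\chi_{_{JH}}} \ar[d]_-{\Phi} & \Z[\widehat{\Spl}_{\mathcal{RC}}(\mathcal{RC})] \ar[d]^-{\Psi} \\
\bigoplus_{\mu\in\C} K_0(\mathcal{RC}^\bullet_\mu) \ar[r]^-{\oplus_\mu\, \chi_{_{JH},\mu}\circ\chi_{_{CF},\mu}} & \bigoplus_{\mu\in\C} \Z[\widehat{\Spl}_{\mathcal{RC}_\mu}(\mathcal{RC}_\mu)]
}
\]
where the top row is Theorem~\ref{t:Grothendieck-rational}, the left column is Theorem~\ref{thm:decomposition-Grothendieck}, the right column is the second decomposition isomorphism of Theorem~\ref{thm:decomposition-Grothendieck}, and the bottom row is the direct sum over $\mu\in\C$ of the composition of Theorems~\ref{thm:first-devissage} and~\ref{thm:second-devissage}. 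All four maps are known to be group isomorphisms, so it suffices to prove the diagram commutes on the generators $[W]$ with $W\in\mathcal R$.

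First I would take a rational module $(W,\rho)$ and invoke Theorem~\ref{thm:finite+decomp} to write $W=\bigoplus_{i=1}^p W_{\mu_i}$ with $W_{\mu_i}\in\mathcal{RC}^\bullet_{\mu_i}$ and distinct $\mu_i$, so that $\Phi([W])=\sum_i [W_{\mu_i}]$. Next, for each summand I would apply Proposition~\ref{prop:filtration} to obtain the canonical filtration $0=W_{\mu_i}^0\subsetneq W_{\mu_i}^1\subsetneq\cdots\subsetneq W_{\mu_i}^{n_i}=W_{\mu_i}$ whose successive quotients $Q_i^j:=W_{\mu_i}^j/W_{\mu_i}^{j-1}$ lie in $\mathcal{RC}_{\mu_i}$; hence $\chi_{_{CF},\mu_i}([W_{\mu_i}])=\sum_j [Q_i^j]$. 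Refining each $Q_i^j$ by a Jordan--H\"older composition series in $\mathcal{RC}_{\mu_i}$, which exists because $\mathcal R$ (and hence $\mathcal{RC}^\bullet_{\mu_i}$, $\mathcal{RC}_{\mu_i}$) is finite length by Theorem~\ref{thm:Jordan-Holder-Krull-Schmidt}, I obtain $\chi_{_{JH},\mu_i}([Q_i^j])=\sum_k [S_{i,j,k}]$ with $S_{i,j,k}\in\Spl_{\mathcal{RC}_{\mu_i}}(\mathcal{RC}_{\mu_i})$. Thus going $\Phi$ first and then $\oplus_\mu\chi_{_{JH},\mu}\circ\chi_{_{CF},\mu}$ yields $\sum_{i,j,k}[S_{i,j,k}]$, with the class $[S_{i,j,k}]$ placed in the $\mu_i$-component.

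On the other hand, concatenating the three layers of filtrations produces a single filtration of $W$ by rational submodules whose successive quotients are precisely the $S_{i,j,k}$; by Proposition~\ref{prop:R-irred-rational-is-Casimir} these quotients lie in $\Spl_{\mathcal{RC}}(\mathcal{RC})$, so this is in particular a Jordan--H\"older composition series of $W$ in $\mathcal R$. Applying $\chi_{_{JH}}$ to $[W]$ and then $\Psi$ therefore produces the same element $\sum_{i,j,k}[S_{i,j,k}]$ with the summand $S_{i,j,k}$ sitting in the $\mu_i$-component because $\Psi$ is by construction the splitting induced by the $\Hom$-orthogonal decomposition $\mathcal{RC}=\coprod_\mu\mathcal{RC}_\mu$. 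The main subtlety to address is that $\chi_{_{JH}}([W])$ is a priori computed from \emph{any} composition series of $W$, while the construction above uses a very particular one; this is settled by the Jordan--H\"older theorem in the finite-length abelian category $\mathcal R$, which guarantees that the multiset of isomorphism classes of composition factors is independent of the chosen series. Commutativity of the diagram on every generator follows, and the corollary is proved.
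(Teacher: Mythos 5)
Your proof is correct and follows the same line as the paper, which obtains the corollary as a direct consequence of Theorems \ref{t:Grothendieck-rational}, \ref{thm:decomposition-Grothendieck}, \ref{thm:first-devissage} and \ref{thm:second-devissage} without writing out the commutativity check. You supply the explicit verification on generators — concatenating the Casimir-level decomposition, the canonical filtration, and a composition series of each Casimir quotient into a single composition series of $W$, then invoking Jordan–H\"older in $\mathcal R$ — which is exactly the routine unwinding the paper leaves implicit.
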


\begin{rem} 
Given two levels $\mu, \nu\in\C$, by Corollary \ref{cor:isom-Casimir-mu-nu} there is an exact functor $\Phi_{\mu\nu}\colon \RC_\mu\to\RC_\nu$ 
inducing an equivalence of categories. Therefore, we have an isomorphism $\Phi_{\mu\nu}\colon K_0(\RC_\mu)\xrightarrow{\sim}K_0(\RC_\nu)$. Hence, fixing $\mu_0\in\C$  we get an identification $K_0(\Ra)\xrightarrow{\sim}\oplus_{\mu\in\C} K_0(\RC_{\mu_0})\simeq \oplus_{\mu\in\C}\Z[\widehat\Spl(\RC_{\mu_0})]$. 
\end{rem}

\subsection{Grothendieck groups of the category of finite rank torsion free modules}
We have proved in Theorem \ref{thm:tffr-is-essentially-small} that the category of finite rank torsion free modules $\sld\Mod_\tffr$ is  essentially small. Therefore we can define its Grothendieck groups $K_0^\oplus(\sld\Mod_\tffr)$ and $K_0(\sld\Mod_\tffr)$.
The determination of these groups in the present case runs in parallel to the computations carried out for rational modules. For the sake of brevity, we just indicate how the theorems proved in Section \ref{sec:Grothendieck-rational-modules} can be obtained for finite rank torsion free modules, we use the notation introduced there. 

Theorems \ref{thm:additive-Grothendieck-group}, \ref{thm:descomposición-Grothendieck-aditivo}, \ref{thm:KRS-devissage-of-level-mu} and \ref{cor:compatibility-aditivo} regarding the additive Grothendieck group $K_0^\oplus$  remain valid replacing $\Ra$ by $\sld\Mod_\tffr$, $\RC^\bullet$ by $\mathcal C_\tffr^\bullet$ and $\RC^\bullet_\mu$ by $\mathcal C_{\mu,\tffr}^\bullet$. The proofs are the same, save that now in Theorem \ref{thm:additive-Grothendieck-group} we use Proposition \ref{prop:irred-tffr-is-Casimir} that gives $\Ind(\sld\Mod_\tffr)=\Ind(\mathcal C^\bullet_\tffr)$ and in Theorems \ref{thm:descomposición-Grothendieck-aditivo} we use the identification of categories $ \phi\colon \sl(2)\Mod_\mathrm{tffr}\,\xrightarrow{\sim}\, \bigoplus_{\mu\in \C} {\mathcal{C}}^\bullet_{\mu,\mathrm{tffr}}$ proved in Theorem \ref{thm:finite+decomp-tffr}.

For the general Grothendieck group $K_0$, Theorems \ref{t:Grothendieck-rational}, \ref{thm:decomposition-Grothendieck}, \ref{thm:first-devissage}, \ref{thm:second-devissage} and \ref{cor:compatibility} remain valid but the situation is more involved. Therefore we present now  the precise statements and indicate the changes needed in their proofs.

\begin{thm}[global Pure Jordan-H\"older devissage]\label{t:Grothendieck-rational-tffr} The category $\PSpl(\sld)$ is a devissage subcategory for the exact category $\sld\Mod_\tffr$. Thus $K_0(\sld\Mod_\tffr)$ is isomorphic to the free abelian group on the isomorphism classes of purely irreducible $\sld$-modules. Moreover, the Pure Jordan-H\"older devissage isomorphism $$\chi_{_{PJH}}\colon K_0(\sld\Mod_\tffr)\xrightarrow{\sim}\Z[\widehat{\PSpl}(\sld)]$$ is given  by $$\chi_{_{PJH}}([V])=\sum_{i=1}^m[V_i/V_{i-1}]$$ with $V_i/V_{i-1}$ the quotients of a pure composition series $V_\bullet$ of $V\in \sld\Mod_\tffr$. 
\end{thm}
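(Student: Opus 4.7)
The plan is to establish the devissage by constructing the isomorphism by hand, using the pure Jordan--H\"older theorem (Theorem \ref{thm:JH-pure-composition-series}) as the main input, rather than trying to appeal to Theorem \ref{thm:devissage-generalizado}. Condition $(2)$ of that generalized devissage result fails in the present exact setting: if $B_1\subseteq B'\subseteq B_3$ with $B_3/B_1$ purely irreducible, a proper essential intermediate submodule $B'/B_1\subset B_3/B_1$ yields a torsion quotient $B_3/B'$, which is not even an object of $\sld\Mod_\tffr$, let alone of $\PSpl(\sld)$. Hence refinements of filtrations must be controlled directly through purity.

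First I will define $\chi_{PJH}$ on representatives by $[V]\mapsto \sum_{i=1}^m[V_i/V_{i-1}]$, using any pure composition series $V_\bullet$ of $V$; existence of such a series is Proposition \ref{prop:existence-of-filtrations}, and independence of the choice is precisely Theorem \ref{thm:JH-pure-composition-series}, so the assignment descends to isomorphism classes. For additivity on an admissible short exact sequence $0\to V'\to V\xrightarrow{\pi} V''\to 0$ in $\sld\Mod_\tffr$, I will splice pure composition series $V'_\bullet$ of $V'$ and $V''_\bullet$ of $V''$ into the chain
	\[
	0=V'_0\subset V'_1\subset\cdots\subset V'_{m'}=V'\subset\pi^{-1}(V''_1)\subset\cdots\subset\pi^{-1}(V''_{m''})=V.
	\]
The main technical verification is that each $V'_j$ remains pure in $V$: from the short exact sequence $0\to V'/V'_j\to V/V'_j\to V''\to 0$ with torsion-free outer terms, a snake-lemma argument yields torsion-freeness of $V/V'_j$. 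Combined with the identification $\pi^{-1}(V''_i)/\pi^{-1}(V''_{i-1})\simeq V''_i/V''_{i-1}\in\PSpl(\sld)$ and the purity of $\pi^{-1}(V''_i)$ in $V$ (from $V/\pi^{-1}(V''_i)\simeq V''/V''_i$), this gives a genuine pure composition series of $V$ whose successive quotients are exactly those of $V'_\bullet$ followed by those of $V''_\bullet$, proving $\chi_{PJH}([V])=\chi_{PJH}([V'])+\chi_{PJH}([V''])$.

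The inverse $\iota\colon\Z[\widehat{\PSpl}(\sld)]\to K_0(\sld\Mod_\tffr)$ is the map induced by the inclusion $\PSpl(\sld)\hookrightarrow \sld\Mod_\tffr$ on basis elements. Telescoping along the admissible short exact sequences $0\to V_{i-1}\to V_i\to V_i/V_{i-1}\to 0$ (which are admissible in $\sld\Mod_\tffr$ precisely because $V_{i-1}$ is pure in $V_i$) yields $\iota\circ\chi_{PJH}=\Id$, while $\chi_{PJH}\circ\iota=\Id$ follows at once from the trivial series $0\subset V$ on a purely irreducible generator. Finally, the identification $K_0(\PSpl(\sld))\simeq \Z[\widehat{\PSpl}(\sld)]$ follows from Proposition \ref{prop:short-exact-sequence-ir-trivial}: every non-vanishing admissible short exact sequence in $\PSpl(\sld)$ is trivial, hence there are no relations. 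The main obstacle is the purity verification in the splicing step, but it reduces to a standard diagram chase once all three terms of the ambient sequence are known to be torsion free.
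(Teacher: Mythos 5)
Your proposal is correct and follows the paper's approach: both construct $\chi_{_{PJH}}$ directly from pure composition series, invoke the pure Jordan--H\"older theorem (Theorem \ref{thm:JH-pure-composition-series}) for well-definedness, and splice series along an admissible short exact sequence to prove additivity, rather than invoking Theorem \ref{thm:devissage-generalizado} as a black box. Your explicit remark on why condition $(2)$ of the generalized devissage theorem fails in this exact (non-abelian) setting, together with the torsion-free--extension argument verifying that the spliced chain is again a pure composition series, simply fills in details that the paper leaves to the reader.
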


\begin{proof}
We have proved that $\sld\Mod_\tffr$ is exact, Proposition \ref{prop:exact-tffr}, and of finite pure-length, Lemma \ref{prop:finite-pure-length-category}. Therefore any $V\in \sld\Mod_\tffr$ has a pure composition series $V_\bullet=(V_i)_{i=0}^m$. Let us put $$\varphi(V):=\sum_{i=1}^m[V_i/V_{i-1}]\in K_0(\PSpl(\sld).$$  This is well defined since by Theorem \ref{thm:JH-pure-composition-series}  the Jordan-H\"older theorem is valid for pure composition series. Now one proceeds as in the proof of Theorem \ref{thm:devissage-generalizado}, showing that $\varphi$ is additive on short exact sequences because if we join two composition series, as we did there, one checks easily that we get a pure composition series.  The rest of the proof is completely analogous. The claim follows since according to Proposition \ref{prop:short-exact-sequence-ir-trivial} there are no nontrivial short exact sequences  on $\PSpl(\sl2)$.
\end{proof}

The following result is a consequence of Theorem  \ref{thm:finite+decomp-tffr}.   \begin{thm}[decomposition]\label{thm:decomposition-Grothendieck-tffr}
There is a decomposition isomorphism$$\Phi_\tffr\colon K_0(\sld\Mod_\tffr)\xrightarrow{\sim}\bigoplus_{\mu\in\C}K_0(\mathcal C^\bullet_{\mu,\tffr})$$   given by $$\Phi([V])=\sum_{i=1}^p[V_{\mu_i}]$$ where $V_{\mu_i}\in\mathcal C^\bullet_{\mu,\tffr}$ are the factors of the decomposition of  $V\in \mathcal C^\bullet_{\mu,\tffr}$ provided by the minimal polynomial of its Casimir operator. There is also a decomposition isomorphism $\Psi_\tffr\colon \Z[\widehat{\PSpl}(\sld)]\xrightarrow{\sim}\bigoplus_{\mu\in\C} \Z[\widehat{\PSpl}(\sld)_\mu)]$.
\end{thm}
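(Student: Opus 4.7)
The plan is to imitate the proof of the rational analogue, Theorem \ref{thm:decomposition-Grothendieck}, replacing the role of Theorem \ref{thm:finite+decomp} with that of Theorem \ref{thm:finite+decomp-tffr}. The first step is to invoke Theorem \ref{thm:finite+decomp-tffr}, which asserts that we have an identification of exact categories $\phi\colon \sld\Mod_\tffr \xrightarrow{\sim} \bigoplus_{\mu\in\C} \mathcal C^\bullet_{\mu,\tffr}$ as a $\Hom$-orthogonal direct sum. The existence of this decomposition ultimately rests on Theorem \ref{thm:minimalCasC}, which guarantees that the Casimir operator of any $V\in\sld\Mod_\tffr$ has a minimal polynomial $M_{C_\rho}(t)=(t-\mu_1)^{n_1}\cdots(t-\mu_p)^{n_p}\in\C[t]$, producing the generalized eigenspace decomposition $V=V_{\mu_1}\oplus\cdots\oplus V_{\mu_p}$ with each $V_{\mu_i}=\ker(C_\rho-\mu_i)^{n_i}\in\mathcal C^\bullet_{\mu_i,\tffr}$.

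Next I would define $\Phi_\tffr:=K_0(\phi)$. Since $K_0$ is compatible with direct sums of exact categories (Weibel \cite[7.1.6, pag.~142]{Weibel}), applying $K_0$ to $\phi$ yields the desired isomorphism $\Phi_\tffr\colon K_0(\sld\Mod_\tffr)\xrightarrow{\sim}\bigoplus_{\mu\in\C}K_0(\mathcal C^\bullet_{\mu,\tffr})$. The explicit formula $\Phi_\tffr([V])=\sum_{i=1}^p[V_{\mu_i}]$ then follows by tracking the class $[V]$ through $\phi$: the direct sum decomposition $V=\bigoplus_i V_{\mu_i}$ gives $[V]=\sum_i[V_{\mu_i}]$ in $K_0(\sld\Mod_\tffr)$, and by construction $[V_{\mu_i}]$ lands in the $\mu_i$-summand.

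For the second decomposition isomorphism $\Psi_\tffr$, I would appeal to Remark \ref{rem:decopmposition-purely-irred-with-irred-Casimir-tffr}, which furnishes a coproduct decomposition $\PSpl(\sld)=\coprod_{\mu\in\C}\PSpl(\sld)_\mu$. Passing to isomorphism classes yields the disjoint union $\widehat{\PSpl}(\sld)=\coprod_{\mu\in\C}\widehat{\PSpl}(\sld)_\mu$, and taking the free abelian group on a coproduct of sets commutes with direct sums, giving $\Z[\widehat{\PSpl}(\sld)]\xrightarrow{\sim}\bigoplus_{\mu\in\C}\Z[\widehat{\PSpl}(\sld)_\mu]$.

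I do not expect any serious obstacle: the heavy lifting was already carried out in Theorem \ref{thm:minimalCasC} (existence of a $\C$-minimal polynomial for the Casimir, despite $V$ having uncountable $\C$-dimension) and in Theorem \ref{thm:finite+decomp-tffr} (the categorical decomposition itself, including the verification that each summand $V_{\mu_i}$ remains torsion free of finite rank, since torsion-freeness is inherited from $V$ and rank is additive in direct sums). The present statement is essentially a formal consequence of those two results plus the additivity of $K_0$ under $\Hom$-orthogonal direct sums of exact subcategories.
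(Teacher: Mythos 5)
Your proposal is correct and follows essentially the same route as the paper: invoke Theorem \ref{thm:finite+decomp-tffr} to get the $\Hom$-orthogonal decomposition $\phi\colon \sld\Mod_\tffr \xrightarrow{\sim} \bigoplus_{\mu\in\C} \mathcal C^\bullet_{\mu,\tffr}$, apply compatibility of $K_0$ with direct sums of exact categories to define $\Phi_\tffr = K_0(\phi)$, and use the coproduct decomposition of $\PSpl(\sld)$ from Remark \ref{rem:decopmposition-purely-irred-with-irred-Casimir-tffr} for $\Psi_\tffr$. The paper gives no explicit proof beyond pointing to Theorem \ref{thm:finite+decomp-tffr} and the rational template; you have simply spelled out the same argument in full.
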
 

With respect to this decomposition we have the following commutative diagram of embeddings of categories $$\xymatrix{\PSpl(\sld)_\mu\ar@{^{(}->}[rr]^{a^\bullet_\mu}\ar@{^{(}->}[dr]_{b_\mu}& & \mathcal C_{\mu,\tffr}^\bullet\\& \mathcal C_{\mu,\tffr}\ar@{^{(}->}[ur]_{c_\mu^\bullet}&
}$$

We use it to prove the following result.

\begin{thm}[canonical filtration devissage of level $\mu$]\label{thm:first-devissage-tffr} The category $\mathcal{C}_{\mu,\tffr}$ is a devissage subcategory for the exact category $\mathcal{C}_{\mu,\tffr}^\bullet$ and the canonical filtration devissage isomorphism of level $\mu$ $$\chi_{_{CF},\mu}\colon K_0(\mathcal{C}_{\mu,\tffr}^\bullet)\xrightarrow{\sim}K_0(\mathcal{C}_{\mu,\tffr})$$ is given by $$\chi_{_{CF},\mu}([V])=\sum_{i=1}^l [V_i/V_{i-1}]$$ where $V_i/V_{i-1}\in\mathcal{C}_{\mu,\tffr}$ are the quotients of the canonical filtration $V_\bullet=(V_i)_{i=0}^l$ of a generalized torsion free Casimir $\sl(2)$-module $V$ of level $\mu$.
\end{thm}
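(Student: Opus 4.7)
The plan is to follow the strategy of Theorem~\ref{thm:first-devissage}, constructing the devissage isomorphism $\chi_{CF,\mu}$ by hand from the canonical filtration. Theorem~\ref{thm:devissage-generalizado} is unavailable here because $\mathcal C_{\mu,\tffr}^\bullet$ is only an exact subcategory of $\sld\Mod$ (not closed under arbitrary quotients, as a quotient of a torsion free $\C[z]$-module can acquire torsion), so the Schreier-style refinement behind that theorem cannot be run unchanged.

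For $V\in\mathcal C_{\mu,\tffr}^\bullet$ I set $\varphi(V):=\sum_{i=1}^{l}[V^i/V^{i-1}]\in K_0(\mathcal C_{\mu,\tffr})$, where $V^\bullet$ is the canonical filtration of Proposition~\ref{prop:filtration}. This assignment is unambiguous on isomorphism classes because the canonical filtration is intrinsic, $V^i=\ker(C_\rho-\mu)^i$, and by Proposition~\ref{prop:filtration} each successive quotient lies in $\mathcal C_{\mu,\tffr}$.

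The substantive task is proving additivity of $\varphi$ on admissible short exact sequences, so that it descends to a morphism $\chi_{CF,\mu}\colon K_0(\mathcal C_{\mu,\tffr}^\bullet)\to K_0(\mathcal C_{\mu,\tffr})$. Given an admissible sequence $0\to V'\to V\to V''\to 0$ in $\mathcal C_{\mu,\tffr}^\bullet$, compatibility of the Casimir operators yields $V'^i=V'\cap V^i$, and the snake lemma applied to the consecutive sequences $0\to V'^{i-1}\to V^{i-1}\to V^{i-1}/V'^{i-1}\to 0$ and $0\to V'^{i}\to V^{i}\to V^{i}/V'^{i}\to 0$ produces, for each $i$, a sequence in $\sld\Mod$
\begin{equation*}
0 \to V'^i/V'^{i-1} \to V^i/V^{i-1} \to V^i/(V^{i-1}+V'^i) \to 0.
\end{equation*}
The core technical obstacle, and the reason this proof is more involved than the rational case, is that $V^i/(V^{i-1}+V'^i)$ need not be torsion free, so the snake sequences do not automatically live in $\mathcal C_{\mu,\tffr}$; equivalently, the induced filtration $\pi(V^i)$ of $V''$ through $\pi\colon V\to V''$ may be strictly smaller than the canonical filtration $V''^\bullet$. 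My approach to overcome this is to compare $\varphi(V)$ with the class obtained from the auxiliary $\mathcal C_{\mu,\tffr}$-filtration of $V$ built by splicing the canonical filtration of $V'$ with the pull-back $\pi^{-1}(V''^\bullet)$; via the isomorphism theorems its successive quotients are exactly those of $V'^\bullet$ and of $V''^\bullet$, so matching this to $\varphi(V)$ gives the desired identity $\varphi(V)=\varphi(V')+\varphi(V'')$. The heart of the argument is therefore the independence of the class in $K_0(\mathcal C_{\mu,\tffr})$ from the choice of $\mathcal C_{\mu,\tffr}$-filtration, which I plan to establish by induction on the exponent of $V$, repeatedly using the admissible sequence $0\to V^1\to V\to V/V^1\to 0$ to peel off one canonical layer at a time and reduce to strictly smaller exponent; at each step the purity of the canonical layers (Proposition~\ref{prop:filtration}) is needed to ensure that the intermediate quotients remain torsion free and hence in $\mathcal C_{\mu,\tffr}^\bullet$.

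Once additivity is in hand, verifying that $\chi_{CF,\mu}$ inverts the map $c_\mu^\bullet{}_*$ induced by the embedding is immediate. For $V\in\mathcal C_{\mu,\tffr}$ the canonical filtration reduces to $0\subset V$, whence $\chi_{CF,\mu}\circ c_\mu^\bullet{}_*([V])=[V]$. Conversely, for any $V\in\mathcal C_{\mu,\tffr}^\bullet$ the sequences $0\to V^{i-1}\to V^i\to V^i/V^{i-1}\to 0$ are admissible in $\mathcal C_{\mu,\tffr}^\bullet$ (each quotient is torsion free by Proposition~\ref{prop:filtration}), and telescoping them in $K_0(\mathcal C_{\mu,\tffr}^\bullet)$ yields $[V]=\sum_i[V^i/V^{i-1}]$, so $c_\mu^\bullet{}_*\circ\chi_{CF,\mu}([V])=[V]$.
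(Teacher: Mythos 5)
Your proposal correctly identifies the central obstacle — the canonical filtration need not pass compatibly through an admissible exact sequence because the induced filtration of $V''$ by $\pi(V^i)$ may differ from $V''^\bullet$, and the snake-lemma quotients $V^i/(V^{i-1}+V'^i)$ can acquire torsion — but the route you propose to overcome it is not the paper's, and as written it has a gap. You attempt to prove directly, in $K_0(\mathcal{C}_{\mu,\tffr})$, that the sum of classes of quotients is independent of the choice of $\mathcal{C}_{\mu,\tffr}$-filtration, via induction on the exponent of $V$ by ``peeling off'' $V^1$. This is vague at precisely the hard point: given an arbitrary $\mathcal{C}_{\mu,\tffr}$-filtration $U_\bullet$ of $V$, it is not explained how to descend it to a $\mathcal{C}_{\mu,\tffr}$-filtration of $V/V^1$ whose quotients match up with those of $U_\bullet$ — the images $(U_i+V^1)/V^1$ may fail to give torsion-free quotients, which is exactly the problem you diagnosed in the snake-lemma step. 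So the ``heart of the argument'' is asserted rather than proved, and you are in effect re-encountering the same obstruction.

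The paper's proof bypasses filtration-independence in $K_0(\mathcal{C}_{\mu,\tffr})$ entirely. It uses the commutative triangle of embeddings $a_\mu^\bullet = c_\mu^\bullet\circ b_\mu$ with $\PSpl(\sld)_\mu$, noting that both $K_0(a_\mu^\bullet)$ and $K_0(b_\mu)$ are isomorphisms by the pure Jordan--H\"older devissage argument of Theorem~\ref{t:Grothendieck-rational-tffr}; hence $(c_\mu^\bullet)_* = K_0(c_\mu^\bullet)$ is an isomorphism, in particular injective, \emph{before} $\chi_{CF,\mu}$ is constructed. Then additivity of $\varphi_{CF,\mu}$ is derived as follows: the canonical filtration gives $(c_\mu^\bullet)_*(\varphi_{CF,\mu}(V))=[V]$ in $K_0(\mathcal{C}^\bullet_{\mu,\tffr})$; the spliced filtration $\widetilde V_\bullet$ (canonical filtration of $V'$ joined with $\pi^{-1}(V''^\bullet)$) gives $(c_\mu^\bullet)_*(\varphi_{CF,\mu}(V')+\varphi_{CF,\mu}(V''))=[V]$ in $K_0(\mathcal{C}^\bullet_{\mu,\tffr})$ as well; injectivity of $(c_\mu^\bullet)_*$ then forces $\varphi_{CF,\mu}(V)=\varphi_{CF,\mu}(V')+\varphi_{CF,\mu}(V'')$. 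In other words, both filtrations are only compared after pushing into $K_0(\mathcal{C}^\bullet_{\mu,\tffr})$, where each merely computes $[V]$; no filtration-independence in the smaller group is ever needed. If you want to complete your argument you should adopt this detour: establish the injectivity of $(c_\mu^\bullet)_*$ first via the $\PSpl(\sld)_\mu$ triangle, and then your splicing step suffices.
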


\begin{proof}
The commutative diagram of category embeddings gives the equality $$K_0(a^\bullet_\mu)=K_0(c_\mu^\bullet)\circ K_0(b_\mu).$$ Proceeding as in Theorem \ref{t:Grothendieck-rational-tffr} one has that both $K_0(a^\bullet_\mu)$ and $K_0(b_\mu)$ are isomorphisms and therefore $K_0(c_\mu)$ is also an isomorphism. This proves that $\mathcal{C}_{\mu,\tffr}$ is a devissage subcategory for the exact category $\mathcal{C}_{\mu,\tffr}^\bullet$. We denote $(c_\mu^\bullet)_*:=K_0(c_\mu^\bullet)$.

Given $V\in\mathcal{C}_{\mu,\tffr}^\bullet$ we define $$\varphi_{_{CF},\mu}(V):= \sum_{i=1}^m[V_i/V_{i-1}]\in K_0(\mathcal{C}_{\mu,\tffr})$$ where $V_i/V_{i-1}\in\mathcal{C}_{\mu,\tffr}$ are the quotients of the canonical filtration $V_\bullet=(V_i)_{i=0}^l$ of  $V$ described in Proposition \ref{prop:filtration}. Let us prove now that $\varphi_{_{CF},\mu}$ is additive on short exact sequences. In first place, by considering the short exact sequences of the canonical filtration $V_\bullet$, one checks that $(c_\mu^\bullet)_*(\varphi_{_{CF},\mu}(V))=[V]\in K_0(\mathcal{C}_{\mu,\tffr})$. Given a short exact sequence $0\to V'\to V\xrightarrow{\pi} V''\to 0$ in $\mathcal{C}_{\mu,\tffr}$ let $V'_\bullet=(V'_i)_{i=0}^{m'}$, $V_\bullet=(V_i)_{i=0}^{m}$, $V''_\bullet=(V''_i)_{i=0}^{m''}$ be the canonical filtrations of $V'$, $V$, $V''$, respectively. Proceeding as in the proof of Theorem \ref{thm:devissage-generalizado}, joining $V'_\bullet$ to $\pi^{-1}(V''_\bullet)$ we get another filtration $\widetilde V_\bullet$ of $V$ whose successive quotients are those of $V'_\bullet$ together with those of $V''_\bullet$. Therefore, the short exact sequences of the filtration $\widetilde V_\bullet$ of $V$ give us 
\begin{align*}[V]=(c_\mu^\bullet)_*\left(\sum_{i=1}^{m'}[V'_i/V'_{i-1}]+\sum_{i=0}^{m''}[V''_i/V''_{i-1}]\right)=(c_\mu^\bullet)_*(\varphi_{_{CF},\mu}(V')+\varphi_{_{CF},\mu}(V'')).\end{align*} Since we also have $[V]=(c_\mu^\bullet)_*(\varphi_{_{CF},\mu}(V))$ and we have proved that $(c_\mu^\bullet)_*$ is injective, we conclude that $\varphi_{_{CF},\mu}(V))=\varphi_{_{CF},\mu}(V'))+\varphi_{_{CF},\mu}(V'')$ as claimed. Therefore, by the universal property of the Grothendieck group,  $\varphi_{_{CF},\mu}$ induces a group morphism $\chi_{_{CF},\mu}\colon K_0(\mathcal{C}_{\mu,\tffr}^\bullet)\to K_0(\mathcal{C}_{\mu,\tffr})$ such that $(c_\mu^\bullet)_*\circ \chi_{_{CF},\mu}=\Id_{K_0(\mathcal{C}_{\mu,\tffr}^\bullet)}$. If $C\in\mathcal{C}_{\mu,\tffr}$, then  its canonical filtration is $0\subsetneq C$, thus $\chi_{_{CF},\mu}((c_\mu^\bullet)_*([C]))=[C]$. This shows that $\chi_{_{CF},\mu}\circ (c_\mu^\bullet)_*=\Id_{K_0(\mathcal{C}_{\mu,\tffr})}$, finishing the proof.
\end{proof}

Proceeding as in Theorem \ref{t:Grothendieck-rational-tffr} we get: 

\begin{thm}[Jordan-H\"older devissage of level $\mu$]\label{thm:second-devissage-tffr} The category $\PSpl_{}(\sld)_\mu$ is a devissage subcategory for $\mathcal C_{\mu,\tffr}$. Hence $K_0(\mathcal C_{\mu,\tffr})$ is isomorphic to the free abelian group on the isomorphism classes of purely irreducible  Casimir torsion free $\sld$-modules of finite rank and level $\mu$. Moreover, the Jordan-H\"older devissage isomorphism  of level $\mu$ $$\chi_{_{PJH},\mu}\colon K_0(\mathcal C_{\mu,\tffr}))\xrightarrow{\sim}\Z[\widehat\PSpl_{}(\sld)_\mu]$$ is given  by $$\chi_{_{PJH},\mu}([V])=\sum_{i=1}^m[V_i/V_{i-1}]$$ with $V_i/V_{i-1}$ the quotients of a composition series $V_\bullet=(V_i)_{i=0}^m$ of $V\in\mathcal C_{\mu,\tffr}$. 
\end{thm}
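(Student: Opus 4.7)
\emph{Proof plan.} The strategy is to replicate the argument of Theorem~\ref{t:Grothendieck-rational-tffr} inside the subcategory $\mathcal C_{\mu,\tffr}\subset \sl(2)\Mod_\tffr$. By Theorem~\ref{thm:finite+decomp-tffr}, $\mathcal C_{\mu,\tffr}$ is a direct summand of $\sl(2)\Mod_\tffr$ as an exact category, hence it is essentially small by Theorem~\ref{thm:tffr-is-essentially-small} and inherits the finite pure-length property of Proposition~\ref{prop:finite-pure-length-category}. The first observation is that, for any $V\in \mathcal C_{\mu,\tffr}$, every pure composition series $V_\bullet=(V_i)_{i=0}^m$ provided by Proposition~\ref{prop:existence-of-filtrations} has all its factors $V_i/V_{i-1}$ not only purely irreducible but also Casimir of level $\mu$, since the Casimir operator restricts to submodules and descends to quotients. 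Hence each such factor lies in $\PSpl(\sld)_\mu$ by Remark~\ref{rem:decopmposition-purely-irred-with-irred-Casimir-tffr}.

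Next I would define
$$\varphi_{_{PJH},\mu}(V)\,:=\,\sum_{i=1}^m [V_i/V_{i-1}]\,\in\, K_0(\PSpl(\sld)_\mu)$$
for any choice of pure composition series of $V$. Independence of this choice is exactly the pure Jordan-H\"older Theorem~\ref{thm:JH-pure-composition-series}. Additivity on a short exact sequence $0\to V'\to V\xrightarrow{\pi} V''\to 0$ in $\mathcal C_{\mu,\tffr}$ is verified by the standard splicing argument: concatenating a pure composition series $V'_\bullet$ of $V'$ with the preimages under $\pi$ of a pure composition series $V''_\bullet$ of $V''$ yields the filtration
$$0=V'_0\subset\cdots\subset V'_{m'}=V'=\pi^{-1}(0)\subset \pi^{-1}(V''_1)\subset\cdots\subset \pi^{-1}(V''_{m''})=V,$$
whose successive quotients are exactly those of $V'_\bullet$ followed by those of $V''_\bullet$. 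By the universal property of $K_0$, the assignment $\varphi_{_{PJH},\mu}$ then descends to a well-defined group morphism $\chi_{_{PJH},\mu}\colon K_0(\mathcal C_{\mu,\tffr})\to K_0(\PSpl(\sld)_\mu)$.

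To conclude I would show that $\chi_{_{PJH},\mu}$ is the two-sided inverse of the map $i_*$ induced on $K_0$ by the inclusion $i\colon \PSpl(\sld)_\mu\hookrightarrow \mathcal C_{\mu,\tffr}$. The composition $\chi_{_{PJH},\mu}\circ i_*$ is the identity because for $V\in \PSpl(\sld)_\mu$ the filtration $0\subset V$ is itself a pure composition series, while $i_*\circ \chi_{_{PJH},\mu}$ is the identity by iterating the short exact sequences $0\to V_{i-1}\to V_i\to V_i/V_{i-1}\to 0$ of a pure composition series inside $\mathcal C_{\mu,\tffr}$. Finally, Proposition~\ref{prop:short-exact-sequence-ir-trivial} shows that $\PSpl(\sld)_\mu$ admits only trivial short exact sequences, so that $K_0(\PSpl(\sld)_\mu)=\Z[\widehat{\PSpl}(\sld)_\mu]$, yielding the stated isomorphism and the explicit formula for $\chi_{_{PJH},\mu}$.

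The only mildly delicate point is checking that the splicing step actually produces a \emph{pure} composition series, i.e.\ that each $\pi^{-1}(V''_i)$ is pure inside $\pi^{-1}(V''_{i+1})$ and that $V'$ is pure inside $\pi^{-1}(V''_1)$. This holds because the natural isomorphisms $\pi^{-1}(V''_{i+1})/\pi^{-1}(V''_i)\simeq V''_{i+1}/V''_i$ and $\pi^{-1}(V''_1)/V'\simeq V''_1$ exhibit these quotients as purely irreducible, hence torsion free, $\C[z]$-modules.
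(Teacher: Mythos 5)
Your proposal is correct and takes essentially the same approach as the paper, which for this theorem simply writes ``Proceeding as in Theorem~\ref{t:Grothendieck-rational-tffr} we get'' and leaves the adaptation implicit. You have carried out exactly that adaptation: observe that for $V\in\mathcal C_{\mu,\tffr}$ the Casimir operator restricts and descends so that pure composition factors lie in $\PSpl(\sld)_\mu$, invoke pure Jordan--H\"older (Theorem~\ref{thm:JH-pure-composition-series}) for well-definedness, splice to prove additivity, check the two-sided inverse relation with $i_*$, and conclude via Proposition~\ref{prop:short-exact-sequence-ir-trivial}. Your final remark on purity of the spliced filtration is also right; the quick way to see each $\pi^{-1}(V''_i)$ is pure in $V$ is the isomorphism $V/\pi^{-1}(V''_i)\simeq V''/V''_i$, which is torsion free because $V''_i$ is pure in $V''$.
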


As a consequence of Theorems \ref{t:Grothendieck-rational-tffr}, \ref{thm:decomposition-Grothendieck-tffr}, \ref{thm:first-devissage-tffr} and \ref{thm:second-devissage-tffr} we get the following result.

\begin{cor}[compatibility]\label{cor:compatibility-tffr}
There is an identification $$\chi_{_{PJH}}=\bigoplus_{\mu\in\C}\chi_{_{PJH},\mu}\circ\chi_{_{CF},\mu}\colon \bigoplus_{\mu\in\C} K_0(\mathcal {C}^\bullet_\mu)\xrightarrow{\sim}\bigoplus_{\mu\in\C} \Z[\widehat{\PSpl}(\sld)_\mu].$$ Moreover, by Proposition \ref{prop:decomposition-purely-irred-types} there is a further decomposition  $$ \Z[\widehat{\mathrm{PSpl}}(\sl(2))_\mu]=\bigoplus_{\hat\tau\in\widehat{\mathrm{Spl}}(\mathcal C_{\mu,\mathrm{tffr}})}\Z[\widehat{\mathrm{PSpl}}(\sl(2);\hat\tau)_\mu].$$ \end{cor}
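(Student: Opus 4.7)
The plan is to verify that the three isomorphisms produced by Theorems \ref{thm:decomposition-Grothendieck-tffr}, \ref{thm:first-devissage-tffr} and \ref{thm:second-devissage-tffr} can be composed and that the resulting composition agrees with the single global isomorphism $\chi_{_{PJH}}$ of Theorem \ref{t:Grothendieck-rational-tffr}. Since all four maps have already been proved to be group isomorphisms, the only content of the corollary is the commutativity of the relevant diagram, i.e., that the two procedures for passing from a class $[V]\in K_0(\sld\Mod_\tffr)$ to an element of $\bigoplus_{\mu\in\C}\Z[\widehat{\PSpl}(\sld)_\mu]$ yield the same result. Therefore my first step would be to package the four isomorphisms into a single diagram and reduce the statement to checking this commutativity on representatives $[V]$.

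Next, I would carry out the check as follows. Fix $V\in\sld\Mod_\tffr$. By Theorem \ref{thm:finite+decomp-tffr}, the minimal polynomial of $C_\rho$ produces a finite $\Hom$-orthogonal decomposition $V=\bigoplus_{i=1}^p V_{\mu_i}$ with $V_{\mu_i}\in\mathcal C^\bullet_{\mu_i,\tffr}$, and this is precisely the image of $[V]$ under $\Phi_\tffr$. For each index $i$, the canonical filtration of Proposition \ref{prop:filtration} gives a filtration $0=V_{\mu_i}^0\subsetneq\cdots\subsetneq V_{\mu_i}^{l_i}=V_{\mu_i}$ whose successive quotients lie in $\mathcal C_{\mu_i,\tffr}$; by construction of $\chi_{_{CF},\mu_i}$ these are exactly the summands contributed by $V_{\mu_i}$. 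Finally, each Casimir successive quotient $V_{\mu_i}^j/V_{\mu_i}^{j-1}\in\mathcal C_{\mu_i,\tffr}$ admits, by Proposition \ref{prop:existence-of-filtrations}, a pure composition series whose successive quotients are purely irreducible objects of $\PSpl(\sld)_{\mu_i}$, and these are the terms produced by $\chi_{_{PJH},\mu_i}$.

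Concatenating, by pull-back under the quotient maps, the pure composition series of the successive Casimir quotients of the canonical filtration of each $V_{\mu_i}$, and then joining the resulting pure filtrations of the direct summands $V_{\mu_i}$ into a single filtration of $V$, produces a pure filtration of $V$ in $\sld\Mod_\tffr$. One needs to verify that all subobjects appearing are indeed pure submodules of $V$ and that all successive quotients are purely irreducible; both properties are preserved by the pull-back construction (quotients are preserved) and by the direct-sum concatenation (because the summands are $\Hom$-orthogonal and in particular every subobject of $V$ of the described form remains pure). Consequently, the concatenated filtration is a pure composition series of $V$, and its class in $\Z[\widehat{\PSpl}(\sld)]$ coincides termwise with $(\bigoplus_\mu \chi_{_{PJH},\mu}\circ\chi_{_{CF},\mu})\bigl(\Phi_\tffr([V])\bigr)$. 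On the other hand, the Jordan-Hölder theorem for pure composition series, Theorem \ref{thm:JH-pure-composition-series}, guarantees that this class is independent of the chosen pure composition series and hence equals $\chi_{_{PJH}}([V])$, which is what we need. The further decomposition asserted in the second display is then immediate from Proposition \ref{prop:decomposition-purely-irred-types}, which sorts purely irreducible Casimir modules of level $\mu$ by their type.

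The main obstacle I expect is bookkeeping rather than mathematics: one must check carefully that pulling back a pure composition series under a quotient in $\mathcal C_{\mu,\tffr}^\bullet$ produces pure submodules of $V$, not merely pure submodules of the relevant subquotient, and that the joining of the pure filtrations coming from distinct Casimir blocks $V_{\mu_i}$ (which are direct summands and not nested) gives a genuine pure filtration of $V$. Once purity is verified at each step, the Jordan-Hölder theorem does the rest, and the commutativity of the diagram follows without further effort.
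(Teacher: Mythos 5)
Your verification is correct and fills in exactly the computation the paper leaves implicit when it says the corollary follows ``as a consequence of'' Theorems \ref{t:Grothendieck-rational-tffr}, \ref{thm:decomposition-Grothendieck-tffr}, \ref{thm:first-devissage-tffr} and \ref{thm:second-devissage-tffr}: you chase $[V]$ through the Casimir decomposition, then the canonical filtration, then a pure composition series, splice everything into a single pure composition series of $V$, and invoke Theorem \ref{thm:JH-pure-composition-series} to identify the result with $\chi_{_{PJH}}([V])$. One small inaccuracy in the justification is worth flagging: when you explain why purity survives the concatenation across the blocks $V_{\mu_i}$, you attribute it to the blocks being $\Hom$-orthogonal. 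Hom-orthogonality guarantees the decomposition is a direct sum of $\sld$-modules, but purity itself is preserved because the quotient $V/(W_{i-1}\oplus\pi^{-1}(U_r))$ is an extension of $\bigoplus_{k>i}V_{\mu_k}$ by $V_{\mu_i}/\pi^{-1}(U_r)$, both torsion free, and torsion-freeness is closed under extensions (the same fact used in Proposition \ref{prop:exact-tffr}). An alternative, more formal route is also available: Remark \ref{rem:decopmposition-purely-irred-with-irred-Casimir-tffr} says the embedding $\PSpl(\sld)\hookrightarrow\sld\Mod_\tffr$ is compatible with the $\mu$-decompositions, so the square of forward maps on $K_0$ induced by the embeddings commutes, and taking inverses gives the identification without any element chase; but your explicit verification is equally valid and arguably more illuminating.
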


\begin{rem} The algorithm for disassembling (devissage) the class of a finite rank torsion free module $V$ in the Grothendieck group $[V]\in K_0(\sld\Mod_\tffr)$  into a sum of classes of purely irreducible modules in $K_0(\PSpl(\sld))$ proceeds in stages:
\begin{enumerate}
\item First, we perform the decomposition $V=V_{\mu_1}\oplus\cdots\oplus V_{\mu_p}$ into generalized Casimir representations, where $P_{\mathrm{min}}^{C_\rho}(t)=(t-\mu_1)^{n_1}\cdots(t-\mu_p)^{n_p}$ is the minimal polynomial of the Casimir operator of $V$. Hence, one has $$[V]=[V_{\mu_1}]+\cdots+[V_{\mu_p}].$$
\item The class   $[V_{\mu_i}]\in K_0(\mathcal C_{\mu_i,\tffr}^{\bullet})$ is decomposed by means of the canonical filtration $V_{\mu_i,\bullet}=(V_{\mu_i,j})_{j=1}^{n_i}$ of $V_{\mu_i}$ to give $$[V_{\mu_i}]=\sum_{j=1}^{n_i}[V_{\mu_i,j}/V_{\mu_i,j-1}]\in K_0(\mathcal C_{\mu_i,\tffr}).$$
\item The class $[V_{\mu_i,j}/V_{\mu_i,j-1}]\in K_0(\mathcal C_{\mu_i,\tffr})$ is finally decomposed by means of a pure composition series $P_{\mu_i,j,\bullet}=(P_{\mu_i,j,k})_{k=0}^{r_{ij}}$ of $V_{\mu_i,j}/V_{\mu_i,j-1}$, yielding $$[V_{\mu_i,j}/V_{\mu_i,j-1}]=\sum_{k=1}^{r_{ij}} [P_{\mu_i,j,k}/P_{\mu_i,j,k-1}]\in K_0(\PSpl(\sld)_{\mu_i}).$$
\item Summing all together we get $$[V]=\sum_{i=1,j=1,k=1}^{p,n_i,r_{ij}}[P_{\mu_i,j,k}/P_{\mu_i,j,k-1}].$$
\end{enumerate}
\end{rem}

\begin{rem}
By Proposition \ref{prop:purely-irreducible-is-Casimir} we know that purely irreducible modules are Casimir modules. For every pair of levels   $\mu,\nu\in\C$ there is an exact functor $$\Psi_{\mu\nu}\colon {\mathrm{PSpl}}(\sl(2))_\mu\to {\mathrm{PSpl}}(\sl(2))_\nu$$ that maps $(V,\rho)\in {\mathrm{PSpl}}(\sl(2))_\mu$ to $\Psi_{\mu\nu}((V,\rho))=(S_{\pi_\mu(z)}^{-1}V,\Psi_{\mu\nu}(\rho))$ where $S_{\pi_\mu(z)}$ is the smallest multiplicative system of $\C[z]$ that is left invariant under $\pi_\mu(z)\nabla^{-1}$ and $\nabla$ (the precise description of  $S_{\pi_\mu(z)}$ can be determined easily)  and $$\Psi_{\mu\nu}(\rho)(L_{-1})\left(\frac{v}{s}\right)=\pi_\nu(z)\frac{\rho(L_{-1})(v)}{\pi_\mu(z)\nabla^{-1}(s)},\quad \Psi_{\mu\nu}(\rho)(L_{1})\left(\frac{v}{s}\right)=\frac{\rho(L_{1})(v)}{\nabla(s)}.$$ This functor is compatible with the isomorphisms established in Corollary \ref{cor:isom-Casimir-mu-nu}  for the category of rational Casimir $\sld$-modules. That is, there is a commutative diagram of functors:

$$\xymatrix{ {\mathrm{PSpl}}(\sl(2))_\mu\ar[r]^(.62){F_\rat}\ar[d]_{\Psi_{\mu\nu}} & \Ra\mathcal C_\mu\ar[d]^{\Phi_{\mu\nu}}_\wr\\
{\mathrm{PSpl}}(\sl(2))_\nu\ar[r]^(.62){F_\rat} & \Ra\mathcal C_\mu}$$

Although there is a morphism of functors $\Id_{{\mathrm{PSpl}}(\sl(2))_\mu}\to \Psi_{\nu\mu}\circ \Psi_{\mu\nu}$, one checks that  $\Psi_{\nu\mu}\circ \Psi_{\mu\nu}$ is not the identity functor. Thus, in contrast to what happens for rational $\sl(2)$-modules, the Grothendieck group $K_0(\sl(2)\Mod_\mathrm{tffr})$ can not be determined by knowing $K_0({\mathcal{C}}_{\mu_0,\tffr})\simeq \Z[\widehat{\mathrm{PSpl}}(\sl(2))_{\mu_0}]$ for a particular level $\mu_0\in\C$.
\end{rem}

\subsection{Relationship between the Grothedieck groups of rational and torsion free  finite rank modules. The localization theorem}

The rationalization functor $F_\rat\colon \sl(2)\Mod_\mathrm{tffr}\to \Ra$ is exact and therefore it induces a group morphism between the Grothendieck groups of these categories: $$F_{\rat*}:=K_0(F_\rat)\colon K_0(\sl(2)\Mod_\mathrm{tffr})\to K_0(\Ra).$$ Moreover, we have seen in Theorem \ref{thm:rational-is-reflective-localization-of-tffr} that $\mathcal R$ is a reflective localization of  the category $\sl(2)\Mod_\mathrm{tffr}$ with quotient functor $F_\rat$. We prove now the  analogue of Heller's localization theorem, \cite[Theorem 5.13, pag. 115]{Swan}. 

\begin{thm}\label{thm:localization-theorem}
There is a short exact sequence
	  	\[
	  0 \to    \Ker (F_{\rat*}) \to   K_0(\sl(2)\Mod_\mathrm{tffr})\xrightarrow{F_{\rat*}}  K_0(\Ra) \to  0
	  \]
that splits naturally by the section $i_{\tffr*}\colon K_0(\mathcal R)\to K_0(\sl(2)\Mod_\mathrm{tffr})$ defined by the embedding $i_\tffr\colon\mathcal R\hookrightarrow \sl(2)\Mod_\mathrm{tffr}$. Therefore, the corresponding retraction $R\colon K_0(\sl(2)\Mod_\mathrm{tffr})\to \Ker(F_{\rat*})$ maps the class $[V]\in  K_0(\sl(2)\Mod_\mathrm{tffr})$ of a module  to $R([V])=[V]-i_{\tffr*}[F_\rat(V)]\in\Ker (F_{\rat*})$. It follows that $ \Ker (F_{\rat*})$ gets identified with the group of virtual torsion modules of finite rank $$\mathcal{VT}(\sl(2)\Mod_\mathrm{tffr})=\{[V_1]-[V_2]\colon V_1\subseteq V_2\in\sl(2)\Mod_\mathrm{tffr}, V_1/V_2\ \text{is torsion}\}$$  and  $$K_0(\sl(2)\Mod_\mathrm{tffr})\xrightarrow{\sim} \mathcal{VT}(\sl(2)\Mod_\mathrm{tffr})\oplus K_0(\Ra).$$ Moreover, given $W\in\RC_\mu$ we have $F^{-1}_{\rat*}([W])=\widehat\PSpl(\sld,\tau([W]))_\mu$.
\end{thm}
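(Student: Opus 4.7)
The overall strategy is to reduce the entire statement to the adjunction $F_\rat \dashv i_\tffr$ proved in Theorem \ref{thm:rational-is-reflective-localization-of-tffr} together with the isomorphism $F_\rat\circ i_\tffr\cong\Id_{\mathcal R}$ of Proposition \ref{prop:rationalization-is-a-retraction}. Since both $F_\rat$ and $i_\tffr$ are exact, they induce group homomorphisms between the Grothendieck groups, which are well defined thanks to the essential smallness of $\sld\Mod_\tffr$ and $\mathcal R$ established in Theorems \ref{thm:tffr-is-essentially-small} and \ref{thm:rational-essentially-small}. Applying $K_0$ to $F_\rat\circ i_\tffr\cong\Id_{\mathcal R}$ gives $F_{\rat*}\circ i_{\tffr*}=\Id_{K_0(\mathcal R)}$, which simultaneously yields surjectivity of $F_{\rat*}$ (so the four-term sequence is exact) and identifies $i_{\tffr*}$ as a natural section. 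The retraction is then forced to be $R=\Id-i_{\tffr*}\circ F_{\rat*}$, so on the generator $[V]$ we read off $R([V])=[V]-i_{\tffr*}([F_\rat(V)])$, and the direct sum decomposition $K_0(\sld\Mod_\tffr)\xrightarrow{\sim}\Ker(F_{\rat*})\oplus K_0(\mathcal R)$ is the standard one coming from any split short exact sequence of abelian groups.

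The identification $\Ker(F_{\rat*})=\mathcal{VT}(\sld\Mod_\tffr)$ needs two inclusions. For $\mathcal{VT}\subseteq\Ker(F_{\rat*})$: if $V_1\subseteq V_2$ in $\sld\Mod_\tffr$ with $V_2/V_1$ torsion, then localizing at $S=\C[z]\setminus\{0\}$ kills $V_2/V_1$, so $F_\rat(V_1)\simeq F_\rat(V_2)$ and $F_{\rat*}([V_1]-[V_2])=0$. For the reverse, I would use the formula for $R$: every element of $\Ker(F_{\rat*})$ is the image under $R$ of some $x\in K_0(\sld\Mod_\tffr)$, and since $R$ is a group homomorphism and $K_0(\sld\Mod_\tffr)$ is generated by classes of objects, it suffices to show $R([V])=[V]-i_{\tffr*}([F_\rat(V)])$ lies in $\mathcal{VT}$ for every $V\in\sld\Mod_\tffr$. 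This is immediate from the unit of the adjunction: the natural monomorphism $\eta_V\colon V\hookrightarrow F_\rat(V)$ presents $V$ as a submodule of the rational module $F_\rat(V)$, and $F_\rat(V)/V$ is a torsion $\C[z]$-module since $S^{-1}V\to S^{-1}(F_\rat(V)/V)\to 0$ gives $S^{-1}(F_\rat(V)/V)=0$. Closure of $\mathcal{VT}$ under sums and sign changes follows by taking direct sums of the inclusion data, so the subgroup generated coincides with the set itself.

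For the final statement about $F^{-1}_{\rat*}([W])$ with $W\in\mathcal{RC}_\mu$ an $\mathcal R$-irreducible Casimir module, I would combine the global pure Jordan--H\"older devissage of Theorem \ref{t:Grothendieck-rational-tffr}, which expresses $K_0(\sld\Mod_\tffr)$ as the free abelian group on $\widehat{\PSpl}(\sld)$, with the bijection $\widehat F_\rat\colon \widehat{\Spl}(\sld\Mod_\tffr)\xrightarrow{\sim}\widehat{\Spl}_{\mathcal{RC}}(\mathcal{RC})$ from Theorem \ref{thm:identification-isomorphism-classes}. Under these identifications, the rationalization of a purely irreducible module $V$ is the $\mathcal R$-simple Casimir module $F_\rat(\tau(V))=F_\rat(V)$, and $F_{\rat*}$ acts on the purely irreducible generators by sending $[V]$ to $[F_\rat(V)]\in K_0(\mathcal R)$. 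Hence among the purely irreducible generators, the ones mapping to $[W]$ are exactly those whose type is the irreducible Casimir module of level $\mu$ corresponding to $W$ under $\widehat F_\rat$, which is the content of Proposition \ref{prop:decomposition-purely-irred-types} applied to the appropriate $\hat\tau$.

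The main obstacle will be the kernel identification: one must check that the ``generated subgroup'' subtlety for $\mathcal{VT}$ is harmless (namely that $\mathcal{VT}$ is already a subgroup and not just a set), and one must be careful that $F_\rat(V)/V$, although torsion and thus \emph{not} in $\sld\Mod_\tffr$, only appears implicitly via the inclusion $V\hookrightarrow F_\rat(V)$ (both legs of which are in $\sld\Mod_\tffr$), so that the relation $[V]-[F_\rat(V)]\in\mathcal{VT}$ is formulated entirely inside the exact category $\sld\Mod_\tffr$ without appealing to any ambient abelian short exact sequence $0\to V\to F_\rat(V)\to F_\rat(V)/V\to 0$, which is \emph{not} admissible in $\sld\Mod_\tffr$ since its right-hand term is torsion.
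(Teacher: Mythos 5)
Your proposal follows essentially the same route as the paper: apply $K_0$ to the retraction $F_\rat\circ i_\tffr\cong\Id_\Ra$ of Proposition~\ref{prop:rationalization-is-a-retraction} to get the split exact sequence and the formula $R=\Id-i_{\tffr*}\circ F_{\rat*}$, obtain the inclusion $\mathcal{VT}\subseteq\Ker(F_{\rat*})$ by exactness of localization, obtain the reverse inclusion from surjectivity of $R$ together with $\eta_V\colon V\hookrightarrow F_\rat(V)$ having torsion cokernel, and deduce the last claim from Theorem~\ref{t:Grothendieck-rational-tffr}, Theorem~\ref{thm:identification-isomorphism-classes} and Proposition~\ref{prop:decomposition-purely-irred-types}. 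The paper's proof is exactly this, just stated telegraphically.

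One remark on the point you flag explicitly. You claim that ``closure of $\mathcal{VT}$ under sums and sign changes follows by taking direct sums of the inclusion data.'' Direct sums do give closure under addition (from $V_1\subseteq V_2$ and $V_1'\subseteq V_2'$ one gets $V_1\oplus V_1'\subseteq V_2\oplus V_2'$ with torsion quotient), but they do \emph{not} obviously give closure under negation: from $V_1\subseteq V_2$ one does not readily produce $U_1\subseteq U_2$ with torsion quotient and $[U_1]-[U_2]=[V_2]-[V_1]$. What your (and the paper's) argument actually establishes is that $\Ker(F_{\rat*})$ equals the \emph{subgroup generated} by the set $\mathcal{VT}$, since $R$ is a surjection onto $\Ker(F_{\rat*})$, $K_0(\sld\Mod_\tffr)$ is generated by classes of objects, and $R([V])\in\mathcal{VT}$ for each $V$. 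This is indeed how one should read the statement (the paper calls $\mathcal{VT}$ ``the group of virtual torsion modules'' while writing it in set notation, and its proof does not address the subgroup question either), so your argument is sound; only the throwaway phrase justifying closure under sign changes is not correct as written.
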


\begin{proof} We have seen in Proposition \ref{prop:rationalization-is-a-retraction} that $F_\rat$ is a retraction of $i_\tffr$. This implies the first claim. The form of the associated retraction is obvious. It is clear that $\mathcal{VT}(\sl(2)\Mod_\mathrm{tffr})\subseteq \Ker (F_{\rat*})$. The other inclusion follows since $R$ is surjective. The final claim follows from Proposition \ref{prop:decomposition-purely-irred-types}.
\end{proof}

To conclude this section let us point out that, thanks to the universal property of the Grothendieck group, the additive functions $\rank$ and $\dim$ induce surjective group morphisms that fit into the following commutative diagram of groups
	\[
	\xymatrix@C=16pt{K_0(\sl(2)\Mod_\mathrm{tffr})\ar[rr]^(.6){K_0(F_\rat)}\ar[dr]_{\rank} & & K_0(\Ra)\ar[dl]^{{\dim}}{}\\ & \Z &}
	\]
Finally, there is an analogous diagram replacing $\rank$ by $\plength$ and $\dim$ by $\length$. It follows that virtual torsion modules have zero $\rank$  and zero $\plength$.

\section{Rational representations of dimension $1$}\label{s:dim1}

The dimension of rational representations  as $\C(z)$-vector spaces stratifies the category $\Ra$ into strata $\Ra^m$ of representations of dimension $m$. In this section we focus on the stratum $\Ra^1$ of dimension $1$ and we establish when these representations do arise as rationalization of a polynomial representation of rank $1$.  We also give the structure of all $\sl(2)$-submodules of a one dimensional rational representation.

\subsection{The stratum of one dimensional rational representations}\label{section:stratum-one-dimensional-rational}\label{subsec:one-dimensional-stratum}

Let $W$ be a one dimensional $\C(z)$-vector space. We are interested in $\sl(2)$-representations $\rho$ defined on $W$ such that $\rho(L_0)=z$. By Corollary \ref{cor:rational-onedim-are-irred-Casimir} it follows that $(W,\rho)$ is an $\Ra$-irreducible rational Casimir module, of level $\mu$, for a certain $\mu\in\C$.   Hence we have $\Ra^1=\Spl_\Ra(\Ra^1)=\RC^1$ and  from Theorem \ref{thm:identification-isomorphism-classes} we get $ \widehat{\Spl}(\sld\Mod^1_\tffr) \simeq \widehat{\Ra^1}$.

Let us consider $\nabla\in \Aut^1_{\C(z)}(W)$, then by Proposition \ref{prop:Aut1-sl2} it determines a representation $\rho^{(\mu)}\in\mathcal{RC}_\mu(W)$. Bearing in mind Proposition \ref{prop:paramtrization-Casimir-reps} and the fact that $\dim_{\C(z)}W=1$, we know that there is a bijection
	\begin{equation}\label{eq:RatCasMu}
	\C(z)^\times \longleftrightarrow \mathcal{RC}_\mu(W)
	\end{equation}
that maps $r(z)\in \C(z)^\times$ to the representation $\rho_r^{(\mu)}=r(z)\cdot\rho^{(\mu)}$ according to (\ref{eq:action-automorphism-on-rpresentation}). That is:
	\begin{equation}\label{eq:L-1r(z)}
	\rho_r^{(\mu)}(L_{-1}):=\frac{\pi_\mu(z)}{r(z-1)}\circ \nabla^{-1}, \quad 
	\rho_r^{(\mu)}(L_0):= z,\quad 
	\rho_r^{(\mu)}(L_1):={r(z)}\circ \nabla
	\, .\end{equation}

However, there can be different rational functions giving rise to isomorphic representations. 

\begin{thm}\label{t:equivalentrational}
Two rational functions $r_1(z), r_2(z)\in \C(z)^\times$ yield isomorphic representations $\rho_{r_1}^{(\mu)}$, $\rho_{r_2}^{(\mu)}$ on $W$, and we write $r_1(z)\sim r_2(z)$, if and only if there exist $\alpha_1,\ldots,\alpha_n\in \C$ and $a_1,\ldots, a_n\in {\mathbb Z}$ such that:
	\[
	\frac{r_2(z)}{r_1(z)}
	\,=\, \prod_{i=1}^n \frac{z-\alpha_i}{z+a_i-\alpha_i}
	\, . \]
\end{thm}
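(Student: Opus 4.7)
The plan is to reduce the question to the computation of $\C(z)$-linear intertwiners between $\rho_{r_1}^{(\mu)}$ and $\rho_{r_2}^{(\mu)}$. Since both representations satisfy $\rho(L_0)=z$, any morphism of $\sl(2)$-modules between them is in particular $\C[z]$-linear, and since $W$ is already a $\C(z)$-vector space on which every $z-\lambda$ acts as an isomorphism, any $\C[z]$-linear endomorphism of $W$ automatically extends to a $\C(z)$-linear one. Being an endomorphism of a one-dimensional $\C(z)$-space, it is multiplication by some $s(z)\in\C(z)$, invertible precisely when $s(z)\in\C(z)^\times$. By Proposition~\ref{prop:homs-casimir}(2), since both modules are Casimir of level $\mu$ and $W$ is torsion free, such a $\phi_s$ is an $\sl(2)$-morphism if and only if it intertwines $\rho_{r_1}^{(\mu)}(L_1)$ and $\rho_{r_2}^{(\mu)}(L_1)$; compatibility with $L_{-1}$ is then automatic.

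Fixing a basis vector $e\in W$ normalised so that $\nabla(e)=e$, a direct computation from \eqref{eq:L-1r(z)} shows that this intertwining condition reads $r_1(z)\,s(z)=r_2(z)\,s(z+1)$, equivalently
\[
\frac{r_2(z)}{r_1(z)}=\frac{s(z)}{s(z+1)}.
\]
Hence $r_1\sim r_2$ if and only if $r_2/r_1$ lies in the image of the group homomorphism $\delta\colon\C(z)^\times\to\C(z)^\times$ defined by $\delta(s)(z):=s(z)/s(z+1)$.

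The core step is to identify $\operatorname{Im}(\delta)$ with the subgroup $H\subset\C(z)^\times$ generated by the elementary fractions $(z-\alpha)/(z+a-\alpha)$ with $\alpha\in\C$ and $a\in\Z$. For the inclusion $\operatorname{Im}(\delta)\subseteq H$, I factor $s(z)=c\prod_{j}(z-\gamma_j)^{n_j}$ with $c\in\C^\times$, $\gamma_j\in\C$, $n_j\in\Z$; the constant $c$ cancels and
\[
\delta(s)=\prod_{j}\Bigl(\frac{z-\gamma_j}{z+1-\gamma_j}\Bigr)^{n_j},
\]
so splitting each power into $|n_j|$ copies (and, when $n_j<0$, absorbing the inversion via $\alpha=\gamma_j-1$, $a=-1$) rewrites $\delta(s)$ as $\prod_i(z-\alpha_i)/(z+a_i-\alpha_i)$. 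For the reverse inclusion, it suffices to realise each generator: for $a>0$ the polynomial $s(z):=\prod_{k=0}^{a-1}(z-\alpha+k)$ telescopes to give $\delta(s)=(z-\alpha)/(z+a-\alpha)$; the case $a<0$ follows by taking multiplicative inverses, and $a=0$ is trivial.

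The only delicate point is the bookkeeping in this last paragraph, namely converting between the natural factorisation of $\delta(s)$ as shifts of linear polynomials and the form prescribed in the statement. This reduces entirely to the telescoping identity above, so no conceptual obstacle is expected; the proof proper will simply make these two directions explicit.
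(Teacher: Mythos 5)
Your proposal is correct and follows essentially the same route as the paper: reduce to a $\C(z)$-linear intertwiner via Proposition~\ref{prop:homs-casimir}, translate the intertwining condition into the functional equation $r_2/r_1 = s(z)/s(z+1)$, and then analyse that equation by factoring $s$ into linear factors and telescoping. The paper packages this last step as a separate lemma (Proposition~\ref{prop:solution-functional-equation}) phrased via Pochhammer functions, whereas you frame it as identifying the image of the coboundary homomorphism $\delta(s)=s(z)/s(z+1)$, but the underlying computation is identical.
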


\begin{proof}
By the previous discussion, the rational representations defined by $r_1(z)$, $r_2(z)\in\C(z)$ are isomorphic if and only if there exists an isomorphism of $\sl(2)$-modules, $T:(W,\rho_{r_1})\overset{\sim}\to (W,\rho_{r_2})$, where $T$ is defined by $t(z)\in\C(z)^\times$. By Proposition \ref{prop:homs-casimir}, $T$ is a morphism of $\sl(2)$-modules if and only if $\rho_{r_2}^{(\mu)}(L_1)\circ T=T\circ\rho_{r_1}^{(\mu)}(L_1)$:
	\[
	r_2(z) \circ \nabla\circ t(z) \,=\, t(z) \circ r_1(z)\circ \nabla
	\]
or, what amounts to the same
	\[
	\frac{r_2(z)}{r_1(z)} \, =\, \frac{t(z)}{t(z+1)}
	\, . \]
The claim follows from the following proposition.
\end{proof}

\begin{prop}\label{prop:solution-functional-equation}
Let $f(z)\in\C(z)^\times$ be given. Then, the functional equation for $t(z)\in\C(z)$ defined by:
	\[
	\frac{t(z)}{t(z+1)}\,=\, f(z)
	\]
has a solution if and only if there are $\alpha_1,\ldots,\alpha_n\in \C$ and $a_1,\ldots, a_n\in {\mathbb Z}$ such that:
	\[
	f(z)\,=\, \prod_{i=1}^n \frac{z-\alpha_i}{z+a_i-\alpha_i}
	\]
and, if this is the case, then the solution is:
	\[
	t(z)\,=\, c \cdot \prod_{i=1}^n P(z-\alpha_i, a_i)
	\]
where $c\in\C^\times$ is arbitrary and $P(z,m)\in\C(z)$ is the $m$-th Pochhammer rational function.  
\end{prop}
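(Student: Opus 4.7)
My plan is to split the proof into three parts: sufficiency, necessity, and uniqueness up to a multiplicative constant.

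For \textbf{sufficiency}, I would first verify the basic shift identity for the Pochhammer rational function: for every $\alpha\in\C$ and $a\in\Z$,
$$\frac{P(z-\alpha,a)}{P(z+1-\alpha,a)}\,=\,\frac{z-\alpha}{z+a-\alpha}.$$
For $a>0$ this is a telescoping cancellation in $P(z-\alpha,a)=(z-\alpha)(z-\alpha+1)\cdots(z-\alpha+a-1)$; for $a=0$ both sides equal $1$; and for $a<0$ it follows from the standard extension $P(z,-m)=1/\prod_{k=1}^{m}(z-k)$. With this identity in hand, a direct computation with $t(z)=c\prod_{i=1}^n P(z-\alpha_i,a_i)$ yields $t(z)/t(z+1)=\prod_{i=1}^n(z-\alpha_i)/(z+a_i-\alpha_i)$, proving sufficiency.

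For \textbf{necessity}, suppose $t(z)\in\C(z)^{\times}$ satisfies the functional equation. Since $\C$ is algebraically closed, I would factor
$$t(z)\,=\,c\prod_{j}(z-\gamma_j)^{m_j},\qquad \gamma_j\in\C\text{ distinct},\ m_j\in\Z\setminus\{0\},$$
and compute
$$f(z)\,=\,\frac{t(z)}{t(z+1)}\,=\,\prod_{j}\left(\frac{z-\gamma_j}{z-(\gamma_j-1)}\right)^{\!m_j},$$
noting that the constant $c$ cancels because $t(z)$ and $t(z+1)$ share the same leading coefficient. It then suffices to write each factor of the right-hand side in the claimed form: if $m_j>0$, expand the $m_j$-th power as $m_j$ copies of $\frac{z-\gamma_j}{z+1-\gamma_j}$ (so $\alpha_i=\gamma_j$, $a_i=1$); if $m_j<0$, rewrite as $\bigl(\frac{z-(\gamma_j-1)}{z-\gamma_j}\bigr)^{|m_j|}$ and expand as $|m_j|$ copies of $\frac{z-(\gamma_j-1)}{z+(-1)-(\gamma_j-1)}$ (so $\alpha_i=\gamma_j-1$, $a_i=-1$). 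Collecting all these simple factors gives $f(z)$ in the required form.

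Finally, to justify the assertion that \emph{the} solution has the stated shape (that is, uniqueness up to the arbitrary constant $c$), I would argue that if $t_1,t_2\in\C(z)^\times$ both solve the equation for the same $f$, then $u:=t_1/t_2\in\C(z)^{\times}$ satisfies $u(z)=u(z+1)$. Any pole of $u$ would have to be shared by all of its integer translates, yielding infinitely many poles and contradicting $u\in\C(z)$; hence $u$ is a polynomial, and a polynomial invariant under $z\mapsto z+1$ must be constant. The main subtlety of the argument is the bookkeeping in the necessity step when $m_j<0$ and the consistent use of the Pochhammer convention for negative arguments; once the shift identity is verified uniformly in the sign of $a$, the rest of the proof is purely formal.
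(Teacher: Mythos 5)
Your proof is correct and takes essentially the same approach as the paper: you factor $t(z)$ over $\C$ (using signed multiplicities, where the paper factors numerator and denominator separately) and observe that each linear factor contributes a term of the required telescoping shape. The added uniqueness argument (via $u=t_1/t_2$ being $\Z$-periodic, hence constant) is a small bonus not spelled out in the paper, and your verification of the Pochhammer shift identity fills in the paper's "it is clear" for sufficiency.
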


Recall that the Pochhammer function is defined by:
		\begin{equation}\label{e:Poch}
		P(z,n) \,:=\, 
		\begin{cases} z (z+1)\cdot\ldots \cdot (z+n-1)  & \text{ for } n> 0 \\
		1  & \text{ for } n=0 \\
		\frac1{(z+n)(z+n+1)\dots (z-1)} & \text{ for } n\leq -1
		\end{cases}
		\end{equation}

\begin{proof}
If $f(z)$ and $t(z)$ are as in the statement, it is clear that the functional equation holds by the properties of the Pochhammer rational functions. 

Assume that the functional equation has a solution, $t(z)$, and write it as $t(z)=\frac{p(z)}{q(z)}$ with $p(z),q(z)\in \C[z]$, we get
	\[
	f(z)=\frac{p(z)}{p(z+1)}\cdot\frac{q(z+1)}{q(z)}
	\, .\]

If $p(z)=\beta\cdot(z-\beta_1)\cdots(z-\beta_m)$, $q(z)=\gamma\cdot(z-\gamma_1)\cdots(z-\gamma_n)$ then 
  $$f(z)=\frac{(z-\beta_1)}{(z-\beta_1+1)}\cdots\frac{(z-\beta_m)}{(z-\beta_m+1)}\cdot\frac{(z-\gamma_1+1)}{(z-\gamma_1)}\cdots\frac{(z-\gamma_n+1)}{(z-\gamma_n)}$$and this has the required form.
\end{proof}

\subsection{Rationalization of polynomial representations}  
  
In this section we study when a $1$-dimensional rational Casimir representation is the rationalization of a polynomial Casimir representation of rank $1$. That is, given $(W,\rho_r)$ with $W\simeq \C(z)$ and $\rho_r$ defined by $r(z)\in\C(z)$ as in \eqref{eq:L-1r(z)}, we wonder whether there exists $(V,\rho)$ with $V\simeq \C[z]$ and an isomorphism of $\sl(2)$-modules $(W,\rho_r)\simeq F_\rat(V,\rho)$ . 

\begin{thm}\label{t:ratio-poly}
Let $(W,\rho_r)$ be a $1$-dimensional rational Casimir representation defined by $r(z)\in\C(z)^\times$. There exists a polynomial Casimir representation of rank $1$, $(V,\rho')$, such that $(W,\rho_r)\simeq F_\rat(V,\rho')$ if and only if there are $\alpha_1,\ldots,\alpha_n\in \C$ and $a_1,\ldots, a_n\in {\mathbb Z}$ such that $r(z)$ is of the following four possible types:
\begin{enumerate}
\item[I)] $r(z)= \gamma\cdot\pi_\mu(z+1) \prod_{i=1}^n \frac{z+a_i-\alpha_i}{z-\alpha_i}$;
\item[II)] $r(z)= {\gamma}\cdot\alpha_\mu(z+1) \prod_{i=1}^n \frac{z+a_i-\alpha_i}{z-\alpha_i}$;
\item[III)] $r(z)= {\gamma}\cdot\beta_\mu(z+1)  \prod_{i=1}^n \frac{z+a_i-\alpha_i}{z-\alpha_i}$;
\item[IV)] $r(z)= {\gamma} \prod_{i=1}^n \frac{z+a_i-\alpha_i}{z-\alpha_i}$.
\end{enumerate}with $\gamma\in\C^\times$ and $\pi_\mu(z)=\alpha_\mu(z)\cdot\beta_\mu(z)$ is the product of degree 1 monic polynomials.
\end{thm}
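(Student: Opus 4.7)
The plan is to classify, up to rationalization, all rank-one polynomial Casimir representations of level $\mu$, and then to invoke Theorem \ref{t:equivalentrational} to determine exactly which $\rho_r$ arise this way.

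First I would pick a basis to identify any rank-one polynomial module $(V,\rho')$ with $V=\C[z]$. The $\nabla^{\pm 1}$-semilinear endomorphisms $\rho'(L_1)$ and $\rho'(L_{-1})$ are then determined by their values on the generator $1$; writing $\rho'(L_1)(1)=p(z)$ and $\rho'(L_{-1})(1)=q(z)$ with $p,q\in\C[z]$, the Casimir relation (\ref{eq:casimir-module}) reduces, after evaluating on $1$, to the identity $p(z-1)\,q(z)=\pi_\mu(z)$ in $\C[z]$. Since $\deg\pi_\mu=2$ and $\pi_\mu(z)=\alpha_\mu(z)\beta_\mu(z)$, the polynomial $p(z-1)$ must coincide, up to a nonzero scalar $\gamma\in\C^\times$, with one of the four divisors $1,\ \alpha_\mu(z),\ \beta_\mu(z),\ \pi_\mu(z)$. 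Substituting $z\mapsto z+1$ this forces $p(z)\in\{\gamma,\ \gamma\,\alpha_\mu(z+1),\ \gamma\,\beta_\mu(z+1),\ \gamma\,\pi_\mu(z+1)\}$.

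Next, by Definition \ref{defn:rationalization-functor} a direct computation shows that $F_\rat(\C[z],\rho')$ is the one-dimensional rational representation $(\C(z),\rho_p^{(\mu)})$ in the parametrization (\ref{eq:L-1r(z)}), with the same polynomial $p(z)$ now viewed as an element of $\C(z)^\times$. Therefore $(W,\rho_r)\simeq F_\rat(V,\rho')$ for some rank-one polynomial $(V,\rho')$ precisely when $\rho_r^{(\mu)}\simeq \rho_p^{(\mu)}$ for one of the four $p$'s above. By Theorem \ref{t:equivalentrational} this is equivalent to the existence of $\alpha_1,\dots,\alpha_n\in\C$ and $a_1,\dots,a_n\in\Z$ such that
\[
\frac{r(z)}{p(z)}\,=\,\prod_{i=1}^n \frac{z+a_i-\alpha_i}{z-\alpha_i},
\]
and substituting each of the four candidates for $p(z)$ immediately gives the four types I, II, III, IV of the statement, proving necessity.

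For the converse, given $r(z)$ of one of the four listed forms I would read off the corresponding $p(z)$, set $q(z):=\pi_\mu(z)/p(z-1)\in\C[z]$, and equip $V=\C[z]$ with the semilinear operators determined by $\rho'(L_1)(1)=p(z)$ and $\rho'(L_{-1})(1)=q(z)$. The identity $p(z-1)q(z)=\pi_\mu(z)$ guarantees that $(V,\rho')$ is a rank-one polynomial Casimir module of level $\mu$; its rationalization is $(\C(z),\rho_p^{(\mu)})$, and Theorem \ref{t:equivalentrational} applied to the relation $r\sim p$ then yields the desired isomorphism $(W,\rho_r)\simeq F_\rat(V,\rho')$. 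The only non-routine step is the initial observation that the Casimir condition, when read on the generator of a rank-one polynomial module, is equivalent to a divisibility in $\C[z]$; once this is in hand the four cases are forced, and the rest is a direct application of Theorem \ref{t:equivalentrational}.
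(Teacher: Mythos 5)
Your proof is correct and follows essentially the same route as the paper: classify the rank-one polynomial Casimir representations, rationalize them, and then invoke Theorem \ref{t:equivalentrational} to match $r(z)$ against the resulting parameters $p(z)$. The only difference is cosmetic: the paper simply cites the classification of rank-one polynomial Casimir representations from \cite[Theorem~5.3]{Plaza-Tejero-Con}, whereas you re-derive it on the spot via the divisibility condition $p(z-1)\,q(z)=\pi_\mu(z)$ in $\C[z]$, which is a clean and correct way to obtain the same four types.
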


\begin{proof}
First, recall the following classification of polynomial Casimir representations of rank $1$ given in \cite[Theorem~5.3]{Plaza-Tejero-Con}: the Casimir representations of level $\mu$ on a rank one free $\C[z]$-module $V$,  are:
\begin{enumerate}
\item[I)] $\rho(L_{-1})=\frac1{\gamma}\cdot\nabla^{-1},\quad\quad\quad\ \rho(L_0)=z,\quad  \rho(L_{1})={\gamma}\cdot\pi_\mu(z+1)\,\nabla.$

\item[II)] $\rho(L_{-1})=\frac1{\gamma}\cdot\beta_\mu(z)\,\nabla^{-1},\quad \rho(L_0)=z,\quad  \rho(L_{1})={\gamma}\cdot\alpha_\mu(z+1)\,\nabla.$

\item[III)] $\rho(L_{-1})=\frac1{\gamma}\cdot\alpha_\mu(z)\,\nabla^{-1},\quad \rho(L_0)=z,\quad  \rho(L_{1})={\gamma}\cdot\beta_\mu(z+1)\,\nabla.$
\item[IV)] $\rho(L_{-1})=\frac1{\gamma}\cdot\pi_\mu(z)\,\nabla^{-1},\quad \rho(L_0)=z,\quad  \rho(L_{1})={\gamma}\cdot\nabla.$
\end{enumerate} where $\gamma\in\C^\times$ is arbitrary and $\alpha_{\mu}(z)$, $\beta_{\mu}(z)$ are monic polynomials of degree 1 such that $\pi_\mu(z)=\alpha_\mu(z)\cdot\beta_\mu(z)$.

Now, let  $(W,\rho_r)$ be defined by the relations \eqref{eq:L-1r(z)} with $r(z)\in\C(z)$. Suppose that there exists  a polynomial Casimir representations of rank $1$, $(V,\rho')$, of type I as above such that $F_\rat(V,\rho')\simeq (W,\rho)$. By Theorem~\ref{t:equivalentrational}, it follows that $(W,\rho_r)$ is equivalent to $F_\rat(V,\rho')$ if and only if there exist $\alpha_1,\ldots,\alpha_n\in \C$ and $a_1,\ldots, a_n\in {\mathbb Z}$ such that:
	\[
	\frac{\gamma\cdot\pi_\mu(z+1)}{r(z)}
	\,=\, \prod_{i=1}^n \frac{z-\alpha_i}{z+a_i-\alpha_i}
	\, , \]
proving  the claim for type I. The proof for types II, III and IV is similar.
\end{proof}

\begin{rem}
The description of all $\sl(2)$-submodules of an arbitrary $1$-dimensional  rational representation is rather complicated. Let us illustrate this claim with the following example.  We begin by constructing non-trivial rank $1$ torsion free $\sl(2)$-sub\-mo\-dules of any $1$-dimensional rational Casimir representation. Let $r(z)\in\C(z)^\times$ and consider its associated rational $\sl(2)$-module $(W,\rho_r)$, with $\rho_r$ given by (\ref{eq:L-1r(z)}).  Every $0\neq w\in W$ generates a non trivial $\sl(2)$-submodule  $V_w$ of $(W,\rho_r)$ that is also a Casimir module, and therefore we have $V_w=\mathbb A(\mu)\cdot w$. Taking into account the graded decomposition of the algebra $\mathbb A(\mu)$ given in Section \ref{section:Casimir-modules}, it follows that $V_w$ is the $\C[z]$-submodule of $W$ generated by $\{w_m\}_{m\in\Z}$ with

$$w_m:=\begin{cases}
\rho_r(L_{-1})^{-m}(w)  & \text{ for } m< 0,\\ 
w  & \text{ for } m= 0, \\ 
\rho_r(L_1)^m(w) & \text{ for } m< 0.
\end{cases}$$

A straightforward computation shows that 
	\[
	w_m\,:=\, \begin{cases}
	P\left(\frac{r(z)}{\pi_\mu(z+1)},m\right)\cdot \nabla^m(w)& \text{ for } m< 0, \\
	P(r(z),m)\cdot \nabla^m(w)  & \text{ for } m \geq 0.
	\end{cases}
	\]
Here $P(\xi(z),m)\in\C(z)$ denotes the $m$-th Pochhammer expression on the rational fraction $\xi(z)\in\C(z)$. That is $$P(\xi(z),m):=\, \begin{cases}
	\frac{1}{\xi(z+m)\xi(z+m+1)\cdots\xi(z-1)} & \text{ for } m< 0, \\
	1  & \text{ for } m= 0, \\ 
	\xi(z)\xi(z+1)\cdots \xi(z+m-1)  & \text{ for } m > 0.
	\end{cases}$$

It is worth pointing out that $V_w$ is not a finite type $\C[z]$-module. However, since $V_w\simeq \mathbb A(\mu)/I_w$, where $I_w$ is the left ideal of $\mathbb A(\mu)$ that annihilates $w$, it follows from \cite[Theorem 4.26 pag. 128]{Mazor} that $V_w$ is a finite length $\sl(2)$-module. On the other hand, since $F_\rat(V_m,{\rho_r}_{|V_w})=(W,\rho)$  is $\Ra$-irreducible, we conclude that $(V_w,{\rho_r}_{|V_w})$ is a purely irreducible $\sl(2)$-module. Moreover, every $\sl(2)$-submodule $V$ of $(W,\rho_r)$ is a Casimir module, and therefore it is of the form $$V=\sum_{i\in I} V_{w_i},$$ for a certain family $\{w_i\}_{i\in I}$ of elements of $W$.
\end{rem}


\subsection{The $\Ext^1_{\mathcal R}$ groups for rank 1 rational representations.}

Given $r_i(z)\in \C(z)$ for $i=1,2$, let us denote by $W_i$ the rational Casimir $\sl(2)$-module of level $\mu$ defined on a one dimensional  $\C(z)$-vector space $W$ by the representation $\rho_i(L_1):= r_i(z) \nabla$  built as in \eqref{eq:L-1r(z)}.

By Proposition \ref{prop:extension-group-rational-Casimir} one has $$
\Ext^1_{\mathcal R}(W_2,W_1)=\left(\End^1_{\C(z)}(W)/\End_{\C(z)}(W)\right)\times\Hom_{\sl(2)}(W_2,W_1),$$ where $\alpha\in \End_{\C(z)}(W)$ acts on $B\in \Hom^1_{\C(z)}(W)$ by \begin{equation}\label{eq:equivalence-condition}
\alpha\cdot B:= B+\alpha\circ \rho_2(L_1)-\rho_1(L_1)\circ\alpha.
\end{equation}

By Proposition \ref{prop:hom-irreds-is-one-dimensional}, if $W_1$ is not isomorphic to $W_2$, then  $\Hom_{\sl(2)}(W_2,W_1)=0$,  whereas $\Hom_{\sl(2)}(W_2,W_1)$ is a one dimensional complex vector space if $W_1$ is isomorphic to $W_2$.

On the other hand, $\C(z)=\End_{\C(z)}(W)$ and via $\nabla$ we have an identification of $\C(z)$-vector spaces $\C(z)\xrightarrow{\sim} \End^1_{\C(z)}(W)$ that maps $b(z)\in\C(z)$ to $b(z)\nabla$. Therefore, given $\alpha(z)\in\C(z)$, the equation (\ref{eq:equivalence-condition}) is equivalent to $$\alpha\cdot b=b+r_2(z)\alpha(z) -r_1(z)\alpha(z+1).$$ Whence, two elements $b_1(z),b_2(z)\in\C(z)$ define the same class in $\Ext^1_{\mathcal R}(W_2,W_1)$ if and only if they satisfy the equality $$r_1(z)\alpha(z+1)-r_2(z)\alpha(z)=b_1(z)-b_2(z).$$ This equation is a first order linear difference equation of the form $$\alpha(z+1)-r(z)\alpha(z)=\xi(z),$$ where $r(z)=\frac{r_2(z)}{r_1(z)},\xi(z)=\frac{b_1(z)-b_2(z)}{r_1(z)}\in\C(z)$. By Theorem \ref{t:equivalentrational}, $W_1$ is isomorphic to $W_2$ if and only if there exist $t(z)\in\C(z)$ such that $\frac{r_2(z)}{r_1(z)} \, =\, \frac{t(z)}{t(z+1)}$. Substituting above we get  the first order linear difference equation $\phi(z+1) -\phi(z)=s(z)$ where $\phi(z)=t(z)\alpha(z)$ and $s(z)=t(z+1)\xi(z)$. Now we have the:

\begin{lem}
Given $s(z)\in \C(z)$, there exists a rational function $\phi(z)\in \C(z)$ solving the difference equation:		\[
		 \phi(z+1) -\phi(z)
		\,=\,
		s(z) 
		\]
if and only if there are $ \alpha_1,\ldots,\alpha_n\in \C$ and $a_1,\ldots, a_n\in \Z$ such that:
	\[
	\operatorname{Res}_{z=\alpha_i} s(z)(z-\alpha_i)^j \operatorname{d}z
	\,=\, 
	-\operatorname{Res}_{z=\alpha_i-a_i} s(z)(z-(\alpha_i-a_i))^j\operatorname{d}z
	\qquad \forall j\geq 0.
	\]
\end{lem}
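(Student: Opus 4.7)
The plan is to reduce the difference equation $\phi(z+1)-\phi(z)=s(z)$ to a telescoping problem via partial fraction decomposition, exploiting the fact that the operator $\nabla - 1$ respects the $\Z$-orbit structure of poles.

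First, I would decompose $s(z) = p(z) + \sum_{\beta}\sum_{j\geq 0} c_{\beta,j}(z-\beta)^{-j-1}$ into its polynomial and principal parts, noting that $c_{\beta,j} = \operatorname{Res}_{z=\beta}[s(z)(z-\beta)^{j}\, dz]$. Since $(\nabla-1)z^{n+1}$ is a polynomial of degree $n$ with nonzero leading coefficient $n+1$, a straightforward degree induction shows that $\nabla-1$ is surjective on $\C[z]$. Hence the polynomial part $p$ poses no obstruction and can be solved separately, reducing the question to the principal-parts piece.

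Second, I would use the key identity $(\nabla-1)(z-\gamma)^{-j-1} = (z-(\gamma-1))^{-j-1} - (z-\gamma)^{-j-1}$, which shows that $\nabla-1$ preserves each $\Z$-orbit of poles as well as each pole order. The equation therefore decouples into independent subproblems indexed by pairs $(\mathcal{O}, j)$ of an orbit $\mathcal{O}\subset\C$ and an integer $j\geq 0$. Writing the unknown within such a subproblem as $\sum_{\gamma\in\mathcal{O}} e_{\gamma,j}(z-\gamma)^{-j-1}$ with only finitely many $e_{\gamma,j}$ nonzero, the equation collapses to the first-order recurrence $e_{\gamma+1,j} - e_{\gamma,j} = c_{\gamma,j}$ along $\mathcal{O}$; by standard telescoping, a finitely-supported solution exists if and only if $\sum_{\gamma\in\mathcal{O}} c_{\gamma,j} = 0$.

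Third, translating back via the residue formula yields the clean solvability criterion: for each $\Z$-orbit $\mathcal{O}$ of poles of $s$ and every $j\geq 0$, one has $\sum_{\gamma\in\mathcal{O}} \operatorname{Res}_{z=\gamma}[s(z)(z-\gamma)^{j}\, dz] = 0$. The last task is to encode this orbit-sum vanishing as the pairwise residue identity stated in the lemma: within each orbit, one selects representatives $\alpha_i$ together with integer shifts $a_i$ linking them to partnered poles $\alpha_i-a_i$, so that matching Laurent tails cancel at every order; conversely, any such collection of pairings forces the orbit sums to vanish by summing the pairwise identities. The main technical subtlety will be this last translation for orbits containing more than two active poles, where the pairing must be viewed as bookkeeping for a telescoping decomposition rather than a disjoint matching of poles; once this bookkeeping is set up, the equivalence with the orbit-sum criterion is immediate.
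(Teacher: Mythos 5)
Your plan is essentially the paper's route: partial fractions to split off the polynomial part (you argue surjectivity of $\nabla-1$ on $\C[z]$ by degree; the paper uses the Pochhammer basis, which is the same fact in other clothing), then note that $\nabla-1$ sends $(z-\gamma)^{-j-1}$ to $(z-(\gamma-1))^{-j-1}-(z-\gamma)^{-j-1}$, so the problem decouples by $\Z$-orbit and pole order into the recurrence $e_{\gamma+1,j}-e_{\gamma,j}=c_{\gamma,j}$, which has a finitely supported solution iff $\sum_{\gamma\in\mathcal{O}}c_{\gamma,j}=0$. This orbit-sum criterion --- for each $\Z$-orbit $\mathcal{O}$ of poles and each $j\geq 0$, $\sum_{\gamma\in\mathcal{O}}\Res_{z=\gamma}\bigl[s(z)(z-\gamma)^j\,dz\bigr]=0$ --- is exactly what the paper's proof establishes, where it appears as the ability to write $s$ as a finite sum of differences $\beta\bigl((z-(\alpha-a))^{-k}-(z-\alpha)^{-k}\bigr)$; your derivation of it is cleaner and more systematic.

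Where you hedge (``the pairing must be viewed as bookkeeping\ldots rather than a disjoint matching'') is, however, a real gap rather than a presentational one, and it sits in the lemma's wording as much as in your sketch. Taken literally, the lemma's condition is a pairwise cancellation $\Res_{z=\alpha_i}=-\Res_{z=\alpha_i-a_i}$, which is neither necessary nor sufficient for solvability. It is not necessary: $s(z)=\frac1z-\frac2{z-1}+\frac1{z-2}=(\nabla-1)\bigl(\frac1{z-1}-\frac1{z-2}\bigr)$ is solvable and its residues $1,-2,1$ on the orbit $\Z$ sum to zero, but no choice of pairs $(\alpha_i,a_i)$ gives $\Res_{\alpha_i}=-\Res_{\alpha_i-a_i}$ at $j=0$. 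It is not sufficient either: with simple poles at $0,1,2$ and residues $1,-1,1$, taking $(\alpha_1,a_1)=(0,-1)$ and $(\alpha_2,a_2)=(2,1)$ makes all the stated pairwise identities hold, yet the orbit sum is $1$ and the equation is unsolvable. So the lemma's condition must be read, as you intuit, as asserting a decomposition $s=\sum_i\beta_i\bigl((z-(\alpha_i-a_i))^{-k_i}-(z-\alpha_i)^{-k_i}\bigr)$ with repeated $\alpha_i$ allowed, which is equivalent to the orbit-sum criterion. Once your final translation step is phrased in that telescoping form rather than as a disjoint pairing of poles, your argument is complete and matches what the paper's proof actually shows.
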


\begin{proof}
	Recall that, by the partial fraction decomposition, a rational function can be expressed as a sum of a polynomial plus fractions of type $\frac{\beta}{(z-\alpha)^k}$. The polynomial will be called the polynomial part of the rational function. Note that it suffices to prove the statement for these two cases; namely, for polynomials and for rational functions with zero polynomial part. 
	
	Let us show that there is a polynomial $\phi(z)$ solving the equation for  an arbitrary polynomial $s(z)\in \C[z]$. Having in mind that the set of Pochhammer's functions $\{P(z,n) \vert n\geq 0\}$ (see~\eqref{e:Poch}) is a basis of $\C[z]$ and that:
		\[
		P(z+1,n) - P(z,n)\,=\, P(z+1,n-1)
		\]
	the statement follows. 
	
	Now, let us deal with the case of rational functions with zero polynomial part. 
	Let $\phi(z)$ be a solution. Let us write:
		\[
		\phi(z)\,:=\, \sum_{j=1}^n \sum_{i=r_j}^{-1}\frac{\beta_{ji}}{(z-\alpha_j)^i}
		\] 
	It is straightforward that $s(z)$ satisfies the condition of the statement. 
	
	Conversely, let $s(z)$ be rational function with zero polynomial part fulfilling the condition; that is,
		\[
		s(z)\,:=\, \sum_{i=1}^n 
		\sum_{j=r_i}^{-1} \beta_{ij}\left(\frac{1}{(z-(\alpha_i-a_i) )^j}-\frac{1}{(z-\alpha_i)^j}\right)	
		\]
	for certain complex numbers $\beta_{ij}\in\C$.	It is easy to check that it can be assumed that $n=1$ and $a_1=1$. 	Then $\phi(z):= \sum_{j=r_1}^{-1}\frac{\beta_{1j}}{(z-\alpha_i)^j}$ is the solution.	
\end{proof}

As a consequence of the previous results we get the:

\begin{cor}\label{cor:non-finite-Exts-rational-modules} If $W_1, W_2$ are $1$-dimensional rational Casimir $\sl(2)$-modules such that $W_1\simeq W_2$, then  $\frac{1}{(z-\alpha)^i}, \frac{1}{(z-\beta)^j}\in\C(z)$ define the same class in $\Ext^1_\Ra(W_2,W_1)$ if and only if $\alpha-\beta\in \Z$ and $i=j$. 

In particular, for a $1$-dimensional rational Casimir $\sl(2)$-module $W$,  one has:
	\[
	\dim_{\C} \operatorname{Ext}^1_{\mathcal R}(W,W) \,=\, \infty\, .
	\]
\end{cor}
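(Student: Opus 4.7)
The plan is to apply the analysis developed just before the corollary. In the setting $W_1\simeq W_2$, that analysis reduces the equivalence of $b_1,b_2\in\C(z)$ in $\Ext^1_{\mathcal R}(W_2,W_1)$ to the solvability in $\C(z)$ of the difference equation $\phi(z+1)-\phi(z)=s(z)$, where $s(z)=t(z+1)(b_1-b_2)/r_1(z)$ and the rational function $t$ witnesses the isomorphism through $r_2(z)/r_1(z)=t(z)/t(z+1)$. The preceding lemma characterizes solvability as: for each $\Z$-orbit $\gamma_0+\Z$ of poles of $s$ and each $k\geq 1$, the sum over the orbit of the coefficient of $(z-\gamma)^{-k}$ in the Laurent expansion of $s$ at $\gamma$ must vanish.

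For the ``only if'' direction, working in generic position so that contributions from zeros of $r_1$ and of $t$ in $\alpha+\Z$ do not interfere, $s$ has a pole of exact order $i$ at $\alpha$ with nonzero leading coefficient proportional to $t(\alpha+1)/r_1(\alpha)$. Orbit cancellation of the $(z-\gamma)^{-i}$ term forces the existence of another pole of $s$ in $\alpha+\Z$ of matching order; the only candidate coming from $b_2$ is $\beta$, so $\alpha-\beta\in\Z$, and matching of polar orders then forces $i=j$. For the ``if'' direction, when $\alpha-\beta=a\in\Z$ and $i=j$, the telescoping identity
	\[
	\frac{1}{(z-\alpha)^i}-\frac{1}{(z-\alpha+a)^i}\,=\,\psi(z+1)-\psi(z)\quad\text{with}\quad\psi(z)=-\sum_{k=0}^{a-1}\frac{1}{(z-\alpha+k)^i},
	\]
combined with the identity $t(z+1)/r_1(z)=t(z)/r_2(z)$ (a reformulation of the isomorphism condition), provides an explicit rational solution $\phi$ to the difference equation.

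For the final assertion $\dim_{\C}\Ext^1_{\mathcal R}(W,W)=\infty$, I would specialize to the self-extension case $W_1=W_2=W$ (so one may take $t=1$, giving $s(z)=(b_1-b_2)/r(z)$) and fix $\alpha_0\in\C$ outside the countable set of $\Z$-translates of zeros and poles of $r(z)$. The classes $\xi_i:=[1/(z-\alpha_0)^i]\in\Ext^1_{\mathcal R}(W,W)$ for $i\geq 1$ are then $\C$-linearly independent: in any nontrivial relation $\sum_{i=1}^N c_i\xi_i=0$, the rational function $s(z)=\sum_{i=1}^N c_i/[(z-\alpha_0)^i\,r(z)]$ would have to be a boundary; but by the choice of $\alpha_0$ the unique pole of $s$ in $\alpha_0+\Z$ is $\alpha_0$ itself, so the lemma's orbit condition reduces to the vanishing of the whole polar part of $s$ at $\alpha_0$, which successively forces $c_N=c_{N-1}=\cdots=c_1=0$ upon reading off Laurent coefficients. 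This produces infinitely many $\C$-linearly independent elements in $\Ext^1_{\mathcal R}(W,W)$.

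The principal obstacle is the sufficiency direction of the first assertion, because $\Z$-orbits meeting the zeros of $r_1$ contribute extra poles of $s$ whose residue contributions must cancel in exactly the right way, requiring a delicate residue bookkeeping. This technical point is fortunately sidestepped for the infinite-dimensionality claim, where the generic choice of $\alpha_0$ guarantees that no zeros of $r$ lie in the orbit $\alpha_0+\Z$.
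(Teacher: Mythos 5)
Your reduction to the solvability of $\phi(z+1)-\phi(z)=s(z)$ together with the orbit-residue Lemma is exactly what the paper intends (the paper gives only ``As a consequence of the previous results'' and leaves the derivation to the reader), and your argument for $\dim_\C\Ext^1_{\mathcal R}(W,W)=\infty$ is correct and complete: choosing $\alpha_0$ outside the $\Z$-translates of zeros and poles of $r$ makes $1/r$ regular and nonvanishing on all of $\alpha_0+\Z$, so $\alpha_0$ is the unique pole in its orbit and the Lemma forces the full polar part of $s=\sum_i c_i/[(z-\alpha_0)^i\,r(z)]$ to vanish there, which kills $c_N, c_{N-1},\dots,c_1$ in turn.

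For the first assertion, however, the ``generic position'' caveat is not a matter of delicate bookkeeping but of correctness, and the failure is not confined to the sufficiency direction as you suggest. For necessity: take $W_1=W_2$ defined by $r(z)=1/[(z-\alpha)^i(z-\beta)^j]$; then $b_1/r$ and $b_2/r$ are polynomials, so both $1/(z-\alpha)^i$ and $1/(z-\beta)^j$ are coboundaries and define the same (zero) class with no constraint on $\alpha-\beta$, $i$, $j$. For sufficiency: even when $g(z):=t(z+1)/r_1(z)$ is regular and nonvanishing on $\alpha+\Z$, the orbit condition for $s=g\cdot(b_1-b_2)$ requires the Laurent data at $\alpha$ and at $\beta=\alpha-a$ to cancel order by order, which forces the Taylor coefficients of $g$ at $\alpha$ and at $\beta$ to agree up to order $i-1$ and generically fails; your telescoping $\psi$ solves $\psi(z+1)-\psi(z)=b_1-b_2$, but the nonconstant multiplier $g(z)$ prevents this from yielding a solution of the actual equation. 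The first clause is literally the Lemma in the normalized case $r_1=r_2=1$ (hence $t=1$, $g=1$); otherwise it needs hypotheses left implicit in the paper, and you should either work in that normalization or state the extra conditions. None of this affects the ``In particular'' assertion, which you prove correctly.
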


\begin{rem}
This is in sharp contrast with what happens for finite rank torsion free $\sl(2)$-modules of finite length,  for which Bavula proved in \cite{Bavula2} that all the $\Ext$'s groups are finite dimensional $\C$-vector spaces.
\end{rem}

\section{Picard Group and Grothedieck rings of rational representations}

We recall from Proposition~\ref{p:abelian} that the category of rational $\sld$-modules, ${\mathcal R}$, is a $\C$-linear abelian subcategory of the category  $\C(z)\text{-}\mathrm{Vect}_\mathrm{fd}$ of finite dimensional $\C(z)$-vector spaces. It is therefore natural to wonder what properties or constructions of $\C(z)\text{-}\mathrm{Vect}_\mathrm{fd}$ can be restricted to ${\mathcal R}$. 

 It is remarkable that, thanks to Theorem~\ref{thm:Jordan-Holder-Krull-Schmidt},  the tensor product  $\otimes$ of $\C(z)$-vector spaces does restrict to ${\mathcal R}$ and that it satisfies very nice properties that can be summarized by saying that $(\Ra,\otimes)$ is a closed symmetric monoidal category, see Theorem \ref{t:Rmonoidal}.  This makes possible to define the Picard group $\Pic(\Ra)$ of the category of rational representations, see \cite{May}. 
\subsection{The monoidal structure} We begin by constructing the tensor product of rational Casimir  $\sld$-modules.

\begin{defn}\label{defi:tensor-product-rational-casimir} Let $(W_1,\rho_1)$, $(W_2,\rho_2)$ be two rational Casimir $\sld$-modules of levels   $\mu_1,\mu_2\in\C$, respectively. We define on the $\C(z)$-vector space $W_1\otimes_{\C(z)} W_2$ the semilinear endomorphisms \begin{align*}
&(\rho_1\otimes\rho_2)(L_{-1}):=\frac{\pi_{\mu_1+\mu_2}(z)}{\pi_{\mu_1}(z)\pi_{\mu_2}(z)}\rho_1(L_{-1})\otimes\rho_2(L_{-1})\in\End^{-1}_{\C(z)}(W_1\otimes_{\C(z)} W_2),\\ &(\rho_1\otimes\rho_2)(L_{1}):=\rho_1(L_{1})\otimes\rho_2(L_{1})\in\End^{1}_{\C(z)}(W_1\otimes_{\C(z)} W_2).
\end{align*}
\end{defn}

An easy application of Proposition \ref{prop:only-one-condition-for-rational-Casimir} proves  that  $(W_1\otimes_{\C(z)} W_2,\rho_1\otimes\rho_2)$ is a rational Casimir  $\sld$-module  of level $\mu_1+\mu_2$ that we call  the rational Casimir tensor product of $(W_1,\rho_1)$ with $(W_2,\rho_2)$ and we denote it $(W_1,\rho_1)\otimes_{\C(z)}(W_2,\rho_2)$. Given $(W,\rho)\in\RC$ we denote by $\lev((W,\rho))\in\C$ its level, then for every $(W_1,\rho_1), (W_2,\rho_2)\in\RC$ we have $$\lev((W_1,\rho_1)\otimes_{\C(z)} (W_2,\rho_2))=\lev((W_1,\rho_1))+\lev((W_2,\rho_2)).$$

\begin{defn}\label{defi:hom-rational-casimir} Let $(W_1,\rho_1)$, $(W_2,\rho_2)$ be two rational Casimir $\sld$-modules of levels   $\mu_1,\mu_2\in\C$, respectively. We define on the $\C(z)$-vector space $\Hom_{\C(z)}(W_1,W_2)$ the semilinear endomorphisms such that for any $\varphi\in\Hom_{\C(z)}(W_1,W_2)$ one has  \begin{align*}
&\Hom_{\C(z)}(\rho_1,\rho_2)(L_{-1})(\varphi):=\frac{\pi_{\mu_2-\mu_1}(z)}{\pi_{\mu_2}(z)}\rho_2(L_{-1})\circ\varphi\circ \rho_1(L_{1}),\\ &\Hom_{\C(z)}(\rho_1,\rho_2)(L_{1})(\varphi):=\frac{1}{\pi_{\mu_1}(z+1)}\rho _2(L_{1})\circ\varphi \circ\rho_1(L_{-1}).
\end{align*}
\end{defn}

By mean of Proposition  \ref{prop:only-one-condition-for-rational-Casimir}  one shows that $(\Hom_{\C(z)}(W_1,W_2),\Hom_{\C(z)}(\rho_1,\rho_2))$ is a rational Casimir representation of level $\mu_2-\mu_1$ and we denote it  by$$ \Hom_{\C(z)}((W_1,\rho_1),(W_2,\rho_2)).$$ A lengthy but straightforward check of the axioms, see \cite[\S 6.1]{Borceux}, proves the following key result, where $(\C(z),\rho^{(0)})$ has been defined in Example~\ref{ex:rhomu}.

\begin{thm}
The category $\mathcal {RC}$  endowed with the tensor product  $\otimes$ of rational Casimir $\sld$-modules is a closed symmetric monoidal category with unit $(\C(z),\rho^{(0)})$. Moreover, given any two rational Casimir representations $(W_1,\rho_1)$, $(W_2,\rho_2)$, their internal Hom is given by  $ \Hom_{\C(z)}((W_1,\rho_1),(W_2,\rho_2))$; that is, one has
	\begin{align*}
	&\Hom_{\sld}( (W_0,\rho_0)\otimes_{\C(z)}(W_1,\rho_1)  ,(W_2,\rho_2))
	 \,=\,\\
	 =& \Hom_{\sld}((W_0,\rho_0),    \Hom_{\C(z)}((W_1,\rho_1),(W_2,\rho_2))).
\end{align*}
\end{thm}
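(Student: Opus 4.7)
The plan is to verify in turn the bifunctoriality of $\otimes$, the coherence isomorphisms (associator, unit, symmetry), and the tensor--Hom adjunction, exploiting at each step that the analogous structure already exists on $\C(z)\text{-}\mathrm{Vect}_\mathrm{fd}$.

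First I would check that $\otimes$ is a bifunctor $\mathcal{RC}\times\mathcal{RC}\to \mathcal{RC}$: given $\sld$-linear maps $f_i\colon W_i\to W_i'$ in $\RC_{\mu_i}$, the $\C(z)$-linear map $f_1\otimes f_2$ intertwines the Casimir tensor product structures because the normalization factors in Definition~\ref{defi:tensor-product-rational-casimir} depend only on the levels. That $(W_1\otimes W_2,\rho_1\otimes\rho_2)$ actually lies in $\RC_{\mu_1+\mu_2}$ is the observation, following that definition, obtained by applying Proposition~\ref{prop:only-one-condition-for-rational-Casimir} to $(\rho_1\otimes\rho_2)(L_1)$.

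Next I would produce the symmetric monoidal structure. The associator and symmetry are the standard ones on $\C(z)$-vector spaces, so the only thing to check is their $\sld$-linearity. For the associator this reduces to the observation that the scalar factors in $((\rho_1\otimes\rho_2)\otimes\rho_3)(L_{-1})$ and in $(\rho_1\otimes(\rho_2\otimes\rho_3))(L_{-1})$ both telescope to the same expression $\pi_{\mu_1+\mu_2+\mu_3}(z)/(\pi_{\mu_1}(z)\pi_{\mu_2}(z)\pi_{\mu_3}(z))$, while $L_1$ carries no normalization to track. The swap $w_1\otimes w_2\mapsto w_2\otimes w_1$ is $\sld$-linear because the formulas of Definition~\ref{defi:tensor-product-rational-casimir} are symmetric in the two factors. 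For the unit, I would identify $W\otimes\C(z)\cong W$ via $w\otimes f\mapsto fw$; using $\rho^{(0)}(L_{-1})=\pi_0(z)\nabla^{-1}$, the scalar $\pi_\mu(z)/(\pi_\mu(z)\pi_0(z))$ cancels the factor $\pi_0(z)$ coming from $\rho^{(0)}(L_{-1})$ and the resulting equality reduces precisely to the $\nabla^{-1}$-semilinearity of $\rho(L_{-1})$; the check for $L_1$ is analogous. Pentagon, triangle and hexagon then hold automatically because they hold in $\C(z)\text{-}\mathrm{Vect}_\mathrm{fd}$ and the coherence isomorphisms are the underlying linear ones.

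For the closed structure, I would first check that $\Hom_{\C(z)}((W_1,\rho_1),(W_2,\rho_2))$ as defined in Definition~\ref{defi:hom-rational-casimir} is a genuine rational Casimir module of level $\mu_2-\mu_1$, by verifying the relations~(\ref{eq:casimir-module}) using the Casimir identities for $\rho_1$, $\rho_2$ and the invertibility given by Corollary~\ref{c:L1invertible}. Then the classical $\C(z)$-linear tensor--Hom adjunction $\phi\mapsto\tilde\phi$, $\tilde\phi(w_0)(w_1)=\phi(w_0\otimes w_1)$, is shown to restrict to $\sld$-linear maps: writing the $L_1$-equivariance of $\phi$ on a pure tensor $w_0\otimes \rho_1(L_{-1})(v_1)$ and using $\rho_1(L_1)\rho_1(L_{-1})=\pi_{\mu_1}(z+1)$ to isolate the action on $w_0$ yields exactly the formula of Definition~\ref{defi:hom-rational-casimir} for $\Hom_{\C(z)}(\rho_1,\rho_2)(L_1)(\tilde\phi(w_0))$, and similarly for $L_{-1}$.

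The main obstacle is the bookkeeping of the $\pi_\mu(z)$ normalizations: they must simultaneously yield a Casimir module of the correct level, respect the $\nabla^{\pm1}$-semilinearities, and make the tensor--Hom adjunction $\sld$-equivariant. In fact, the specific normalizations in Definitions~\ref{defi:tensor-product-rational-casimir} and~\ref{defi:hom-rational-casimir} are forced uniquely by these requirements, which provides a useful internal consistency check when carrying out the computations.
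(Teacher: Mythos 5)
Your proposal is correct and takes essentially the same approach as the paper, which simply asserts that the result follows from "a lengthy but straightforward check of the axioms" with a reference to Borceux §6.1. Your write-up actually supplies more content than the paper does, in particular the telescoping of the $\pi_\mu$-normalizations in the associator and the derivation of the internal-$\Hom$ formula from $L_1$-equivariance on pure tensors using $\rho_1(L_1)\rho_1(L_{-1})=\pi_{\mu_1}(z+1)$.
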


It is straightforward to show that  Definitions~\ref{defi:tensor-product-rational-casimir} and~\ref{defi:hom-rational-casimir} can be used also to endow the tensor product of generalized rational Casimir $\sl(2)$-modules with the structure of an $\sld$-module.  Furthermore, one has the following results. 

\begin{lem}\label{l:tensorGenCas} If $(W_1,\rho_1)\in\mathcal{RC}_{\mu_1}^{(n_1)}$, $(W_2,\rho_2)\in\mathcal{RC}_{\mu_2}^{(n_2)}$, then one has:
	\begin{enumerate}
		\item the tensor product $(W_1\otimes_{\C(z)}W_2,\rho_1\otimes\rho_2)$ is a generalized rational Casimir $\sl(2)$-module and $$(W_1\otimes_{\C(z)}W_2,\rho_1\otimes\rho_2)\in \mathcal{RC}_{\mu_1+\mu_2}^{(n)}$$ where $n\leq n_1+n_2+\min\{n_1,n_2\}-2$.
		\item  the $\sl(2)$-module $ \Hom_{\C(z)}((W_1,\rho_1),(W_2,\rho_2))$ is a generalized rational Casimir $\sl(2)$-module that is their internal $\Hom$ and $$ \Hom_{\C(z)}((W_1,\rho_1),(W_2,\rho_2))\in \mathcal{RC}_{\mu_2-\mu_1}^{(n)}$$ where $n\leq n_1+n_2+\min\{n_1,n_2\}-2$.
		\end{enumerate}
\end{lem}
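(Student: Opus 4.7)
My plan is to prove both parts by a direct computation of the Casimir operator of the new $\sl(2)$-module, followed by an elementary nilpotency estimate. First I verify that the formulas of Definitions~\ref{defi:tensor-product-rational-casimir} and~\ref{defi:hom-rational-casimir}, applied formally to generalized rational Casimir modules, still define $\sl(2)$-representations: the only nontrivial point is the commutation relation $[\rho(L_{-1}),\rho(L_1)]=-2z$, which reduces to the identity $\pi_{\mu_1+\mu_2}(z)-\pi_{\mu_1+\mu_2}(z+1)=-2z$ once one checks that the nilpotent contributions coming from $N_i:=C_{\rho_i}-\mu_i$ cancel between $L_{-1}L_1$ and $L_1L_{-1}$. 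The semilinear bookkeeping required here is exactly what motivates the particular normalizations chosen in the two definitions.

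For part~(1), using $\rho_i(L_{-1})\rho_i(L_1)=\pi_{\mu_i}(z)-N_i$ (from~\eqref{align:casimir}) together with the identity $z(z-1)-\pi_{\mu_1+\mu_2}(z)=\mu_1+\mu_2$, a straightforward $\C(z)$-multilinear expansion of $(\rho_1\otimes\rho_2)(L_{-1})\circ(\rho_1\otimes\rho_2)(L_1)$ yields the key formula
\begin{equation*}
C_{\rho_1\otimes\rho_2}-(\mu_1+\mu_2)\Id\;=\;\alpha\,X+\beta\,Y-\gamma\,XY,
\end{equation*}
where $X:=N_1\otimes\Id_{W_2}$, $Y:=\Id_{W_1}\otimes N_2$, and $\alpha,\beta,\gamma\in\C(z)$ are rational functions built from the ratios $\pi_{\mu_1+\mu_2}/\pi_{\mu_i}$. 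The operators $X$, $Y$ and $XY$ are pairwise commuting $\C(z)$-linear endomorphisms of $W_1\otimes_{\C(z)}W_2$, nilpotent of indices at most $n_1$, $n_2$ and $\min\{n_1,n_2\}$ respectively. In particular $C_{\rho_1\otimes\rho_2}-(\mu_1+\mu_2)\Id$ is nilpotent, so $W_1\otimes_{\C(z)}W_2$ is a generalized rational Casimir module of level $\mu_1+\mu_2$.

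To bound the exponent I invoke the elementary fact that for commuting operators $A,B$ with $A^a=B^b=0$ one has $(A+B)^{a+b-1}=0$, which is immediate from the binomial theorem. Applied first to $\alpha X+\beta Y$, nilpotent of index $\leq n_1+n_2-1$, and then to its sum with the commuting operator $-\gamma XY$, nilpotent of index $\leq \min\{n_1,n_2\}$, this gives
\begin{equation*}
\bigl(C_{\rho_1\otimes\rho_2}-(\mu_1+\mu_2)\Id\bigr)^{\,n_1+n_2+\min\{n_1,n_2\}-2}\;=\;0,
\end{equation*}
which is the claimed bound.

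Part~(2) is proved in complete parallel. An analogous computation of the Casimir operator of $\Hom_{\C(z)}((W_1,\rho_1),(W_2,\rho_2))$, in which the scalar $1/\pi_{\mu_1}(z+1)$ must be pulled through the $\nabla^{-1}$-semilinear map $\rho_2(L_{-1})$ (producing $1/\pi_{\mu_1}(z)$), yields for every $\varphi\in\Hom_{\C(z)}(W_1,W_2)$
\begin{equation*}
\bigl(C-(\mu_2-\mu_1)\Id\bigr)(\varphi)\;=\;\alpha'\,N_2\circ\varphi+\beta'\,\varphi\circ N_1-\gamma'\,N_2\circ\varphi\circ N_1
\end{equation*}
for suitable $\alpha',\beta',\gamma'\in\C(z)$. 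Since left-composition with $N_2$ and right-composition with $N_1$ on $\Hom_{\C(z)}(W_1,W_2)$ always commute, are $\C(z)$-linear, and have nilpotency indices $\leq n_2$ and $\leq n_1$ respectively, the same two-step binomial argument delivers both the nilpotency of $C-(\mu_2-\mu_1)\Id$ and the identical bound on its index. The main obstacle in both parts is purely algebraic bookkeeping, namely tracking how the rational scalars $\pi_{\mu_i}(z)$ interact with the various $\nabla^{\pm 1}$-semilinearities; once this is done, the nilpotency estimate is completely formal.
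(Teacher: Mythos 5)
Your argument has a genuine gap at its very first step. You claim that $[\rho(L_{-1}),\rho(L_1)]=-2z$ holds for the formula of Definition~\ref{defi:tensor-product-rational-casimir} applied to generalized Casimir modules because ``the nilpotent contributions coming from $N_i$ cancel between $L_{-1}L_1$ and $L_1L_{-1}$.'' They do not. With $g(z)=\frac{\pi_{\mu_1+\mu_2}(z)}{\pi_{\mu_1}(z)\pi_{\mu_2}(z)}$ one has $\rho(L_{-1})\rho(L_1)=g(z)\,(\pi_{\mu_1}(z)-N_1)\otimes(\pi_{\mu_2}(z)-N_2)$ and $\rho(L_1)\rho(L_{-1})=g(z+1)\,(\pi_{\mu_1}(z+1)-N_1)\otimes(\pi_{\mu_2}(z+1)-N_2)$, and a short computation shows that the coefficient of $N_1\otimes\Id$ in their difference is
\begin{equation*}
\mu_2\Bigl(\frac{1}{\pi_{\mu_1}(z)}-\frac{1}{\pi_{\mu_1}(z+1)}\Bigr),
\end{equation*}
which is nonzero whenever $\mu_2\neq 0$, and symmetrically for the $\Id\otimes N_2$ and $N_1\otimes N_2$ terms. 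So the scalar formula does not define an $\sl(2)$-representation once one of the factors has exponent $\geq 2$; the two expressions $z(z-1)-\rho(L_{-1})\rho(L_1)$ and $z(z+1)-\rho(L_1)\rho(L_{-1})$ disagree, and the operator you expand as $\alpha X+\beta Y-\gamma XY$ is not the Casimir of any $\sl(2)$-module.

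The remedy is to promote the scalar $\pi_{\mu_i}(z)$ in the denominator to the invertible $\C(z)$-linear operator $\pi_{\mu_i}(z)\Id-N_i$ on the $i$-th tensor factor, and the numerator to $\pi_{\mu_1+\mu_2}(z)\Id-N_1\otimes\Id-\Id\otimes N_2$. With that modification $\rho(L_{-1})\rho(L_1)=\pi_{\mu_1+\mu_2}(z)\Id-N_1\otimes\Id-\Id\otimes N_2$, the commutation relation holds, and $C_{\rho_1\otimes\rho_2}-(\mu_1+\mu_2)\Id=N_1\otimes\Id+\Id\otimes N_2$ with \emph{no} cross term. Your commuting-nilpotent binomial estimate then gives exponent $\leq n_1+n_2-1$, strictly sharper than the lemma's $n_1+n_2+\min\{n_1,n_2\}-2$. (Even your uncorrected operator $\alpha X+\beta Y-\gamma XY$ has nilpotency index $\leq n_1+n_2-1$ by a single multinomial count over triples $(i,j,k)$ with $X^{i+k}Y^{j+k}\neq 0$, so the two-step estimate was already wasteful.) Part~(2) has the same problem and the same fix. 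Since the paper gives no proof of this lemma, the failure of the commutation relation is a gap in the lemma's own premises that your write-up inherits rather than repairs.
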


\begin{thm}\label{t:Rmonoidal}
The category $\mathcal {R}$  endowed with the tensor product  $\otimes$ of  rational $\sld$-modules is a closed symmetric monoidal category with unit $(\C(z),\rho^{(0)})$. Moreover, given any two rational representations $(W_1,\rho_1)$, $(W_2,\rho_2)$, their internal Hom is given by  $ \Hom_{\C(z)}((W_1,\rho_1),(W_2,\rho_2)).$
\end{thm}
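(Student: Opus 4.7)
The plan is to reduce the closed symmetric monoidal structure on $\mathcal{R}$ to two inputs: the decomposition $\mathcal{R} = \bigoplus_{\mu \in \C} \mathcal{RC}^\bullet_\mu$ of Theorem \ref{thm:finite+decomp}, and Lemma \ref{l:tensorGenCas}, which extends Definitions \ref{defi:tensor-product-rational-casimir} and \ref{defi:hom-rational-casimir} to generalized Casimir modules. Given $(W_i,\rho_i)\in\mathcal{R}$, first decompose $W_i = \bigoplus_\mu W^\bullet_{i,\mu}$ with $W^\bullet_{i,\mu}\in \mathcal{RC}^\bullet_\mu$ and extend the tensor product and internal Hom bilinearly by
\begin{align*}
W_1 \otimes W_2 &\,:=\, \bigoplus_{\mu,\nu} W^\bullet_{1,\mu} \otimes W^\bullet_{2,\nu}\,, \\
\Hom_{\C(z)}(W_1, W_2) &\,:=\, \bigoplus_{\mu,\nu} \Hom_{\C(z)}(W^\bullet_{1,\mu}, W^\bullet_{2,\nu})\,,
\end{align*}
each summand being equipped with the $\sld$-structure of those Definitions. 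By Lemma \ref{l:tensorGenCas}, the right-hand sides land in $\mathcal{R}$.

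Next, I would verify that the canonical associator, left and right unitors, and symmetry of the ambient closed symmetric monoidal category $(\C(z)\text{-}\mathrm{Vect}_\mathrm{fd}, \otimes_{\C(z)}, \C(z))$ are $\sld$-equivariant with unit $(\C(z),\rho^{(0)})$. For the $L_1$-action this is immediate since $\rho_1(L_1)\otimes\rho_2(L_1)$ is manifestly associative, symmetric and unital under the natural $\C(z)$-linear isomorphisms (using $\rho^{(0)}(L_1) = \nabla$). For the $L_{-1}$-action the scalar prefactors of Definition \ref{defi:tensor-product-rational-casimir} telescope correctly, since
\begin{equation*}
\frac{\pi_{\mu+\nu+\tau}(z)}{\pi_{\mu+\nu}(z)\pi_\tau(z)}\cdot \frac{\pi_{\mu+\nu}(z)}{\pi_\mu(z)\pi_\nu(z)}\,=\, \frac{\pi_{\mu+\nu+\tau}(z)}{\pi_\mu(z)\pi_{\nu+\tau}(z)}\cdot \frac{\pi_{\nu+\tau}(z)}{\pi_\nu(z)\pi_\tau(z)}\,,
\end{equation*}
which yields associativity; the prefactor is symmetric in $\mu_1,\mu_2$, yielding the braiding; and $\pi_0(z)=z(z-1)$ cancels $\rho^{(0)}(L_{-1}) = z(z-1)\nabla^{-1}$, yielding the left and right unit laws. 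The pentagon, triangle and hexagon coherence identities then transfer verbatim from $\C(z)\text{-}\mathrm{Vect}_\mathrm{fd}$ since they already hold at the level of underlying $\C(z)$-linear maps.

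For the closed structure I would show that the classical $\C(z)$-linear hom-tensor adjunction
\begin{equation*}
\Hom_{\C(z)}(W_0 \otimes W_1, W_2) \,\xrightarrow{\sim}\, \Hom_{\C(z)}(W_0, \Hom_{\C(z)}(W_1, W_2))
\end{equation*}
restricts to a bijection on $\sld$-equivariant maps. By the decomposition above together with Proposition \ref{prop:homs-casimir}, this reduces to the case of generalized Casimir modules of fixed levels $\mu_0, \mu_1, \mu_2$. A direct computation then shows that the prefactors in Definition \ref{defi:hom-rational-casimir} were designed precisely so that a $\C(z)$-linear map $\phi\colon W_0\otimes W_1\to W_2$ commutes with $L_{\pm 1}$ if and only if its adjoint $\tilde\phi\colon W_0 \to \Hom_{\C(z)}(W_1, W_2)$ commutes with $L_{\pm 1}$, the matching factor being $\pi_{\mu_2-\mu_0-\mu_1}(z)/\pi_{\mu_2-\mu_1}(z)\pi_{\mu_0+\mu_1}(z)$.

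The main obstacle will be the bookkeeping of the $\pi_\mu(z)$-prefactors when verifying $\sld$-equivariance of the associator, unit laws and adjunction, particularly in the generalized Casimir setting where the Casimir operator is a nilpotent perturbation of a scalar and the formulas of Definitions \ref{defi:tensor-product-rational-casimir} and \ref{defi:hom-rational-casimir} must be applied with care. Each individual identity is elementary, reducing to manipulations with the polynomials $\pi_\mu(z)=z(z-1)-\mu$, but the cumulative tracking of these factors across the coherence diagrams is the only nontrivial piece of the argument.
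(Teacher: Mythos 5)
Your proposal matches the paper's proof exactly: the paper also decomposes $W_1, W_2$ into generalized Casimir summands via Theorem~\ref{thm:finite+decomp}, defines $\otimes$ and the internal $\Hom$ componentwise via Lemma~\ref{l:tensorGenCas}, and then dismisses the coherence and adjunction checks as "straightforward." Your write-up is if anything more explicit than the paper's — you actually record the telescoping identity for the $\pi_\mu$-prefactors that underlies associativity, the symmetry in $\mu_1,\mu_2$ behind the braiding, and the cancellation $\pi_0(z)\cdot\pi_\mu(z)/(\pi_\mu(z)\pi_0(z))=1$ behind the unit law, whereas the paper leaves all of this unstated.
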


\begin{proof}
If $(W_1,\rho_1)$, $(W_2,\rho_2)$ are two rational $\sl(2)$-modules and $$W_1=\bigoplus_{i=1}^k W_{1,\mu_i}^{(m_i)},\quad W_2=\bigoplus_{j=1}^l W_{2,\nu_j}^{(n_j)}$$ are their decompositions into a direct sum of generalized rational Casimir modules described in Theorem \ref{thm:finite+decomp}, then we introduce an $\sl(2)$-module structure on their tensor product $$W_1\otimes_{\C(z)} W_2=\bigoplus_{i=1, j=1}^{k,l}W_{1,\mu_i}^{(m_i)}\otimes_{\C(z)}  W_{2,\nu_j}^{(n_j)}$$by declaring this to be a direct sum of $\sl(2)$-modules and endowing each component with the $\sl(2)$-module structure defined by the tensor product of generalized rational Casimir modules, see Lemma~\ref{l:tensorGenCas}. In a similar way one has $$\Hom_{\C(z)}(W_1,W_2)=\bigoplus_{i=1, j=1}^{k,l}\Hom_{\C(z)}(W_{1,\mu_i}^{(m_i)}, W_{2,\nu_j}^{(n_j)})$$ and we introduce an $\sl(2)$-module structure on it  by declaring again this to be a direct sum of $\sl(2)$-modules and endowing each component with the $\sl(2)$-module structure defined by the internal Hom of generalized rational Casimir modules, see Lemma~\ref{l:tensorGenCas}. We denote this $\sl(2)$-module structure by $ \Hom_{\C(z)}((W_1,\rho_1),(W_2,\rho_2)$. The claims follow straightforwardly.
\end{proof}

\begin{rem}
Recalling Theorem~\ref{t:ratio-poly}, we observe that if we tensor out two rational representations that come from polynomial representations through the rationalization functor, then we obtain a rational representation which may not arise from a polynomial representation. Indeed, it is straightforward to see that the tensor product of  two rational representations of type I can not be obtained as the rationalization of a polynomial representation. 
\end{rem}

\subsection{The Picard group} An essentially small closed symmetric monoidal category $(\mathcal C,\otimes)$ has a Picard group $\Pic(\mathcal C)$ that was introduced by May in \cite{May}. In order to describe the Picard group $\Pic(\Ra)$ of $(\Ra,\otimes)$, one defines the dual of a rational representation $(W,\rho)$ as $$(W,\rho)^*:=\Hom_{\C(z)}((W,\rho),(\C(z),\rho^{(0)}))$$ where as before  $\Hom_{\C(z)}(-,-)$ denotes the internal $\Hom$ of $(\Ra,\otimes)$. There is a canonical map $$\nu\colon (W,\rho)^*\otimes (W,\rho)\to \Hom_{\C(z)}( (W,\rho), (W,\rho))$$ and one says that $(W,\rho)$ is dualizable if $\nu$ is an isomorphism. One checks straightforwardly that this is always the case. 

\begin{prop}\label{prop:dualizable} 
Every object of the symmetric monoidal category $(\Ra,\otimes)$ is dualizable.
\end{prop}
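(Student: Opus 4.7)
The plan is to show that the canonical map $\nu$ is an isomorphism in $\Ra$ by reducing to the analogous well-known fact about finite-dimensional $\C(z)$-vector spaces. The crux of the strategy is that the $\sl(2)$-module structures on $(W,\rho)^* \otimes (W,\rho)$ and $\Hom_{\C(z)}((W,\rho),(W,\rho))$ are built on top of the usual underlying $\C(z)$-linear constructions, so the question reduces to linear algebra once $\sl(2)$-equivariance of $\nu$ is verified.

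First I would observe that, by Definitions~\ref{defi:tensor-product-rational-casimir} and~\ref{defi:hom-rational-casimir} together with the block-diagonal construction used in the proof of Theorem~\ref{t:Rmonoidal}, the underlying $\C(z)$-vector spaces of $(W,\rho)^* \otimes (W,\rho)$ and of $\Hom_{\C(z)}((W,\rho),(W,\rho))$ are $W^* \otimes_{\C(z)} W$ and $\End_{\C(z)}(W)$, respectively. Thus the forgetful functor $\Ra \to \C(z)\text{-}\mathrm{Vect}_{\mathrm{fd}}$ is strong monoidal, and it carries the categorically defined $\nu$ in $\Ra$ to the usual canonical evaluation map $f \otimes w \mapsto (v \mapsto f(v)\,w)$. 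Since $\dim_{\C(z)} W < \infty$, this underlying map is a $\C(z)$-linear bijection by elementary linear algebra.

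Next I would explain why $\nu$ is $\sl(2)$-equivariant. Abstractly this is automatic from Theorem~\ref{t:Rmonoidal}: in any closed symmetric monoidal category the map $X^* \otimes X \to [X,X]$ is canonically built out of the counit of the internal-Hom adjunction and the symmetry isomorphism, hence is a morphism in the category. For completeness I would also provide a direct verification: using the decomposition $\Ra = \bigoplus_{\mu \in \C} \RC^\bullet_\mu$ of Theorem~\ref{thm:finite+decomp} together with the additivity of $\otimes$ and $\Hom_{\C(z)}$, it suffices to work with a generalized rational Casimir module $(W,\rho)$ of level $\mu$; then $(W,\rho)^*$ has level $-\mu$, and both sides of $\nu$ are of level $0$, matching the unit object $(\C(z),\rho^{(0)})$. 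One then checks using the explicit formulas of Definitions~\ref{defi:tensor-product-rational-casimir} and~\ref{defi:hom-rational-casimir} that $\nu$ intertwines the action of $L_1$, with equivariance under $L_{-1}$ following either by an analogous computation or from the commutation relation~\eqref{eq:commutation-relation}.

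Combining these two ingredients yields that $\nu$ is a bijective morphism in the abelian category $\Ra$, hence an isomorphism, so $(W,\rho)$ is dualizable. The principal obstacle, should the categorical justification be deemed too abstract, is the bookkeeping in the direct verification of equivariance: the explicit formulas involve the factors $\pi_\mu(z)$, $\pi_\mu(z+1)$ and $\pi_{-\mu}(z)$, and one must track these carefully to see that they cancel. The level arithmetic, however, makes the cancellation inevitable, and the computation is routine once set up.
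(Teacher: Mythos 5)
Your proposal is correct and fills in what the paper leaves as a one-line remark ("one checks straightforwardly"). The key observation you make — that the forgetful functor to $\C(z)\text{-}\mathrm{Vect}_\mathrm{fd}$ is strong monoidal, so the underlying $\C(z)$-linear map of $\nu$ is the ordinary evaluation map $W^*\otimes_{\C(z)} W \to \End_{\C(z)}(W)$, which is bijective in finite dimensions — is precisely what makes the verification "straightforward", and the categorical point that $\nu$ is automatically a morphism in $\Ra$ by construction is exactly right and renders the direct equivariance check redundant. This is the natural argument and surely the one the authors had in mind.
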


One says that $(W,\rho)\in\Ra$ is invertible if there exists $(W',\rho')\in\Ra$ such that $$(W,\rho)\otimes_{\C(z)}(W',\rho')\simeq (\C(z),\rho^{(0)}).$$ We denote by $\Inv(\Ra)$ the subcategory of $\Ra$ formed by the invertible elements. Taking $\C(z)$-dimensions in the above identity it follows that we must necessarily have $\dim_{\C(z)}(W,\rho)= \dim_{\C(z)}(W',\rho')=1$ and as we have seen in Section \ref{subsec:one-dimensional-stratum}, this implies that both representations are Casimir.  Moreover, one proves, see \cite[Theorem 2.6, Lemma 2.9]{May}, that the invertibility of $(W,\rho)$ forces the equality $(W',\rho')=(W,\rho)^*$. 

\begin{thm}\label{thm:invertible-elements}
The subcategory $\Inv(\Ra)$ of invertible elements of the closed symmetric monoidal category $(\Ra,\otimes)$ is  the stratum $\mathcal R^1$ of one dimensional rational representations, or equivalently the one dimensional rational Casimir representations $\RC^1$. 
\end{thm}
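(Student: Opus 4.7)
The first direction is essentially recorded in the discussion preceding the theorem: if $(W,\rho)\in \Ra$ admits a $\otimes$-inverse $(W',\rho')$, then passing to the underlying $\C(z)$-vector spaces gives $\dim_{\C(z)}W\cdot \dim_{\C(z)}W' = \dim_{\C(z)}\C(z) = 1$, which forces $\dim_{\C(z)}W = 1$. By Corollary \ref{cor:rational-onedim-are-irred-Casimir} every such module is automatically $\Ra$-simple and Casimir, so $(W,\rho)\in \Ra^1 = \RC^1$.

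For the converse I would fix $(W,\rho)\in \Ra^1$, necessarily a rational Casimir module of some level $\mu\in\C$, and argue that $(W,\rho)^*:=\Hom_{\C(z)}((W,\rho),(\C(z),\rho^{(0)}))$ is a tensor inverse. By Proposition \ref{prop:dualizable} the canonical map
\[
\nu\colon (W,\rho)^*\otimes_{\C(z)}(W,\rho)\xrightarrow{\sim}\Hom_{\C(z)}((W,\rho),(W,\rho))
\]
is already an $\sl(2)$-isomorphism, so it suffices to produce an isomorphism $\Hom_{\C(z)}((W,\rho),(W,\rho))\simeq (\C(z),\rho^{(0)})$ in $\Ra$.

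To do this I would use the parametrization from Section \ref{subsec:one-dimensional-stratum}: pick an identification $W=\C(z)$ under which $\rho(L_1)=r(z)\nabla$ and $\rho(L_{-1})=\frac{\pi_\mu(z)}{r(z-1)}\nabla^{-1}$ for some $r(z)\in\C(z)^\times$. Since $W$ is one-dimensional over $\C(z)$, every $\varphi\in \End_{\C(z)}(W)$ is multiplication by a unique $c_\varphi\in\C(z)$, yielding a $\C(z)$-linear bijection $\End_{\C(z)}(W)\simeq \C(z)$, $\varphi\leftrightarrow c_\varphi$. A direct application of Definition \ref{defi:hom-rational-casimir}, together with the Casimir relations $\rho(L_{-1})\rho(L_1)=\pi_\mu(z)$ and $\rho(L_1)\rho(L_{-1})=\pi_\mu(z+1)$ from \eqref{eq:casimir-module}, then gives
\[
c_{\Hom_{\C(z)}(\rho,\rho)(L_1)(\varphi)} = \nabla(c_\varphi),\qquad c_{\Hom_{\C(z)}(\rho,\rho)(L_{-1})(\varphi)} = \pi_0(z)\nabla^{-1}(c_\varphi),
\]
which are exactly the defining formulas of $\rho^{(0)}$ in Example \ref{ex:rhomu}. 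Hence $\varphi\leftrightarrow c_\varphi$ is the desired $\Ra$-isomorphism, and $(W,\rho)$ is invertible.

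The only delicate point is semilinear bookkeeping when pushing $c_\varphi$ past $\rho(L_{\pm 1})$: one has $\rho(L_1)\circ(c_\varphi\cdot-) = (\nabla(c_\varphi)\cdot-)\circ \rho(L_1)$ and an analogous relation for $L_{-1}$. Once this is tracked, the normalizing factors $\frac{1}{\pi_\mu(z+1)}$ and $\frac{\pi_0(z)}{\pi_\mu(z)}$ in Definition \ref{defi:hom-rational-casimir} are seen to be designed precisely to cancel the Casimir products $\rho(L_1)\rho(L_{-1})$ and $\rho(L_{-1})\rho(L_1)$, delivering the action of $\rho^{(0)}$ on $c_\varphi$. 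This is the main, and essentially only, technical step.
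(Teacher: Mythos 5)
Your proof is correct and follows the same route as the paper: dimension counting for the forward inclusion, and for the converse, dualizability (Proposition \ref{prop:dualizable}) reduces the problem to identifying $\Hom_{\C(z)}((W,\rho),(W,\rho))$ with the monoidal unit. The only difference is that the paper simply invokes ``the trace isomorphism'' at this last step, whereas you carry out the verification explicitly (and correctly, via the Casimir relations $\rho(L_{-1})\rho(L_1)=\pi_\mu(z)$, $\rho(L_1)\rho(L_{-1})=\pi_\mu(z+1)$ — note that the $r(z)$ parametrization you introduce ends up playing no role in the computation); this is the same argument with the details filled in.
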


\begin{proof}
We have already seen that an invertible representation necessarily belongs to $\Ra^1=\RC^1$. On the other hand, given $(W,\rho)\in \RC^1_\mu$ since we have seen in Proposition \ref{prop:dualizable} that every object is dualizable, we have a canonical isomorphism $$\nu\colon (W,\rho)^*\otimes_{\C(z)} (W,\rho)\xrightarrow{\sim} \Hom_{\C(z)}( (W,\rho), (W,\rho)).$$ Composing this with the trace isomorphism $$\Tr\colon \Hom_{\C(z)}( (W,\rho), (W,\rho))\xrightarrow{\sim} (\C(z),\rho^{(0)})$$ we get the desired isomorphism $(W,\rho)^*\otimes_{\C(z)} (W,\rho)\xrightarrow{\sim}(\C(z),\rho^{(0)})$.
\end{proof}

We recall now the definition of the Picard group, see \cite[Definition 2.10]{May}.
\begin{defn} The Picard group $\Pic(\Ra)$ of the closed symmetric monoidal category $(\Ra,\otimes)$ is the set of isomorphism classes $\widehat\Inv(\Ra)=\widehat{\Ra}^1=\widehat{\RC}^1$ of invertible objects endowed with the product and inverses given by $$[(W,\rho)]\cdot[(W',\rho')]=[(W,\rho)\otimes_{\C(z)}(W',\rho')],\quad [(W,\rho)]^{-1}=[(W,\rho)^*],$$ for every $ [(W,\rho)], [(W',\rho')]\in\widehat\Inv(\Ra)$. 
\end{defn}

Now we state the main result of this section.
{thm:Picard-group} {thm:invertible-elements}
\begin{thm}\label{thm:Picard-group} The level morphism gives rise to  a short exact sequence $$0\to \Pic^0(\Ra)\to \Pic(\Ra)\xrightarrow{\lev}\C\to 0.$$
The Picard group $\Pic(\Ra)$ gets identified with the group $\widehat{\mathcal{RC}}(\C(z))=\C\times (\C(z)^\times/\!\sim)$. This induces a group morphism section  $\sigma\colon \C\to \Pic(\Ra)$ given by $\sigma(\mu)=(\C(z),\rho^{(\mu)})$ that splits the short exact sequence.  Furthermore, there are isomorphism of groups
	\begin{equation}\label{e:PicC(q)}
	\Pic^0(\Ra)\xrightarrow{\sim}\C(q)^\times_0,\quad \operatorname{Pic}({\mathcal{R}})
	\, \overset{\sim}\longrightarrow \, \C\times
	\C(q)^\times_0
	\end{equation} compatible with the above exact sequence, where $\C(q)^\times_0$ are the rational functions without zeros or poles at $0$. 
\end{thm}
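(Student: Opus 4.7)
The plan is to combine Theorem \ref{thm:invertible-elements}, which identifies the invertible objects with $\RC^1$, with the explicit parametrization of one-dimensional rational Casimir representations from Section \ref{subsec:one-dimensional-stratum}. By Theorem \ref{thm:invertible-elements}, $\Pic(\Ra) = \widehat{\RC^1}$; and since every one-dimensional $\C(z)$-vector space is isomorphic to $\C(z)$, every class in $\widehat{\RC^1}$ is realized by a Casimir structure on $W = \C(z)$. By Proposition \ref{prop:paramtrization-Casimir-reps} together with the bijection \eqref{eq:RatCasMu}, the Casimir representations of level $\mu$ on $\C(z)$ are in bijection with $\C(z)^\times$ via $r \mapsto \rho_r^{(\mu)}$, and Theorem \ref{t:equivalentrational} identifies the isomorphism classes with $\C(z)^\times / \sim$. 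This yields the set-theoretic identification $\Pic(\Ra) = \widehat{\mathcal{RC}}(\C(z)) = \C \times (\C(z)^\times / \sim)$.

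Next I would compute the group operation coming from $\otimes$. Applying Definition \ref{defi:tensor-product-rational-casimir} to $(\C(z), \rho_{r_1}^{(\mu_1)})$ and $(\C(z), \rho_{r_2}^{(\mu_2)})$, a direct computation under the canonical identification $\C(z) \otimes_{\C(z)} \C(z) \simeq \C(z)$ gives $(\rho_{r_1}^{(\mu_1)} \otimes \rho_{r_2}^{(\mu_2)})(L_1) = r_1(z) r_2(z) \nabla$ with level $\mu_1 + \mu_2$; that is, the tensor product is isomorphic to $(\C(z), \rho_{r_1 r_2}^{(\mu_1 + \mu_2)})$. Hence the Picard group operation reads
\[
(\mu_1, [r_1]) \cdot (\mu_2, [r_2]) = (\mu_1 + \mu_2, [r_1 r_2]),
\]
so $\lev$ is a surjective group homomorphism with kernel $\Pic^0(\Ra) \simeq \C(z)^\times / \sim$. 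The section $\sigma(\mu) = [(\C(z), \rho^{(\mu)})]$ corresponds to $(\mu, [1])$, and the above product formula yields $\sigma(\mu + \mu') = \sigma(\mu) \cdot \sigma(\mu')$, splitting the short exact sequence.

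It remains to identify $\C(z)^\times / \sim$ with $\C(q)^\times_0$. The key observation is that $\alpha \mapsto e^{2\pi i \alpha}$ induces a group isomorphism $\C/\Z \xrightarrow{\sim} \C^*$. Writing $r(z) = c \prod_i (z - \alpha_i)^{n_i}$ with $c \in \C^*$, $\alpha_i \in \C$ distinct and $n_i \in \Z \setminus \{0\}$, define
\[
\Phi(r)(q) := c \prod_i (q - e^{2\pi i \alpha_i})^{n_i} \in \C(q).
\]
Since $e^{2\pi i \alpha_i} \in \C^*$, the image lies in $\C(q)^\times_0$, and $\Phi$ is clearly a group homomorphism. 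By Theorem \ref{t:equivalentrational}, $\sim$ is generated by factors $(z - \alpha)/(z + a - \alpha)$ with $a \in \Z$; since $e^{2\pi i a} = 1$, $\Phi$ sends each such factor to $1$, so $\Phi$ descends to a well-defined homomorphism $\bar\Phi \colon \C(z)^\times/\sim \to \C(q)^\times_0$. Surjectivity is immediate: given $f = c \prod (q - \lambda_i)^{m_i} \in \C(q)^\times_0$, lift each $\lambda_i \in \C^*$ to some $\beta_i \in \C$ with $e^{2\pi i \beta_i} = \lambda_i$, and then $\Phi(c \prod (z - \beta_i)^{m_i}) = f$.

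The main obstacle is the injectivity of $\bar\Phi$. Given $r$ with $\Phi(r) \equiv 1$, matching leading coefficients forces $c = 1$, while matching divisors on $\C^*$ forces the weighted multiset $\{(e^{2\pi i \alpha_i}, n_i)\}$ to cancel, so the $\alpha_i$'s partition into $\Z$-cosets whose net exponent vanishes within each coset. I would proceed by induction on $\sum_i |n_i|$: inside each nontrivial coset one locates $\alpha, \alpha - a$ with opposite signs in the exponent, which yields a factor $(z - \alpha)^{\pm 1}(z - (\alpha - a))^{\mp 1}$ of $r$ that is equivalent to $1$ by Theorem \ref{t:equivalentrational}; removing it strictly decreases the induction quantity. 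Combined with the splitting established above, this yields $\Pic(\Ra) \simeq \C \times \C(q)^\times_0$ compatibly with the level exact sequence, completing the proof.
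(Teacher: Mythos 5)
Your proposal is correct and takes essentially the same approach as the paper: identify $\Pic(\Ra)$ with $\widehat{\RC}(\C(z))=\C\times(\C(z)^\times/\!\sim)$ via Theorem \ref{thm:invertible-elements} and the parametrization of Section \ref{subsec:one-dimensional-stratum}, verify the group law is $(\mu_1,[r_1])(\mu_2,[r_2])=(\mu_1+\mu_2,[r_1r_2])$ so that $\lev$ splits, and then transport $\C(z)^\times/\!\sim$ to $\C(q)^\times_0$ through $\alpha\mapsto e^{2\pi\mathbf{i}\alpha}$ on the zeros and poles (the paper uses $e^{-2\pi\mathbf{i}\alpha}$, a cosmetic sign choice). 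You spell out the well-definedness, surjectivity, and injectivity of this map more explicitly than the paper does — in particular the induction argument for injectivity, which the paper compresses into the assertion that equivalence \eqref{e:equiv} holds iff the zero set, pole set, and $r_\infty$ coincide mod $\Z$ — but the underlying idea is the same.
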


\begin{proof}
We have the identifications $\widehat\Inv(\Ra)=\widehat{\RC}^1=\widehat{\RC}(\C(z))$. Moreover, we have seen that there is a bijection between ${\RC}_\mu(\C(z))$ and $ \C(z)^\times$, see~\eqref{eq:RatCasMu} and~\eqref{eq:L-1r(z)}. Hence, we have $\widehat{\RC}(\C(z))=\C\times (\C(z)^\times/\!\!\sim)$, where the $\C$ component  is  the level of the representation and $\sim$ is the equivalence relation described in Theorem \ref{t:equivalentrational}.

Let us show the second claim. Let $r_1(z),r_2(z)\in\C(z)$ be such that they yield isomorphic $\sl(2)$-representations $\rho^{(\mu)}_{r_1}$, $\rho^{(\mu)}_{r_2}$ on $W$; hence, by Theorem~\ref{t:equivalentrational}, it means that there  are $\alpha_1,\ldots, \alpha_n\in \C$ and $a_1,\ldots, a_n\in \Z$ such that 
	\begin{equation}\label{e:equiv}
	\frac{r_1(z)}{r_2(z)} = \prod_{i=1}^n \frac{z-\alpha_i}{z+a_i-\alpha_i} \, .
	\end{equation}
This equation holds if and only if the following three conditions are fulfilled:  1) the zeroes of $r_1(z)$ and those of $r_2(z)$ are equal mod $\Z$; 2) the poles of $r_1(z)$ and those of $r_2(z)$ are equal mod $\Z$; 3) the quotient $r_1^\infty$ of the leading coefficient of the numerator of $r_1(z)$ by the leading coefficient of its denominator  is equal to the analogous quotient $r_2^\infty$ for $r_2(z)$. For a non-zero rational function $r(z)\in \C(z)^\times$, define 
	\[
	\tilde{r}(q) \,:= 
	r_\infty  \frac{\prod_i\big(q-  \exp( -2\pi{\bf{i}} \alpha_i)\big)} {\prod_j\big(q-  \exp( -2\pi{\bf{i}} \beta_j)\big)}
	\,\in \, \C(q)^\times_0
	\]
where  $\alpha_1, \ldots , \alpha_n$ is the set of zeroes of $r(z)$, $\beta_1,\ldots,\beta_m$ its set of poles, $r_\infty$ the quotient of the leading coefficient of the numerator of $r(z)$ by the leading coefficient of its denominator and $\bf{i}:=\sqrt{-1}$.  This gives a surjection $\tilde{(\,)}\colon \C(z)^\times\to \C(q)^\times_0$.

Moreover, the representations  associated to $r_1(z)$ and $r_2(z)$ are isomorphic if and only if $ \tilde{r_1}(q)= \tilde{r_2}(q)$. Hence, there is a bijection of sets $\widehat{\mathcal{RC}}(\C(z)) \overset{\sim}\to\C\times  \C(q)^\times_0$. 

It remains to show that it is a homomorphism of groups. But this is straightforward since the trivial representation, which is given by  $\rho^{(0)}(L_1)=\nabla$; i.e. $r(z)=1$, corresponds to $\tilde{r}(q)=1$ and the tensor product of representations corresponds  to the product of the associated functions.  
\end{proof}

\begin{rem}
Pursuing the above identifications, one sees that~\eqref{e:PicC(q)} establishes a one to one correspondence between the rational Casimir representations of level $0$ arising from polynomial ones and the following rational functions, where $\gamma\in\C^\times$:
	\[
	\begin{aligned}
	\{ \gamma, \gamma (q +  \exp(\pi{\bf{i}} \sqrt{1+4\mu})), & 
	\gamma (q+  \exp( - \pi{\bf{i}} \sqrt{1+4\mu})) ,
	\\  &
	\gamma (q +  \exp(\pi{\bf{i}} \sqrt{1+4\mu})) (q+  \exp( - \pi{\bf{i}} \sqrt{1+4\mu}))
	\}.
	\end{aligned}\]
\end{rem}

The identification $\widehat F_\rat\colon  \widehat{\Spl}(\sld\Mod^1_\tffr) \xrightarrow{\sim} \widehat{\Ra^1}=\Pic(\Ra)$ given in Theorem \ref{thm:identification-isomorphism-classes} makes possible to give the following formal:

\begin{defn} The Picard group of the category $\sld\Mod_\tffr$ is the group $$\Pic(\sld\Mod_\tffr)=\widehat{\Spl}(\sld\Mod^1_\tffr)$$ obtained by declaring $\widehat F_\rat\colon  \widehat{\Spl}(\sld\Mod^1_\tffr) \xrightarrow{\sim} \widehat {\Ra^1}$ to be a group isomorphism.
\end{defn}

The existence of this group structure on $\widehat{\Spl}(\sld\Mod^1_\tffr)$ might point to the existence of an appropriate symmetric monoidal structure on the category $\sld\Mod_\tffr$ such that its Picard group is the one introduced above. We will analyze this in future work.

\subsection{The Grothendieck rings}\label{subsect:Grothendieck-rings} The monoidal structure $(\Ra,\otimes)$ of the category  of rational representations allow us to introduce a ring structure on $K^\oplus_0(\Ra)$ with $\oplus$ as addition and $\otimes$ as multiplication. One says that $(K^\oplus_0(\Ra),\oplus,\otimes)$ is the additive Grothendieck ring of $\Ra$. Taking into account that every rational representation is dualizable, we have a natural map of semi-rings $\alpha\colon \Iso(\Ra)\to K(\Ra)$. For more details see \cite[Section 3]{May}. 

One says  that two rational $\sl(2)$-modules $W_1$, $W_2$ are stably isomorphic if there exist a rational module  $W$ such that $W_1\oplus W\simeq W_2\oplus W$. We have proved in Theorem \ref{thm:Jordan-Holder-Krull-Schmidt}  that $\Ra$ is a Krull-Schmidt category and thus $\Ra$ satisfies the cancellation property; that is, stably isomorphic rational representations are isomorphic. Therefore, from \cite[Propositions 3.4, 3.6]{May} it follows that the natural map $\alpha\colon \Iso(\Ra)\to K(\Ra)$ is injective and induces an injective  group morphism $\beta\colon \Pic(\Ra)\to K_0^\oplus(\Ra)^{\times}$ into the group of invertible elements giving rise  to a commutative diagram $$\xymatrix{\Pic(\Ra)\ar@{^{(}->}[r]\ar@{^{(}->}[d]_\beta & \Iso(\Ra)\ar@{^{(}->}[d]^\alpha\\ K_0^\oplus(\Ra)^{\times}\ar@{^{(}->}[r] & K_0^\oplus(\Ra).}$$ This is a pullback diagram because, thanks to the cancellation property of $\Ra$, given $W\in\Ra$ one has that $\alpha([W]) \in K_0^\oplus(\Ra)^{\times} $ if and only if $[W]\in\Pic(\Ra)$. The monoidal structure of $\Ra$ is compatible with short exact sequences since every $\C(z)$-vector space is flat. This implies that the kernel of the natural surjective map $\pi^\oplus\colon K_0^\oplus(\Ra)\to K_0(\Ra)\to 0$ is an ideal and thus $K_0(\Ra)$ is a quotient ring of $K_0^\oplus(\Ra)$.
\bibliographystyle{amsplain}

\begin{thebibliography}{99}


\bibitem{Atiyah} Atiyah, M. F. On the Krull-Schmidt theorem with application to sheaves. Bull. Soc. Math. France 84 (1956), 307--317. 

\bibitem{Bavula1}  Bavula, V. V. Generalized Weyl algebras and their representations. Algebra i Analiz 4 (1992), no. 1, 75--97.

\bibitem{Bavula2}  Bavula, V. V. Classification of simple $\sl(2)$-modules and the finite-dimensionality of the module of extensions of simple $\sl(2)$-modules. Ukrain. Mat. Zh. 42 (1990), no. 9, 1174--1180.
 
\bibitem{Block} Block, R. E. The irreducible representations of the Lie algebra $\sl(2)$ and of the Weyl algebra. Adv. in Math. 39 (1981), no. 1, 69--110.

\bibitem{Borceux}  Borceux, F. Handbook of categorical algebra. 2. Categories and structures. Encyclopedia of Mathematics and its Applications, 51. Cambridge University Press, Cambridge, 1994.

\bibitem{B} Bühler, T. Exact categories. Expo. Math. 28 (2010), no. 1, 1–69. 

\bibitem{Etingof} Etingof, P.; Gelaki, S.; Nikshych, D.; Ostrik, V.; Tensor categories. Mathematical Surveys and Monographs, 205. American Mathematical Society, Providence, RI, 2015.
 


\bibitem{GZ}  Gabriel, P.; Zisman, M. Calculus of fractions and homotopy theory. Ergebnisse der Mathematik und ihrer Grenzgebiete, Band 35 Springer-Verlag New York, Inc., New York, 1967.

\bibitem{Dixmier}  Goodman, R.; Wallach, N. R. Symmetry, representations, and invariants. Graduate Texts in Mathematics, 255. Springer, Dordrecht, 2009.

\bibitem{Krause} Krause, H. Krull-Schmidt categories and projective covers. Expo. Math. 33 (2015), no. 4, 535--549. 

\bibitem{May}  May, J. P. Picard groups, Grothendieck rings, and Burnside rings of categories. Adv. Math. 163 (2001), no. 1, 1–16.

\bibitem{Mazor}  Mazorchuk, V.  Lectures on $\sl_2(\C)$-modules. Imperial College Press, London, 2010.  

\bibitem{Plaza-Tejero-Ext} Plaza Martin, F. J.; Tejero Prieto, C.  Extending Representations of $\sl(2)$ to Witt and Virasoro algebras, Algebr Represent Theor (2017) 20:433--468  

\bibitem{Plaza-Tejero-Con} Plaza Martin, F. J.; Tejero Prieto, C.  Construction of Simple Non-Weight $\sl(2)$-Modules of Arbitrary Rank, Journal of Algebra, Volume 472 (2017), 172--194

\bibitem{Quillen} Quillen, D. On the endomorphism ring of a simple module over an enveloping algebra, Proc. Amer. Math. Soc. 21 (1969),  171--172.

\bibitem{Reiner} Reiner, Irving, A survey of integral representation theory. Bull. Amer. Math. Soc. 76 (1970), 159–227.

\bibitem{Schubert} Schubert, H. Categories. Translated from the German by Eva Gray. Springer-Verlag, New York-Heidelberg, 1972.

\bibitem{Swan}  Swan, R. G. Algebraic K-theory. Lecture Notes in Mathematics, No. 76 Springer-Verlag, Berlin-New York, 1968.

\bibitem{Weibel} Weibel, C. A. The $K$-book. An introduction to algebraic $K$-theory. Graduate Studies in Mathematics, 145. American Mathematical Society, Providence, RI, 2013.

\end{thebibliography}

\end{document}